\newtheorem{theorem}{Theorem}[section]
\newtheorem{lemma}[theorem]{Lemma}
\newtheorem{corollary}[theorem]{Corollary}
\newtheorem{proposition}[theorem]{Proposition}
\newtheorem{problem}[theorem]{Problem}
\newtheorem{fact}[theorem]{Fact}
\theoremstyle{definition}
\newtheorem{definition}[theorem]{Definition}
\newtheorem{remark}[theorem]{Remark}
\def\Z{\mathbb{Z}}
\def\N{\mathbb{N}}
\def\F{\mathbb{F}}
\def\cF{\mathcal{F}}
\def\cH{\mathcal{H}}
\def\Q{\mathbb{Q}}
\newcommand{\abar}{\bar{a}}
\newcommand{\bbar}{\bar{b}}
\newcommand{\cbar}{\bar{c}}
\newcommand{\dbar}{\bar{d}}
\newcommand{\ebar}{\bar{e}}
\newcommand{\gbar}{\bar{g}}
\newcommand{\mbar}{\bar{m}}
\newcommand{\nbar}{\bar{n}}
\newcommand{\sbar}{\bar{s}}
\newcommand{\tbar}{\bar{t}}
\newcommand{\ubar}{\bar{u}}
\newcommand{\vbar}{\bar{v}}
\newcommand{\wbar}{\bar{w}}
\newcommand{\xbar}{\bar{x}}
\newcommand{\ybar}{\bar{y}}
\newcommand{\zbar}{\bar{z}}
\newcommand{\qftp}{\textnormal{qftp}}
\newcommand{\calL}{\mathcal{L}}
\newcommand{\calH}{\mathcal{H}}
\newcommand{\calC}{\mathcal{C}}
\newcommand{\calM}{\mathcal{M}}
\newcommand{\calN}{\mathcal{N}}
\newcommand{\calA}{\mathcal{A}}
\newcommand{\calB}{\mathcal{B}}
\newcommand{\calD}{\mathcal{D}}
\newcommand{\calE}{\mathcal{E}}
\newcommand{\calI}{\mathcal{I}}
\newcommand{\calJ}{\mathcal{J}}
\newcommand{\calW}{\mathcal{W}}
\newcommand{\calX}{\mathcal{X}}
\newcommand{\calY}{\mathcal{Y}}
\newcommand{\calZ}{\mathcal{Z}}
\newcommand{\M}{\mathbb{M}}
\newcommand{\Span}{\operatorname{Span}}
\def\Aut{\operatorname{Aut}}
\def\tp{\operatorname{tp}}
\def\dom{\operatorname{dom}}
\newcommand{\FOP}{\operatorname{FOP}}
\def\NFOP{\operatorname{NFOP}}
\def\IP{\operatorname{IP}}
\def\NIP{\operatorname{NIP}}
\def\Th{\operatorname{Th}}
\def\ded{\operatorname{ded}}
\newcommand{\cA}{\mathcal{A}}
\newcommand{\cB}{\mathcal{B}}
\newcommand{\cC}{\mathcal{C}}
\newcommand{\cD}{\mathcal{D}}
\newcommand{\cI}{\mathcal{I}}
\newcommand{\cL}{\mathcal{L}}
\newcommand{\cM}{\mathcal{M}}
\newcommand{\cN}{\mathcal{N}}
\newcommand{\cP}{\mathcal{P}}
\newcommand{\bH}{\textnormal{\textbf{H}}}
\newcommand{\bK}{\textnormal{\textbf{K}}}
\newcommand{\uth}{^{\textrm{th}}}
\newcommand{\clqed}{\hfill$\dashv_{\text{\scriptsize{claim}}}$}
\newcommand{\avec}[1]{a_{#1}}
\newcommand{\seq}{\subseteq}
\newcommand{\miff}{\makebox[.4in]{$\Leftrightarrow$}}
\newcommand{\mimp}{\makebox[.4in]{$\Rightarrow$}}
\newcommand{\oless}{\mathbin{\ooalign{$\ocircle$\cr\hspace{1pt}\raisebox{.12ex}{\scalebox{0.84}{$>$}}}}}
\newcommand{\bR}{\mathbf{R}}
\newcommand{\bS}{\mathbf{S}}
\newcommand{\im}{\textnormal{im}}
\newcommand{\inv}{^{\text{-}1}}
\renewcommand{\phi}{\varphi}
\begin{document}
\title{Higher arity stability and the functional order property}

\author{A. Abd Aldaim} 
\address{Department of Mathematics\\
The Ohio State University\\
Columbus, OH, USA}
\email{abdaldaim.1@osu.edu}

\author{G. Conant}
\address{Department of Mathematics\\
The Ohio State University\\
Columbus, OH, USA}
\email{conant.38@osu.edu}

\author{C. Terry}
\address{Department of Mathematics\\
The Ohio State University\\
Columbus, OH, USA}
\email{terry.376@osu.edu}

\begin{abstract}
The $k$-dimensional functional order property ($\FOP_k$) is a combinatorial property of a $(k+1)$-partitioned formula.  This notion arose in work of Terry and Wolf \cite{TW1,TW2}, which identified $\NFOP_2$ as a ternary analogue of stability in the context of two finitary combinatorial problems related to hypergraph regularity and arithmetic regularity. In this paper we show $\NFOP_k$ has equally strong implications in model-theoretic classification theory, where its behavior as a $(k+1)$-ary version of stability is in close analogy to the behavior of $k$-dependence as a $(k+1)$-ary version of $\NIP$. Our results include  several new characterizations of $\NFOP_k$, including a characterization in terms of collapsing indiscernibles, combinatorial recharacterizations, and a characterization in terms of type-counting when $k=2$.  As a corollary of our collapsing theorem, we show $\NFOP_k$ is closed under Boolean combinations, and that $\FOP_k$ can always be witnessed by a formula where all but one variable have length $1$. When $k=2$, we prove a composition lemma analogous to that of Chernikov and Hempel from the setting of $2$-dependence.  Using this,  we provide a new class of algebraic examples of $\NFOP_2$ theories.  Specifically, we show that  if $T$ is the theory of an infinite dimensional vector space over a field $K$, equipped with a bilinear form satisfying certain properties, then $T$ is $\NFOP_2$ if and only if $K$ is stable.  Along the way we provide a corrected and reorganized proof of Granger's quantifier elimination and completeness results for these theories.
\end{abstract}

\date{June 17, 2025}

\thanks{GC was partially supported by NSF grant DMS-2204787. CT was partially supported by NSF grants DMS-2115518 and DMS-2239737.}

\subjclass[2020]{03C45, 03C10}

\maketitle

\section{Introduction}

Combinatorial dividing lines have played a central role in the development of  model theory over the past several decades. 
A rich structure of \emph{stable theories} was developed in the 1970s by Shelah \cite{Shelah}, who also laid the groundwork for most of the other significant dividing lines in modern research, such as the independence property and various tree properties. One of the first major generalizations of stability (in the sense of a ``structure theory") came from work of Kim \cite{KimSST} and Kim and Pillay \cite{KiPi} on \emph{simple theories} in the 1990s. The 2000s then saw an equally fruitful development of NIP theories by Hrushovski, Peterzil, Pillay, and Simon \cite{HPP, HP, HPS} (see also \cite{SiNIP}).  
More recent work includes that of Chernikov and Kaplan \cite{ChKa} on NTP$_2$ theories (see also \cite{ChNTP2}), and of Kaplan and Ramsey \cite{KaRam} on NSOP$_1$ theories (see also \cite{DKR}). For even more recent developments on further dividing lines, see, e.g., \cite{AhnKimLee, Mutch1, Mutch2}.

A common feature of the above dividing lines is that they are  essentially binary, in the sense that they deal with encoding binary configurations in models, using formulas whose variables have been partitioned into two sets. Naturally, this raises the question of whether there exist higher arity analogues of these definitions.  An analogue of NIP, referred to as  ``$k$-dependence",  was defined by Shelah  \cite{Shelah1,Sh2dep}, and studied  by Chernikov, Palac{\'i}n, and Takeuchi \cite{CPT}, Hempel \cite{Hempel},  Chernikov and Hempel \cite{CH,CH2}, and Chernikov and Towsner \cite{ChTow}. 
On the other hand, it has remained an open problem to develop analogues of stability that are essentially higher arity.

  In  \cite{TW1,TW2}, Terry and Wolf addressed this problem from a combinatorial  perspective.  The starting  points for this investigation were two results about stability from the finite setting.  First, following work of Malliaris and Shelah \cite{MS}, it was known that stability characterizes when hereditary classes of finite graphs admit regular decompositions with no irregular pairs.  Second, work of Terry and Wolf \cite{TWa,TWb}, Conant, Pillay, and Terry \cite{CPTstable}, and Conant \cite{CoQSAR}, showed that in the setting of finite groups, stability is closely tied to subgroup structure, reflecting fundamental results from stable group theory going back to the 1980s.  The recent work in \cite{TW1,TW2} takes a systematic approach to finding higher arity analogues of stability by proving higher order analogues of these two types of finitary theorems.   While this investigation gave rise to several competing notions (see \cite[Problem 2.14]{TW2}), one definition  emerged as  particularly robust.  This notion, called the \emph{functional order property}, is defined as follows.\footnote{This is not the original definition from \cite{TW1,TW2} but is easily shown to be equivalent; see Proposition \ref{prop:alleq0} and surrounding discussion.}

\begin{definition}\label{def:FOPk}
Suppose $T$ is a complete $\cL$-theory. An $\cL$-formula $\varphi(x_1,\ldots,x_{k+1})$ has \textbf{$\FOP_k$ in $T$} if there is a model $\cM\models T$, and sequences $(a_f)_{f:\omega^{k-1}\to\omega}$, $(b^1_i)_{i<\omega}$, \ldots, $(b^k_i)_{i<\omega}$, such that each  $a_f\in M^{x_1}$, each $b^t_i\in M^{x_{t+1}}$, and
\[
\cM\models\varphi(a_f,b^1_{i_1},\ldots,b^k_{i_k})\miff i_k\leq f(i_1,\ldots,i_{k-1}).
\]
We say that $T$ is \textbf{NFOP}$_k$ if no $(k+1)$-partitioned formula has $\FOP_k$ in $T$. 
\end{definition}

The work of Terry and Wolf in \cite{TW1,TW2} develops $\NFOP_2$ as a ternary analogue of stability in the finitary settings of hypergraph regularity and quadratic Fourier analysis.
 In analogy to \cite{MS}, they proved in \cite{TW2}  that  the absence of $\FOP_2$ characterizes when hereditary classes of $3$-uniform hypergraphs admit regular decompositions where the irregular triads are controlled using certain lower arity sets.  In analogy to \cite{TWa,TWb}, they  proved in  \cite{TW1} that subsets of $\F_p^n$ whose ternary sum-graphs are $\NFOP_2$ admit quadratic decompositions where the error set can be controlled using linear structure.  This result demonstrates that the functional order property is especially relevant to the arithmetic setting, and to the machinery of higher order Fourier analysis.  Specifically, the results of \cite{TWa,TWb,TW1} establish correspondences between stability and the linear level in this hierarchy, and between $\NFOP_2$ and the quadratic level.  Quadratic structure in this context is fundamentally related to bilinear forms.

 However, the results of \cite{TW1,TW2} say little about $\NFOP_k$ for $k>2$, and do not explore this notion in the infinite setting.  Thus, it was an open question whether the functional order property has interesting implications in the  model-theoretic setting of first-order theories. In this paper, we prove a number of  results about $\NFOP_k$ theories and formulas, which together demonstrate a positive answer. As the reader will see, the previous literature on $k$-dependence (which we will denote by $\NIP_k$) has played a significant role in shaping these theorems. This is especially the case for the work of Chernikov, Palac\'{i}n, and Takeuchi \cite{CPT} and Chernikov  and Hempel \cite{CH}.

We will now summarize  the main theorems of the paper. We work with a fixed complete $\cL$-theory $T$ and a sufficiently saturated monster model $\M$. Note that Definition \ref{def:FOPk} can be equivalently stated to refer just to $\M$. 

 One of the primary tasks of this paper is to characterize $\NFOP_k$ via collapse of indiscernible sequences indexed by a certain Fra\"{i}ss\'{e} limit. This is done for $\NIP_k$ in \cite{CPT} where the choice of Fr\"{a}iss\'{e} class is intuitive. For $\NFOP_k$, however, Definition \ref{def:FOPk} does not clearly suggest an appropriate class, especially due to the role of \emph{functions} rather than relations. (This is relevant since most of the significant tools developed for indiscernible collapse, such as the work of Scow \cite{Scow1,Scow2}, are done in a relational setting.) For these reasons, the first main section of the paper, Section \ref{sec:combochar}, proves several equivalent characterizations of $\FOP_k$ (see  Proposition \ref{prop:alleq}). Of particular relevance to the previous discussion is the following.

\begin{proposition}\label{prop:order}
    A formula $\varphi(x_1,\ldots,x_{k+1})$ has $\FOP_k$ in $T$ if and only if for any linear order $<_*$ on $\omega^k\cup\omega$, there are sequences $(a^1_i)_{i<\omega}$, \ldots, $(a^{k+1}_i)_{i<\omega}$ such that each  $a^t_i\in \M^{x_t}$ and $\M\models\varphi(a^1_{i_1},\ldots,a^{k+1}_{i_{k+1}})$ if and only if $(i_1,\ldots,i_k)<_* i_{k+1}$.
\end{proposition}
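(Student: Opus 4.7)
Plan. The proof establishes both implications via a compactness reduction, exploiting the key combinatorial observation that the upper sets $U_{\vec{i}} := \{m \in \omega : \vec{i} <_* m\}$ induced by any linear order $<_*$ on $\omega^k \cup \omega$ form a family totally ordered by inclusion (directly from transitivity of $<_*$).

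For the $(\Rightarrow)$ direction, assume $\FOP_k$ and fix witnesses $(a_f)_f, (b^1_i), \ldots, (b^k_i)$. Given $<_*$, set $a^1_{i_1} := a_{f_{i_1}}$ and $a^t_i := b^{t-1}_{\sigma_{t-1}(i)}$ for $t = 2, \ldots, k+1$, with $\sigma_1, \ldots, \sigma_{k-1}$ chosen as the identity and $\sigma_k : \omega \to \omega$ a bijection to be chosen. The $\FOP_k$ equivalence reduces the target to $\sigma_k(U_{\vec{i}}) = [0, f_{i_1}(i_2, \ldots, i_k)]$ for each $\vec{i}$, and by compactness it suffices to realize this on each finite $I_{k+1} \subseteq \omega$. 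The nestedness of $\{U_{\vec{i}} \cap I_{k+1}\}_{\vec{i}}$ allows picking a single bijection $\sigma_k$ sending each such set to a natural-order initial segment; then setting $f_{i_1}(\vec{i}^-) := |U_{(i_1, \vec{i}^-)} \cap I_{k+1}| - 1$ completes the encoding. Each $a_{f_{i_1}}$ exists in $\M$ by $\FOP_k$ applied to the function $f_{i_1}$.

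For the $(\Leftarrow)$ direction, assume the encoding conclusion. By compactness it suffices to realize $\FOP_k$ for any finite family $g_1, \ldots, g_N : \omega^{k-1} \to \omega$. Construct $<_*$ on $\omega^k \cup \omega$ as follows: let $<_*|_\omega$ be the reverse natural order (so $0 >_* 1 >_* 2 >_* \cdots$), and for each $\vec{i} = (i_1, \ldots, i_k) \in \omega^k$ with $i_1 \in [N]$, place $\vec{i}$ strictly between $g_{i_1}(i_2, \ldots, i_k) + 1$ and $g_{i_1}(i_2, \ldots, i_k)$ in $<_*$ (for $i_1 > N$, place $\vec{i}$ arbitrarily), breaking ties within each slot arbitrarily. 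A direct unpacking gives $U_{\vec{i}} = [0, g_{i_1}(i_2, \ldots, i_k)]$, so $(i_1, \ldots, i_k) <_* i_{k+1} \iff i_{k+1} \leq g_{i_1}(i_2, \ldots, i_k)$ whenever $i_1 \in [N]$. Applying the hypothesis to this $<_*$ yields sequences $(a^t_i)_i$ in $\M$; setting $a_{g_n} := a^1_n$ (for $n \in [N]$) and $b^t_j := a^{t+1}_j$ furnishes the $\FOP_k$ witnesses for $\{g_1, \ldots, g_N\}$.

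The main technical step is the combinatorial construction of $<_*$ in the $(\Leftarrow)$ direction and the verification that it is a genuine linear order: transitivity amounts to checking that elements of $\omega^k$ with larger $g$-value are placed strictly $<_*$-lower, which is consistent with the reverse-natural-order placement on $\omega$. Both directions thus reduce to the nestedness of $\{U_{\vec{i}}\}$ together with the parameter-rich nature of the $\FOP_k$ configuration (one $a_f$ per function). I expect this proposition to fit into a suite of parallel equivalences developed in Proposition \ref{prop:alleq}, all sharing the same combinatorial backbone linking staircase cells, linear orders, and upper sets.
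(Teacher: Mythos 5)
Your proof is correct. In the forward direction you are essentially doing what the paper does: reduce by compactness to a finite set of indices on the $\omega$-side and encode the cut of each $(i_1,\ldots,i_k)$ in that finite set as a value of a function $f_{i_1}$, then take $a^1_{i_1}=a_{f_{i_1}}$ and keep the $b$-sequences. The one difference is your re-indexing bijection $\sigma_k$, justified by the nestedness of the upper sets, which lets the finite segment carry an arbitrary order; the paper instead sets $f_i(i_1,\ldots,i_{k-1})=\min\{j\in[n]:(i,i_1,\ldots,i_{k-1})<_*j\}-1$, which implicitly aligns the finite segment with its natural enumeration (and lands on the reversed relation), so your version is a slightly more careful rendering of the same idea. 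One small repair: when $U_{(i_1,\ldots,i_k)}\cap I_{k+1}$ is empty your formula gives $-1\notin\omega$; take $\sigma_k$ valued in $\{1,2,\ldots\}$ and set $f_{i_1}(i_2,\ldots,i_k)=|U_{(i_1,\ldots,i_k)}\cap I_{k+1}|$, which is the role played by the paper's convention $\min\emptyset=n+1$. In the converse direction your route is genuinely different: the paper obtains this implication only as the composite $(ii)\Rightarrow(iii)\Rightarrow(i)$ in Proposition \ref{prop:alleq}, passing through partitions of $\omega^k$ given by level sets of the finitely many functions, whereas you build the required linear order directly from $g_1,\ldots,g_N$ by reversing $\omega$ and inserting each tuple into the slot determined by its $g$-value. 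Your construction is shorter and self-contained for the stated proposition; the paper's detour has the advantage of simultaneously producing the partition characterization $(iii)$, which it needs later (e.g.\ in Proposition \ref{prop:fieldFOP} and the $\FOP_2$ type-counting results).
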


The characterization in Proposition \ref{prop:order} leads to a natural definition of a class $\bH_k$ of finite structures in a \emph{relational} language denoted $\calL_k$ (see Definitions \ref{def:Hk} and \ref{def:Tk}, along with surrounding discussion). Roughly speaking, a structure in $\bH_k$ is obtained from a finite half-graph by blowing up one side into the Cartesian product of $k$ disjoint sets.  This process can be seen as a specific example of a general ``blow-up" operation, which we show produces new Fra\"{i}ss\'{e} classes from old (see Proposition \ref{prop:fraisseop}). In our case, we obtain $\bH_k$ as a blow-up of the class of finite linear orders equipped with a unary predicate. With the Fra\"{i}ss\'{e} class $\bH_k$ in hand, we  obtain a Fra\"{i}ss\'{e} limit $\cH_k$.  One of the main results of the paper is that a theory is $\NFOP_k$ if and only if all $\calH_k$-indiscernible sequences collapse to a certain sublanguage of $\cL_k$, which we will call $\calL_k''$ (see Definition \ref{def:L''}).

\begin{theorem}\label{thm:collapse-intro}
$T$ is $\NFOP_k$ if and only if every $\cH_k$-indiscernible sequence in $\M$ is $\cL''_k$-indiscernible.
\end{theorem}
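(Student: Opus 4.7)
The plan is to follow the Scow-style collapse-of-generalized-indiscernibles methodology (cf.\ \cite{Scow1,Scow2} and \cite{CPT}), leveraging the fact that $\bH_k$ has been built to be a Fra\"{i}ss\'{e} class with the Ramsey/modeling property via the blow-up construction in Proposition \ref{prop:fraisseop}. The structural key is that $\cL_k\setminus\cL_k''$ consists exactly of the half-graph edge relation between the two sorts of $\cH_k$, and by design this edge encodes precisely the truth pattern ``$(i_1,\ldots,i_k)<_* i_{k+1}$'' appearing in Proposition \ref{prop:order}. Thus the two directions of the theorem say, respectively, that a $\FOP_k$-configuration produces an $\cH_k$-indiscernible sequence in which the edge relation is genuinely used, and that any $\cH_k$-indiscernible whose $\cL_k''$-type fails to determine formula truth must reflect such a configuration.

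The ``$\Leftarrow$'' direction is the more routine of the two, proved by contrapositive. Suppose $\varphi(x_1,\ldots,x_{k+1})$ has $\FOP_k$. Apply Proposition \ref{prop:order} with any convenient linear order $<_*$ on $\omega^k\cup\omega$ to obtain sequences $(a^t_i)_{i<\omega}$ in $\M$ so that $\M\models\varphi(a^1_{i_1},\ldots,a^{k+1}_{i_{k+1}})$ if and only if $(i_1,\ldots,i_k)<_* i_{k+1}$. Reindex these tuples by $\cH_k$, placing $(a^1_{i_1},\ldots,a^k_{i_k})$ at the tuple-sort element coded by $(i_1,\ldots,i_k)$ and $a^{k+1}_j$ at the singleton-sort element $j$, so that $\varphi$ tracks precisely the half-graph edge. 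A standard modeling-property argument (Ramsey for $\bH_k$ plus compactness) then yields an $\cH_k$-indiscernible sequence $(c_\alpha)_{\alpha\in\cH_k}$ locally based on this data; since $\varphi$ still distinguishes edge-related from non-edge-related pairs of the same $\cL_k''$-type, $(c_\alpha)$ is not $\cL_k''$-indiscernible.

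The harder ``$\Rightarrow$'' direction is also proved by contrapositive. Suppose $(c_\alpha)_{\alpha\in\cH_k}$ is an $\cH_k$-indiscernible sequence that is not $\cL_k''$-indiscernible: there exist a formula $\psi(\bar y)$ and tuples $\bar\alpha,\bar\beta\in\cH_k$ of equal $\cL_k''$-quantifier-free type but distinct $\cL_k$-quantifier-free type, with $\psi(c_{\bar\alpha})\wedge\lnot\psi(c_{\bar\beta})$. The difference between $\bar\alpha$ and $\bar\beta$ must be concentrated on the half-graph edge relation. Using ultrahomogeneity of $\cH_k$ together with $\cH_k$-indiscernibility, one can realize inside $\cH_k$ any prescribed finite edge-pattern between the two sorts while keeping the remaining $\cL_k''$-structure rigid, and transporting $\psi$ along these realizations produces sequences $(a^t_i)_{i<\omega}$ for $1\le t\le k+1$ matching the pattern of Proposition \ref{prop:order} for a derived formula $\psi'$ obtained from $\psi$ by substituting the frozen coordinates. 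Hence $\psi'$ has $\FOP_k$, contradicting $\NFOP_k$. The main obstacle is precisely this last amplification step: choosing the variable partition and the ``frozen'' part of $\psi$ so that the extracted formula genuinely witnesses the full $\omega^k\cup\omega$-indexed pattern required by Proposition \ref{prop:order}, rather than some weaker residue arising only near the original tuples $\bar\alpha,\bar\beta$.
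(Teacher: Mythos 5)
Your ``$\Leftarrow$'' direction is fine and is essentially the paper's Corollary \ref{cor:genind} combined with Proposition \ref{prop:codeTk} (coding all finite models of $T_k$ via Proposition \ref{prop:alleq}$(ii)$, then invoking the modeling property to make the witness $\cH_k$-indiscernible). The ``$\Rightarrow$'' direction, however, has a genuine gap, and it begins with a wrong structural premise: $\cL_k\setminus\cL''_k=\{<_k,R\}$, not just the half-graph relation $R$. A failure of $\cL''_k$-indiscernibility can therefore be witnessed by two tuples whose $R$-patterns agree but whose $<_k$-patterns (the linear order on $Q=P_1\times\ldots\times P_k$) differ. Your plan---freeze the $\cL''_k$-data and vary only ``the edge pattern between the two sorts''---does not address this case at all, and it is precisely this case that forces the paper's $QQ$-transposition analysis (Definition \ref{def:Hproduct2}, Lemma \ref{lem:Hproduct2}, Case 2 of Lemma \ref{lem:collapse1}), where the $P_{k+1}$-side of a fresh copy of $\cH_k$ must be inserted into one of the first $k$ predicates $P_{i_*}$ and the relation $R$ of the copy is detected through $<_k$ of the ambient structure. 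This is also why the resulting $\FOP_k$ formula can have its last variable of the sort of $x_{i_*}$ rather than $x_{k+1}$, a point that matters for the later one-variable reduction.

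Even restricted to the $R$-case, the step you yourself flag as the ``main obstacle'' is exactly what the paper has to work hardest for, and ultrahomogeneity of $\cH_k$ plus indiscernibility is not enough on its own. The paper first localizes the failure: by Lemma \ref{lem:transposition} and Proposition \ref{prop:movetrans}, any two expansions of the same $\cL''_k$-structure to models of $T_k$ are linked by a chain of adjacent transpositions realized inside $\cH_k$, so one may assume the two tuples $\gbar,\gbar'$ differ by a single $QP$- or $QQ$-transposition. Only then can the transposed pair be excised and replaced by an entire copy $\cH^*$ of $\cH_k$, via the surgeries $\cA\oslash_{(\ebar,v)}\cB$ and $\cA\oless_{(\dbar,\ebar)}\cB$, and the deficient-isomorphism statements (Lemma \ref{lem:Hproduct}$(d)$, Lemma \ref{lem:Hproduct2}$(d)$) are what guarantee that for every $(\sbar,t)$ in the copy the tuple $g_{\sbar,t}$ has the $\cL_k$-quantifier-free type of $\gbar$ or of $\gbar'$ according to whether $R(\sbar,t)$ holds, so that $\varphi(\xbar,a_\gamma)$ $R$-codes $\cH^*$ and Proposition \ref{prop:codeTk} applies. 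Without the transposition reduction and these explicit amalgam constructions, the ``amplification'' from a local discrepancy at $\bar\alpha,\bar\beta$ to a global coding of $\cH_k$ is not justified, so as written the hard direction is not proved.
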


The previous theorem is an abridged version of Theorem \ref{thm:genind}.
A key ingredient required for this result is a proof that $\bH_k$ has the Ramsey property. Here again, we find a significant divergence from the case of $k$-dependence, where this essentially boils down to the Ramsey property for ordered $(k+1)$-uniform hypergraphs, which is a classical result (see \cite{AbHarr, NR77b, NR82, NR83}). For $\bH_k$ however, the verification of the Ramsey property is  more complicated and uses  much more recent theorems of Evans, Hubi\v{c}ka, and Ne\v{s}et\v{r}il \cite{EHN}, and Hubi\v{c}ka and Ne\v{s}et\v{r}il \cite{HN} (see Section \ref{sec:ramsey}).

One of our main applications of Theorem \ref{thm:collapse-intro} is the following one-variable reduction for $\NFOP_k$ (which restates Theorem \ref{thm:FOPkone}).

\begin{theorem}\label{thm:onev-intro}
If $T$ has $\FOP_k$, then there is a formula $\varphi(x_1,\ldots, x_{k+1})$ with $\FOP_k$ in $T$ and with $|x_i|=1$ for all $1\leq i\leq k$. 
\end{theorem}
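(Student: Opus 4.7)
The plan is to deduce the result as a corollary of the collapsing characterization Theorem~\ref{thm:collapse-intro}, in analogy with the one-variable reduction for $\NIP_k$ due to Chernikov and Hempel. Assume $T$ has $\FOP_k$; then some $\cH_k$-indiscernible sequence $(a_i)_{i\in\cH_k}$ in $\M$ fails to be $\cL_k''$-indiscernible. So there exist tuples $\bar i, \bar j$ from $\cH_k$ with $\qftp^{\cL_k''}(\bar i)=\qftp^{\cL_k''}(\bar j)$ together with an $\cL$-formula $\theta(\bar y)$ satisfying $\theta(a_{\bar i})\wedge \neg\theta(a_{\bar j})$. Since $\cL_k''$ is obtained from $\cL_k$ by dropping exactly the symbols that encode the $\FOP_k$-signal coming from the blow-up construction (cf.\ Proposition~\ref{prop:fraisseop})---namely the half-graph-type relation linking the $k$ coordinate sorts to the linearly ordered sort---the $\cL_k$-qftps of $\bar i$ and $\bar j$ must disagree on some instance of this relation.

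I then localize the discrepancy to a single relation instance. Using the ultrahomogeneity of $\cH_k$, connect $\bar i$ with a tuple $\bar i'$ having $\bar j$'s $\cL_k$-qftp by a chain $\bar i = \bar i_0,\ldots,\bar i_m = \bar i'$ in $\cH_k$ whose consecutive tuples differ in exactly one instance of the key $(k{+}1)$-ary relation. By $\cH_k$-indiscernibility, the truth of $\theta(a_{\bar i_s})$ depends only on the $\cL_k$-qftp of $\bar i_s$, so pigeonhole along the chain identifies a single step where $\theta$ flips, isolating tuples differing in one relation instance that involves $k$ coordinate-sort elements and one linear-order-sort element. Hard-coding the remaining positions as parameters reduces $\theta$ to a formula $\varphi(x_1,\ldots,x_{k+1})$ whose variables $x_1,\ldots,x_k$ correspond to coordinate-sort data and whose $x_{k+1}$ corresponds to linear-order-sort data. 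That $\varphi$ has $\FOP_k$ follows from Definition~\ref{def:FOPk} and the fact that $\cH_k$ embeds every finite member of $\bH_k$: any desired function-pattern can be realized as a blown-up half-graph in $\cH_k$, and $\cH_k$-indiscernibility then transports the realization into $\M$.

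The main obstacle is achieving $|x_t|=1$ for $t\leq k$. The collapsing theorem a priori yields an indiscernible sequence with entries $a_i$ of some fixed arity $m\geq 1$, so the construction above gives $|x_t|=m$ rather than $|x_t|=1$ in the coordinate slots. To complete the reduction, I would apply a coordinate-extraction argument in the spirit of Chernikov and Hempel~\cite{CH}: within each coordinate-sort tuple $a_i$, use Ramsey/pigeonhole to select a single entry that still distinguishes the flipped instance of $\theta$, and then invoke $\cH_k$-indiscernibility to propagate this choice uniformly across the sequence. Ensuring that this extraction works simultaneously in all $k$ coordinate sorts, while preserving the $\FOP_k$-pattern, is the technical crux of the proof.
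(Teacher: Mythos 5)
There is a genuine gap, and it sits exactly where the theorem's content lies. Your first two paragraphs only recover the existence of \emph{some} formula with $\FOP_k$ from a non-collapsing $\cH_k$-indiscernible sequence (essentially the content of Lemma \ref{lem:collapse1} together with Proposition \ref{prop:codeTk}); since $T$ is assumed to have $\FOP_k$, this by itself produces nothing new. The actual claim --- that the first $k$ variables can be made singletons --- is deferred to your last paragraph, and the mechanism proposed there (inside each tuple $a_g$, use Ramsey/pigeonhole to select a single coordinate that ``still distinguishes the flipped instance of $\theta$'') is not sound: the truth value of $\theta$ depends jointly on all coordinates of the tuples, and in general no single coordinate of $a_g$ carries the distinguishing pattern, so there is nothing for the pigeonhole to select. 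You also give no device for disposing of the discarded coordinates; the reduction is only possible because leftover data is allowed to migrate into the distinguished final variable $x_{k+1}$, which may remain a long tuple, and your sketch never exploits this asymmetry.

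The paper's route is quite different. To shorten one of the first $k$ variables, it instantiates that variable by a parameter $c$, passes from $\cH_k$ down to the induced structure $\cH_{k-1}$ (Lemma \ref{lem:var}, using the modeling property and $R_{k-1}$-coding), and then inducts on $|c|$ (Corollary \ref{cor:var}): writing $c=c'c''$, either the $\cH_{k-1}$-indexed sequence fails to be $\cL''_{k-1}$-indiscernible over $c''$, in which case Lemma \ref{lem:var2} (i.e., Lemma \ref{lem:collapse1} applied at dimension $k-1$ with $c''$ named by constants) yields $\FOP_k$ with the strictly shorter parameter $c''$; or it stays $\cL''_{k-1}$-indiscernible over $c''$, in which case $c''$ is appended to the $P_k$-indexed entries of the sequence (absorbed into the last variable) and Lemma \ref{lem:var2} is applied with $c'$. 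Iterating this and permuting the first $k$ variables via Corollary \ref{cor:easyFOPk} gives the theorem. Without something playing the role of this dimension shift and the two-case absorption of $c''$ into $x_{k+1}$, your argument does not reduce any of the first $k$ variables to a singleton.
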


It is worth emphasizing that the main work required to obtain this result is the proof of Theorem \ref{thm:collapse-intro}, which is extremely complicated, and factors through a collapsing result for a second sublanguage of $\cL_k$.
Beyond this, the proof of Theorem \ref{thm:onev-intro}   essentially follows the same general strategy used for $\NIP_k$ in \cite{CPT} and \cite{CH}.  

 We now briefly summarize some of our other applications of Theorem \ref{thm:collapse-intro}. Perhaps most importantly,   we obtain a short proof of the following fundamental result (which appears later as Theorem \ref{thm:FOPkbool}).

\begin{theorem}
    The collection of $\NFOP_k$ formulas in $T$ is closed under Boolean combinations.
\end{theorem}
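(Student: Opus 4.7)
The plan is to derive this as a formal consequence of the indiscernible-collapse characterization (Theorem \ref{thm:collapse-intro}). The key input is a formula-level refinement of that theorem: a single $(k+1)$-partitioned formula $\varphi(x_1,\ldots,x_{k+1})$ is $\NFOP_k$ in $T$ if and only if, for every $\cH_k$-indiscernible sequence in $\M$, the truth value of $\varphi$ on tuples drawn from the sequence is determined by the quantifier-free $\cL_k''$-type of the index tuple. I expect such a refinement to fall out directly from the proof of Theorem \ref{thm:genind}, since both directions of that argument should proceed formula-by-formula; if not, one can always reduce the formula-level statement to the theory-level one by naming $\varphi$ as a fresh relation symbol and passing to the corresponding reduct.

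Granting this formula-level collapse, the argument is direct. Fix a common partition $(x_1,\ldots,x_{k+1})$ of the variables, and suppose $\varphi_1,\ldots,\varphi_n$ are each $\NFOP_k$ $(k+1)$-partitioned formulas under this partition. Let $\psi$ be any Boolean combination of the $\varphi_j$. Take an arbitrary $\cH_k$-indiscernible sequence in $\M$. By the formula-level characterization applied to each $\varphi_j$, its truth value on a tuple drawn from the sequence depends only on the $\qftp^{\cL_k''}$ of the corresponding index tuple. Since $\psi$ is a Boolean combination of the $\varphi_j$, its truth value on that same tuple also depends only on $\qftp^{\cL_k''}$ of the index tuple. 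Applying the formula-level characterization in the reverse direction then yields that $\psi$ is $\NFOP_k$.

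The main obstacle is not the Boolean-closure step itself, which is purely formal once the collapse characterization is in hand, but rather confirming that the collapse theorem is available uniformly for each individual formula. This mirrors the standard pattern for closure under Boolean combinations in stability and $\NIP$: such closure comes essentially for free from any sufficiently uniform indiscernible-collapse theorem, so once the heavy combinatorial work of proving Theorem \ref{thm:collapse-intro} is complete, the Boolean closure statement is an immediate corollary.
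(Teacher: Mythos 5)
Your Boolean step is fine, but the key lemma you lean on is not justified by either route you offer, and in the unrestricted form you state it, it is dubious. The hard direction of Theorem \ref{thm:genind} (i.e.\ Lemma \ref{lem:collapse1}) does \emph{not} proceed formula-by-formula in the sense you need: starting from a failure of $\cL''_k$-indiscernibility witnessed by an instance of $\varphi$, it produces an $\FOP_k$ formula only after regrouping and permuting the variable blocks and absorbing part of the index tuple into parameters, so one gets $\FOP_k$ for a permuted, parametrized variant of $\varphi$, not for $\varphi$ with its given partition. Since $\FOP_k$ is a property of a \emph{partitioned} formula, and the paper expects $\NFOP_k$ not to be preserved under permutations moving $x_{k+1}$, the statement ``$\varphi$ is $\NFOP_k$ implies the truth values of $\varphi$ on \emph{all} tuples drawn from an $\cH_k$-indiscernible sequence are determined by $\qftp_{\cL''_k}$ of the index tuple'' would in particular yield such permutation-closure, so it cannot be expected to ``fall out'' of the collapse proof. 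Your fallback is circular: if you name $\varphi$ by a fresh relation symbol and pass to the reduct, then applying the theory-level collapse theorem requires the reduct theory to be $\NFOP_k$, i.e.\ that every Boolean combination (and permutation) of instances of $\varphi$ is $\NFOP_k$ --- which is exactly what is being proved.

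What your argument actually needs is much weaker and is true: for index tuples $\gbar\in P_1(\cH_k)\times\cdots\times P_{k+1}(\cH_k)$ plugged positionally into $(x_1,\ldots,x_{k+1})$, all $\gbar$ have the same $\cL''_k$-type, so ``determined by the $\cL''_k$-type'' just means ``constant''; if constancy fails, then by $\cH_k$-indiscernibility either $\varphi$ or $\neg\varphi$ $R$-codes $\cH_k$ (using Corollary \ref{cor:easyFOPk}), hence $\varphi$ has $\FOP_k$ by Proposition \ref{prop:codeTk}, and conversely an $\FOP_k$ formula fails constancy along the indiscernible witness given by Proposition \ref{prop:codeTk}$(v)$. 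With this restricted lemma your Boolean argument goes through, and it is essentially the paper's own proof of Theorem \ref{thm:FOPkbool}: the input is the $R$-coding characterization with an indiscernible witness, which rests on the modeling property of $\cH_k$ (Corollary \ref{cor:Gkmod}) but not on the hard direction of the collapse theorem. So the gap is in the justification of your formula-level biconditional; replace it by Proposition \ref{prop:codeTk} restricted to shape-respecting tuples and the proof is complete.
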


This was proved for $k=2$ by Terry and Wolf in \cite{TW2} using a long and complicated hypergraph regularity argument. The result for general $k$ was open.  

In Section \ref{sec:examples} we also give several examples of theories with and without $\FOP_k$, including an easy proof that any complete theory with quantifier elimination in a finite relational language of arity at most $k$ is $\NFOP_k$ (Proposition \ref{prop:arity}). From this we observe that $\NFOP_k$ crosscuts many known dividing lines, including simplicity, NSOP$_1$, and NTP$_2$. Our examples also show that the general implications $\NFOP_k\Rightarrow \NIP_k\Rightarrow \NFOP_{k+1}$ (proved in Proposition \ref{prop:IPk}) are proper.

We now discuss results primarily focused on the $k=2$ case. In Corollary \ref{cor:smooth}, we prove that the theory of any \emph{smoothly approximable} $\aleph_0$-categorical structure is $\NFOP_2$. For example, this shows that the theory of an infinite extraspecial $p$-group is $\NFOP_2$ in the pure group language (see Corollary \ref{cor:extraspecial}; this also follows from Theorem \ref{thm:VSintro} below).
We then characterize $\NFOP_2$ via the following type-counting dichotomy (see Theorem \ref{thm:FOP2dich} for a complete statement with details).

\begin{theorem} Fix an $\cL$-formula $\varphi(x,y,z)$ and an infinite cardinal $\kappa$.
\begin{enumerate}[$(a)$]
\item Assume $\varphi(x,y,z)$ has $\FOP_2$ in $T$. Then for any cardinal $\lambda\geq\ded(\kappa)$ there is an element $a\in\M^x$ and mutually indiscernible sequences $I\seq\M^y$ and $J\seq \M^z$, with $I$ of length $\lambda$ and $J$ of size $\kappa$,  such that for any $\ell<\lambda$, the number of $\varphi$-types over $\{a\}\times J$ realized in $I_{>\ell}$ is at least $\ded(\kappa)$.  
\item Assume $\varphi(x,y,z)$ is $\NFOP_2$ in $T$. Then for any regular cardinal $\lambda>\kappa^{\aleph_0}$, any element $a\in\M^x$, any set $J\seq\M^z$ of size $\kappa$, and any $\lambda$-indexed $J$-indiscernible sequence $I\seq\M^y$, there is some $\ell<\lambda$ such that the number of $\varphi$-types over $\{a\}\times J$ realized in $I_{>\ell}$ is at most $\kappa$.  
\end{enumerate}
\end{theorem}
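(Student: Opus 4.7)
The plan is to treat parts (a) and (b) separately, each mirroring the classical Dedekind-number characterization of stability (namely: $\varphi$ is stable if and only if $|S_\varphi(B)|\leq|B|^{\aleph_0}$ for every infinite $B$, and otherwise this count can reach $\ded(|B|)$).

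For part (a), my first step is to upgrade the bare $\FOP_2$ witness of Definition \ref{def:FOPk} via compactness. Pick a dense linear order $L$ of cardinality $\kappa$, and let $K$ be its Dedekind completion, of cardinality $\ded(\kappa)$. A routine compactness argument, in which every finite subconfiguration is realized inside the original integer-valued witness, yields sequences $(b^1_i)_{i<\lambda}\subseteq\M^y$ and $(b^2_j)_{j\in L}\subseteq\M^z$ together with, for every $f\colon\lambda\to K$, an element $a_f\in\M^x$ satisfying $\varphi(a_f,b^1_i,b^2_j)\iff j\leq_K f(i)$. I then choose $f$ so that for every $\ell<\lambda$ the image $f\bigl(\{i:i>\ell\}\bigr)$ induces all $\ded(\kappa)$ Dedekind cuts of $L$, which is possible since $\lambda\geq\ded(\kappa)=|K|$, and set $a=a_f$. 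The $\varphi$-type of $b^1_i$ over $\{a\}\cup\{b^2_j:j\in L\}$ is then determined by the cut $\{j\in L:j\leq_K f(i)\}$, so each tail $(b^1_i)_{i>\ell}$ realizes $\ded(\kappa)$ distinct $\varphi$-types. A standard mutual-indiscernibility extraction (form the EM-type of the sorted configuration in the expansion naming $a$, then stretch) finally produces mutually indiscernible sequences $I$ and $J$ of the prescribed sizes together with an $a$ inheriting the tail type-counting property.

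For part (b), I argue the contrapositive: assume $\varphi$ is $\NFOP_2$ and that some $a\in\M^x$, some $J\subseteq\M^z$ of size $\kappa$, and some $J$-indiscernible sequence $I=(b_i)_{i<\lambda}$ with $\lambda>\kappa^{\aleph_0}$ regular realize more than $\kappa$ $\varphi$-types over $\{a\}\cup J$ in every tail; the aim is to extract a $\FOP_2$ witness for $\varphi$ and contradict $\NFOP_2$. For each $b_i$, let $\chi_i\in 2^J$ denote its $\varphi$-slice $\chi_i(c)=\mathbf{1}[\varphi(a,b_i,c)]$. By regularity of $\lambda$ and the tail hypothesis, pass to a $\lambda$-indexed subsequence with pairwise distinct $\chi_i$'s. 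The heart of the proof is then a countable ramification: recursively pick $c_0,c_1,\ldots\in J$ and nested $\lambda$-sized subsequences $I\supseteq I^{(0)}\supseteq I^{(1)}\supseteq\cdots$ such that, on $I^{(n)}$, the tuple $(\chi_i(c_0),\ldots,\chi_i(c_{n-1}))$ is constant while $c_n$ further splits $I^{(n)}$ into two $\lambda$-sized pieces. The cardinal gap $\lambda>\kappa^{\aleph_0}$ is precisely what sustains this: after $\omega$ many splits the countable subset $\{c_n\}\subseteq J$ has at most $\kappa^{\aleph_0}$ possible address patterns in $2^\omega$, so some branch retains $\lambda$ many points. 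Using $J$-indiscernibility of $I$ to coherently align the $c_n$'s with the surviving subsequence, I extract a countable order-configuration $(b_{i_n})_{n<\omega}$ and $(c_n)_{n<\omega}$, and then invoke Proposition \ref{prop:order} together with compactness to promote this single extracted order-configuration to a witness of $\FOP_2$ in $T$.

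The principal obstacle in (a) is the final mutual-indiscernibility extraction while preserving the existence of $a$ with the tail type-counting property; this is handled by enriching the EM-type with sufficiently many distinguishing formulas. The principal obstacle in (b) is the ramification bookkeeping combined with the final promotion step: the bound $\lambda>\kappa^{\aleph_0}$ is what gives enough room for countable branching while keeping $\lambda$-many survivors, $J$-indiscernibility is what supplies the coherence needed to line up the $c_n$'s with the surviving $b_{i_n}$'s, and the passage from the extracted single-parameter configuration to a genuine $\FOP_2$ pattern is where the combinatorial equivalents from Section \ref{sec:combochar} enter.
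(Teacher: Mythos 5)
In part (a) your order of operations is the fatal issue. The paper first proves (Proposition \ref{prop:area}) that an $\FOP_2$ witness can be arranged so that the two parameter sequences are mutually indiscernible while still carrying an element $a_f$ for \emph{every} function $f$ — a nontrivial induction whose EM-type bookkeeping involves the existential formulas $\exists x\bigwedge\varphi^{\pm}$ — and only \emph{afterwards} chooses the particular $f$ (each value attained cofinally in $\lambda$) and sets $a=a_f$. You instead fix $f$ and $a=a_f$ first and then propose to extract mutual indiscernibility ``in the expansion naming $a$''. That cannot work: sequences extracted to be mutually indiscernible after naming $a$ are in particular indiscernible over $\{a\}\cup J$, so every element of $I$ realizes the \emph{same} $\varphi$-type over $\{a\}\times J$, which destroys exactly the count you are trying to preserve. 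If instead you do not name $a$, then nothing in the EM-type of the pair $(I,J)$ guarantees that any suitable $a$ exists for the extracted sequences; securing that is precisely the content of Proposition \ref{prop:area}, and ``enriching the EM-type with sufficiently many distinguishing formulas'' is not a substitute for that argument.

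In part (b) several steps fail. First, you cannot in general pass to a $\lambda$-indexed subsequence with pairwise distinct slices $\chi_i$: the hypothesis only gives more than $\kappa$ distinct $\varphi$-types in each tail, and the total number of slices realized in $I$ could be as small as $\kappa^+<\lambda$. Second, the binary ramification does not produce the configuration you need: the goal is a half-graph (order-property) pattern between a tail of $I$ and a countable subset of $J$, and the tool that converts ``more than $|J|$ slices in the tail'' into such a pattern is the Erd\H{o}s--Makkai theorem (Fact \ref{fact:EM}) applied to the family $\{S_i:i>\ell\}\seq\cP(\kappa)$; plain splitting into $\lambda$-sized pieces yields distinctness of addresses, not monotonicity, and a $\lambda$-sized subset need not inherit the tail hypothesis at all (it may consist of a single repeated slice). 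Third, and most seriously, a single extracted order configuration with the one parameter $a$ does not witness $\FOP_2$, and Proposition \ref{prop:order} plus compactness cannot ``promote'' it: $\FOP_2$ requires parameters $a_f$ for \emph{all} $f\colon\omega\to\omega$ against fixed sequences (for instance $\varphi(x,y,z)\coloneqq y<z$ in a dense linear order admits such an order configuration for any $a$ yet is $\NFOP_2$). The paper manufactures the family $(a_f)$ by building $\lambda$-many pairwise disjoint $\omega$-ladders along $I$ (regularity of $\lambda$), using $\lambda>\kappa^{\aleph_0}$ to pigeonhole infinitely many ladders onto a single countable $h\colon\omega\to\kappa$, and then using $J$-indiscernibility of $I$ to transport the subsequence that picks the $f(m)$-th rung of the $m$-th ladder back onto $(b_m)_{m<\omega}$, thereby producing $a_f$. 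This block-and-transfer mechanism is the heart of the proof and is absent from your sketch; the phrase ``coherently align the $c_n$'s with the surviving subsequence'' does not perform its role.
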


Using part $(a)$ of the previous result, we prove a
stable/NFOP$_2$ analogue of the NIP/NIP$_k$ ``composition lemma" proved by Chernikov and Hempel in \cite{CH}. Roughly speaking, our version says that if one  takes a formula in a stable structure, and then replaces the free variables with arbitrary binary functions, then the new formula is $\NFOP_2$ (in the theory expanded by the binary functions). See Theorem \ref{thm:FOP2comp} for a precise statement.  We use this theorem to establish a new class of examples of $\NFOP_2$ theories of vector spaces with bilinear forms. In particular, we prove the following result (which restates Theorem \ref{thm:VSFOP2}).

\begin{theorem}\label{thm:VSintro}
    Let $K$ be an arbitrary first-order expansion of a field in a language eliminating quantifiers. Let $T$ be one of the following two-sorted theories:
    \begin{enumerate}[$(i)$]
        \item $T^K_{\infty,\textnormal{alt}}\colon$ the theory of infinite dimensional vector spaces over a model of $\Th(K)$, equipped with a non-degenerate alternating bilinear form.
        \item $T^K_{\infty,\textnormal{sym}}\colon$ the theory of infinite dimensional vector spaces over a model of $\Th(K)$, equipped with a non-degenerate symmetric bilinear form. 
    \end{enumerate}
    In case $(ii)$, assume also that $\operatorname{char}(K)\neq 2$ and $K$ has square roots. Then $T$ is $\NFOP_2$ if and only if $\Th(K)$ is stable. 
\end{theorem}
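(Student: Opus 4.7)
The plan is to prove the two directions separately. For the forward direction (if $T$ is $\NFOP_2$, then $\Th(K)$ is stable), I argue the contrapositive by explicitly converting an order-property witness in $\Th(K)$ into an $\FOP_2$-witness in $T$. For the (harder) backward direction (if $\Th(K)$ is stable, then $T$ is $\NFOP_2$), I combine Granger's quantifier elimination (reorganized earlier in the paper) with the composition lemma (Theorem~\ref{thm:FOP2comp}) and Boolean closure of $\NFOP_2$ (Theorem~\ref{thm:FOPkbool}).

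For the forward direction, suppose $\psi(x,y)$ witnesses the order property in $\Th(K)$, with sequences $(c_i)_{i<\omega}$ and $(d_j)_{j<\omega}$ such that $\psi(c_i,d_j)$ iff $i\leq j$. Because $V$ is infinite-dimensional and $\beta$ is non-degenerate, in both the alternating case and (using $\operatorname{char}(K)\neq 2$ together with square roots) the symmetric case I can produce a configuration $\{e_i,g_i:i<\omega\}\cup\{e',g'\}\subseteq V$ with $\beta(e_i,g_j)=\delta_{ij}$, $\beta(e',g')=1$, and all other pairings among these vectors equal to zero --- essentially an infinite orthogonal sum of hyperbolic planes. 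Setting
\[
\varphi(x_1,x_2,x_3) := \psi\bigl(\beta(x_1,x_3),\beta(x_1,x_2)\bigr),\qquad a_f := e' + \sum_{i<n} c_{f(i)} e_i,\qquad b^1_i := g_i,\qquad b^2_j := d_j g',
\]
bilinearity yields $\beta(a_f,b^1_i)=c_{f(i)}$ and $\beta(a_f,b^2_j)=d_j$, so that $\varphi(a_f,b^1_i,b^2_j)$ holds iff $j\leq f(i)$. A standard compactness argument then promotes these finite patterns to the full $\FOP_2$ witness for $\varphi$ in $T$.

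For the backward direction, Granger's quantifier elimination reduces every $\cL$-formula of $T$ modulo $T$ to a Boolean combination of atomic pieces of two kinds: (i) pure linear-algebraic data on the $V$-sort variables (dimension, linear dependence, etc.), and (ii) formulas of the form $\theta\bigl(\beta(x_{i_1},x_{j_1}),\ldots,\beta(x_{i_m},x_{j_m}),\bar z\bigr)$ where $\theta$ is a formula in the language of $\Th(K)$ and $\bar z$ is an optional tuple of $K$-sort variables. By Theorem~\ref{thm:FOPkbool} it suffices to verify each atomic piece is $\NFOP_2$. For type (i), the pure theory of an infinite-dimensional $K$-vector space (without the form) is controlled by $\Th(K)$ and is stable (hence $\NFOP_2$) whenever $\Th(K)$ is. For type (ii), $\theta$ is a stable formula of $\Th(K)$ by hypothesis, and since the $K$-sort is stably embedded in $T$ (a consequence of Granger's description), $\theta$ remains stable in the ambient theory $T$. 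The composition lemma (Theorem~\ref{thm:FOP2comp}), applied to the stable outer formula $\theta$ with the binary functions $(x,y)\mapsto \beta(x,y)$ substituted into its $V$-sort coordinates, then gives the desired $\NFOP_2$ conclusion.

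The main obstacle I anticipate is securing the correct form of quantifier elimination --- specifically confirming that after reduction every atomic piece expresses its $K$-sort content using only \emph{binary} $\beta$-values (so that the composition lemma applies) and not via higher-arity derived terms. A closely related technical point is the verification of stable embeddedness of the $K$-sort in $T$, which is required so that the outer formula in the composition lemma is stable in the ambient theory and not merely in $\Th(K)$; this should follow from Granger's completeness/QE results but requires a careful reading of their corrected presentation earlier in the paper.
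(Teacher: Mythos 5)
Your forward direction is essentially the paper's argument (Proposition \ref{prop:fieldFOP}, via non-degeneracy $\Rightarrow$ genericity, Fact \ref{fact:generic}), just carried out with an explicit orthogonal sum of hyperbolic planes; but as written it does not quite close. You encode \emph{both} sides of the order-property formula $\psi$ as scalars read off by $\beta$ (the $c$'s as coefficients of the $e_i$'s, the $d$'s as scalar multiples of $g'$), which forces both variables of $\psi$ to be singletons; in general instability only guarantees a witness $\psi(\ybar,z)$ with $z$ a singleton. Moreover, with your definitions $\varphi(a_f,b^1_i,b^2_j)$ evaluates to $\psi(d_j,c_{f(i)})$, i.e.\ the $c$'s and $d$'s land in the wrong slots. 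Both problems are fixable by passing the tuple side directly as field-sort parameters in the third slot, e.g.\ $\varphi(x_1,x_2,\ybar):=\psi(\ybar,\beta(x_1,x_2))$ with $b^2_j$ a field tuple, which is in effect what the paper does through Proposition \ref{prop:alleq}$(iii)$.

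The backward direction, however, has a genuine gap, and it is exactly the obstacle you flag at the end. In the corrected language, atomic formulas after quantifier elimination are of the form $R(t_1(\xbar),\ldots,t_d(\xbar))$ where the terms involve not only $\beta$ but the functions $g_{n,i}\colon V^{n+1}\to F$ of unbounded arity, whose arguments can mix vector variables from all three groups $x_1,x_2,x_3$. Such values cannot be absorbed into ``pure linear-algebraic data'' (they feed into $K$-relations), and they cannot serve as inner functions in Theorem \ref{thm:FOP2comp}, which requires each inner function to depend on at most two of the three variable groups. Your claimed normal form, with outer formula in the language of $\Th(K)$ and only binary $\beta$-values inside, is the Chernikov--Hempel-style reduction to the field sort, which the paper explicitly declines to carry out in the corrected language (Remark \ref{rem:terms}) and which your sketch does not supply. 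In addition, ``stable embeddedness of the $K$-sort'' is neither proved in the paper nor the relevant hypothesis: Theorem \ref{thm:FOP2comp} needs stability of a \emph{reduct} $T{\upharpoonright}\cL_0$ containing the outer formula. The paper's resolution is structurally different: take $\cL_0=\cL\backslash\{\beta\}$, remove only $\beta$ from terms (Lemma \ref{lem:terms}), so the $g_{n,i}$'s and the vector variables all live in the outer formula, and then prove $T{\upharpoonright}\cL_0$ is stable because it is interpretable in $\widetilde{T}^K_\infty$, whose stability (Fact \ref{fact:stableVS}) is itself nontrivial and rests on the d'Elb\'ee--Kaplan--Neuhauser theorem --- a fact you assert in passing for your ``type (i)'' pieces without justification. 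So as it stands your argument is missing both the normal form it relies on and the stability input; with the paper's choice of reduct the first issue disappears and the second is precisely Fact \ref{fact:stableVS}, after which Lemma \ref{lem:fieldNFOP} completes the proof.
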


The theories defined in the previous result were intially studied by Granger \cite{Gr} in his thesis. We also note that Theorem \ref{thm:VSintro} is an analogue of a result of Chernikov and Hempel \cite{CH}, who showed that a theory $T$ as above is $\NIP_2$ if and only if $\Th(K)$ is $\NIP$. Our  proof strategy is roughly the same, in the sense that we apply a quantifier elimination result along with our stable/$\NFOP_2$ version of the composition lemma. However, after initially mapping out this strategy in the same setting as that of \cite{CH}, we learned from \cite{Dobrowolski} that the language used by Granger in his thesis is not sufficient for quantifier elimination. A corrected language was suggested by Macpherson (who initially found the error), however no proof has appeared in the literature as far as we are aware. Thus we take the liberty in Section \ref{sec:QE} of correcting Granger's proof of quantifier elimination (and completeness). Our proof of Theorem \ref{thm:VSintro} then proceeds more or less as in \cite{CH}, although the new language does naturally produce some differences.

Along the way, we prove some  results for $k$-linear forms, including Proposition \ref{prop:fieldFOP} which shows that a certain combinatorial property of a $k$-linear form transforms instability in the field sort to $\FOP_k$ in the vector space. In Section \ref{sec:QE}, we also give a characterization of quantifier elimination for a theory of vector spaces (over some fixed $K$ as above) equipped with a $k$-linear form (see Theorem \ref{thm:rich}).

After posting a preprint of this paper to the arXiv, it came to our attention that unpublished work of Chernikov and Hempel independently obtains some related results.  In particular, they give a corrected quantifier elimination result for vector spaces equipped with non-degenerate bilinear forms. Their work goes further, dealing with $k$-linear forms for $k\geq 2$ and the relationship there to VC$_k$-dimension. These results have since appeared in the preprint \cite{ChHe3}.  In their unpublished draft, Chernikov and Hempel also prove a composition lemma (for $k=2$) similar to Theorem \ref{thm:FOP2comp}, with the stronger conclusion that the resulting formula is $\text{NIFOP}_2$ (see Definition \ref{def:increasingfop} and subsequent discussion).  Combining this stronger composition lemma with the proofs in Section \ref{sec:VS} yields the a stronger dichotomy for the theory of an infinite-dimension $K$-vector space equipped with a  non-degenerate bilinear form: if $K$ is unstable, this theory has $\FOP_2$, and if $K$ is stable, this theory is $\text{NIFOP}_2$ (a stronger conclusion than  $\NFOP_2$).

\subsection*{Acknowledgements}

We thank Greg Cherlin, Dugald Macpherson, Maryanthe Malliaris, Lynn Scow, and Julia Wolf for discussions about various results. We also thank the referee for their careful reading of the paper, along with several useful and interesting remarks.

\subsection*{Notation}

We set out some notation here which we will use throughout the paper.  

 If $(X,<)$ is a linearly ordered set and $x\in X^r$, we say $x$ is \emph{$<$-increasing} if $x_1<\ldots<x_r$.  Given a $t$-ary relation $R$ on a set $X$ and some $Y\subseteq X$, $R|_Y$ denotes the restriction of $R$ to $Y$, i.e. $R\cap Y^t$. In some cases we will use $R{\upharpoonright}Y$ for readability.  Similarly, if $f\colon X\rightarrow Z$ is a function, the notations $f|_Y$ and $f{\upharpoonright}Y$ are used for the restriction of $f$ to domain $Y$. 
 Given a relation $R\seq X\times Y$, let $R^{\textnormal{opp}}$ denote the reverse relation on $Y\times X$. Given an integer $n\geq 1$, let $[n]=\{1,\ldots, n\}$.

As is customary in model theory, we will often omit bars on tuples unless it is useful for understanding.  We will also sometimes conflate tuples with their underlying sets, and use the notation $a\in X$ to denote that $a$ is a \emph{tuple} of elements of $X$ (when there is no possibility of confusion).

Given a first order language $\calL$ and an $\calL$-structure  $\calM$, we let $M$ denote the underlying set of $\calM$, and use $f^{\calM},R^{\calM},c^{\calM}$ to denote the interpretations in $\calM$ of function symbols $f$, relation symbols $R$, and constant symbols $c$ from $\calL$. Often to simplify notation, we will use $R(\cM)$ instead of $R^{\cM}$. In situations where the distinction between substructure and subset is especially relevant, we will write $\cM\leq\cN$ to denote substructure.  Given  some $\cL_0\seq\cL$, we use $\cM{\upharpoonright}\cL_0$ for the reduct of an $\cL$-structure $\cM$ to $\cL_0$. Similarly, if $T$ is a (complete) $\cL$-theory, then $T{\upharpoonright}\cL_0$ is the $\cL_0$-reduct of $T$.

\textbf{Throughout the paper, we let $k\geq 1$ be a fixed integer.}

\section{First results on $\FOP_k$}

Throughout this section, we let $T$ be a complete theory in a language $\cL$, and let $\M$ denote a sufficiently saturated and strongly homogeneous  model of $T$.

\subsection{Combinatorial characterizations}\label{sec:combochar}
 The goal of this subsection is to establish several combinatorial reformulations of $\FOP_k$ as a property of formulas.   We first recall the definition of $\FOP_k$ given in the introduction (but formulated in the monster model $\M$).

 \begin{definition}\label{def:FOPk2}
     An $\cL$-formula $\varphi(x_1,\ldots,x_{k+1})$ has $\bm{{\FOP}_k}$ in $T$ if there are sequences $(a_f)_{f:\omega^{k-1}\to\omega}$, $(b^1_i)_{i<\omega}$, \ldots, $(b^k_i)_{i<\omega}$ such that each $a_f\in \M^{x_1}$, each $b^t_i\in \M^{x_{t+1}}$, and $\M\models\varphi(a_f,b^1_{i_1},\ldots,b^k_{i_k})$ if and only if $i_k\leq f(i_1,\ldots,i_{k-1})$.
     
     The theory $T$ is $\bm{{\NFOP}_k}$ if no formula has $\FOP_k$ in $T$.
 \end{definition}

Roughly speaking a $(k+1)$-partitioned formula has $\FOP_k$ in $T$ if and only if it codes all subsets of $\omega^k$ given by the area under the graph of a function $f\colon\omega^{k-1}\rightarrow \omega$.  One can view this  as an ``ordered variation" of the $k$-ary analogue of the independence property ($\IP_k$), which is about coding \emph{arbitrary} subsets of $\omega^k$. We will elaborate further on these details in Section \ref{sec:IPk}.

 Before discussing the main equivalences we will need for our results, we quickly point out that our definition of $\FOP_k$ is equivalent to the original definition from \cite{TW1,TW2}. This was already observed in in \cite{TW2} (see Appendix D there) and  the proof is short and straightforward. So we emphasize that our change in the definition of $\FOP_k$  is  entirely superficial and done mostly for aesthetic reasons.

\begin{proposition}\label{prop:alleq0}
An $\cL$-formula $\varphi(x_1,\ldots,x_{k+1})$ has $\FOP_k$ in $T$ if and only if there exist an array $(a^f_i)_{i<\omega,f:\omega^k\to\omega}$, and sequences $(b^1_i)_{i<\omega}$, \ldots, $(b^k_i)_{i<\omega}$ such that each  $a^f_i\in\M^{x_1}$, each $b^t_i\in \M^{x_{t+1}}$, and
\[
\M\models\varphi(a^f_j,b^1_{i_1},\ldots,b^k_{i_k})\miff i_k\leq f(j,i_1,\ldots,i_{k-1}).
\] 
\end{proposition}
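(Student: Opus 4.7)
The plan is to prove both directions by a direct reindexing that leaves the sequences $(b^1_i)_{i<\omega}, \ldots, (b^k_i)_{i<\omega}$ unchanged and only re-parametrizes the first (array/sequence of) parameters. The key observation is the correspondence between functions $g\colon\omega^{k-1}\to\omega$ and ``constant-in-$j$'' functions $\omega^k\to\omega$, together with the fact that any $f\colon\omega^k\to\omega$ decomposes into its sections $f(j,\cdot)\colon\omega^{k-1}\to\omega$ indexed by $j<\omega$. So the entire proof is bookkeeping about these parametrizations, with no real obstacle.

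For the backward direction ($\Leftarrow$), I would assume the array formulation holds, witnessed by $(a^f_j)$ and $(b^t_i)$. Given an arbitrary $g\colon\omega^{k-1}\to\omega$, define $\tilde g\colon\omega^k\to\omega$ by $\tilde g(j,i_1,\ldots,i_{k-1})=g(i_1,\ldots,i_{k-1})$, and set $a_g:=a^{\tilde g}_0$. Reusing the same $b$-sequences, the array condition evaluated at $j=0$ and $f=\tilde g$ gives
\[
\M\models\varphi(a_g,b^1_{i_1},\ldots,b^k_{i_k})\iff i_k\leq \tilde g(0,i_1,\ldots,i_{k-1})=g(i_1,\ldots,i_{k-1}),
\]
which is exactly Definition \ref{def:FOPk2}. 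This verifies that $\varphi$ has $\FOP_k$ in $T$.

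For the forward direction ($\Rightarrow$), I would assume $\varphi$ has $\FOP_k$, witnessed by $(a_g)_{g:\omega^{k-1}\to\omega}$ and $(b^t_i)$. For each $j<\omega$ and $f\colon\omega^k\to\omega$, denote by $f_j\colon\omega^{k-1}\to\omega$ the section $f_j(i_1,\ldots,i_{k-1}):=f(j,i_1,\ldots,i_{k-1})$, and set $a^f_j:=a_{f_j}$. Reusing the same $b$-sequences, the $\FOP_k$ condition applied at $g=f_j$ yields
\[
\M\models\varphi(a^f_j,b^1_{i_1},\ldots,b^k_{i_k})\iff i_k\leq f_j(i_1,\ldots,i_{k-1})=f(j,i_1,\ldots,i_{k-1}),
\]
which is precisely the array formulation in the proposition. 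Thus the two parametrizations are freely interchangeable, proving the equivalence.
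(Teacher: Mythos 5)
Your proposal is correct and is essentially identical to the paper's proof: the forward direction uses the sections $f_j=f(j,\cdot)$ with $a^f_j:=a_{f_j}$, and the backward direction extends $g$ to a function constant in the first coordinate and evaluates at $j=0$, exactly as the paper does.
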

\begin{proof}
   Suppose $\varphi(\xbar)$ has $\FOP_k$ in $T$ witnessed by $(a_f)_{f:\omega^{k-1}\to w}$ and $(b^t_i)_{t\in[k],i<\omega}$. Given $f\colon\omega^k\to \omega$ and $j<\omega$, let $f_{j}\colon\omega^{k-1}\to \omega$ be the function defined by $f_{j}(i_1,\ldots, i_{k-1})=f(j,i_1,\ldots, i_{k-1})$ and set $a^f_{j}=a_{f_{j}}$. Then  we have 
   \[
   \M\models\varphi(a^f_{j},b^1_{i_1},\ldots, b^k_{i_k})\miff i_k\leq f_j(i_1,\ldots,i_{k-1})=f(j,i_1,\ldots,i_{k-1}).
   \]

Conversely, suppose we have $(a^f_i)_{i<\omega,f:\omega^k\to\omega}$ and $(b^t_i)_{t\in[k],i<\omega}$ as in the statement of the proposition. Given $f\colon\omega^{k-1}\to \omega$, define $g_f\colon\omega^k\to \omega$ by $g_f(i_1,\ldots, i_k)=f(i_2,\ldots, i_k)$, and let $a_f:=a^{g_f}_0$. Then we have 
\[
\M \models \varphi(a_f,b^1_{i_1},\ldots, b_{i_k}^k)\miff i_k\leq g_f(0, i_1,\ldots, i_{k-1}) = f(i_1,\ldots, i_{k-1}).\qedhere
\]
\end{proof}

 Now we move on to the main equivalences, which are given in Proposition \ref{prop:alleq} below.  To help motivate these statements, first recall that a formula $\varphi(x,y)$ has the \emph{order property in  $T$} if  there are sequences $(a_i)_{i\in \omega}$ in $\M^x$ and $(b_i)_{i\in \omega}$ in $\M^y$ so that $\M\models\varphi(a_i,b_j)$ if and only if $i\geq j$.\footnote{Note that the order property is equivalent to $\FOP_1$ (up to identifying $i<\omega$ with the function $\emptyset\mapsto i$ from $\omega^0$ to $\omega$). See also \cite[Fact A.18]{TW1}.} Condition $(ii)$ of Proposition \ref{prop:alleq} characterizes $\FOP_k$ in terms of a version of the  order property in which one of these sequences has been replaced by a Cartesian product of $k$ sequences. The key advantage of this formulation is that it is purely ``relational" in the sense that functions are not explicitly mentioned (unlike the definition of $\FOP_k$). Consequently, this characterization will allow us to define a Fra\"{i}ss\'{e} class used to characterize $\FOP_k$ via ``collapse of  indiscernibles". This is done in Sections \ref{sec:Fraisse} and \ref{sec:collapse}. 

 Condition $(iii)$ of Proposition \ref{prop:alleq} characterizes $\FOP_k$ in terms of a version of the order property where one of the sequences is replaced by an edge-colored $k$-partite  $k$-uniform hypergraph. This characterization plays a crucial role in the applications to higher order stable  regularity appearing in \cite{TW2,TW1} (see, e.g,  \cite[Theorem 6.9]{TW2}). It will also be used to prove $\NFOP_2$ for smoothly approximable structures (Corollary \ref{cor:smooth}), as well as in transferring instability in a field $K$ to $\FOP_k$ in vector spaces over $K$ equipped with a suitably generic $k$-linear form (Proposition \ref{prop:fieldFOP})

\begin{proposition}\label{prop:alleq}
Given an $\cL$-formula $\varphi(x_1,\ldots,x_{k+1})$, the following are equivalent.
\begin{enumerate}[$(i)$]
\item $\varphi(x_1,\ldots,x_{k+1})$ has $\FOP_k$ in $T$.

\item For any linear order $<_*$ on $\omega^k\cup \omega$, there are sequences $(a^1_i)_{i<\omega}$, \ldots, $(a^{k+1}_i)_{i<\omega}$ such that each  $a^t_i\in \M^{x_t}$ and
\[
\M\models\varphi(a^1_{i_1},\ldots,a^{k+1}_{i_{k+1}})\miff  (i_1,\ldots,i_k)<_* i_{k+1}.
\]
\item For any $s<\omega$ and any partition $\omega^k=E_1\cup\ldots\cup E_s$, there are sequences $(a^1_i)_{i<\omega}$, \ldots, $(a^k_i)_{i<\omega}$, $(b_j)_{j=1}^s$ such that each  $a^t_i\in \M^{x_t}$, each  $b_j\in \M^{x_{k+1}}$, and
\[
\textstyle \M\models\varphi(a^1_{i_1},\ldots,a^k_{i_k},b_j)\miff (i_1,\ldots,i_k)\in \bigcup_{\ell\geq j}E_\ell.
\]
\end{enumerate}
\end{proposition}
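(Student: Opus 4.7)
The plan is to prove the cycle $(i) \Rightarrow (iii) \Rightarrow (ii) \Rightarrow (i)$, freely invoking compactness to pass between finite and infinite configurations. The main subtlety is that the ``encoding variable'' differs among the three formulations: in (i) the complexity is carried by $x_1$ (through the parameter $a_f$), whereas in (ii) and (iii) it is carried by $x_{k+1}$ (through $a^{k+1}_{i_{k+1}}$ or $b_j$). Each implication must therefore transfer the encoding from one variable to another, which is the source of the small bookkeeping gymnastics in the constructions.

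For $(i) \Rightarrow (iii)$, start with $\FOP_k$ witnesses $(a_f)_f$, $(b^t_i)_{t,i}$ and a finite partition $\omega^k = E_1 \cup \cdots \cup E_s$. Form the decreasing chain $V_j := \bigcup_{\ell \geq j} E_\ell$, and for each $i_1 \in \omega$ define a function $g_{i_1}\colon \omega^{k-1} \to \omega$ by $g_{i_1}(i_2,\ldots,i_k) := \max\{j \in [s] : (i_1,\ldots,i_k) \in V_j\}$. Setting $a^1_{i_1} := a_{g_{i_1}}$, $a^t_{i_t} := b^{t-1}_{i_t}$ for $2 \leq t \leq k$, and $b_j := b^k_j$, the defining property of $\FOP_k$ yields $\varphi(a^1_{i_1},\ldots,a^k_{i_k},b_j)$ iff $j \leq g_{i_1}(i_2,\ldots,i_k)$ iff $(i_1,\ldots,i_k) \in V_j$, which is exactly the condition in (iii).

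For $(iii) \Rightarrow (ii)$, reduce via compactness to a finite $<_*$ on $[n]^k \cup [n]$. The cuts $U_m := \{\alpha \in [n]^k : \alpha <_* m\}$ for $m \in [n]$ form a chain ordered by the restriction of $<_*$ to $[n]$, and this chain is realized as $V_j = \bigcup_{\ell \geq j} E_\ell$ for a suitable partition $E_1,\ldots,E_{n+1}$ of $[n]^k$; applying (iii) and relabeling the $b_j$'s by the permutation of $[n]$ induced by $<_*$ produces the desired $(ii)$-configuration. For $(ii) \Rightarrow (i)$, by compactness it suffices, for every $M, n$ and every $f_1,\ldots,f_M\colon [n]^{k-1} \to [n]$, to produce $(a_\ell)_{\ell \in [M]}$ and $(b^t_i)_{t\in[k], i\in[n]}$ satisfying the $\FOP_k$ condition. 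To this end, design a linear order $<_*$ on $\omega^k \cup \omega$ so that $(\ell,j_1,\ldots,j_{k-1}) <_* j_k$ iff $j_k \leq f_\ell(j_1,\ldots,j_{k-1})$ for $\ell \in [M]$ and $j_1,\ldots,j_k \in [n]$, placing all other indices arbitrarily. Then the sequences delivered by (ii) directly give $a_\ell := a^1_\ell$ and $b^t_i := a^{t+1}_i$ with the required truth values.

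The main obstacle is verifying that the linear order constructed in $(ii) \Rightarrow (i)$ is coherent; this reduces to the observation that the sets $\{(\ell,\bar{\jmath}) : j_k \leq f_\ell(\bar{\jmath})\}$ form a decreasing chain in $j_k$, so one may take $<_*$ to order $[M] \times [n]^{k-1}$ by the rank $r(\ell,\bar{\jmath}) := f_\ell(\bar{\jmath})$ and interleave $[n]$ in the reverse of its usual order, breaking ties arbitrarily. Once this order is in place, every direction of the proposition reduces to a one-line formula check.
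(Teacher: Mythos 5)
Your proof is correct, but it traverses the equivalence in the opposite direction from the paper: you prove $(i)\Rightarrow(iii)\Rightarrow(ii)\Rightarrow(i)$, whereas the paper proves $(i)\Rightarrow(ii)\Rightarrow(iii)\Rightarrow(i)$, so each implication you establish is formally different from the ones in the paper, with the combinatorial ingredients appearing as mirror images. Where the paper's $(ii)\Rightarrow(iii)$ manufactures a linear order on $\omega^k\cup\omega$ from a given partition (the relation ${\prec'}={\prec}\cup R\cup{<}$, then reversed), your $(iii)\Rightarrow(ii)$ goes the other way, reading off the increasing chain of cuts $U_m$ from a finite restriction of the given order and realizing it as the chain $\bigcup_{\ell\geq j}E_\ell$ of a suitable partition; and where the paper's $(iii)\Rightarrow(i)$ encodes a finite family of functions by the fiber partition $E_i=\{(i_1,\ldots,i_k):f_{i_1}(i_2,\ldots,i_k)=i\}$, your $(ii)\Rightarrow(i)$ encodes it directly into a linear order by ranking the tuples by $f_\ell(\bar\jmath)$ and interleaving $[n]$ in reverse order, which is the same reversal trick the paper uses inside $(ii)\Rightarrow(iii)$. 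Your $(i)\Rightarrow(iii)$ is, if anything, slightly cleaner than the paper's $(i)\Rightarrow(ii)$, since it needs no compactness: the functions $g_{i_1}$ are read off directly from the partition. Two bookkeeping points deserve a sentence in a full write-up: in $(iii)\Rightarrow(ii)$ your partition is built on $[n]^k$ and must be extended to all of $\omega^k$ (and may have empty parts, which is harmless after merging or by the same compactness device the paper uses for $\omega$-indexed coverings); and in $(ii)\Rightarrow(i)$, functions $[n]^{k-1}\to[n]$ cannot express the empty pattern $\{i_k\in I: i_k\leq f(\bar\jmath)\}=\emptyset$ that a finite fragment of the $\FOP_k$ type may demand, so one should either allow the value $0$ in the range or pass to the configuration for $n+1$ and discard the first index of the last sequence --- both fixes are routine, and with them your argument goes through.
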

\begin{proof}
$(i)\Rightarrow (ii)$. Assume $(i)$, and let $(a_f)_{f:\omega^{k-1}\to\omega}$, $(b^t_i)_{t\in[k],i<\omega}$ witness $\FOP_k$ for $\varphi(x_1,\ldots,x_{k+1})$ in $T$. By compactness, it suffices to fix $n<\omega$ and prove $(ii)$ for an arbitrary linear order $<_*$ on $\omega^k\cup [n]$. Without loss of generality, assume that the restriction of $<_*$ to $[n]$ is the reverse of the standard order. Given $i<\omega$, define a function $f_i\colon\omega^{k-1}\to\omega$ such that 
\[
f_i(i_1,\ldots,i_{k-1})={\textstyle \max_{<}}\{j\in[n]:(i,i_1,\ldots,i_{k-1})<_* j \}
\]
(where we adopt the convention $\max\emptyset=0$). For $i<\omega$, let $\hat{a}^1_i=a_{f_i}$. For $1<t\leq k+1$ and $i<\omega$, let $\hat{a}^t_i=b^{t-1}_i$. Then given $i_1,\ldots,i_k<\omega$ and $1\leq i_{k+1}\leq n$,
\begin{align*}
\M\models\varphi(\hat{a}^1_{i_1},\ldots,\hat{a}^{k+1}_{i_{k+1}}) &\miff \M\models\varphi({a_{f}}_{i_1},b^1_{i_2},\ldots,b^k_{i_{k+1}})\\
&\miff i_{k+1}\leq f_{i_1}(i_2,\ldots,i_k)\\
&\miff i_{k+1}\leq\max\{j\in[n]:(i_1,\ldots,i_k)<_*j\}\\
&\miff (i_1,\ldots,i_k)<_*i_{k+1}. 
\end{align*}

$(ii)\Rightarrow (iii)$. Assume $(ii)$. Fix a partition $\omega^k=E_1\cup\ldots\cup E_s$. Let $\prec$ be any linear order on $\omega^k$ so that $E_1\prec\ldots \prec E_s$ and, for each $1\leq \ell\leq s$, $(E_\ell,\prec)$ is arbitrary.  Define a relation $R$ on $\omega^k\times [s]$ as follows.
\[
\textstyle R(i_1,\ldots,i_k,j)\miff (i_1,\ldots,i_k)\not\in \bigcup_{\ell\geq j}E_\ell.
\]
Given $\ubar,\vbar\in \omega^k$ and $i,j\in [s]$, if $\ubar\preceq\vbar$, $R(\vbar,i)$, and $i\leq j$, then $R(\ubar,j)$ holds by construction. It follows that the relation ${\prec'}:={\prec}\cup R\cup {(\neg R)^{\textnormal{opp}}}\cup {<}$ is a linear order on $\omega^k\cup [s]$. Let $\prec_*$ be any linear order on $\omega^k\cup\omega$ extending $\prec'$, and let $<_*$ be the reverse ordering of $\prec_*$. Apply $(ii)$ to $<_*$ to obtain $(a^t_i)_{t\in[k+1],i<\omega}$. For $j\in[s]$, set $b_j=a^{k+1}_j$. Then we have
\begin{multline*}
\M\models\varphi(a^1_{i_1},\ldots,a^k_{i_k},b_j)\miff (i_1,\ldots,i_k)<_* j \miff j\prec_* (i_1,\ldots,i_k)\\
\miff j\prec' (i_1,\ldots,i_k)\miff  \neg R(i_1,\ldots,i_k,j)\miff \textstyle (i_1,\ldots,i_k)\in \bigcup_{\ell\geq j}E_\ell.
\end{multline*}

$(iii)\Rightarrow (i)$.  Assume $(iii)$. By compactness, $(iii)$ holds for any covering of $\omega^k$ by pairwise disjoint sets $(E_i)_{i<\omega}$.  Also by compactness, it suffices to witness $\FOP_k$ with respect to any finite collection $F$ of functions from $\omega^{k-1}$ to $\omega$.  So fix such a finite collection $F$ (we note our proof below would work even if  $F$ were countable). Let $F=\{f_i:i<\omega\}$ be a (repeating) enumeration. Given $i<\omega$, define
\[
E_i=\{(i_1,\ldots,i_k)\in\omega^k:f_{i_1}(i_2,\ldots,i_k)=i\}.
\]
Note that any $(i_1,\ldots,i_k)\in\omega^k$ is contained in $E_{f_{i_1}(i_2,\ldots,i_k)}$. Moreover, if there is some $(i_1,\ldots,i_k)\in E_i\cap E_j$ then $i=f_{i_1}(i_2,\ldots,i_k)=j$. Thus we can apply $(iii)$ with $(E_i)_{i<\omega}$ to obtain $(a^t_i)_{t\in[k],i<\omega}$ and $(b_i)_{i<\omega}$. For $i<\omega$, let $\hat{a}_{f_i}=a^1_{i}$. For $1\leq t<k$ and $i<\omega$, let $b^t_i=a^{t+1}_i$. For $i<\omega$, let $b^{k}_i=b_i$. Then
\begin{align*}
\textstyle \M\models\varphi(\hat{a}_{f_i},b^1_{i_1},\ldots,b^k_{i_k}) &\miff \M\models\varphi(a^1_{i},a^2_{i_1},\ldots,a^k_{i_{k-1}},b_{i_k})\\
  &\miff \textstyle (i,i_1,\ldots,i_{k-1})\in\bigcup_{\ell\geq i_k} E_\ell\\
  & \miff i_k\leq f_i(i_1,\ldots,i_{k-1}).\qedhere
\end{align*}
\end{proof}

\begin{remark}
It is natural to ask whether  $(ii)$ and $(iii)$ can be reformulated in terms of ``existential" conditions in which one only needs to check some sufficiently generic linear order  (in $(ii)$) or partition (in $(iii)$). In Section \ref{sec:Fraisse}, we will construct a certain Fra\"{i}ss\'{e} limit which will lead to reformulations of this kind; see Remark \ref{rem:Gkconditions} for further details.

The interested reader may wish to investigate other variations of the statements in Proposition \ref{prop:alleq}. For example, while the precise form of condition $(iii)$ is motivated by the influence of combinatorics discussed above, the proof of $(ii)\Rightarrow (iii)$ makes it clear that an equivalent variation can be obtained in terms of an arbitrary covering of $\omega^k=\bigcup_{i<\omega}E_i$ by pairwise disjoint sets. But of course this is only one of many such variations (both finitary and infinitary) allowed by compactness. 

It is also worth noting that further combinatorial insights can be obtained by different routes through the proofs of the three equivalent statements.  We leave these as exercises to the reader. 
\end{remark}

As a  consequence of Proposition \ref{prop:alleq}, we observe that $\NFOP_k$ formulas are closed under negation and permutation of the first $k$ variables.

\begin{corollary}\label{cor:easyFOPk}
If $\varphi(x_1,\ldots,x_{k+1})$ is $\NFOP_k$ in $T$ then so is $\neg\varphi(x_1,\ldots,x_{k+1})$ and $\varphi^\sigma(x_{\sigma(1)},\ldots,x_{\sigma(k)},x_{k+1})\coloneqq \varphi(x_1,\ldots,x_{k+1})$ for any permutation $\sigma$ of $[k]$.
\end{corollary}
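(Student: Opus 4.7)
The plan is to leverage the ``for every linear order'' characterization of $\FOP_k$ given in Proposition \ref{prop:alleq}(ii). Both closure properties will follow from the observation that the class of linear orders on $\omega^k \cup \omega$ is invariant under two natural operations: reversal, and the action of the symmetric group $S_k$ on $\omega^k$ that permutes coordinates. These symmetries translate directly into the symmetries of the $\NFOP_k$ property that we want to establish.

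For the negation statement, I argue by contrapositive. Assume $\neg\varphi$ has $\FOP_k$, and fix an arbitrary linear order $<_*$ on $\omega^k \cup \omega$. Let $<^{\mathrm{op}}_*$ denote the reverse order. Applying Proposition \ref{prop:alleq}(ii) to $\neg\varphi$ with $<^{\mathrm{op}}_*$ yields sequences $(a^t_i)_{t \in [k+1],\, i<\omega}$ satisfying $\M \models \neg\varphi(a^1_{i_1},\ldots,a^{k+1}_{i_{k+1}}) \miff (i_1,\ldots,i_k) <^{\mathrm{op}}_* i_{k+1}$. Since $(i_1,\ldots,i_k)$ and $i_{k+1}$ are distinct elements of the underlying set $\omega^k \cup \omega$, the cross-sort cases of $<^{\mathrm{op}}_*$ and $<_*$ are complementary: $(i_1,\ldots,i_k) <^{\mathrm{op}}_* i_{k+1}$ iff $\neg((i_1,\ldots,i_k) <_* i_{k+1})$. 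Negating both sides of the biconditional yields $\M \models \varphi(a^1_{i_1},\ldots,a^{k+1}_{i_{k+1}}) \miff (i_1,\ldots,i_k) <_* i_{k+1}$, verifying Proposition \ref{prop:alleq}(ii) for $\varphi$ with $<_*$, so $\varphi$ has $\FOP_k$.

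For the permutation claim, again by contrapositive, assume $\varphi^\sigma$ has $\FOP_k$ and fix a linear order $<_*$ on $\omega^k \cup \omega$. Define a new linear order $<'_*$ on $\omega^k \cup \omega$ by retaining $<_*$ on $\omega$ and pulling back along the coordinate permutation $\sigma^{-1}$ on $\omega^k$: for $\ubar,\vbar \in \omega^k$ and $m \in \omega$, declare $\ubar <'_* m$ iff $(u_{\sigma^{-1}(1)},\ldots,u_{\sigma^{-1}(k)}) <_* m$, and analogously for $m <'_* \ubar$ and $\ubar <'_* \vbar$. Apply Proposition \ref{prop:alleq}(ii) to $\varphi^\sigma$ with $<'_*$ to obtain sequences $(a^t_i)$. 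Using the identity $\varphi^\sigma(y_1,\ldots,y_{k+1}) = \varphi(y_{\sigma^{-1}(1)},\ldots,y_{\sigma^{-1}(k)},y_{k+1})$ together with the relabelling $c^t_i := a^{\sigma^{-1}(t)}_i$ for $t \in [k]$ and $c^{k+1}_i := a^{k+1}_i$, the substitution $j_t := i_{\sigma^{-1}(t)}$ converts the witness to $\M \models \varphi(c^1_{j_1},\ldots,c^k_{j_k},c^{k+1}_{i_{k+1}}) \miff (j_1,\ldots,j_k) <_* i_{k+1}$, verifying Proposition \ref{prop:alleq}(ii) for $\varphi$ with $<_*$.

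The only delicate step is the index bookkeeping in the permutation argument, where one must carefully track that $\sigma \circ \sigma^{-1}$ is the identity when the definitions of $\varphi^\sigma$ and $<'_*$ are unfolded in parallel; beyond this, both arguments are brief manipulations of the equivalent characterizations already established in Proposition \ref{prop:alleq}.
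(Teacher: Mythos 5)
Your proof is correct and takes essentially the same route as the paper, which also deduces both closure properties directly from the linear-order characterization in Proposition \ref{prop:alleq}$(ii)$ (order reversal for negation, pulling back the order along the coordinate permutation for $\varphi^\sigma$). The paper simply declares these "immediate from $(ii)$"; your write-up supplies the bookkeeping it omits, and your details check out.
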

\begin{proof}
Closure under negations was already shown by Terry and Wolf \cite{TW2} (see Proposition D.2 there), and is fairly clear from  the definition of $\FOP_k$. In any case, condition $(ii)$ makes the claim completely obvious. Closure under permutations of the first $k$ variables is also immediate from $(ii)$  (or $(iii)$). 
\end{proof}

It is also true that $\NFOP_k$ formulas are closed under disjunction and thus arbitrary finite Boolean combinations. This was shown for $k=2$ in \cite{TW2}.  We will provide a  proof for general $k$ later in the paper (see Theorem \ref{thm:FOPkbool}), which will require more elaborate tools on generalized indiscernibles. These results will also demonstrate that in the definition of $\FOP_k$ for a formula $\varphi(x_1,\ldots,x_{k+1})$, the final variable $x_{k+1}$ plays a truly distinct role from the rest. Thus we expect that $\NFOP_k$ is not closed under permutations of variables that move $x_{k+1}$. 

Finally, we show that the definition of $\FOP_k$ can be reformulated using mutually indiscernible sequences. This will be used for our results in Section \ref{sec:FOP2}, which themselves are used in Section \ref{sec:VS}.

\begin{proposition}\label{prop:area}
    If a formula $\varphi(x_1,\ldots,x_{k+1})$ has $\FOP_k$ in $T$, then there is a witness  (as in Definition \ref{def:FOPk2}) in which the sequences $(b^1_i)_{i<\omega}$, ..., $(b^k_i)_{i<\omega}$ are mutually indiscernible.
\end{proposition}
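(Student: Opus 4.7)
The plan is to prove, via compactness, consistency of an expanded theory $T^+$ that couples the $\FOP_k$-pattern with mutual indiscernibility of the $d$-sequences. Expand $\cL$ by fresh constants: $c_f$ of sort $x_1$ for each $f\colon\omega^{k-1}\to\omega$, and $d^t_i$ of sort $x_{t+1}$ for each $t\in[k]$ and $i<\omega$. Let $T^+$ consist of $T$, the $\FOP_k$-diagram
\[
\varphi(c_f, d^1_{i_1},\ldots, d^k_{i_k})\leftrightarrow\bigl(i_k\leq f(i_1,\ldots, i_{k-1})\bigr)
\]
for all $f$ and $(i_1,\ldots, i_k)\in\omega^k$, together with the scheme asserting, for each $t\in[k]$, that $(d^t_i)_{i<\omega}$ is indiscernible over $\bigcup_{s\neq t}\{d^s_i : i<\omega\}$. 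Note that this scheme does \emph{not} impose indiscernibility over the $c_f$'s; imposing it would force $\varphi(c_f,d^1_{i_1},\ldots,d^k_{i_k})$ to be constant in $(i_1,\ldots,i_k)$, contradicting $\FOP_k$. A model of $T^+$ supplies the refined witness.

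For a finite $T^+_0\subseteq T^+$, involving constants $c_{f_1},\ldots,c_{f_M}$, indices bounded by some $N$, and only finitely many $\cL$-formulas $\psi_1,\ldots,\psi_L$ appearing in its indiscernibility axioms, I would realize $T^+_0$ inside $\M$ as follows. Starting from the original $\FOP_k$-witness $(a_h)_h$, $(b^t_i)_{t,i}$, apply iterated classical Ramsey, row by row, to extract strictly increasing infinite subsequences $I_1,\ldots,I_k\subseteq\omega$ such that the restricted array $(b^t_i)_{t\in[k],\, i\in I_t}$ satisfies the finitely many mutual-indiscernibility instances appearing in $T^+_0$; only finitely many configurations of indices $\leq N$ and finitely many formulas $\psi_\ell$ are in play, so this is a finite Ramsey task. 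Writing $I_t=\{i^{(t)}_1<i^{(t)}_2<\cdots\}$, interpret $d^t_j := b^t_{i^{(t)}_j}$ for $j\leq N$. For the $c_{f_m}$'s, one cannot reuse the old $a_{f_m}$ (these encode the pattern against the original, unextracted indices); instead, for each $m\in[M]$ define $h_m\colon\omega^{k-1}\to\omega$ by
\[
h_m\bigl(i^{(1)}_{j_1},\ldots, i^{(k-1)}_{j_{k-1}}\bigr) \;=\; i^{(k)}_{f_m(j_1,\ldots,j_{k-1})}
\]
on $I_1\times\cdots\times I_{k-1}$ (and arbitrarily elsewhere), and interpret $c_{f_m}$ as $a_{h_m}$. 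Using the original $\FOP_k$-pattern for $a_{h_m}$ and the strict monotonicity of $j\mapsto i^{(k)}_j$, one checks
\[
\varphi\bigl(a_{h_m}, b^1_{i^{(1)}_{j_1}},\ldots, b^k_{i^{(k)}_{j_k}}\bigr) \;\Leftrightarrow\; i^{(k)}_{j_k}\leq i^{(k)}_{f_m(j_1,\ldots,j_{k-1})} \;\Leftrightarrow\; j_k\leq f_m(j_1,\ldots,j_{k-1}),
\]
which is the $\FOP_k$ portion of $T^+_0$; the indiscernibility portion is covered by the extraction. Compactness then gives a model of $T^+$.

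The main obstacle I expect is ensuring the Ramsey extraction yields genuine \emph{mutual} indiscernibility --- each row permutable independently of the others --- rather than merely joint indiscernibility of the diagonal columns $(b^1_i,\ldots,b^k_i)$. This is arranged by iterating Ramsey against the other rows' already-chosen indices: at stage $t$, color increasing $n$-tuples from $\omega$ by their $\psi_\ell$-types over the finite parameter set drawn from the previously chosen $I_s$'s ($s\neq t$), and pass to an infinite monochromatic $I_t$; the previous stages survive since they depended only on row values, not on which later subsequence is selected. A secondary care-point is that the functions $h_m$ must be built from a \emph{single} common choice of order-isomorphisms $I_t\cong\omega$ across all $m\in[M]$, so that one realization in $\M$ validates all $M$ of the $\FOP_k$-clauses in $T^+_0$ simultaneously.
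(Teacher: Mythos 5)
Your overall frame (compactness against the type coupling the $\FOP_k$-diagram with mutual indiscernibility of the $d$-rows only, plus the re-indexing trick $h_m(i^{(1)}_{j_1},\ldots,i^{(k-1)}_{j_{k-1}})=i^{(k)}_{f_m(j_1,\ldots,j_{k-1})}$ so that subgrids of the \emph{original} array are re-witnessed by $a_{h_m}$) is sound, and the re-indexing step is checked correctly. But the combinatorial heart — the Ramsey extraction of a finitely-mutually-indiscernible subarray — has a genuine gap as you describe it. At stage $t$ you can only color over parameters from rows already processed, so a single row-by-row pass never arranges indiscernibility of an earlier row over the \emph{later} rows' chosen elements; and your justification that ``the previous stages survive since they depended only on row values, not on which later subsequence is selected'' is exactly where this breaks: row $1$'s homogeneity was established over no (or only earlier) parameters, and shrinking row $2$ afterwards does nothing to make $\psi(b^1_i,b^2_j)$ independent of $i$ on the chosen grid. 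Coloring instead over the full unrefined later rows is not a finite Ramsey task (infinitely many parameters, hence infinitely many colors); note that the infinite-by-infinite version of what you want is simply false (color $(i,j)$ by whether $i<j$). What you actually need is the finite product (grid) Ramsey theorem: for finitely many formulas and a bound $N$, there exist $N$-element index sets $A_1,\ldots,A_k$ such that the truth value of each formula on increasing tuples drawn from the $A_t$'s is constant. That statement is true and standard, but it requires its own argument (or citation); it does not follow from the iterated procedure you wrote.

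So the proposal is repairable — replace the extraction step by an appeal to (or proof of) finite product Ramsey, and the rest goes through — but as written the key step fails. For contrast, the paper avoids this finitary combinatorics entirely: it builds genuinely mutually indiscernible sequences $C^1,\ldots,C^k$ by successively realizing EM-types over the other rows, and then the work shifts to showing that the existential statements $\psi_{f,n}$ (asserting a single element realizing the $\FOP_k$-pattern on an $n$-grid) lie in the relevant EM-types; this is done by an induction on the rows, using re-parametrized functions $g$ much like your $h_m$. In effect, your approach trades the paper's induction transferring existentials to new sequences for a product-Ramsey extraction inside the original array; either works, but the extraction must be justified by the correct Ramsey statement, not by one-directional iteration.
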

\begin{proof} 
Assume $\varphi(x_1,\ldots,x_{k+1})$ has $\FOP_k$ in $T$. By definition, we can choose $B^t=(b^t_i)_{i<\omega}$ for $t\in[k]$ and $(a_f)_{ f:\omega^{k-1}\rightarrow \omega}$ such that $
\M\models\varphi(a_f,b^1_{i_1},\ldots,b^k_{i_k})$ if and only if $i_k\leq f(i_1,\ldots,i_{k-1})$. Let $x$ be a variable of sort $x_1$ and, for $t\in[k]$ and $i<\omega$, let $y^t_{i}$  be a variable of sort $x_{t+1}$. 
For each $f\colon \omega^{k-1}\rightarrow \omega$ and $n< \omega$, let $\psi_{f,n}((y^1_{i})_{i=1}^n,\ldots (y^k_{i})_{i=1}^n)$ be the formula
\[
\exists x \bigwedge_{(i_1,\ldots,i_k)\in[n]^{k}}  \varphi(x,y^1_{i_1},\ldots,y^k_{i_k})^{i_k\leq f(i_1,\ldots,i_{k-1})}.
\] 
It is not difficult to prove that we can find mutually indiscernible sequences $C^1,\ldots, C^k$ such that for every finite subsequence in $C^i$ and every finite set of formulas $\Delta$, there is some finite subsequence of $B^i$ with the same $\Delta$-type. This is a special case of the \textit{local basedness} of Definition \ref{def:genind}. This fact can be established either directly (e.g. by an argument similar to that in Lemma 1.2 in \cite{ChNTP2}\footnote{The details of such an argument can be found in the first arXiv version of this paper.}) or via the modeling property (Definition \ref{def:genind}), Scow's Theorem (Theorem \ref{thm:scow}) and the fact that the class of linear orders with $k$ named predicates is Ramsey. Now, we write $C^t=(c^{t}_i)_{i<\omega}$ and note that $\M\models \psi_{f,n}(\cbar^{1,n},\ldots,\cbar^{k,n})$ for all $n<\omega$  and $f\colon\omega^{k-1}\to \omega$, where $\cbar^{t,n}=(c^t_1,\ldots,c^t_n)$ for each $1\leq t\leq k$ and $n<\omega$.

Define a type $\Gamma = \Gamma_0\cup\bigcup_{t\in[k]}\Lambda_t$ in variables $(y^t_{i})_{t\in[k],i<\omega}$ and $(x_f)_{f:\omega^{k-1}\to\omega}$, with each $x_f$ of sort $x_1$, as follows:
\begin{enumerate}[\hspace{5pt}$\ast$]
\item $\Gamma_0$ states that $\phi(x_f,y^1_{i_1},\ldots y^k_{i_k})$ holds if and only if $i_k\leq f(i_1,\ldots, i_{k-1})$;
\item $\Lambda_t$ states that $(y^t_{i})_{i<\omega}$ is indiscernible over $(y^s_{t})_{i<\omega,s\in[k]\backslash\{t\}}$. 
\end{enumerate}
If we show $\Gamma$ is satisfiable, we are done. 

Fix some finite $\Delta\subseteq \Gamma$. Let $f_1,\ldots, f_m$ be an enumeration of all functions $f\colon \omega^{k-1}\rightarrow \omega$ such that $x_{f}$ appears in $\Delta$, and let $n<\omega$ be such that for all $1\leq t\leq k$, if $y^t_i$ appears in $\Delta$  then $i\leq n$.

For each $r\in[m]$, we have $\M\models \psi_{f_r,n}(\cbar^{1,n},\ldots,\cbar^{k,n})$, and so we may choose $\hat{a}_r\in \M^{x_1}$ witnessing the existential in $\psi_{f_r,n}$. Since $C^1,\ldots,C^k$ are mutually indiscernible, it follows from the definition of $\psi_{f_r,n}$ that $(\hat{a}_1,\ldots,\hat{a}_m,\cbar^{1,n},\ldots,\cbar^{k,n})$ realizes $\Delta$. By compactness, $\Gamma$ is satisfiable, as desired.
\end{proof}

\subsection{Relation to higher arity independence}\label{sec:IPk}

In this subsection, we discuss the relationship between $\FOP_k$ and  the  higher arity versions of the independence property (defined by Shelah \cite{Shelah1}). As mentioned in the introduction, these often go by the names ``$k$-dependent" and ``$k$-independent". However we will use the acronyms $\NIP_k$ and $\IP_k$ for the sake of consistency.

\begin{definition}
A formula $\varphi(x_1,\ldots,x_{k+1})$ has $\bm{{\IP}_k}$ in $T$ if there are sequences $(a_X)_{X\seq\omega^k}$, $(b^1_i)_{i<\omega}$, \ldots, $(b^k_i)_{i<\omega}$ such that each  $a_X\in \M^{x_1}$, each  $b^t_i\in \M^{x_{t+1}}$, and
\[
\M\models\varphi(a_X,b^1_{i_1},\ldots,b^k_{i_k})\miff (i_1,\ldots,i_k)\in X.
\]
The theory $T$ is $\bm{{\NIP}_k}$ if no formula has $\IP_k$ in $T$.
\end{definition}

\begin{proposition} \label{prop:IPk}
As properties of $T$, 
\[
 \NFOP_k\mimp \NIP_k\mimp \NFOP_{k+1}.
\]
More specifically:
\begin{enumerate}[$(a)$]
\item If a formula $\varphi(x_1,\ldots,x_{k+1})$ has $\IP_k$ in $T$, then it also has $\FOP_k$ in $T$.
\item If a formula $\varphi(x_1,\ldots,x_{k+2})$ has $\FOP_{k+1}$ in $T$, then there is a parameter $b\in\M^{x_{k+2}}$ such that $\varphi(x_1,\ldots,x_{k+1},b)$ has $\IP_k$ in $T$ (with constants  for $b$).
\end{enumerate}
\end{proposition}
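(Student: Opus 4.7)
The plan is to prove the two concrete assertions (a) and (b) directly from the definitions; the displayed implications then fall out by routinely absorbing a parameter into the final variable block. I do not anticipate any serious obstacle here, as the whole argument is bookkeeping.

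For (a), I would start with an $\IP_k$-configuration $(a_X)_{X \seq \omega^k}$ together with sequences $(b^t_i)_{t \in [k], i < \omega}$ witnessing $\IP_k$ for $\varphi$. The observation is that each function $f : \omega^{k-1} \to \omega$ determines a particular subset of $\omega^k$, namely its ``subgraph'' $X_f := \{(i_1,\ldots,i_k) \in \omega^k : i_k \le f(i_1,\ldots,i_{k-1})\}$. Setting $\hat{a}_f := a_{X_f}$ and keeping the same sequences $(b^t_i)$, the $\IP_k$-equivalence specializes directly to the $\FOP_k$-equivalence in Definition \ref{def:FOPk2}.

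For (b), the idea is to fix the last parameter so that the $\FOP_{k+1}$-condition ``$i_{k+1} \le f(i_1,\ldots,i_k)$'' collapses to an arbitrary condition on $(i_1,\ldots,i_k)$. Starting from an $\FOP_{k+1}$-witness $(a_f)_{f : \omega^k \to \omega}$ and $(b^t_i)_{t \in [k+1], i < \omega}$, I would take $b := b^{k+1}_1$. Then $\varphi(a_f, b^1_{i_1}, \ldots, b^k_{i_k}, b)$ holds if and only if $f(i_1,\ldots,i_k) \ge 1$. For each $X \seq \omega^k$, letting $f_X$ be the $\{0,1\}$-valued indicator function of $X$ and setting $c_X := a_{f_X}$, the sequences $(c_X)_{X \seq \omega^k}$ and $(b^t_i)_{t \in [k]}$ witness $\IP_k$ for $\varphi(x_1,\ldots,x_{k+1}, b)$.

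Finally, the displayed implications drop out as corollaries: $\NFOP_k \Rightarrow \NIP_k$ is immediate from (a), and $\NIP_k \Rightarrow \NFOP_{k+1}$ follows from (b) by viewing $\varphi$ itself as a $(k+1)$-partitioned $\cL$-formula with final block $(x_{k+1}, x_{k+2})$ and using the sequence $((b^k_i, b))_{i < \omega}$ in that block to absorb the parameter $b$ into a fixed coordinate.
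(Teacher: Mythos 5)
Your proposal is correct and follows essentially the same route as the paper: part $(a)$ by identifying $f$ with its subgraph $X_f\seq\omega^k$, and part $(b)$ by specializing the last variable to $b^{k+1}_1$ and coding subsets via indicator functions, with the parameter absorbed into the final variable block to deduce the displayed implications.
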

\begin{proof}
Part $(a)$. This follows immediately by identifying a function $f\colon\omega^{k-1}\to \omega$ with the set $X_f=\{(i_1,\ldots,i_k)\in\omega^k:i_k\leq f(i_1,\ldots,i_{k-1})\}$.

Part $(b)$. Suppose $(a_f)_{f: \omega^{k}\to\omega}$, $(b^1_i)_{i<\omega}$, \ldots, $(b^{k+1}_i)_{i<\omega}$ witness $\FOP_{k+1}$ for  $\varphi(x_1,\ldots,x_{k+2})$ in $T$. For each $X\seq\omega^{k}$, define  $f_X\colon\omega^{k}\to\omega$ to be the indicator function of $X$, and let $a'_X=a_{f_X}$. Then $(a'_X)_{X\seq\omega^{k}}$, $(b^1_i)_{i<\omega}$, \ldots, $(b^{k}_i)_{i<\omega}$ witness $\IP_{k}$ for $\varphi(x_1,\ldots,x_k,b^{k+1}_1)$ in $T$. 
\end{proof}

The previous result also appears in Fact 2.22 and Lemma 5.11 of \cite{TW2} (for $k=2$).
We will show in Proposition \ref{prop:IPkstrict} that none of the implications  can be reversed. 

In order to further strengthen the analogy between $\NFOP_k$ and $\NIP_k$, we formulate equivalences of $\NIP_k$ suggested by Proposition \ref{prop:alleq}. The analogue of condition $(ii)$ is due to Chernikov, Palac\'{i}n, and Takeuchi \cite{CPT}. Moreover, the combinatorial content of condition $(iii)$ appears in \cite[Proposition 5.6]{TW2} (for $k=2$).

\begin{proposition}\label{prop:IPkeq}
    Given an $\cL$-formula $\varphi(x_1,\ldots,x_{k+1})$, the following are equivalent.
    \begin{enumerate}[$(i)$]
\item $\varphi(x_1,\ldots,x_{k+1})$ has $\IP_k$ in $T$.

\item For any relation $R\seq \omega^{k+1}$, there are sequences $(a^1_i)_{i\in\omega}$, \ldots, $(a^{k+1}_i)_{i<\omega}$ such that each  $a^t_i\in \M^{x_t}$ and
\[
\M\models\varphi(a^1_{i_1},\ldots,a^{k+1}_{i_{k+1}})\miff  R(i_1,\ldots,i_{k+1}).
\]

\item For any $s<\omega$ and partition $\omega^k=\bigcup_{\sigma\seq [s]}E_\sigma$, there are sequences $(a^1_i)_{i<\omega}$, \ldots, $(a^k_i)_{i<\omega}$, $(b_j)_{j\in [s]}$ such that each $a^t_i\in\M^{x_t}$, each $b_j\in \M^{x_{k+1}}$, and
\[
\M\models \varphi(a^1_{i_1},\ldots,a^k_{i_k},b_j)\miff \textstyle (i_1,\ldots,i_k)\in \bigcup_{j\in \sigma}E_\sigma.
\]
    \end{enumerate}
\end{proposition}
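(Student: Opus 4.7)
The plan is to prove $(i)\Rightarrow(ii)\Rightarrow(iii)\Rightarrow(i)$, following the same general strategy as Proposition \ref{prop:alleq}, but substantially simpler since no compactness-enabled reduction is needed to pass from functions to arbitrary relations.

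For $(i)\Rightarrow(ii)$, I would fix witnesses $(a_X)_{X\seq\omega^k}$ and $(b^t_i)_{t\in[k],i<\omega}$ of $\IP_k$ for $\varphi$. Given any relation $R\seq\omega^{k+1}$, for each $i<\omega$ set $X_i=\{(i_2,\ldots,i_{k+1})\in\omega^k:R(i,i_2,\ldots,i_{k+1})\}$, and define $\hat{a}^1_i=a_{X_i}$ and $\hat{a}^{t+1}_i=b^t_i$ for $t\in[k]$. Then $\M\models\varphi(\hat{a}^1_{i_1},\ldots,\hat{a}^{k+1}_{i_{k+1}})$ iff $(i_2,\ldots,i_{k+1})\in X_{i_1}$ iff $R(i_1,\ldots,i_{k+1})$.

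For $(ii)\Rightarrow(iii)$, given a partition $\omega^k=\bigcup_{\sigma\seq[s]}E_\sigma$, each $\bar{i}\in\omega^k$ lies in a unique $E_{\sigma(\bar{i})}$. Define $R\seq\omega^{k+1}$ by setting $R(i_1,\ldots,i_k,j)$ to hold iff $j\in[s]$ and $j\in\sigma(i_1,\ldots,i_k)$ (and trivially false elsewhere). Apply $(ii)$ to obtain sequences $(a^t_i)_{t\in[k+1],i<\omega}$, and set $b_j=a^{k+1}_j$ for $j\in[s]$. Then $(i_1,\ldots,i_k)\in\bigcup_{j\in\sigma}E_\sigma$ iff $j\in\sigma(i_1,\ldots,i_k)$ iff $R(i_1,\ldots,i_k,j)$ iff $\M\models\varphi(a^1_{i_1},\ldots,a^k_{i_k},b_j)$.

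For $(iii)\Rightarrow(i)$, by compactness it suffices to witness $\IP_k$ for any finite collection $\{X_1,\ldots,X_m\}$ of subsets of $\omega^k$ (one builds the sets $(a_X)_{X\seq\omega^k}$ by compactness since each finitary coherence condition is handled). Given such a collection, for each $\sigma\seq[m]$ let
\[
E_\sigma=\{\bar{i}\in\omega^k:\{j\in[m]:\bar{i}\in X_j\}=\sigma\},
\]
which gives a partition of $\omega^k$ indexed by subsets of $[m]$. Apply $(iii)$ with $s=m$ to obtain $(a^t_i)_{t\in[k],i<\omega}$ and $(b_j)_{j\in[m]}$. Setting $\hat{a}_{X_j}=b_j$ and $\hat{b}^t_i=a^t_i$, we have $\M\models\varphi(\hat{a}_{X_j},\hat{b}^1_{i_1},\ldots,\hat{b}^k_{i_k})$ iff $(i_1,\ldots,i_k)\in\bigcup_{j\in\sigma}E_\sigma$ iff $j\in\{\ell:\bar{i}\in X_\ell\}$ iff $\bar{i}\in X_j$.

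I do not expect a real obstacle here: each implication amounts to a bookkeeping argument translating between parameter sets (subsets of $\omega^k$, relations on $\omega^{k+1}$, and partitions of $\omega^k$ labeled by subsets of $[s]$). The only mildly delicate step is $(iii)\Rightarrow(i)$, where one must invoke compactness to go from an arbitrary finite collection of subsets of $\omega^k$ to the full family indexed by $\calP(\omega^k)$; this is routine since the $\IP_k$ condition is a union of finitary type-defining conditions, each of which is satisfied by the finite construction above.
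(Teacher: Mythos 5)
Your $(i)\Rightarrow(ii)$ and $(ii)\Rightarrow(iii)$ are fine (and your $(i)\Rightarrow(ii)$ is a clean direct argument where the paper simply cites [CPT]). The gap is in the last step of $(iii)\Rightarrow(i)$. Condition $(iii)$ only controls instances of the form $\varphi(a^1_{i_1},\ldots,a^k_{i_k},b_j)$, with $b_j\in\M^{x_{k+1}}$ sitting in the \emph{last} slot; it says nothing about $\varphi(b_j,a^1_{i_1},\ldots,a^k_{i_k})$. When you set $\hat{a}_{X_j}=b_j$, $\hat{b}^t_i=a^t_i$ and assert $\M\models\varphi(\hat{a}_{X_j},\hat{b}^1_{i_1},\ldots,\hat{b}^k_{i_k})\Leftrightarrow (i_1,\ldots,i_k)\in X_j$, you have silently moved $b_j$ into the first coordinate: that instance is not governed by $(iii)$, and it need not even be well formed, since $b_j$ has sort $x_{k+1}$ while the definition of $\IP_k$ requires the set-parameters $a_X$ to lie in $\M^{x_1}$. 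What your construction actually establishes is $\IP_k$ for the formula in which the roles of $x_{k+1}$ and $(x_1,\ldots,x_k)$ are exchanged, i.e.\ with the parameter carried by the last variable group. Passing from that to statement $(i)$ needs the invariance of $\IP_k$ under permuting which variable group carries the parameter --- a true but nontrivial fact (it is part of the results of [CPT] cited elsewhere in the paper), which you neither prove nor invoke.

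The paper's proof of $(iii)\Rightarrow(i)$ is built precisely to avoid this issue. After the same compactness reductions (together with extending $(iii)$ to partitions indexed by subsets of $\omega$), it takes a repeating enumeration $\{X_i:i<\omega\}$ of the finite family and defines $E_\sigma=\{(i_1,\ldots,i_k)\in\omega^k:\{j<\omega:(i_2,\ldots,i_k,j)\in X_{i_1}\}=\sigma\}$, so the index of the set is encoded in the \emph{first} coordinate of the $k$-tuple; then $\hat{a}_{X_i}=a^1_i$ stays in the $x_1$ slot, the remaining coordinates shift, and the parameters $b_j$ supplied by $(iii)$ serve as the last vertex coordinate $b^k$. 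To repair your argument you must either add a proof or citation of the permutation symmetry of $\IP_k$, or replace your $E_\sigma$ with this fiber-encoding so that the witnesses land in the correct sorts.
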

\begin{proof}
The equivalence of $(i)$ and $(ii)$ is  \cite[Proposition 5.2]{CPT}. We leave $(ii)\Rightarrow (iii)$ to the reader, since it is  straightforward and easier than the analogous direction of Proposition \ref{prop:alleq}. The proof of $(iii)\Rightarrow (i)$ is also similar to Proposition \ref{prop:alleq}, but in this case the argument is a little more complicated, so we give the details. 

Assume $(iii)$. By compactness, $(iii)$ holds for any covering of $\omega^k$ by pairwise disjoint sets $(E_\sigma)_{\sigma\seq\omega}$. Also by compactness, it suffices to witness $\IP_k$ with respect to some finite collection $F$  of subsets of $\omega^k$. Let $F=\{X_i:i<\omega\}$ be a (repeating) enumeration. Given $\sigma\seq \omega$, define
    \[
E_\sigma=\{(i_1,\ldots,i_k)\in \omega^k:\forall j<\omega,~(i_2,\ldots,i_k,j)\in X_{i_1}\Leftrightarrow j\in\sigma\}.
    \]
   Given $(i_1,\ldots,i_k)\in\omega^k$, if we define $\sigma=\{j<\omega:(i_2,\ldots,i_{k},j)\in X_{i_1}\}$, then $(i_1,\ldots,i_k)\in E_\sigma$.  
Moreover, if $(i_1,\ldots,i_k)\in E_\sigma\cap E_\tau$ then, given $j<\omega$, we have 
\[
j\in\sigma\miff (i_2,\ldots,i_k,j)\in X_{i_1}\miff j\in\tau,
\]
and so $\sigma=\tau$. Thus we can apply $(iii)$ with $(E_\sigma)_{\sigma\seq\omega}$ to obtain $(a^t_i)_{t\in[k],i<\omega}$ and $(b_i)_{i<\omega}$. For $i<\omega$, let $\hat{a}_{X_i}=a^1_{i}$. For $1\leq t<k$ and $i<\omega$, let $b^t_i=a^{t+1}_i$. For $i<\omega$, let $b^k_i=b_i$. Then
\begin{align*}
\M\models\varphi(\hat{a}_{X_i},b^1_{i_1},\ldots,b^k_{i_k}) &\miff \M\models \varphi(a^1_i,a^2_{i_1},\ldots,a^k_{i_{k-1}},b_{i_k})\\
&\miff (i,i_1,\ldots,i_{k-1})\in \textstyle\bigcup_{i_k\in\sigma}E_\sigma.
\end{align*}
Note that the right side of the above equivalences clearly implies $(i_1,\ldots,i_k)\in X_i$. Conversely,  suppose $(i_1,\ldots,i_k)\in X_i$ and define $\sigma=\{j<\omega:(i_1,\ldots,i_{k-1},j)\in X\}$. Then $i_k\in\sigma$ and $(i,i_1,\ldots,i_{k-1})\in E_\sigma$. Thus $\M\models\varphi(\hat{a}_{X_i},b^1_{i_1},\ldots,b^k_{i_k})$ holds if and only if $(i_1,\ldots,i_k)\in X_i$.
\end{proof}

\subsection{Results for $\FOP_2$}\label{sec:FOP2}

In this subsection, we prove some results that are special to the $k=2$ case. First is a basic type-counting result for $\FOP_2$, which we will use to produce some examples.

\begin{proposition}
Suppose $T$ has $\FOP_2$. Then there is a constant $C>0$ so that $|S_n(T)|\geq 2^{Cn^2\log n}$ for arbitrarily large $n$. 
\end{proposition}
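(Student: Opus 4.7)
The plan is to exhibit, for each sufficiently large $n$, a family of $2^{\Omega(n^2\log n)}$ pairwise distinct complete types over $\emptyset$ in $n$ free variables. The key observation is that an $\FOP_2$ witness gives us not just many singleton types over a parameter set, but many $\vec f$-indexed configurations that can be packaged into a single tuple whose type records \emph{all} the functions $f_1,\dots,f_r$ simultaneously.

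Concretely, fix a formula $\varphi(x_1,x_2,x_3)$ with $\FOP_2$ in $T$ and sequences $(a_f)_{f:\omega\to\omega}$, $(b^1_i)_{i<\omega}$, $(b^2_i)_{i<\omega}$ as in Definition \ref{def:FOPk2}, so that $\M\models\varphi(a_f,b^1_i,b^2_j)$ iff $j\leq f(i)$. Given $n$, set $r=\lfloor n/3\rfloor$. For each vector of functions $\vec f=(f_1,\dots,f_r)\in([r]^{[r]})^r$ consider the $3r$-tuple
\[
\bar{a}_{\vec f}=(a_{f_1},\dots,a_{f_r},\,b^1_1,\dots,b^1_r,\,b^2_1,\dots,b^2_r).
\]
I claim that distinct $\vec f,\vec g$ yield distinct elements of $S_{3r}(T)$. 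Indeed, if $\tp(\bar{a}_{\vec f})=\tp(\bar{a}_{\vec g})$, then for every $i,j,k\in[r]$, using the formula obtained from $\varphi$ by substituting the $i$-th, $(r+j)$-th, and $(2r+k)$-th variables, we get $k\leq f_i(j)\Leftrightarrow k\leq g_i(j)$, forcing $f_i(j)=g_i(j)$ for all $i,j$, contrary to $\vec f\neq\vec g$.

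It follows that $|S_{3r}(T)|\geq (r^r)^r=r^{r^2}=2^{r^2\log_2 r}$. Since the natural projection $S_n(T)\to S_{3r}(T)$ is surjective whenever $3r\leq n$ (every partial type over $\emptyset$ extends), we get $|S_n(T)|\geq r^{r^2}$. Taking logarithms and substituting $r=\lfloor n/3\rfloor$ gives $\log_2|S_n(T)|\geq \frac{n^2}{9}\log_2(n/3)$ for all sufficiently large $n$, so any constant $C<\frac{1}{9}$ works.

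There is essentially no obstacle here: the only content beyond the definition of $\FOP_2$ is the bookkeeping observation that packaging $r$ independent ``function choices'' into one tuple multiplies the exponent of $r$ by $r$, yielding the quadratic-in-$n$ exponent. (In particular, nothing beyond Definition \ref{def:FOPk2} is needed; none of the later characterizations or the indiscernibility strengthening of Proposition \ref{prop:area} are used.)
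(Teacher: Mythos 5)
Your proof is correct and is essentially the paper's argument: both count roughly $n^{n^2}$ types over $\emptyset$ by coding an arbitrary function $u\colon[n]^2\to[n]$ through the threshold pattern $\varphi(x_i,y_j,z_k)\leftrightarrow k\leq u(i,j)$ (your vector $(f_1,\dots,f_r)$ of functions $[r]\to[r]$ is the same data as the paper's enumerated partition of $[n]^2$ into $n$ parts); the only difference is that the paper justifies satisfiability of each configuration via Proposition \ref{prop:alleq}$(iii)$, while you read it off directly from the $\FOP_2$ witnesses $(a_f)$, $(b^1_i)$, $(b^2_j)$, which works just as well once you extend each $f_i\colon[r]\to[r]$ to a function $\omega\to\omega$. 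One bookkeeping slip: $x_1,x_2,x_3$ need not be singleton variables, so $\bar a_{\vec f}$ is a tuple of length $r\,(|x_1|+|x_2|+|x_3|)$ rather than $3r$; this only shrinks the constant $C$ (which may then depend on $|x_1|+|x_2|+|x_3|$, exactly as the paper's $t$ does) and does not affect the conclusion.
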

\begin{proof}
Suppose $\phi(x,y,z)$ has $\FOP_2$ in $T$. Fix some $n\geq 1$ and let $I$ be the set of functions $f\colon [n]\times[n]\to [n]$. Fix variables $\xbar=(x_1,\ldots,x_n)$, $\ybar=(y_1,\ldots,y_n)$, and $\zbar=(z_1,\ldots,z_n)$. Given $f\in I$, define
\[
\Gamma_{f}(\xbar,\ybar,\zbar)=\left\{\varphi(x_{i_1},y_{i_2},z_j):f(i_1,i_2)\geq j\right\}\cup \left\{\neg\varphi(x_{i_1},y_{i_2},z_j):f(i_1,i_2)<j\right\}.
\]
It follows from the characterization of $\FOP_2$ in Proposition \ref{prop:alleq0} (for $k=2$) that $\Gamma_f$ is satisfiable for all $f\in I$. Further, it's clear that $f\neq f'$ implies that $\Gamma_{f}$ and $\Gamma_{f'}$ are contradictory.  Let $t=|x|+|y|+|z|$. Note $|I|=n^{n^2}$.  Thus $|S_{nt}(T)|\geq n^{n^2}=2^{n^2\log n}$.  Consequently, there is $C>0$ (depending on $t$) so that for arbitrarily large $N$, $|S_{N}(T)|\geq 2^{CN^2\log N}$. 
\end{proof}

The statement of the previous result is only meaningful when $T$ is  $\aleph_0$-categorical since otherwise there is some $n$ such that $S_n(T)$ is infinite (but note that the proof is providing more information about $\varphi$-types that could still be interesting in general). In particular, we will consider  ``smoothly approximable" $\aleph_0$-categorical structures (for definitions, we refer to the reader to the monograph \cite{ChHrFSFT} and the survey article \cite{MacSA}). A celebrated result of Cherlin, Harrington, and Lachlan \cite{CHL} is that any $\omega$-stable $\aleph_0$-categorical structure is smoothly approximable. The converse need not be true since, for example, if $\cM$ is a countably infinite dimensional vector space over a finite field equipped with a non-degenerate alternating bilinear form, then $\cM$ is smoothly approximable and unstable (again, see \cite{ChHrFSFT, MacSA}). However, using the previous proposition and a deep classification theorem from \cite{ChHrFSFT}, we can show that any smoothly approximable structure is ``stable" in our ternary sense.

\begin{corollary}\label{cor:smooth}
If $\calM$ is a smoothly approximable first-order structure, then $\Th(\calM)$ is $\NFOP_2$.
\end{corollary}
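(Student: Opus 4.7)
The plan is to argue by contradiction, combining the type-counting lower bound from the previous proposition with an upper bound extracted from the Cherlin--Hrushovski classification of smoothly approximable structures. So, toward a contradiction, I would assume that $T := \Th(\calM)$ has $\FOP_2$. By the previous proposition, there is then a constant $C > 0$ such that $|S_n(T)| \geq 2^{Cn^2 \log n}$ for arbitrarily large $n$.

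Next I would appeal to the main structure theorem of \cite{ChHrFSFT}: every smoothly approximable structure is Lie-coordinatizable, i.e., coordinatized by a finite tree of basic Lie geometries over finite fields. A standard consequence of this coordinatization is an at-most-quadratic-in-exponent upper bound on orbit growth, of the form $|S_n(T)| \leq 2^{an^2 + b}$ for constants $a, b > 0$ depending only on $\calM$. Morally, an $n$-type in such a structure is determined by an $O(n^2)$-sized amount of data over the coordinatizing finite field(s), exactly as in the prototypical example of an infinite-dimensional vector space over a finite field equipped with a non-degenerate bilinear form, where an $n$-type is pinned down by the linear-dependence pattern of the tuple together with its $n \times n$ Gram matrix.

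Comparing these two bounds, for all $n$ large enough one has $Cn^2 \log n > an^2 + b$ (since $\log n$ eventually exceeds $a/C$), which contradicts the lower bound forced by $\FOP_2$. Hence $T$ must be $\NFOP_2$.

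The main obstacle will be locating and properly citing the $2^{O(n^2)}$ upper bound inside \cite{ChHrFSFT}: a priori the orbit growth of a Lie-coordinatizable theory could depend on the depth and branching of the coordinatization tree, so one must verify that each additional layer contributes only quadratic-in-$n$ information in the exponent, keeping the total bound at $2^{O(n^2)}$. Once that ingredient is in hand, the numerical comparison above is immediate, and no further model-theoretic input about $\FOP_2$ is needed beyond the previous proposition.
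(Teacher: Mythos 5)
Your argument is correct and follows essentially the same route as the paper: combine the $2^{Cn^2\log n}$ lower bound on $|S_n(T)|$ forced by $\FOP_2$ with a $2^{O(n^2)}$ upper bound on type counting coming from the Cherlin--Hrushovski classification, and compare exponents. The one ingredient you flag as missing is exactly what the paper cites, namely Theorem 2(6) of \cite{ChHrFSFT}, which gives a non-standard finite model in which the number of ($=$ internal) $n$-types over $\emptyset$ is at most $c^{n^2}$; since $T$ is $\aleph_0$-categorical all types are realized there, yielding the desired bound and the contradiction.
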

\begin{proof}
Suppose towards a contradiction $\calM$ is smoothly approximable but $T=\Th(\calM)$ has $\FOP_2$.  By above, there is $C>0$ so that for arbitrarily large $N$, $|S_N(T)|\geq 2^{CN^2\log N}$. Since $\calM$ is smoothly approximable, Theorem 2(6) of \cite{ChHrFSFT} implies there is a model $\calM^*$ of $T$ of non-standard finite size, so that for all standard $n\in \mathbb{N}$, the number of internal $n$-types over the empty set is at most $c^{n^2}$, for some finite $c$, and where the internal $n$-types and $n$-types over $\calM^*$ coincide.  However, if $N$ is sufficiently large, then $2^{CN^2\log N}>c^{N^2}$, and thus there are strictly more than $c^{N^2}$ many $N$-types over the empty set in $\calM^*$, a contradiction.
\end{proof}

\begin{remark}[pointed out to us by the referee]
Theorem 3 of \cite{ChHrFSFT}  states that a structure $\cM$ is a reduct of a smoothly approximable structure if and only if $\Th(\cM)$ is \emph{quasi-finite}, i.e., there is some $\nu\colon \N\to \N$ such that any finite subset of $\Th(\cM)$ has a finite model with at most $\nu(k)$ $k$-types for all $k$ (see Definition 2.1.1 of \cite{ChHrFSFT}, and also Section 6 of \cite{HruPFD}). Since $\NFOP_2$ is clearly closed under reducts, it then follows from Corollary \ref{cor:smooth} that any quasi-finite theory is $\NFOP_2$.  
\end{remark}

We can use Corollary \ref{cor:smooth} to give an example of an $\NFOP_2$ group with the independence property. Given an odd prime $p$, an infinite group $G$ is called an \emph{extraspecial $p$-group} if every element of $G$ has order $p$, $Z(G)$ is cyclic, and $Z(G)=G'$. The theory of infinite extraspecial $p$ groups is complete, $\aleph_0$-categorical, and unstable, as first shown in \cite{Felgner}. Moreover, this theory is bi-interpretable with the theory $T_{\infty,\textnormal{alt}}^{\mathbb{F}_p}$ of an infinite dimensional vector space over $\mathbb{F}_p$ with a non-degenerate alternating bilinear form, and so it is  supersimple (hence $\IP$). For details see the appendix of \cite{Milliet} (or for different approaches to get supersimplicity, see \cite{MacStein} and \cite{Baudisch}). As noted above, $T^{\mathbb{F}_p}_{\infty,\textnormal{alt}}$ is the theory of smoothly approximable structure, and thus is $\NFOP_2$ (we will reprove this for a more general class of theories of vector spaces with bilinear forms in Section \ref{sec:VS}). So we have the following conclusion.

\begin{corollary}\label{cor:extraspecial}
    For any odd prime $p$, the theory of infinite extraspecial $p$-groups is $\NFOP_2$ and $\IP$.
\end{corollary}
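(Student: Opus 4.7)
The plan is to assemble the pieces from the preceding paragraph. First I would note that, by the discussion cited from the appendix of \cite{Milliet}, the theory of infinite extraspecial $p$-groups (for odd $p$) is bi-interpretable with $T^{\mathbb{F}_p}_{\infty,\textnormal{alt}}$, and the latter is the theory of a smoothly approximable $\aleph_0$-categorical structure (namely, a countably infinite-dimensional $\mathbb{F}_p$-vector space with a non-degenerate alternating bilinear form). Applying Corollary \ref{cor:smooth} then yields that $T^{\mathbb{F}_p}_{\infty,\textnormal{alt}}$ is $\NFOP_2$.

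Next I would transfer $\NFOP_2$ across the bi-interpretation. The crucial observation is that Definition \ref{def:FOPk2} does not restrict the sorts or arities of the variables $x_1,\ldots,x_{k+1}$, so if $\varphi(x_1,\ldots,x_{k+1})$ witnessed $\FOP_2$ in the theory of extraspecial $p$-groups, then pulling $\varphi$ and the witnessing tuples $(a_f)$, $(b^1_i)$, $(b^2_i)$ back through the interpretation into $T^{\mathbb{F}_p}_{\infty,\textnormal{alt}}$ (via appropriate tuples in the vector-space sort representing the group elements) yields a formula and configuration witnessing $\FOP_2$ in $T^{\mathbb{F}_p}_{\infty,\textnormal{alt}}$, contradicting the previous paragraph. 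Hence $\NFOP_2$ passes to the theory of infinite extraspecial $p$-groups.

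For the $\IP$ claim, I would invoke the properties already cited from \cite{Milliet}: the theory is supersimple (in particular simple) and unstable. By the classical fact that a simple theory is $\NIP$ if and only if it is stable (equivalently, simple plus unstable forces $\IP$), the theory must have $\IP$.

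The only non-trivial step is checking that $\NFOP_k$ is preserved under bi-interpretation, and the main (minor) subtlety there is just the bookkeeping of converting tuples in one sort to tuples in the other through the interpreting formulas; since $\FOP_k$ only references satisfaction of a formula on configurations of tuples (with no constraint on arity), this bookkeeping is routine rather than a genuine obstacle.
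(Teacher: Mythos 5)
Your proposal is correct and follows essentially the same route as the paper: bi-interpretability with $T^{\mathbb{F}_p}_{\infty,\textnormal{alt}}$ (from Milliet's appendix), $\NFOP_2$ for that theory via smooth approximability and Corollary \ref{cor:smooth}, and $\IP$ from unstable plus (super)simple. The only difference is that you spell out the (routine) transfer of $\NFOP_2$ across the interpretation, which the paper leaves implicit.
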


\begin{remark}
    The previous corollary raises the natural question of finding groups whose theories are $\NFOP_k$ and $\FOP_{k-1}$ (or even $\IP_{k-1}$) for arbitrary $k$. In \cite[Section 4]{CH2}, Chernikov and Hempel construct groups with $\NIP_k$ and $\IP_{k-1}$ by showing that $\NIP_k$ is preserved by \emph{Mekler's construction} (a process for interpreting a given structure in a finite relational language inside a pure $2$-step nilpotent group of finite exponent). In addition to general combinatorical facts about Mekler's construction, their proof relies primarily on the characterization of $\NIP_k$ using collapse of indiscernibles. Thus we expect that their argument can be directly modified to produce the same preservation result for $\NFOP_k$ using the indiscernible collapse characterization that we will prove below in Theorem \ref{thm:genind}. So for example, if one applies Mekler's construction to the generic $k$-uniform hypergraph (which is $\NFOP_k$ by Proposition \ref{prop:arity} below), then we conjecture that the resulting group is $\NFOP_k$ (note that this group is $\NIP_k$ and $\IP_{k-1}$ by \cite{CH2}).\footnote{This conjecture has since been  proved by Boissonneau, Papadopoulos, and Touchard  \cite{BoPaTo}.}
\end{remark}

Next we give a \emph{characterization} of $\FOP_2$ in terms of a more elaborate type-counting dichotomy. Recall that for a cardinal $\kappa$, $\ded(\kappa)$ denotes the supremum of the cardinalities of linear orders with a dense subset of size $\kappa$. Clearly $\ded(\kappa)\leq 2^\kappa$. A standard exercise  is that $\ded(\kappa)\geq 2^\lambda$ where $\lambda$ is minimal such that $2^\lambda>\kappa$ (see, e.g., \cite[Lemma 5.2.12]{Marker}). So altogether, $\kappa<\ded(\kappa)\leq 2^\kappa$. We will also need the following result from stability theory.

\begin{fact}[Erd\H{o}s-Makkai \cite{EM}]\label{fact:EM}
Let $X$ be an infinite set and suppose $\cF\seq \cP(X)$ is such that $|\cF|>|X|$. Then there are sequences $(a_i)_{i<\omega}$ from $X$ and $(B_i)_{i<\omega}$ from either $\cF$ or  $\{X\backslash S:S\in\cF\}$ such that, for all $i,j<\omega$, $a_i\in B_j$ if and only if $i\leq j$.
\end{fact}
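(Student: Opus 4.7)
The plan is to prove the contrapositive: assuming that no such infinite half-graph configuration can be extracted, either from $\cF$ or from the complement family $\{X \setminus S : S \in \cF\}$, I will derive $|\cF| \leq |X|$. The key observation is to view the incidence relation as a binary formula $\varphi(x;y) := (x \in y)$ on the two-sorted structure $(X, \cF; \in)$. Failure of the conclusion says exactly that $\varphi$ has neither the order property nor its dual in this structure: for instance, the conclusion with $(B_i) \seq \cF$ would be a witness to the order property of $\varphi$ with parameter sort $y$, while the conclusion with $(B_i) \seq \{X\setminus S : S \in \cF\}$ corresponds to $\neg\varphi$ having the order property.

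The next step is to invoke Shelah's order-property/stability dichotomy: a formula without the order property is stable, so the number of complete $\varphi$-types over any infinite set $A$ satisfies $|S_\varphi(A)| \leq |A|$ (the language here being finite). In particular $|S_\varphi(X)| \leq |X|$. Now each $B \in \cF$ is uniquely determined by its characteristic function $\chi_B \colon X \to \{0,1\}$, and hence by its $\varphi$-type over $X$; thus the map $B \mapsto \tp_\varphi(B/X)$ is injective. This gives $|\cF| \leq |S_\varphi(X)| \leq |X|$, contradicting the hypothesis.

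The main obstacle, if one wishes to bypass Shelah's dichotomy and give a self-contained argument, is to actually extract the staircase pattern from mere cardinality excess, which is essentially a Ramsey-theoretic task. Concretely, one would enumerate $\cF = (B_\alpha)_{\alpha < |X|^+}$, choose for each $\alpha < \beta$ a separator $x_{\alpha,\beta} \in X$ with $\chi_{B_\alpha}(x_{\alpha,\beta}) \neq \chi_{B_\beta}(x_{\alpha,\beta})$, and apply Erd\H{o}s--Rado (using $|X|^+ \to (\omega)^2_4$) to the pair-coloring recording the full $2 \times 2$ membership pattern of $x_{\alpha,\beta}$ in $\{B_\alpha, B_\beta\}$. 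A monochromatic infinite subsequence then produces a consistent orientation; after iterating Ramsey on longer tuples to coordinate the pairwise separators into a single sequence $(a_i)_{i<\omega}$, one obtains the staircase $a_i \in B_j \Leftrightarrow i \leq j$ inside $\cF$ or inside the complement family, according to the constant color.
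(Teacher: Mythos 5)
The paper gives no proof of this statement---it is imported directly from Erd\H{o}s--Makkai \cite{EM}---so your proposal must stand on its own, and its central reduction has a genuine gap. You identify ``failure of the conclusion'' with ``$\varphi(x;y):=x\in y$ does not have the order property in $(X,\cF;\in)$'', but these are different conditions. The order property, which is what Shelah's dichotomy concerns, amounts (by compactness) to the existence of arbitrarily long \emph{finite} half-graphs, equivalently an infinite one realized in a saturated elementary extension; it does not require an infinite half-graph whose points lie in $X$ and whose sets are actual members of $\cF$ or of the complement family. For instance, if $X=\omega$ is split into disjoint finite blocks of unbounded size and $\cF$ is the family of initial segments of the blocks (under fixed orderings), then $x\in y$ has the order property, yet neither orientation of the infinite configuration exists inside the structure, since all the $B_j$ (resp.\ all the $a_i$ with $i\geq 1$) would be forced into a single finite block. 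So from ``no infinite staircase inside $(X,\cF)$'' you cannot conclude stability, and the type-counting step is unavailable; what your argument actually yields is only that $|\cF|>|X|$ forces instability of $x\in y$, i.e.\ arbitrarily long finite staircases, or an infinite one in an elementary extension in which the $B_j$ are realizations of types rather than elements of $\cF$ and the $a_i$ need not lie in $X$. Producing the infinite staircase \emph{inside} the given structure is exactly the content of the fact, and is what the paper needs (in Theorem \ref{thm:FOP2dich}$(b)$ it is applied to actual indices below $\kappa$ and actual sets $S_\gamma$); the implication you would need, ``no infinite staircase in $(X,\cF)$ implies at most $|X|$ traces on $X$'', is precisely the contrapositive of the fact itself, so nothing has been reduced.

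The fallback sketch does not repair this, because after the first step the hypothesis $|\cF|>|X|$ does no work. The relation $|X|^+\to(\omega)^2_4$ you invoke holds for every infinite cardinal simply by restricting the colouring to a countable set of indices (and nothing like $|X|^+\to(\omega_1)^2_4$ is available in $\mathsf{ZFC}$: for $|X|=\aleph_0$ it fails by Sierpi\'{n}ski), so from the monochromatic set onward you are reasoning about an arbitrary countable subfamily of distinct sets equipped with pairwise separators---data available for \emph{any} infinite $\cF$. Since the conclusion fails for some infinite families with $|\cF|\leq |X|$ (for $\cF$ the singletons of $\omega$, neither orientation of the staircase exists), no amount of further Ramsey ``coordination'' of the pairwise separators can close the argument as outlined. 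Concretely, the homogeneous outcome can be degenerate: the separator $x_{\alpha,\beta}$ may lie in none (or in all) of the other sets of the subfamily, in which case the pattern one extracts is the diagonal $a_i\in B_j\Leftrightarrow i=j$ (or its complement), which contains no infinite half-graph in either orientation; your sketch never addresses these colours. Ruling them out is where $|\cF|>|X|$ must enter essentially---known arguments, for example, fix a subfamily $\cB\seq\cF$ of size at most $|X|$ realizing every trace of a member of $\cF$ on a finite subset of $X$ and play a set $S_*\in\cF\setminus\cB$ against it---and your outline has no step in which the hypothesis does comparable work.
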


We can now give our type-counting dichotomy for $\FOP_2$, which will use the following notation. Let $\varphi(x_1,x_2,x_3)$ be an $\cL$-formula. Given $1\leq i\leq 3$ and a set $A\seq\prod_{j\neq i}\M^{x_j}$, we have the local type space $S^\varphi_{x_i}(A)$.  Given $B\seq \M^{x_i}$, we let $S^\varphi_{x_i}(A| B)$ be the set of types in $S^\varphi_{x_i}(A)$ realized in $B$.

\begin{theorem}\label{thm:FOP2dich}
Let $\varphi(x,y,z)$ be a formula, and fix an infinite cardinal $\kappa$.
\begin{enumerate}[$(a)$]
\item Suppose $\varphi(x,y,z)$ has $\FOP_2$ in $T$ and $\lambda\geq\ded(\kappa)$. Then there is some $a\in\M^x$ and mutually indiscernible sequences $I\seq \M^y$ and $J\seq\M^z$ such that $I$ is indexed by $\lambda$, $J$ is indexed by a linear order of size $\kappa$, and for all $\ell<\lambda$,
\[
|S^\varphi_{y}(A|I_{>\ell})|\geq\ded(\kappa),
\]
where $A=\{a\}\times J$.
\item Suppose $\varphi(x,y,z)$ is $\NFOP_2$ in $T$ and $\lambda>\kappa^{\aleph_0}$ is regular. Then for any element $a\in \M^{x}$, any  $\lambda$-indexed sequence $I$ from $\M^y$, and any set $J\subset\M^z$ of size at most $\kappa$, if $I$ is indiscernible over $J$ then  for some $\ell<\lambda$,
\[
|S^\varphi_y(A|I_{>\ell})|\leq\kappa,
\]
where $A=\{a\}\times J$.
\end{enumerate}
\end{theorem}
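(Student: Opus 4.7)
For part (a), my plan is to begin with the sharpened $\FOP_2$ witness provided by Proposition~\ref{prop:area}, stretch the indices by compactness, and then read off the type-count from the density of a small linear order inside a larger one. Concretely, fix a linear order $L^*$ of cardinality $\ded(\kappa)$ with a dense subset $L\seq L^*$ of cardinality $\kappa$. Compactness applied to the EM-type of the witness of Proposition~\ref{prop:area} furnishes mutually indiscernible sequences $I=(b^1_i)_{i<\lambda}$ in $\M^y$ and $J=(b^2_j)_{j\in L}$ in $\M^z$ (so $|J|=\kappa$) together with elements $a_f\in\M^x$ for every $f\colon\lambda\to L^*$ satisfying $\M\models\varphi(a_f,b^1_i,b^2_j)\iff j<f(i)$ in $L^*$. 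Since $\lambda\geq\ded(\kappa)=|L^*|$, I partition $\lambda$ into $\ded(\kappa)$-many cofinal pieces $\{A_t\}_{t\in L^*}$ and pick $f$ so that $f^{-1}(t)=A_t$; then $f(I_{>\ell})=L^*$ for every $\ell<\lambda$. Setting $a=a_f$, the $\varphi$-type of $b^1_i$ over $\{a\}\times J$ is encoded by the cut $\{j\in L:j<f(i)\}$, and density of $L$ in $L^*$ forces distinct values of $f(i)$ to yield distinct cuts. Hence $|S^\varphi_y(A|I_{>\ell})|\geq |f(I_{>\ell})|=\ded(\kappa)$, as required.

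For part (b), my plan is to argue by contradiction: assume $|S^\varphi_y(A|I_{>\ell})|>\kappa$ for every $\ell<\lambda$ and derive that $\varphi$ has $\FOP_2$. The first step is to extract a family of size greater than $\kappa$ consisting of distinct $\varphi$-types over $A$ that are realized cofinally in $I$, using regularity of $\lambda>\kappa^{\aleph_0}$ together with the fact that each such type corresponds to a subset of $J$. Applying the Erd\H{o}s--Makkai theorem (Fact~\ref{fact:EM}) to this family of $>\kappa$ subsets of $J$ yields sequences $(c_m)_{m<\omega}\seq J$ and cofinally realized types $(p_n)_{n<\omega}$ with $c_m\in S_{p_n}\iff m\leq n$, after possibly replacing $\varphi$ by $\neg\varphi$ using Corollary~\ref{cor:easyFOPk}. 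Picking realizations $b^*_n=b_{i_n}$ of $p_n$ with $i_0<i_1<\cdots$ (possible by cofinal realizability), the sequence $(b^*_n)$ is a $J$-indiscernible subsequence of $I$ and satisfies $\varphi(a,b^*_n,c_m)\iff m\leq n$.

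The final and main step is to convert this order-property pattern into an $\FOP_2$ witness for $\varphi$. My approach is to perform a further Ramsey extraction on $(c_m)$, delivering mutual indiscernibility of $(b^*_n)$ and $(c_m)$ while preserving the pattern, and then to run a compactness argument over $\M$ that uses the automorphisms induced by mutual indiscernibility to shift $a$ into a full array $(a_f)$ realizing the $\FOP_2$ pattern. I expect this to be the hardest part of the argument, because order-preserving automorphisms of the indiscernible sequences only supply $a_f$ for order-preserving $f\colon\omega\to\omega$; handling arbitrary $f$ will require combining the Erd\H{o}s--Makkai output with an ``inversion'' of the compactness construction underlying Proposition~\ref{prop:area}, so as to re-assemble a standard $\FOP_2$ witness from the configuration extracted in the tails of $I$. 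The role of the bound $\lambda>\kappa^{\aleph_0}$ should be to guarantee, via pigeonhole on countable-parameter $\varphi$-types, that the extracted cofinal family is rich enough to survive this reassembly.
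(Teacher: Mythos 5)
Your part (a) is essentially the paper's argument: stretch the witness from Proposition~\ref{prop:area} by compactness over a dense pair $L\seq L^*$, choose $f$ so that every fiber is cofinal in $\lambda$, and separate the resulting types by cuts using density. The only deviations (letting $f$ take values in all of $L^*$ rather than $L^*\setminus L$, and using a strict inequality) are harmless.

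Part (b) has a genuine gap, in two places. First, the opening extraction is unjustified: from the assumption that $|S^\varphi_y(A|I_{>\ell})|>\kappa$ for every $\ell<\lambda$ you cannot conclude that more than $\kappa$ types are realized \emph{cofinally} in $I$. The $\varphi$-types over $A$ correspond to subsets of $\kappa$, so there may be as many as $2^\kappa$ of them, and $\lambda>\kappa^{\aleph_0}$ does not give $\lambda>2^\kappa$; in particular nothing rules out that the sets $S_i=\{j<\kappa:\M\models\varphi(a,b_i,c_j)\}$ are pairwise distinct, in which case every tail realizes $\lambda>\kappa$ types but no type is realized cofinally, and your Erd\H{o}s--Makkai step has nothing to apply to. The paper never asks for cofinal realizability: it applies Fact~\ref{fact:EM} separately inside each tail $I_{>\ell}$, $\lambda$ many times, using regularity of $\lambda$ only to keep the supremum of previously used indices below $\lambda$.

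Second, and more seriously, the ``final and main step'' you defer is exactly the content of the theorem, and the mechanism you sketch cannot supply it. What you have at that point is a single half-graph $\varphi(a,b^*_n,c_m)\iff m\le n$ over the fixed parameter $a$, with $(b^*_n)$ a subsequence of the $J$-indiscernible sequence $I$. Automorphisms fixing $J$ (which is all that indiscernibility gives you, since nothing is indiscernible over $a$) can only re-index $(b^*_n)$ in an order-preserving way, so they produce parameters $a_f$ only for monotone $f$; $\FOP_2$ requires arbitrary $f\colon\omega\to\omega$, and no further Ramsey extraction on $(c_m)$ helps, because in a single half-graph each ``height'' is occupied by exactly one $b^*_n$. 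The device the paper uses instead is a $\lambda$-indexed family of pairwise $I$-ordered blocks $g_\alpha\colon\omega\to\lambda$, each carrying a full $\omega\times\omega$ half-graph against its own $h_\alpha\colon\omega\to\kappa$; the hypothesis $\lambda>\kappa^{\aleph_0}$ is used precisely to find infinitely many blocks sharing one common $h$ (not, as you guess, to enrich a cofinal family of types), and then an arbitrary $f$ is encoded with the \emph{same} $a$ by selecting from the $m$-th block the element of height $f(m)$: the resulting subsequence is increasing in $I$, so $J$-indiscernibility of $I$ moves it onto $I$ itself and moves $a$ to the required $a_f$. Your sketch has no analogue of this block structure, and without it the passage from the extracted half-graph to $\FOP_2$ does not go through.
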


\begin{proof}
Part $(a)$. Let $(L^*,<)$ be a linear order of size $\ded(\kappa)$ with a dense subset $L\seq L^*$ of size $\kappa$. By Proposition \ref{prop:area} and compactness, there is a sequence $(a_f)_{f:\lambda\to L^*}$ from $\M^{x}$ and mutually indiscernible sequences $(b_i)_{i<\lambda}$ in $\M^{y}$ and $(c_i)_{i\in L^*}$ in $\M^{z}$ such that
\[
\M\models \varphi(a_f,b_{i},c_{j})\miff j\leq f(i).
\]
Let $I=(b_i)_{i<\lambda}$ and let $J$ be the subtuple $(c_i)_{i\in L}$ of $(c_i)_{i\in L^*}$. 

Since $\lambda\geq\ded(\kappa)=|L^*\backslash L|$, we can partition $\lambda=\bigcup_{\alpha\in L^*\backslash L}X_\alpha$ so that each $X_\alpha$ has cardinality $\lambda$, and thus is cofinal in $\lambda$.
From this we can define a function $f\colon \lambda \to L^*\backslash L$ so that for all $i<\lambda$, $i\in X_{f(i)}$. Set $a=a_f$ and $A=\{a\}\times J$. We fix $\ell<\lambda$ and show that $|S^\varphi_{y}(A|I_{>\ell})|\geq\ded(\kappa)$. For each $\alpha\in L^*\backslash L$, since $X_\alpha$ is cofinal in $\lambda$, we may choose some $i_\alpha\in X_\alpha$ with $i_\alpha>\ell$. Set $p_\alpha=\tp^{\varphi}_{y}(b_{i_\alpha}/A)$. Then $p_\alpha\in S^\varphi_{y}(A|I_{>\ell})$ for all $\alpha\in L^*\backslash L$. So it suffices to fix distinct $\alpha,\beta \in L^*\backslash L$, and show $p_\alpha\neq p_\beta$. Without loss of generality, assume $\alpha<\beta$. Fix $j\in L$ such that $\alpha<j<\beta$.  Then
\[
f(i_\alpha)=\alpha<j<\beta=f(i_\beta),
\]
and so $\M\models \varphi(a,b_{i_\alpha},c_{j})\wedge\neg\varphi(a,b_{i_\beta},c_{j})$. Since $(a,c_{j})\in A$, it follows that $p_\alpha\neq p_\beta$, as desired.

Part $(b)$. We prove the contrapositive. Suppose there is some $a\in\M^x$, a $\lambda$-indexed sequence $I$ from $\M^y$, and 
 a set $J\subset\M^z$ of size at most $\kappa$, such that $I$ is $J$-indiscernible and $|S^\varphi_y(A|I_{>\ell})|>\kappa$ for  all $\ell<\lambda$, where $A=\{a\}\times J$. Let $I=(b_i)_{i<\lambda}$ and $J=\{c_i:i<\kappa\}$.

Given $i<\lambda$, let $S_i=\{j<\kappa:\M\models\varphi(a,b_i,c_j)\}$. Note that if $i,i'<\lambda$, then $S_i=S_{i'}$ if and only if $\tp^\varphi_y(b_i/A)=\tp^\varphi_y(b_{i'}/A)$.

We inductively define sequences $(g_\alpha)_{\alpha<\lambda}$ and $(h_\alpha)_{\alpha<\lambda}$ satisfying the following properties:
\begin{enumerate}[$(1)$]
\item Each $g_\alpha$ is a function from $\omega$ to $\lambda$, and each $h_\alpha$ is a function from $\omega$ to $\kappa$.
\item For all $\alpha<\lambda$, one of the following hold.
\begin{enumerate}[$(a)$]
\item For all $m,n<\omega$, $\M\models \varphi(a,b_{g_\alpha(m)},c_{h_\alpha(n)})$ if and only if $n\leq m$. 
\item For all $m,n<\omega$, $\M\models\neg\varphi(a,b_{g_\alpha(m)},c_{h_\alpha(n)})$ if and only if $n\leq m$.
\end{enumerate}
\item If $\alpha<\alpha'<\lambda$ then $g_\alpha(m)<g_{\alpha'}(n)$ for all $m,n<\omega$.  
\end{enumerate}

Fix $\alpha<\lambda$ and suppose $g_{\alpha'}$ and $h_{\alpha'}$ have been defined for all $\alpha'<\alpha$. Define
\[
\ell=\sup\{g_{\alpha'}(n):{\alpha'}<\alpha,~n<\omega\}.
\]
Then $\ell<\lambda$ since $\lambda$ is regular and uncountable. So  $|S^\varphi_y(A|I_{>\ell})|>\kappa$ by assumption. Therefore, if  $\cF=\{S_\gamma:\ell<\gamma<\lambda\}$ then $\cF\seq\cP(\kappa)$ and $|\cF|>\kappa$. So by Fact \ref{fact:EM}, there are $g\colon \omega\to \cF$  and $h\colon \omega\to \kappa$ such that either:
\begin{enumerate}[$(i)$]
\item  for all $i,j<\omega$, $h(i)\in g(j)$ if and only if $i\leq j$, or 
\item for all $i,j<\omega$, $h(i)\not\in g(j)$ if and only if $i\leq j$.
\end{enumerate}
Let $h_\alpha=h$. Define $g_\alpha\colon\omega\to \lambda$ so that $g_\alpha(n)=\min\{\ell<\gamma<\lambda: g(n)=S_{\gamma}\}$. Note that by definition, for each $n<\omega$, $\ell<g_{\alpha}(n)<\lambda$  and $g(n)=S_{g_\alpha(n)}$. Then in case $(i)$, we have that for all $m,n<\omega$, 
\[
\M\models \varphi(a,b_{g_\alpha(m)},c_{h_\alpha(n)})\miff h_\alpha(n)\in S_{g_\alpha(m)}\miff h(n)\in g(m)\miff n\leq m, 
\]
and so property $(2a)$ above holds. 
Similarly, in case $(ii)$, we obtain property $(2b)$. 
This finishes our construction of the sequences.  Let $X_{(a)}$ be the set of $\alpha<\lambda$ where $(2a)$ holds, and let $X_{(b)}$ be the set of $\alpha<\lambda$ where $2(b)$ holds. At least one of these sets has size $\lambda$. Define
\[
(\psi,X)=\begin{cases}
    (\varphi,X_{(a)}) & \text{if $|X_{(a)}|=\lambda$, and}\\
    (\neg\varphi, X_{(b)}) & \text{if $|X_{(a)}|<\lambda$.}
\end{cases}
\]
Then we have  $|X|=\lambda$, and for all $\alpha\in X$ and  $m,n < \omega$, $\M\models \psi(a,b_{g_\alpha(m)},c_{h_\alpha(n)})$ if and only if $n\leq m$. 

 Recall that each $h_\alpha$ is a function from $\omega$ to $\kappa$. Since $\lambda>\kappa^{\aleph_0}$, there is an infinite subset $Y\seq X$ and a function $h\colon\omega\to \kappa$ such that $h_\alpha=h$ for all $\alpha\in Y$. So for all $\alpha\in Y$ and $m,n<\omega$, 
 \[
 \M\models \psi(a,b_{g_\alpha(m)},c_{h(n)})\miff n\leq m.
 \]
For each $m< \omega$, let $c_m^*=c_{h(m)}$.  Let $(\alpha_m)_{m<\omega}$ be an increasing sequence from $Y\seq\lambda$.    

Now fix some function $f\colon \omega\to\omega$. For each  $m<\omega$, define $i_m=g_{\alpha_m}(f(m))$. Note that if $m<m'<\omega$ then $i_m<i_{m'}$ by property $(3)$ above. For each $m<\omega$, set $b^f_m=b_{i_m}$. Observe that for any $m,n<\omega$, we have
\[
\M\models\psi(a,b^f_m,c^*_n)\miff \M\models\psi(a,b_{g_{\alpha_m}(f(m))},c_{h(n)})\miff n\leq f(m).
\]
Recall that $I$ is $J$-indiscernible, and so $(b^f_m)_{m<\omega}\equiv_J (b_m)_{m<\omega}$. Choose $a_f\in\M^x$ such that $a(b^f_m)_{m<\omega}\equiv_J a_f(b_m)_{m<\omega}$. 

Finally, given $f\colon\omega\to\omega$ and $m,n<\omega$, we have
\[
\M\models\psi(a_f,b_m,c^*_n)\miff \M\models \psi(a,b^f_m,c^*_n)\miff n\leq f(m).
\]
So $\psi(x,y,z)$ has $\FOP_2$ in $T$, and hence $\varphi(x,y,z)$ has $\FOP_2$ in $T$ (using Corollary \ref{cor:easyFOPk} in the case that $\psi$ is $\neg\varphi$). 
\end{proof}

As an application of Theorem \ref{thm:FOP2dich}$(a)$, we establish a stable/$\NFOP_2$ ``composition lemma", which says that if one starts with a formula in a stable structure and replaces the free variables by arbitrary binary functions in variables among some fixed $x,y,z$,  then the resulting formula is $\NFOP_2$. This is an analogue of the $\NIP/\NIP_2$ composition lemma proved by Chernikov and Hempel in \cite{CH}. The proof structure  is very much the same as that of \cite{CH}, except  we use Theorem \ref{thm:FOP2dich} (in place of an analogous dichotomy for $\IP_2$) and the fact that if $\psi(x,y)$ is a stable formula then the local type space $S_\psi(B)$ has size at most $|B|+\aleph_0$ for any parameter set $B$ (i.e., the converse of Fact \ref{fact:EM}). We also note that, unlike the $\IP_2$ case, our proof does not require a  set-theoretic absoluteness argument because the relevant cardinal gap, namely $\kappa<\ded(\kappa)$, is true in $\mathsf{ZFC}$  (and we only need it for $\kappa=\aleph_0$).

\begin{theorem}\label{thm:FOP2comp}
Suppose $\cL_0\seq\cL$ is such that $T{\upharpoonright}\cL_0$ is stable. Let $\theta(w_1,\ldots,w_n)$ be an $\cL_0$-formula. Fix disjoint tuples of variables $x,y,z$ and, for each $1\leq i\leq n$,  let $f_i(u_i)=w_i$ be an $\cL$-definable function, where $u_i$ is an ordered pair from $\{x,y,z\}$. Then the $\cL$-formula
\[
\varphi(x,y,z)\coloneqq \theta(f_1(u_1),\ldots,f_n(u_n))
\]
is $\NFOP_2$ in $T$.
\end{theorem}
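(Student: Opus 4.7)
The plan is to argue by contradiction, using the $\NFOP_2$ type-counting dichotomy (Theorem \ref{thm:FOP2dich}(a)) in place of the $\NIP_2$ one, and stability of $\theta$ in place of $\NIP$, adapting the strategy of the $\NIP/\NIP_2$ composition lemma of Chernikov--Hempel \cite{CH}.

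Assume $\varphi(x,y,z)$ has $\FOP_2$ in $T$. Apply Theorem \ref{thm:FOP2dich}(a) with $\kappa=\aleph_0$ and $\lambda=\ded(\aleph_0)=2^{\aleph_0}$ to obtain $a\in\M^x$ and mutually indiscernible sequences $I=(b_i)_{i<\lambda}\seq\M^y$ and $J=(c_j)_{j<\omega}\seq\M^z$ with $|S^\varphi_y(\{a\}\times J\mid I_{>\ell})|\geq 2^{\aleph_0}$ for every $\ell<\lambda$. Grouping the $f_i$'s according to which of $x,y,z$ appear in $u_i$ (here the hypothesis that no $u_i$ involves all three variables is essential), the composition $\varphi(x,y,z)=\theta(f_1(u_1),\ldots,f_n(u_n))$ decomposes as
\[
\varphi(a,b_i,c_j)\Leftrightarrow \theta(\bar d,\bar\alpha_i,\bar\gamma_j,\bar\beta_{ij}),
\]
where $\bar d=\bar d(a)$, $\bar\alpha_i=\bar\alpha(a,b_i)$, $\bar\gamma_j=\bar\gamma(a,c_j)$, $\bar\beta_{ij}=\bar\beta(b_i,c_j)$. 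Since $T{\upharpoonright}\cL_0$ is stable, $\theta$ and every Boolean combination of its instances is a stable $\cL_0$-formula in $T$, and hence satisfies the local type-counting bound $|S_\psi(B)|\leq|B|+\aleph_0$ for every parameter set $B$.

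The next step is to use the subsets $S_i=\{j<\omega:\M\models\varphi(a,b_i,c_j)\}$ of $\omega$ (of which there are at least $2^{\aleph_0}$ realized on every tail of $I$) together with Fact \ref{fact:EM} (Erd\H os--Makkai, applied to $\cF=\{S_i\}\seq\cP(\omega)$, which satisfies $|\cF|>|\omega|$) to extract subsequences $(b_{i_m})_{m<\omega}$ of $I$ and $(c_{j_n})_{n<\omega}$ of $J$ such that, after possibly replacing $\theta$ by $\neg\theta$, $\M\models\varphi(a,b_{i_m},c_{j_n})\Leftrightarrow n\leq m$. Since $(b_{i_m})_m$ and $(c_{j_n})_n$ inherit mutual indiscernibility from $I,J$, we obtain a configuration
\[
\theta(\bar d,\bar\alpha(a,b_{i_m}),\bar\gamma(a,c_{j_n}),\bar\beta(b_{i_m},c_{j_n}))\;\Leftrightarrow\; n\leq m,
\]
in which the first three parameter slots depend on only one of $m,n$ each. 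The principal technical obstacle, shared with \cite{CH}, is now to eliminate the dependence of the mixed parameter $\bar\beta(b_{i_m},c_{j_n})$ on $m$, so as to produce a genuine order-property configuration for the $\cL_0$-formula $\theta$ and contradict its stability. The plan for this elimination is a compactness/Ramsey argument on the $\cL_0$-EM-type of the array $(\bar\beta(b_{i_m},c_{j_n}))_{m,n}$: using the mutual indiscernibility, pass to a sub-array on which the $\cL_0$-structure of the $\bar\beta$-array is homogeneous enough that, by local stability of a suitable Boolean combination of $\theta$-instances, the truth value of $\theta(\bar d,\bar\alpha(a,b_{i_m}),\bar\gamma(a,c_{j_n}),\bar\beta(b_{i_m},c_{j_n}))$ agrees with that of $\theta(\bar d,\bar\alpha(a,b_{i_m}),\bar\gamma(a,c_{j_n}),\bar\beta(b_{i_{m_0}},c_{j_n}))$ for a fixed reference index $m_0$ and all sufficiently late $m,n$; with the $\bar\beta$-parameter now depending only on $n$, the configuration witnesses the order property for $\theta$ in $T{\upharpoonright}\cL_0$, the desired contradiction. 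Unlike the $\IP_2$ analogue in \cite{CH}, the cardinal gap $\aleph_0<\ded(\aleph_0)=2^{\aleph_0}$ exploited here is a theorem of $\mathsf{ZFC}$, so no set-theoretic absoluteness argument is required.
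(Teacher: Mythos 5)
Your proposal follows the paper only up to the invocation of Theorem \ref{thm:FOP2dich}$(a)$; after that it diverges, and the divergence contains a genuine gap. The crux of the composition lemma is precisely the mixed term $\bar\beta(b_m,c_n)=f_1(b_m,c_n)$, and your plan to handle it --- pass to a sub-array where the $\cL_0$-EM-type of $(\bar\beta(b_{i_m},c_{j_n}))_{m,n}$ is homogeneous and then argue ``by local stability of a suitable Boolean combination of $\theta$-instances'' that one may replace $\bar\beta(b_{i_m},c_{j_n})$ by $\bar\beta(b_{i_{m_0}},c_{j_n})$ without changing the truth value of $\theta$ --- is asserted, not proved, and it is exactly the step that fails to be automatic. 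Freezing the $m$-coordinate of the mixed parameter is not a general feature of stable $\theta$ composed with binary functions on a half-graph array (e.g.\ for $\theta(w_1,w_2):=w_1=w_2$ and $\varphi:=\beta(x,z)=\beta(y,z)$ with $\beta$ a generic bilinear form, one can easily build configurations where $\theta$ holds iff $n\leq m$ but any such substitution destroys the pattern), so some additional structural input specific to your configuration is needed, and your sketch does not supply it. Relatedly, your use of Theorem \ref{thm:FOP2dich}$(a)$ is essentially wasted: after Erd\H{o}s--Makkai you retain only a single half-graph $\M\models\varphi(a,b_{i_m},c_{j_n})\Leftrightarrow n\leq m$, which already follows directly from the definition of $\FOP_2$ (instantiate $x$ at $a_f$ for $f$ the identity), so the cardinality gap $\aleph_0<\ded(\aleph_0)$ --- the engine of the paper's contradiction --- never actually enters your argument. (Two smaller issues: subsequences extracted by Erd\H{o}s--Makkai need not inherit mutual indiscernibility unless you re-enumerate in an order-compatible way, which may scramble the half-graph; and $\lambda=2^{\aleph_0}$ need not be regular, which matters if one later needs tail-indiscernibility as the paper does with $\lambda=(2^{\aleph_0})^+$.)

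For contrast, the paper never tries to manufacture an order-property configuration for $\theta$. After reducing to $\varphi(x,y,z)=\theta(f_1(y,z),f_2(x,z),f_3(x,y))$, it runs a counting argument on the tail of $I$: setting $r_i=(f_1(b_i,c))_{c\in J}$ and $B=\{f_2(a,c):c\in J\}$, stability of $\theta$ gives the local bound $|S_{\theta^*}(r_iB)|\leq\aleph_0$, hence each family $\cF_i=\{J_{e,i}:e\in\M^z\}$ is countable (here the key trick is to replace the value $f_3(a,b_i)$ by a free parameter $e$, which decouples the problematic dependence); then NIP of $T{\upharpoonright}\cL_0$ and regularity of $\lambda=(2^{\aleph_0})^+$ give a tail on which $(r_i)$ is $\cL_0$-indiscernible over $B$, so $\cF_i$ is constant there; since $K_i=J_{f_3(a,b_i),i}$ determines $\tp^\varphi_y(b_i/\{a\}\times J)$, only countably many such types are realized on the tail, contradicting the $\ded(\aleph_0)$ lower bound from part $(a)$. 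If you want to salvage your route, you would need to prove the freezing step, and I see no way to do so that is simpler than the paper's counting argument.
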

\begin{proof}
Without loss of generality, assume $T$ is countable. By adding dummy variables and renaming the functions, we may assume each $u_i$ is one of $(x,y)$,  $(x,z)$, or $(y,z)$. Note  that if for some $i_1,\ldots, i_s\leq n$, we have $u_{i_1}=\ldots=u_{i_s}=u$, then we can combine $f_{i_1},\ldots, f_{i_s}$ into a single $\cL$-definable function $g(u)=(f_{i_1}(u),\ldots, f_{i_s}(u))=(w_{i_1},\ldots, w_{i_s})$ in the same tuple of independent variables. Therefore, by 
regrouping and renaming the  variables $w_1,\ldots,w_n$ in $\theta$, we may assume  $\theta$ is of the form $\theta(w_1,w_2,w_3)$ and  
\[
\varphi(x,y,z)\coloneqq \theta(f_1(y,z),f_2(x,z),f_3(x,y)).
\]

Toward a contradiction, suppose $\varphi(x,y,z)$ has $\FOP_2$ in $T$.  
 Let $a\in \M^{x}$, $I\seq\M^y$, and $J\seq\M^z$ be as in Theorem \ref{thm:FOP2dich}$(a)$ with $\kappa=\aleph_0$ and  $\lambda=(2^{\aleph_0})^+$. Recall that $I$ is indexed by $\lambda$ and  $J$ is countable. Enumerate $I=(b_i)_{i<\lambda}$ (so each $b_i$ is in $\M^{y}$). 
 
Recall that $J\seq\M^z$. Let $B=\{f_2(a,c):c\in J\}$ and note that $|B|\leq\aleph_0$. 
Since $I$ and $J$ are mutually indiscernible and $f_1$ is definable, the  array $(f_1(b_i,c))_{c\in J,i<\lambda}$ is indiscernible. So if we let $r_i$  be the sequence $(f_1(b_i,c))_{c\in J}$, then the sequence $(r_i)_{i<\lambda}$ is indiscernible. Note also that each $r_i$ is countable.

 Given $e\in \M^{z}$ and $i<\lambda$, set
\[
J_{e,i}=\{c\in J:\M\models\theta(f_1(b_i,c),f_2(a,c),e)\}.
\]
For $i<\lambda$, set $\cF_i=\{J_{e,i}:e\in \M^{z}\}$.\medskip

\noindent\emph{Claim 1:} For all $i<\lambda$, $|\cF_i|\leq\aleph_0$.

\noindent\emph{Proof:}  Let $\theta^*(w_3;w_1,w_2)$ be $\theta(w_1,w_2,w_3)$. Fix $i<\lambda$. For any $e,e'\in \M^{z}$, if $\tp_{\theta^*}(e/r_iB)=\tp_{\theta^*}(e'/r_iB)$ then $J_{e,i}=J_{e',i}$. So we have a surjective map from $S_{\theta^*}(r_iB)$ to $\cF_i$. Since $|r_iB|\leq\aleph_0$ and $\theta^*$ is stable, it follows that $|\cF_i|\leq\aleph_0$.\clqed\medskip

 Recall that $(r_i)_{i<\lambda}$ is indiscernible. Since $T{\upharpoonright}\cL_0$ is stable (and countable) and $\lambda>
 \aleph_0$ is regular, there is  $\ell<\lambda$ such that $(r_i)_{i>\ell}$ is $\cL_0$-indiscernible over $B$.\footnote{See the ``Claim" in the proof of \cite[Proposition 2.11]{SiNIP}; this only requires $T{\upharpoonright}\cL_0$ to be NIP.}   \medskip

\noindent\emph{Claim 2:} If $\ell< i,j< \lambda$ then $\cF_i=\cF_j$. 

\noindent\emph{Proof:} Fix $\ell< i,j<\lambda$. Without loss of generality, it suffices to show $\cF_i\seq\cF_j$. By $\cL_0$-indiscernibility, there is some $\sigma\in\Aut_{\cL_0}(\M/B)$ such that $\sigma(r_i)=r_j$. Fix $e\in \M^{z}$. Given $c\in J$, we have
\begin{align*}
c\in J_{e,i} &\miff \M\models\theta(f_1(b_i,c),f_2(a,c),e) &\text{(by definition of $J_{e,i}$)}\\
 &\miff \M\models\theta(\sigma(f_1(b_i,c)),\sigma(f_2(a,c)),\sigma(e)) &\text{(since $\sigma$ preserves $\theta$)}\\
 &\miff \M\models\theta(\sigma(f_1(b_i,c)),f_2(a,c),\sigma(e)) &\text{(since $f_2(a,c)\in B$)}\\
 &\miff \M\models \theta(f_1(b_j,c),f_2(a,c),\sigma(e)) &\text{(since $\sigma(r_i)=r_j$)}\\
 &\miff c\in J_{\sigma(e),j} & \text{(by definition of $J_{\sigma(e),j}$).}
\end{align*}
So $J_{e,i}=J_{\sigma(e),j}\in \cF_j$. Therefore $\cF_i\seq \cF_j$, as desired.\clqed\medskip

By the second claim, we can define $\cF\coloneqq\cF_i$ for  $\ell< i<\lambda$. Note $|\cF|\leq\aleph_0$ by the first claim. Given $i<\lambda$, let $K_i=\{c\in J:\M\models \varphi(a,b_i,c)\}$. For any $i<\lambda$ we have $K_i=J_{f_3(a,b_i),i}$. Thus if $\ell<i<\lambda$,  then $K_i\in\cF$. Set $A=\{a\}\times J$. Since $K_i$ determines $\tp^{\varphi}_{y}(b_i/A)$, it follows that $|S^{\varphi}_{y}(A|I_{>\ell})|\leq|\cF|\leq\aleph_0$, which contradicts Theorem \ref{thm:FOP2dich}$(a)$.
\end{proof}

\begin{remark}
As noted in the proof, the previous theorem holds under the weaker assumption that $T{\upharpoonright}\cL_0$ is NIP and $\theta(w_1,\ldots,w_n)$ is stable under any bi-partition of the variables.
\end{remark}

Finally, we remark that an alternate ternary analogue of stability was proposed by Takeuchi \cite{Tak} in a talk at the 2017 Asian Logic Conference in Daejeon, South Korea. We will refer to this  as the \emph{increasing functional order property} ($\text{IFOP}_2$).\footnote{The slides of Takeuchi's talk use the acronym OP$_2$, which has also been used by other authors. However, we will use $\text{IFOP}_2$ to draw contrast with $\FOP_2$, and to emphasize the lack of canonicity of the definition of higher order stability.} 

\begin{definition}[Takeuchi]\label{def:increasingfop}
Let $\omega^{\omega\uparrow}$ denote the set of weakly increasing functions from $\omega$ to $\omega$. 
A formula $\varphi(x_1,x_2,x_3)$  has $\bm{\mathrm{IFOP}_2}$ in $T$ if there are sequences $(a_f)_{f\in\omega^{\omega\uparrow}}$, $(b^1_i)_{i<\omega}$, and $(b^2_i)_{i<\omega}$, such that each $a_f\in\M^{x_1}$, each $b^t_i\in \M^{x_{t+1}}$, and 
\[
\M\models\varphi(a_f,b^1_{i_1},b^2_{i_2})\miff i_2\leq f(i_1).
\]
\end{definition}

 Some basic comparisons between $\textnormal{IFOP}_2$ and $\FOP_2$ are carried out in \cite{TW1}. For example, Proposition A.20 of \cite{TW1} provides a recharacterization of $\textnormal{IFOP}_2$ analogous to Proposition \ref{prop:alleq0} in which one insists that the functions $f\colon \omega^2\rightarrow \omega$ are weakly increasing in the second coordinate. It is clear from the definition that $\FOP_2$ implies $\textnormal{IFOP}_2$. In forthcoming work, the authors show the notions are distinct by exhibiting a complete theory that is $\textnormal{IFOP}_2$ but not $\FOP_2$. This involves a similar characterization of IFOP$_2$ using indiscernible collapse. In particular, one can then use this to establish preservation of NIFOP$_2$ by Boolean combinations (which is one of the results stated in \cite{Tak}).  Finally, we note that there are many potential generalizations of $\textnormal{IFOP}_2$ to $k>2$, since one must make a choice for what it means for a function $f\colon\omega^k\rightarrow \omega$ to be ``increasing."

\section{$\NFOP_k$ and collapsing indiscernibles: Part I}\label{sec:Fraisse}

In this section we show that $\FOP_k$ can be characterized (at the level of formulas) by the ability to code a certain Fra\"{i}ss\'{e} structure (in particular, a $(k+1)$-partite $(k+1)$-uniform hypegraph engineered to be a higher arity generalization of a dense linear order with a dense co-dense predicate). We will also show that this structure satisfies the properties necessary for an application of generalized indiscernibles later in the paper.

\subsection{Preliminaries on generalized indiscernibles}

We begin with some background material on generalized indiscernibles from \cite{Scow1, Scow2}. 

\begin{definition}\label{def:genind}
Let $T$ be a complete $\cL$-theory with monster model $\M$. 
Suppose also that $\cL_0$ is another first-order language and $\cI$ is an $\cL_0$-structure. Let  $(a_i)_{i\in I}$ be a  sequence in $\M^{<\omega}$. 
\begin{enumerate}[$(1)$]
\item  Given $\cL'\seq\cL_0$, we say that $(a_i)_{i\in I}$ is  \textbf{$\cL'$-indiscernible in $\M$} if, for all $n\geq 1$ and $i_1,\ldots, i_n,j_1,\ldots, j_n\in I$, 
\begin{multline*}
\hspace{10pt}\qftp_{\cL'}^{\calI}(i_1,\ldots, i_n)=\qftp_{\cL'}^{\calI}(j_1,\ldots, j_n) \\
\mimp \tp^T_{\cL}(a_{i_1},\ldots,a_{i_n})=\tp^T_{\cL}(a_{j_1},\ldots,a_{j_n}).
\end{multline*}
Note that this implies that for $i,j\in I$, if $\qftp^{\cI}_{\cL'}(i)=\qftp^{\cI}_{\cL'}(j)$ then $|a_i|=|a_j|$.
\item We say that $(a_i)_{i\in I}$ is an \textbf{$\cI$-indexed indiscernible sequence in $\M$} if it is $\cL_0$-indiscernible in $\M$ (for brevity, we often just say  $(a_i)_{i\in I}$ is \textbf{$\cI$-indiscernible}).
\item Suppose $\calJ$ is another $\calL_0$-structure and $(b_j)_{j\in J}$ is another sequence in $\M^{<\omega}$.  We say $(b_j)_{j\in J}$ is \textbf{locally based on $(a_i)_{i\in I}$} if for any finite set $\Delta$ of $\calL$-formulas, any $n\geq 1$, and any $j_1,\ldots, j_n\in J$, there are $i_1,\ldots, i_n\in I$ such that $\qftp_{\calL_0}^{\calJ}(j_1,\ldots, j_n)=\qftp_{\calL_0}^{\calI}(i_1,\ldots, i_n)$ and $\tp^T_{\Delta}(b_{j_1},\ldots, b_{j_n})=\tp^T_{\Delta}(a_{i_1},\ldots, a_{i_n})$.
\end{enumerate}
\end{definition}

\begin{remark}
 The reader may notice that our definition is slightly more general than what appears in \cite{Scow1}, in that we allow our indiscernibles to be tuples of any finite length.  However, the relevant theorem (Theorem \ref{thm:scow} below) is easily extended to this setting, for example by adding explicit new sorts for finite tuples.
\end{remark}

A crucial notion from the theory of generalized indiscernibles is the so-called modeling property.

\begin{definition}\label{def:modeling}
Suppose $\cL_0$ is a first-order language and $\cI$ is an $\cL_0$-structure.  We say  that $\cI$ has the \textbf{modeling property}\footnote{In \cite{Scow2}, this   is defined using the phrase ``$\cI$-indexed indiscernible sets have the modeling property".} if for any $\cL$-theory $T$ (in any language $\cL$) and any sequence $(a_i)_{i\in I}$ in  $\M^{<\omega}$, where $|a_i|$ depends only on the quantifier-free type of $i$, there exists an $\cI$-indexed indiscernible in $\M$ locally based on $(a_i)_{i\in I}$.
\end{definition}

In \cite{Scow1}, Scow proves a characterization of the modeling property in the case of a linearly ordered structure $\cI$ in a finite relational language. This characterization is in terms of the \emph{Ramsey property} for the age of $\cI$ (i.e., the class of all finite structures isomorphic to a substructure of $\cI$).

\begin{theorem}[Scow \cite{Scow1}]\label{thm:scow}
Suppose $\calL_0$ is a finite relational language containing $<$, and $\calI$ is a $\calL_0$-structure linearly ordered by $<$.  Then $\calI$ has the modeling property if and only if the age of $\cI$ has the Ramsey property. 
\end{theorem}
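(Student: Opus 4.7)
The plan is to treat the two implications separately and to identify precisely where Ramsey feeds into modeling and vice versa.

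For the easy direction, I would assume the age of $\calI$ has the Ramsey property and argue by compactness. Given any $\calL$-theory $T$ with monster $\M$ and any sequence $(a_i)_{i\in I}$ in $\M^{<\omega}$, I would write down a partial type $\Gamma$ in variables $(y_j)_{j\in I}$ expressing both $\calI$-indiscernibility and basedness on $(a_i)$; the latter is encoded, for each finite $\Delta$ and each $\bar j$, as a finite disjunction asserting that $\tp_\Delta(y_{\bar j})$ equals the $\Delta$-type of some $a_{\bar i}$ with $\qftp^\calI(\bar i)=\qftp^\calI(\bar j)$. Realizing a finite fragment of $\Gamma$ reduces, after fixing a finite substructure $A\leq\calI$ and a finite set $\Delta$ of $\calL$-formulas, to finding an embedding $\iota\colon A\to\calI$ such that for every quantifier-free type $\sigma$ realized in $A$, the assignment $\bar a\mapsto \tp_\Delta(a_{\iota(\bar a)})$ is constant on the $\sigma$-tuples of $\iota(A)$. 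Since only finitely many $\sigma$'s and finitely many $\Delta$-types are involved, I would iterate the Ramsey property finitely many times, building a tower $A=A_0\leq A_1\leq\cdots\leq A_r$ in the age, each $A_i$ chosen so that any finitely-valued coloring of its $\sigma_i$-tuples admits a copy of $A_{i-1}$ on which that coloring is constant; embedding $A_r$ into $\calI$ and peeling off copies produces the desired $\iota$.

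For the harder direction I would argue the contrapositive: assume Ramsey fails for some $A\leq B$ in the age and some $k$, and use a standard compactness argument in the finite relational language $\calL_0$ to obtain a bad $k$-coloring $\chi$ of the embeddings $A\to\calI$ admitting no $\chi$-monochromatic $B$-copy. Next, expand $\calI$ to an $\calL$-structure $\calM$ by adjoining $|A|$-ary predicates $P_1,\dots,P_k$ with $P_j(\bar i)$ declaring that $\bar i$ enumerates (in a fixed enumeration of $A$) a $\chi$-color-$j$ embedding of $A$. Applying the modeling property of $\calI$ inside a monster $\M\succeq\calM$ of $\Th(\calM)$ to the singleton sequence $a_i=i$ yields an $\calI$-indexed indiscernible $(b_i)_{i\in I}$ based on $(a_i)_{i\in I}$. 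Since all enumerated copies of $A$ in $\calI$ share a single qftp $\sigma_A$, $\calI$-indiscernibility of $(b_i)$ forces a unique color $j_0$ such that $\M\models P_{j_0}(b_{\bar j})$ for every enumeration $\bar j$ of an $A$-copy in $\calI$. Now fix any embedding $\iota\colon B\to\calI$, let $\bar j$ enumerate $\iota(B)$, and apply basedness at $\bar j$ with $\Delta=\{P_1,\dots,P_k\}$ to produce an embedding $\iota'\colon B\to\calI$ whose enumeration $\bar i$ matches the $\Delta$-type of $b_{\bar j}$. For every sub-embedding $\alpha\colon A\to B$, the corresponding $|A|$-sub-tuple of $b_{\bar j}$ satisfies $P_{j_0}$ by indiscernibility, hence so does the corresponding sub-tuple of $\bar i$, so $\chi(\iota'\circ\alpha)=j_0$ independently of $\alpha$. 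Thus $\iota'(B)$ is $\chi$-monochromatic, a contradiction.

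I expect the main obstacle to be the second direction, specifically the delicate coordination between (i) choosing the expansion of $\calI$ so that the coloring is visible to $\calL$-types of the indiscernible sequence at the correct arity, (ii) noting that constancy of $\sigma_A$ collapses $\calI$-indiscernibility of $(b_i)$ into a single color $j_0$, and (iii) invoking basedness on a $|B|$-tuple to transfer that color from the abstract indiscernible $(b_i)$ back to an actual $B$-copy in $\calI$ while preserving the coloring of every internal $A$-sub-copy. Keeping the type-level information about $(b_i)$ and the literal-level information about $(a_i)=(i)$ in sync at the level of sub-tuples is where I would budget the most care.
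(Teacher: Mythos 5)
The paper does not prove this statement at all: Theorem \ref{thm:scow} is quoted as a black-box citation to Scow, so there is no in-paper argument to compare against. Your proposal is, in essence, a correct reconstruction of the standard proof (and of Scow's own strategy): the Ramsey-to-modeling direction via the EM-type compactness argument, reducing finite satisfiability to finding an embedded copy of a finite index structure on which the $\Delta$-type coloring is constant on each quantifier-free type class, obtained by iterating the Ramsey property through a finite tower inside the age; and the modeling-to-Ramsey direction by the contrapositive, transferring a failure of Ramsey on the age to a bad coloring of $A$-copies of $\cI$ by propositional compactness, coding the coloring with predicates $P_1,\ldots,P_k$, and using an $\cI$-indexed indiscernible based on the tautological sequence to pull a monochromatic $B$-copy back into $\cI$. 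Two small points deserve tightening. First, in the second direction, $\cI$-indiscernibility alone only tells you that all $\sigma_A$-tuples $b_{\bar l}$ have the same type; that this type contains \emph{some} $P_j$ (i.e.\ that the disjunction $\bigvee_j P_j$ holds) comes from basedness, either by one application at an $A$-tuple or, as your final step implicitly does, by reading off the $A$-sub-tuples in the basedness application at the $B$-tuple; indiscernibility then gives uniformity of the color $j_0$. Second, you are tacitly using the rigidity supplied by the linear order: it is what makes every copy of $A$ (resp.\ $B$) in $\cI$ have a unique increasing enumeration realizing a single quantifier-free type $\sigma_A$ (resp.\ $\sigma_B$), so that colorings of embeddings, colorings of increasing tuples, and the predicates $P_j$ all match up; this hypothesis should be flagged where it is used, since the theorem fails without it. With those points made explicit, the argument goes through.
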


The definition of the Ramsey property is given in Section \ref{sec:ramsey}. 
 In fact, Scow has proved more general results along these lines (see \cite{Scow2}), however the theorem above suffices for our purposes.

The modeling property plays a crucial role in determining whether a given dividing line  can be characterized by collapsing indiscernibles via some indexing structure (depending only on the dividing line).  As is often the case with this type of result, we will use Theorem \ref{thm:scow}, together with the vast literature on Ramsey classes, to deduce that our desired index structure has the modeling property.

\subsection{Our index structure}\label{sec:Hk}

In this subsection, we define the class of finite structures whose Fra\"{i}ss\'{e} limit  will be used as the index model in our collapsing indiscernibles result. The definition of the class, which we will denote by $\bH_k$,  is motivated by characterization $(ii)$ of $\FOP_k$ in Proposition \ref{prop:alleq}. In particular, we start with a half-graph and  replace the linear order on one side by a linear order on a Cartesian product of $k$ sets. 

The proof that $\bH_k$ is a Fra\"{i}ss\'{e} class will be done in a more general setting, which will  clarify aspects of the argument, and is also of independent interest. The reader who does not wish to delve into these details, and is willing to take our word that $\bH_k$ is a Fra\"{i}ss\'{e} class, can safely ignore this general construction. In this case, the reader should skip to the definition of the language $\cL_k$  given just before Definition \ref{def:Tk}, and then take Definition \ref{def:Tk} and Proposition \ref{prop:Hkmod} as the definition of $\bH_k$. 

We now formulate a general way to perform the above-mentioned process of ``blowing up" a predicate into a Cartesian product to get a new Fra\"{i}ss\'{e} class from a base Fra\"{i}ss\'{e} class satisfying mild assumptions. We start with some notation.

Given a $t$-ary relation symbol $R(x_1,\ldots,x_t)$ and some $I\seq[t]$, we define a relation $R_I(\xbar_1,\ldots,\xbar_t)$ were $\xbar_i=x_i$ for each $i\not\in I$ and $\xbar_i=(x_{i,1},\ldots,x_{i,k})$ for each $i\in I$. In particular, $R_I$ is a new relation symbol with arity $k|I|+(t-|I|)$. Let $J$ denote the complement $[t]\backslash I$. Given sequences $(a_i)_{i\in I}$ and $(b_i)_{i\in J}$ of singletons, we let $a_I\oplus b_J$ denote the sequence $(c_i)_{i=1}^t$ where $c_i=a_i$ if $i\in I$ and $c_i=b_i$ if $i\in J$ (so $R(a_I\oplus b_J)$ is a well-defined statement). Analogously, if $\abar_I=(\abar_i)_{i\in I}$ is a tuple of $k$-tuples, we write $\abar_I\oplus b_J$ for the tuple $(\cbar_i)_{i=1}^{t}$ where $\cbar_i=\abar_i$ if $i\in I$ and $\cbar_i=b_i$ if $i\in J$ (so $R_I(\abar_I\oplus b_J)$ is a well-defined statement).

For the rest of this subsection, $\calL$ is a finite relational language containing a binary relation $<$ and a unary relation $U$.

\begin{definition}
Define 
$$
\calL_k(U)=\{P_1,\ldots, P_{k+1}, <\}\cup \{R_I: R(x_1,\ldots, x_t)\in \calL\backslash\{U\},~ I\subseteq[t]\},
$$
where each $P_i$ is a unary predicate, and each $R_I$ is as defined above.
\end{definition}

Our next definition shows us how to obtain an $\calL_k(U)$ structure from an $\calL$-structure.  Given sets $X$, $Y$, a function $f:X\rightarrow Y$, and a tuple $\xbar\in X^k$, let $f(\xbar)=(f(x_1),\ldots, f(x_k))$.  We will also use $Q$ for the $k$-ary relation $P_1\times\cdots\times P_k$. 
 
\begin{definition}\label{def:blowup}
 Suppose $\cA$ is an $\cL$-structure, $\calM$ is an $\calL_k(U)$-structure, and $f$ is a bijection from $Q(\calM)$ to $U(\calA)$.  We say $\calM$ is {\bf obtained from $\calA$ via $f$} if the following hold:
\begin{enumerate}[$(1)$]
\item $P_{k+1}(\calM)=A\backslash U(\calA)$.
\item $P_1(\calM), \ldots , P_{k+1}(\calM)$ partition\footnote{We allow a part to be empty in a partition.} $M$.
\item Given $R(x_1,\ldots,x_t)$ in $\cL\backslash\{U\}$ and $I\seq [t]$ with $J=[t]\backslash I$, if $(\abar_i)_{i\in I}$ is a sequence from $M^k$ and $(b_i)_{i\in J}$ is a sequence from $M$, then $\cM\models R_I(\abar_I\oplus b_J)$ if and only if each $\abar_i\in Q(\cM)$, each $b_j\in P_{k+1}(\cM)$, and $\cA\models R(f(\abar_I)\oplus b_J)$. 
\item $<$ is a linear order on $M$ whose restriction to $P_{k+1}(\calM)$ agrees with $<_{\emptyset}$, and which satisfies $P_1(\calM)<\ldots<P_{k+1}(\calM)$.
\end{enumerate}
\end{definition}

In the previous definition, we think of $\cM$ as being obtained from  $\cA$ by ``blowing up'' the predicate $U$ into a $k$-dimensional Cartesian product. In order for this notion to work well with the properties of Fra\"{i}ss\'{e} classes, we will also need to apply the previous definition with $\cA$ replaced by the empty structure. For the sake of clarity, we make this explicit with the next definition.

\begin{definition}
We say that an $\calL_k(U)$-structure $\calM$ is {\bf trivially obtained} if:
\begin{enumerate}
\item $P_{k+1}(\calM)=\emptyset$ and $Q(\calM)=\emptyset$. 
\item $P_1(\calM),\ldots,P_k(\calM)$ partition $M$.
\item Given $R(x_1,\ldots,x_t)$ in $\cL\backslash\{U\}$ and $I\seq [t]$, $R_I$ does not hold on any tuples in $\calM$. 
\item $<$ is a linear order on $M$  which satisfies $P_1(\calM)<\ldots<P_k(\calM)$.
\end{enumerate} 
\end{definition}

The following fact shows that our blow-up operation behaves well under substructures and isomorphisms.

\begin{fact}\label{fact:HP}$~$
\begin{enumerate}[$(a)$] 
\item Suppose $\calN$ is an $\calL_k(U)$-structure  obtained from an $\cL$-structure $\calB$ via $g$.  Then for any $\calM\leq \calN$, either $\cM$ is trivially obtained or there is some  $\calA\leq \calB$ so that $\calM$ is obtained from $\calA$ via $g|_{\calM}$.
\item Suppose $\calM$ is an $\calL_k(U)$-structure  obtained from an $\cL$-structure $\calA$ via $f$, and also from an $\cL$-stucture $\calB$ via $g$.  Then $\calA$ and $\calB$ are isomorphic.
\end{enumerate}
\end{fact}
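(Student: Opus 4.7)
The plan is to unpack Definition \ref{def:blowup} carefully in both parts; neither requires any deep idea beyond tracking how the auxiliary bijections $f, g$ interact with the $\cL_k(U)$-structure.

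For part (a), I would take $A \coloneqq g(Q(\cM)) \cup (M \cap P_{k+1}(\cN))$ and let $\cA$ be the induced $\cL$-substructure of $\cB$ on $A$; since $\bK$ is a Fra\"{i}ss\'{e} class (hence closed under substructures up to isomorphism), $\cA \in \bK$. A quick preliminary is that the $P_i$'s restrict under $\cM \leq \cN$, so $Q(\cM) = Q(\cN) \cap M^k$ and $g|_{Q(\cM)}$ is a bijection onto its image; combined with the disjointness of $U(\cB) = g(Q(\cN))$ from $P_{k+1}(\cN)$, this yields $U(\cA) = U(\cB) \cap A = g(Q(\cM))$. Then each of the four conditions in Definition \ref{def:blowup} for $\cM$ with respect to $\cA$ and $g|_{\cM}$ is inherited by restriction from the corresponding condition for $\cN$ with respect to $\cB$ and $g$: conditions (1) and (2) are immediate from $P_i^{\cM} = P_i^{\cN} \cap M$ together with the identity $U(\cA) = g(Q(\cM))$; condition (3) follows because $R_I$ on $\cM$ is the restriction of $R_I$ on $\cN$, translated via $g$ into the corresponding statement for $R$ in $\cA \leq \cB$; and condition (4) follows from the restriction of the linear order together with $P_1(\cN) < \cdots < P_{k+1}(\cN)$.

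For part (b), the natural candidate for an isomorphism $\sigma \colon \cA \to \cB$ is $\sigma = g \circ f^{-1}$ on $U(\cA)$ together with the identity on $P_{k+1}(\cM) = A \setminus U(\cA) = B \setminus U(\cB)$. This is visibly a bijection carrying $U(\cA)$ onto $U(\cB)$. For any $R \in \cL \setminus \{U\}$ of arity $t$ and any tuple $c_1,\ldots,c_t \in A$, I would set $I = \{i : c_i \in U(\cA)\}$, write $c_i = f(\abar_i)$ for $i \in I$, note that $c_j \in P_{k+1}(\cM)$ for $j \in J \coloneqq [t] \setminus I$, and observe that $\sigma$ preserves the partition $(I,J)$ because $\sigma(c_i) = g(\abar_i) \in U(\cB)$ for $i \in I$ while $\sigma(c_j) = c_j \in P_{k+1}(\cM)$ for $j \in J$. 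Then applying condition (3) of Definition \ref{def:blowup} once for $\cA$ with $f$ and once for $\cB$ with $g$ shows that both $\cA \models R(c_1,\ldots,c_t)$ and $\cB \models R(\sigma(c_1),\ldots,\sigma(c_t))$ are equivalent to $\cM \models R_I(\abar_I \oplus c_J)$, so $\sigma$ is an $\cL$-isomorphism.

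Both parts are essentially bookkeeping; the only step with any substance is verifying in part (a) the set-theoretic identities $U(\cA) = g(Q(\cM))$ and $P_{k+1}(\cM) = A \setminus U(\cA)$, after which the restriction argument for conditions (3) and (4) becomes a tautology. A secondary subtlety worth flagging in part (b) is that the linear order of $\cA$ restricted to $U(\cA)$ is not directly constrained by $<$ on $\cM$, but is instead recovered through $R_I$ with $R = {<}$ and $I = [2]$; so it is precisely condition (3) (and not (4)) that forces $\sigma$ to be an order-isomorphism on $U(\cA)$.
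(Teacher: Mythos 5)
Your proposal is correct and matches the paper's proof essentially verbatim: in part (a) your domain $g(Q(\calM))\cup(M\cap P_{k+1}(\calN))$ coincides with the paper's choice $(B\cap P_{k+1}(\calM))\cup(U(\calB)\cap g(Q(\calM)))$, and in part (b) you use the same map, namely $g\circ f^{-1}$ on $U(\calA)$ extended by the identity on $P_{k+1}(\calM)$, verified through condition (3) of Definition \ref{def:blowup} applied once for $f$ and once for $g$. Your closing observation that order-preservation on $U(\calA)$ comes from condition (3) with $R={<}$ (rather than condition (4)) is accurate but is already implicit in the paper's treatment of $<$ as just another relation of $\calL\setminus\{U\}$.
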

\begin{proof}
    Part $(a)$. Using a straightforward verification, one sees that we may choose $\calA$ to be the substructure of $\calB$ with domain $(B\cap P_{k+1}(\calM))\cup (U(\calB)\cap g(Q(\calM)))$\, which is not empty when $\cM$ is not trivially obtained. 

    Part $(b)$. By definition, $A\backslash U(\calA)=P_{k+1}(\calM)=B\backslash U(\calB)$. Define $h\colon A\to B$ so that $h{\upharpoonright}A\backslash U(\cA)$ is the identity (to $B\backslash U(\cB)$) and $h{\upharpoonright}U(\cA)$ is $g\circ f\inv$. Note that $h$ is a bijection.
 We show it is an isomorphism.  Fix a relation $R(x_1,\ldots, x_t)$ of $\calL$ and $\abar\in A^k$.  Let $I=\{i\in [t]: a_i\in U(\calA)\}$ and $J=[t]\setminus I$. Let $\cbar,\dbar$ be such that $\abar=\cbar_I\oplus \dbar_J$. By definition, $\calA\models R(\abar)$ if and only if $\calM\models R_I(f^{-1}(\cbar_I)\oplus \dbar_J)$ if and only if $\calB\models R(g(f^{-1}(\cbar_I))\oplus \dbar_J)$. By definition of $g\circ f^{-1}$, this holds if and only if $\calB\models R(g\circ f^{-1}(\abar))$, as desired.
\end{proof}

\begin{definition}\label{def:fraisseop}
Given a class $\bK$ of $\cL$-structures, define $\bK_k(U)$ to be the class of all $\cL_k(U)$-structures $\cM$ that are either trivially obtained or obtained from an $\cL$-structure $\cA\in \bK$ via some $f$.
\end{definition}

\begin{proposition} \label{prop:fraisseop}
Let $\calL$ be a finite relational language containing a binary relation $<$ and a unary relation $U$. Suppose $\bK$ is a Fra\"{i}ss\'{e} class of finite $\cL$-structures with disjoint amalgamation,   whose elements are linearly ordered by $<$. Moreover, suppose that given any $\calA\in\bK$ and integer $n$, there is a $\calB\in\bK$ with $\calA\leq \calB$ and $|U(\calB)|\geq n$. Then $\bK_k(U)$ is a Fra\"{i}ss\'{e} class.
\end{proposition}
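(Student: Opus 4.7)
The plan is to verify the standard Fraïssé axioms for $\bK_k(U)$: closure under isomorphism, countably many isomorphism types, hereditary property (HP), joint embedding property (JEP), and amalgamation property (AP). Closure and countability are immediate from the definition of blow-up and the corresponding properties of $\bK$. For HP: given $\cN \leq \cM \in \bK_k(U)$ with $\cM$ obtained from $\cA\in \bK$ via $f$, Fact \ref{fact:HP}(a) produces some $\cA'\leq \cA$ such that $\cN$ is obtained from $\cA'$ via $f|_{\cN}$, and $\cA'\in\bK$ by HP of $\bK$; hence $\cN\in \bK_k(U)$. JEP will drop out of AP by amalgamating over the empty structure, so the real work is in proving AP.

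For AP, suppose $\cM_0\hookrightarrow\cM_i$ in $\bK_k(U)$ with $\cM_i$ obtained from $\cA_i\in\bK$ via $f_i$ for $i=0,1,2$. Applying Fact \ref{fact:HP}(a) to $\cM_0$ viewed as a substructure of $\cM_i$, and then Fact \ref{fact:HP}(b) to identify the resulting base with $\cA_0$, I obtain induced embeddings $\cA_0\hookrightarrow \cA_i$ in $\bK$. I then build the amalgam $\cM_3$ as an $\cL_k(U)$-structure by setting $P_j(\cM_3)=P_j(\cM_1)\sqcup_{P_j(\cM_0)}P_j(\cM_2)$ for each $1\leq j\leq k+1$, and $Q(\cM_3)=P_1(\cM_3)\times\cdots\times P_k(\cM_3)$. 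In parallel, I invoke disjoint AP in $\bK$ to produce $\cA_3^0\in\bK$ amalgamating $\cA_1,\cA_2$ over $\cA_0$; the induced bijection $f_3^0$ then identifies $U(\cA_3^0)$ with the subset $Q(\cM_1)\cup Q(\cM_2)\subseteq Q(\cM_3)$.

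Since $|Q(\cM_3)|$ will typically be strictly larger than $|U(\cA_3^0)|$, the crux is to further extend $\cA_3^0$ in $\bK$ to some $\cA_3$ with exactly $|Q(\cM_3)|$ many $U$-elements. I would pick a $\cC\in\bK$ with sufficiently large $U$-part, disjointly amalgamate with $\cA_3^0$ over the empty structure, and then use HP to cut down to a substructure $\cA_3$ containing $\cA_3^0$ along with precisely the needed number of extra $U$-points. I then extend $f_3^0$ to a bijection $f_3:Q(\cM_3)\to U(\cA_3)$ by any matching on the new elements, and equip $\cM_3$ with the relations $R_I$ and linear order $<$ dictated by Definition \ref{def:blowup}; the order on $P_j(\cM_3)$ for $j\leq k$ is obtained from the two given orders by a Szpilrajn-type extension, and on $P_{k+1}(\cM_3)$ it is inherited from $\cA_3$. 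The fact that each $\cA_i\hookrightarrow\cA_3$ is an $\cL$-embedding and $f_i=f_3|_{\cM_i}$ then immediately gives that the canonical maps $\cM_i\hookrightarrow\cM_3$ are $\cL_k(U)$-embeddings over $\cM_0$, witnessing AP.

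The main obstacle is the extension step, as it requires importing extra $U$-mass into $\cA_3^0$ without disturbing the existing amalgam. This is where I must exploit that $\bK$ contains structures with arbitrarily large $U$-part—a property that holds for free in the target application where $\bK$ is the class of finite linearly ordered structures with a unary predicate, and can in general be derived from JEP applied to $\cA_3^0$ and any $\bK$-structure with large $U$. The remaining verifications (compatibility of $<$, the block structure $P_1<\cdots<P_{k+1}$, and condition (3) of Definition \ref{def:blowup}) are routine, reducing at each step to the analogous statements for $\cA_3$ via the bijection $f_3$.
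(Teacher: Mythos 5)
Your overall route is the same as the paper's: identify the base structures of $\cM_0\leq\cM_1,\cM_2$ via Fact \ref{fact:HP}, disjointly amalgamate them in $\bK$, enlarge the resulting base so that its $U$-part has size exactly $|P_1(\cM_3)\times\cdots\times P_k(\cM_3)|$, extend the partial bijection, read off the blow-up structure from Definition \ref{def:blowup}, and order each $P_j(\cM_3)$ ($j\leq k$) by a Szpilrajn extension of the union of the two given orders; JEP is then the same argument over the empty base. So the architecture is fine, but the step you yourself flag as ``the main obstacle'' is not actually established, and that is where the real content lies. Your justification that $\bK$ contains members with arbitrarily large $U$-part is circular as written: you propose to get it ``from JEP applied to $\cA_3^0$ and any $\bK$-structure with large $U$,'' but the existence of a $\bK$-structure with large $U$-part is precisely what has to be proved, and plain JEP cannot produce it --- a joint embedding only guarantees a $U$-part at least as large as the bigger of the two inputs, so JEP alone is compatible with a uniform bound on $|U|$ across $\bK$. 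What does work, and what the paper's ``Fact'' encodes, is to use \emph{disjoint} amalgamation together with HP: if some $\cA_0\in\bK$ has a point $u\in U(\cA_0)$, disjointly amalgamate $\cA_0$ with itself over its substructure on $A_0\setminus\{u\}$ to duplicate $u$, iterate to make $|U|$ arbitrarily large, and then (disjointly) joint-embed $\cA_3^0$ alongside such a structure before cutting down by HP.

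Second, your argument silently assumes that some member of $\bK$ has nonempty $U$-part. If $U(\cA)=\emptyset$ for every $\cA\in\bK$, there is no structure with large $U$ to import, and your construction of $\cM_3$ genuinely breaks down whenever $Q(\cM_3)\neq\emptyset$ (e.g.\ for $k=2$, amalgamating a structure with $P_1\neq\emptyset=P_2$ against one with $P_1=\emptyset\neq P_2$ forces $Q(\cM_3)\neq\emptyset$, which no base with empty $U$ can support). The paper deals with this by splitting off the degenerate case $U\equiv\emptyset$ at the outset and treating it separately, and only then proving the ``arbitrarily large $U$'' extension fact under the assumption that some $U(\cA)\neq\emptyset$. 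Your write-up needs the same case split and a correct (disjoint-amalgamation-based) proof of the extension fact; the rest of your verification --- well-definedness and injectivity of the glued bijection on $Q(\cM_1)\cup Q(\cM_2)$ (which uses disjointness of the amalgam in $\bK$), the order extension, and checking that the inclusions are $\cL_k(U)$-embeddings --- is indeed routine and matches the paper.
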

\begin{proof}
First, note that the Hereditary Property is immediate from Fact \ref{fact:HP}$(a)$ above.

\textit{Amalgamation Property:} Suppose $\calM,\calN\in \bK_k(U)$ with $\calD\leq \calM$ and $\calD\leq \calN$, and assume $M\cap N = D$, after a possible relabeling. We will assume that none of these structures is trivially obtained, since in that case the proof is simpler. Let $\calA$, $f$ and $\calB$, $g$ be so that $\calM$ is obtained from $\calA$ via $f$ and $\calN$ is obtained from $\calB$ via $g$.  By Fact \ref{fact:HP}$(a)$, there are  $\calA'\leq \calA$ so that $\calD$ is obtained from $\calA'$ via $f|_{\calD}$ and $\calB'\leq \calB$ so that $\calD$ is obtained from $\calB'$ via $g|_{\calD}$. Thus by Fact \ref{fact:HP}$(b)$, $\calA'\cong \calB'$.  Without loss of generality, we can assume $\calA'=\calB'$, and we call this substructure $\calD'$.  So $\calD$ is obtained from $\calD'$ via $h:=f|_{\calD}=g|_{\calD}$. We will define the amalgamation of $\calM, \calN$ over $\calD$, which we will call $\calC$, as follows.  

First, for each $i\in [k]$, we define the sets 
$$
C_i\coloneqq P_i(\calD)\sqcup (P_i(\calM)\backslash P_i(\calD))\sqcup (P_i(\calN)\backslash P_i(\calD)),
$$
and let $n_i\coloneqq |C_i|$.  Set $n \coloneqq n_1\cdots n_k$, so $n=|C_1\times \ldots \times C_k|$. 

By the disjoint amalgamation property in $\bK$, there is an $\calE\in \bK$ and embeddings $h_1\colon\calA\rightarrow \calE$ and $h_2\colon\calB\rightarrow \calE$ such that for all $d\in \calD'$, we have  $h_1(d)=h_2(d)$, and further, $h_1(A\backslash D')\cap h_2(B\backslash D')=\emptyset$.  By assumption, there exists $\calE'\in \bK$ so that $\calE\leq \calE'$ and $|U(\calE')|\geq n$.  Since $\calL$ is finite relational and $\bK$ is hereditary, we may assume $|U(\calE')|=n$.

Set $C_{k+1}\coloneqq E'\backslash U(\calE')$ and $C\coloneqq C_1\sqcup\ldots \sqcup C_{k+1}$.   We define an injection 
$$
w'\colon Q(\cM)\cup Q(\cN)\to U(\calE')
$$
by letting $w'(\mbar)=h_1(f(\mbar))$ for $\mbar\in Q(\calM)$, and letting $w'(\nbar)=h_2(g(\nbar))$ for $\nbar\in Q(\calN)$. Then $w'$ well-defined because for $\dbar\in Q(\calD)$ we have that $h_1(f(\dbar))=h_1(h(\dbar))=h_1(g(\dbar))=h_2(g(\dbar))$. Moreover, $w'$ is injective since $f$, $g$, $h_1$, and $h_2$ are all injective, and $h_1(A\backslash D')\cap h_2(B\backslash D')=\emptyset$. As $|C_1\times \ldots \times C_k|=n=|U(\calE')|$, we can extend $w'$ to a bijection $w\colon C_1\times \ldots \times C_k\rightarrow U(\calE')$. 

We now define an $\calL_k(U)$-structure $\calC$ so that $\calC$ will be obtained from from $\calE$ via $w$. We let the domain of $\calC$ be $C$, and for each $i\in [k+1]$, we interpret $P_i(\calC)=C_i$.  By above, $w$ is then a bijection from $Q(\calC)$ to $U(\calE')$. Given $R(x_1,\ldots, x_t)$ in $\calL$, $I\subseteq[t]$, $(\cbar_i)_{i\in I}\subseteq Q(\calC)$, and $(c_j')_{j\in J}\subseteq P_{k+1}(\calC)=E'\backslash P(\calE')$, we define $R^\calC_I(\cbar_I\oplus c'_J)$ to hold if and only if $R^{\calE'}(w(\cbar_I)\oplus c'_J)$ holds.  We then define $<^\calC$ as follows.  First, define $<^{\calC}$ on $P_{k+1}(\calC)$ to coincide with $<^\calC_\emptyset$. Given $i\in [k]$, it is straightforward to see that the relation ${\prec_i}={{<^{\calM}}{\upharpoonright}{P_i(\calM)}}\cup {{<^{\calN}}{\upharpoonright}{P_i(\calN)}}$ is a partial order on $P_i(\calC)$, so we can define  ${<^{\calC}}{\upharpoonright}{P_i(\calC)}$ to be any linear order extending $\prec_i$.  Finally, define  $P_1(\calC)<^\calC\cdots<^\calC P_{k+1}(\calC)$.  This finishes our definition of $\calC$. Observe that $\calC$ is obtained from $\calE'$ via $w$ by construction, and so $\calC\in \bK_k(U)$.

Define  $h_1'\colon \calM\rightarrow \calC$ to be the identity on $P_1(\calM)\cup \ldots \cup P_k(\calM)$ and $h_1$ on $P_{k+1}(\calM)$.  We show $h_1'$ is an embedding.  It is clear $h_1'$ is an injection.  By construction, $h_1'(P_i(\calM))=P_i(\calM)=P_i(\calC)\cap M$ for $i\in [k]$.  Also by construction, $h_1'(P_{k+1}(\calM))=h_1(A\backslash U(\calA))=h_1(\calM)\cap P_{k+1}(\calC)$.  The map $h_1'$ preserves $<$ by definition of $\calC$. 

Now fix an $\calL$-relation $R(x_1,\ldots, x_t)$ and an $I\subseteq [t]$.  Suppose $\mbar_I=(\mbar_i)_{i\in I}\subseteq Q(\calM)$ and $m_J'=(m'_j)_{j\in J}\subseteq P_{k+1}(\calM)$. We have that $\calM\models R_I(\mbar_I\oplus m'_J)$ if and only if $\calA\models R(f(\mbar_I)\oplus m'_J)$.  Since $h_1$ is an embedding, we have that $\calA\models R(f(\mbar_I)\oplus m'_J)$ if and only if $\calE'\models R(h_1(f(\mbar_I))\oplus h_1(m'_J))$. Because $w=h_1\circ f$ on $Q(\calM)$, this holds  if and only if $\calC\models R_I(\mbar_I\oplus h_1(m'_J))$, i.e., if and only if $\calC\models R_I(h_1'(\mbar_I\oplus m'_J))$.  This shows $h_1'$ is an embedding of $\calM$ into $\calC$. 

We can similarly define  $h_2'\colon \calN\rightarrow \calC$ to be the identity on $P_1(\calN)\cup \ldots \cup P_k(\calN)$ and $h_2$ on $P_{k+1}(\calN)$. The same argument as above shows $h_2'$ is an embedding of $\calN$ into $\calC$, and it straightforward to check that $h_1',h_2'$ these agree on $\calD$ by construction.

\textit{Joint Embedding Property:} It is an exercise to show that $\bK$ must satisfy a disjoint version of JEP, i.e., that for any two $\calA,\calB\in \bK$, there is $\calC\in \bK$ and embeddings $h_1\colon \calA\rightarrow \calC$ and $h_2\colon \calB\rightarrow \calC$ with disjoint images. Using this, one can mimic the proof above with $\calD$ being empty to show $\bK_k(U)$ has JEP. \footnote{We follow the convention that all structures are nonempty. If we allow for the empty structure then it is in our class and we are done by the AP.}
\end{proof}

Now we are ready to define the class which we will use to obtain the index model for our generalized indiscernibles. 
\begin{definition}\label{def:Hk}
    Let $\calL=\{U, <\}$ where $U$ is a unary predicate and $<$ is a binary relation. Define $\bH_k$ to be $\bK_k(U)$, where $\bK$ is the class of finite $\calL$-structures in which $<$ is a linear order (and $U$ is arbitrary).  
\end{definition}

In order to make the class $\bH_k$ more transparent, we will now give an explicit axiomatization of its universal theory. First, let us make some simplifications to the language and notation in this special case. With $\cL=\{U,<\}$ as in the previous definition, we have  
\[
\calL_k(U)=\{P_1,\ldots, P_{k+1}, <, <_{\emptyset}, <_{\{1\}}, <_{\{2\}},<_{[2]} \}.
\]
For any structure in $\bH_k$, $<_{\emptyset}$ only holds on $P_{k+1}$, where it coincides with $<$. Moreover, $<_{\{1\}}$ is a $(k+1)$-ary relation on $P_1\times\ldots\times P_{k+1}$, and $<_{\{2\}}$ is simply the negation of $<_{\{1\}}$. Finally, $<_{[2]}$ is a linear order order on $P_1\times\ldots\times P_k$. So to simplify notation, we will discard the symbols $<_\emptyset$ and $<_{\{2\}}$, and rename $<_{\{1\}}$ as $R$ and $<_{[2]}$ as $<_k$. Altogether, we denote the resulting simplified language by
\[
\cL_k=\{P_1,\ldots,P_{k+1},<,<_k,R\},
\]
where $P_1,\ldots, P_{k+1}$ are unary relations, $<$ is a binary relation, $<_k$ is a $2k$-ary relation, and $R$ is a $(k+1)$-ary relation. 
From this point on, we view elements of $\bH_k$ as $\cL_k$-structures, and we continue to let $Q$ denote the $k$-ary relation $P_1\times\ldots\times P_k$.

\begin{definition}\label{def:Tk}
 Define $T_k$ to be the $\cL_k$-theory consisting of the following axioms:
\begin{enumerate}[$(1)$]
\item $P_1,\ldots, P_{k+1}$ is a partition.
\item $<$ is a linear order with $P_1<\ldots< P_{k+1}$.
\item $R$ only holds on $P_1\times\ldots\times P_{k+1}$ (which we also view as $Q\times P_{k+1}$).
\item $<_k$ only holds on $Q\times Q$, and is a linear order on $Q$. 
\item For any $\xbar,\ybar\in Q$ and $w,z\in P_{k+1}$, $\big(\xbar\leq_k\ybar \wedge R(\ybar,w)\wedge w\leq z\big)\rightarrow R(\xbar,z)$.
\end{enumerate}
\end{definition}

We note in passing that axiom (5) is a bipartite analogue of the notion of a ``monotone binary relation" on a linearly ordered set (see, e.g., Section 4 of \cite{SiDP}).

\begin{remark}\label{rem:Tkorder}
For any $\cA\models T_k$, the  relation ${<_*}\coloneqq {<_k}\cup R\cup{(\neg R)^{\textnormal{opp}}}\cup {({<}{\upharpoonright}P_{k+1})}$ is a linear order on $Q(\cA)\cup P_{k+1}(\cA)$. 
\end{remark}

\begin{proposition}\label{prop:Hkmod}
    $\bH_k$ is the class of finite models of $T_k$. 
\end{proposition}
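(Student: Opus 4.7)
My plan is to prove both inclusions directly, with the forward direction an exercise in unwinding definitions, and the reverse direction leveraging Remark \ref{rem:Tkorder} to reconstruct the $\cL$-structure $\cA$ from which $\cM$ is ``blown up''.

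For the forward inclusion, I would suppose $\cM\in\bH_k$, so that $\cM$ is obtained from some $\cA\in\bK$ via a bijection $f\colon Q(\cM)\to U(\cA)$, and verify the five axioms of $T_k$ in turn. Axioms (1) and (2) are immediate from clauses (2) and (4) of Definition \ref{def:blowup}. Axioms (3) and (4) encode the sort and linearity constraints on $R=<_{\{1\}}$ and $<_k=<_{[2]}$ that are already built into clause (3) of Definition \ref{def:blowup} (linearity of $<_k$ on $Q(\cM)$ transferring from $<^{\cA}$ on $U(\cA)$ via the bijection $f$). The one axiom requiring a short argument is (5): unpacking in $\cA$, if $\xbar\leq_k\ybar$, $R(\ybar,w)$, and $w\leq z$ hold in $\cM$, then $f(\xbar)\leq^{\cA} f(\ybar)<^{\cA} w\leq^{\cA} z$ (where the last inequality uses clause (4) of Definition \ref{def:blowup} to identify $<^{\cM}{\upharpoonright}P_{k+1}$ with $<^{\cA}{\upharpoonright}P_{k+1}$), and transitivity of $<^{\cA}$ gives $f(\xbar)<^{\cA}z$, i.e., $R(\xbar,z)$.

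For the reverse inclusion, I would start with a finite model $\cM$ of $T_k$ and invoke Remark \ref{rem:Tkorder} to obtain the linear order ${<_*} = {<_k}\cup R\cup({<}{\upharpoonright}P_{k+1})$ on $Q(\cM)\cup P_{k+1}(\cM)$. Define an $\cL$-structure $\cA$ whose domain is $A = Q(\cM)\sqcup P_{k+1}(\cM)$ (these are automatically disjoint since $Q(\cM)\subseteq M^k$ and $P_{k+1}(\cM)\subseteq M$), with $U(\cA) = Q(\cM)$ and $<^{\cA} = {<_*}$; then $\cA\in\bK$, and the natural choice of $f\colon Q(\cM)\to U(\cA)$ is the identity. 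The remaining task is to verify that $\cM$ is obtained from $\cA$ via $f$ in the sense of Definition \ref{def:blowup}. Clause (1) holds by construction, clause (2) is axiom (1) of $T_k$, and clause (4) is axiom (2) together with the fact (by the choice of $<^{\cA}$) that $<^{\cM}{\upharpoonright}P_{k+1}$ agrees with $<^{\cA}{\upharpoonright}P_{k+1}$. Clause (3) is checked case-by-case for $I\subseteq[2]$: the cases $I=\{1\}$ and $I=[2]$ are immediate from the definition of $<_*$, which makes $R(\abar,x)\Leftrightarrow f(\abar)<^{\cA}x$ on $Q(\cM)\times P_{k+1}(\cM)$ and $\abar<_k\bbar\Leftrightarrow f(\abar)<^{\cA}f(\bbar)$ on $Q(\cM)\times Q(\cM)$.

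The only bit of bookkeeping is that $\cL_k$ was obtained from the full language $\cL_k(U)$ of Definition \ref{def:blowup} by suppressing the redundant symbols $<_{\emptyset}$ and $<_{\{2\}}$, so formally applying clause (3) of that definition requires specifying their interpretations on $\cM$: these are ${<}{\upharpoonright}P_{k+1}^2$ and the complement of $R$ (with arguments appropriately flipped), respectively, and in both cases the agreement with the corresponding $<^{\cA}$-relation on $\cA$ is immediate by totality of $<^\cA$ together with $P_{k+1}(\cM)\cap U(\cA)=\emptyset$. Beyond this translation, I do not anticipate any real obstacle.
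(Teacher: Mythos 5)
Your proposal is correct and follows essentially the same route as the paper's proof: one direction by checking the axioms of $T_k$ directly (with axiom (5) the only one needing an argument, unwound through the order on $\cA$ exactly as you do), and the converse by taking $\cA$ with domain $Q(\cM)\cup P_{k+1}(\cM)$, $U(\cA)=Q(\cM)$, and $<^{\cA}={<_*}$ from Remark \ref{rem:Tkorder}, so that $\cM$ is obtained from $\cA$ via the identity. Your extra bookkeeping about the suppressed symbols $<_{\emptyset}$ and $<_{\{2\}}$ is a harmless elaboration of what the paper dismisses as ``by construction.''
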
 
\begin{proof}
Let $\calM \in \bH_k=\bK_k(U)$. If it is trivially obtained, it is clear that it satisfies $T_k$. Say it is obtained from $\calA\in \bK$ via $f$. It is immediate by definition that $\cM$ satisfies axioms (1)-(3) and the first half of (4). That $<_k$ is a linear order on $Q$ holds because ${<_k}={<_{[2]}}$ and $<$ is a linear order on $U(\calA)$. Lastly, given $\xbar, \ybar \in Q$ and $w,z\in P_{k+1}$, $\calM\models \big(\xbar\leq_k\ybar \wedge R(\ybar,w)\wedge w\leq z\big)$ implies $\calA\models f(\xbar)<f(\ybar) <w< z$, which implies $\calA\models f(\xbar)<w$. This implies $\calM\models R(\xbar,w)$ as desired.

Conversely, given a model $\calM\models T_k$, if $Q(\cM)\cup P_{k+1}(\cM) = \emptyset$ we can see that $\calM$ is trivially obtained. Otherwise, we get an $\cL$-structure $\cA$ with domain $Q(\cM)\cup P_{k+1}(\cM)$, where $U(\cA)$ is interpreted as $Q(\cM)$ and $<$ is the linear order $<_*$ from Remark \ref{rem:Tkorder}. By construction, $\cM$ is obtained from $\cA$ via the identity from $Q(\cM)$ to $U(\cA)$.
\end{proof}

Now, it is a well-known folklore fact that the class $\bK$ in Definition \ref{def:Hk} is a Fra\"{i}ss\'{e} class (e.g., see the discussion on page 8 of \cite{TorrTruss}), and it clearly satisfies the extra assumptions in Proposition \ref{prop:fraisseop}. So we obtain the following conclusion.

 \begin{corollary}
$\bH_k$ is a Fra\"{i}ss\'{e} class.
\end{corollary}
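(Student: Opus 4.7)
The plan is to derive the corollary directly from Proposition \ref{prop:fraisseop}, since that proposition already does all the heavy lifting of showing that the blow-up construction preserves the Fra\"{i}ss\'{e} property. All that remains is to verify that the base class $\bK$ from Definition \ref{def:Hk} satisfies the hypotheses of Proposition \ref{prop:fraisseop}: namely that $\cL = \{U,<\}$ is a finite relational language containing a binary relation $<$ and a unary relation $U$ (immediate), that $\bK$ is a Fra\"{i}ss\'{e} class with disjoint amalgamation whose elements are linearly ordered by $<$, and that $\bK$ is non-trivial.

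Linear ordering by $<$ and non-triviality are both immediate from the definition of $\bK$ (for any $n$ the standard $n$-element chain with any choice of $U$ lies in $\bK$). The Fra\"{i}ss\'{e} property is folklore as noted in the excerpt, but if I were writing a self-contained proof I would verify it by hand: the hereditary property is clear since any substructure of a finite linearly ordered $\{U,<\}$-structure is again such a structure, and joint embedding is a special case of disjoint amalgamation (over the empty structure). For disjoint amalgamation, given $\cA,\cB \in \bK$ sharing a common substructure $\cD$, I would take the pushout $C = A \sqcup_D B$ at the level of sets, interpret $U$ as $U(\cA) \cup U(\cB)$ (which agrees on $D$ since both came from substructures of $\cD$), and define a linear order $<^{\cC}$ by extending $<^{\cA}$ and $<^{\cB}$ — concretely, one can insert each element of $A\setminus D$ into the chain $B$ at the unique position determined by its $<^{\cA}$-relationship to the elements of $D$, breaking any remaining ties with elements of $B \setminus D$ arbitrarily. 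This yields an $\cL$-structure $\cC \in \bK$ into which $\cA$ and $\cB$ embed over $\cD$ with disjoint images outside $\cD$.

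With these verifications in place, Proposition \ref{prop:fraisseop} applies and yields that $\bH_k = \bK_k(U)$ is a Fra\"{i}ss\'{e} class, which is the desired conclusion. There is no genuine obstacle here; the corollary is essentially a bookkeeping step whose purpose is to license the passage to the Fra\"{i}ss\'{e} limit $\cH_k$ used in the collapsing-indiscernibles theorem.
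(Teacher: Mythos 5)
Your proposal is correct and follows essentially the same route as the paper: both derive the corollary by applying Proposition \ref{prop:fraisseop} to the class $\bK$ of finite linear orders with a unary predicate, the paper simply citing the folklore fact that $\bK$ is a Fra\"{i}ss\'{e} class with disjoint amalgamation where you verify it by hand (your cut-insertion amalgamation argument is fine). No gap here.
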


\begin{definition}\label{def:HkFL}
Let $\cH_k$ be the Fra\"{i}ss\'{e} limit of $\bH_k$.
\end{definition}

In order to use $\cH_k$ in our characterization of $\FOP_k$ via collapse of indiscernibles, we need to know that this structure has the modeling property (Definition \ref{def:modeling}). Recall that by Theorem \ref{thm:scow}, it suffices to show that $\bH_k$ has the Ramsey property. This will follow from some very general theorems of Hubi\v{c}ka and  Ne\v{s}et\v{r}il \cite{HN}, and Evans, Hubi\v{c}ka, and  Ne\v{s}et\v{r}il \cite{EHN}. In Section \ref{sec:ramsey},  we will state these results and use them to establish the Ramsey property for $\bH_k$. For now, we just state the following corollary, which is all we need for the main body of our work.

\begin{corollary}\label{cor:Gkmod}
$\cH_k$ has the modeling property.
\end{corollary}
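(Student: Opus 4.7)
The plan is to derive this corollary as a direct consequence of Scow's characterization (Theorem \ref{thm:scow}) together with the Ramsey property for $\bH_k$ that will be established in Section \ref{sec:ramsey}. So the proof is essentially a matter of checking that the hypotheses of Theorem \ref{thm:scow} are in place and then invoking the deferred Ramsey result.

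First, I would verify the hypotheses of Scow's theorem: the language $\cL_k = \{P_1,\ldots,P_{k+1},<,<_k,R\}$ is finite and relational, and it contains the distinguished binary relation $<$. Every model of $T_k$ is linearly ordered by $<$ via axiom (2) of Definition \ref{def:Tk}, and in particular so is $\cH_k$. Since $\cH_k$ is the Fra\"iss\'e limit of $\bH_k$, its age is precisely $\bH_k$ (using Proposition \ref{prop:Hkmod} to identify $\bH_k$ with the class of finite models of $T_k$, which are exactly the finitely generated, i.e.\ finite, substructures of $\cH_k$ up to isomorphism). Thus Theorem \ref{thm:scow} applies and reduces ``$\cH_k$ has the modeling property'' to ``$\bH_k$ has the Ramsey property.''

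The remaining content is the Ramsey property for $\bH_k$, which is what Section \ref{sec:ramsey} will provide via the general machinery of Hubi\v{c}ka and Ne\v{s}et\v{r}il \cite{HN} and Evans, Hubi\v{c}ka, and Ne\v{s}et\v{r}il \cite{EHN}. So in the proposed proof of the corollary itself, I would simply state: by Theorem \ref{thm:scow}, it suffices to show that $\bH_k$ is a Ramsey class, and this is established in Section \ref{sec:ramsey} (citing the precise result, e.g.\ the theorem in that section that verifies the Ramsey property for $\bH_k$ by checking the hypotheses of the abstract criteria of \cite{HN,EHN}).

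The main obstacle is not in the present corollary but in the deferred Ramsey verification for $\bH_k$: unlike the $\NIP_k$ setting, where the relevant class reduces to ordered $(k+1)$-uniform hypergraphs and classical Ne\v{s}et\v{r}il--R\"odl theorems apply, here the blow-up construction of Proposition \ref{prop:fraisseop} introduces the relations $<_k$ and $R$ on tuples from $Q = P_1\times\cdots\times P_k$, which breaks a direct combinatorial approach. The abstract framework of \cite{HN,EHN} is well-suited to this situation because it allows one to identify $\bH_k$ as a Ramsey expansion of a suitable base class by checking amalgamation-type conditions that are already built into Proposition \ref{prop:fraisseop}. Once those checks are carried out in Section \ref{sec:ramsey}, the present corollary follows with no further work.
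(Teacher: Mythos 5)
Your proposal is correct and matches the paper's treatment exactly: the corollary is obtained by checking that $\cL_k$ is a finite relational language containing $<$, that $\cH_k$ is linearly ordered by $<$ with age $\bH_k$, and then invoking Theorem \ref{thm:scow} together with the Ramsey property for $\bH_k$, whose proof is deferred to Section \ref{sec:ramsey} (Theorem \ref{thm:hkramsey}) via the machinery of \cite{HN,EHN}. No gaps.
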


\begin{remark}\label{rem:ordextra}
	We now note that we could have defined the order $<$ in our Fra\"{i}ss\'{e} class $\bH_k$ and Fra\"{i}ss\'{e} limit $\cH_k$ to hold only on $P_{k+1}$, and still get a Fra\"{i}ss\'{e} class and limit which allow us to characterize $\FOP_k$ as in Proposition \ref{prop:codeTk}. This essentially says that the order $<_*$ from Remark \ref{rem:Tkorder} encodes all the information we need. However, to be able to use Theorem \ref{thm:scow}, we need our structures to be linearly ordered, which explains why we require $<$ to be a linear order. 
\end{remark}

\subsection{Witnessing $\FOP_k$ with $\cH_k$}

In this subsection, we establish the key connection between $\cH_k$ and $\FOP_k$  via the characterization in Proposition \ref{prop:alleq}$(ii)$. We again work with a complete $\cL$-theory $T$ with monster model $\M$.

\begin{definition}
Given $\cH\models T_k$, we say that an $\cL$-formula $\varphi(x_1,\ldots,x_{k+1})$ \textbf{$R$-codes $\cH$} if there is a sequence $(a_h)_{h\in H}$ from $\M$ such that 
\begin{enumerate}[$(i)$]
\item for all $1\leq i\leq k+1$, if $h\in P_i(\cH)$ then $a_h\in\M^{x_i}$, and
\item for any $(h_1,\ldots,h_{k+1})\in P_1(\cH)\times\ldots\times P_{k+1}(\cH)$, we have $\M\models\varphi(a_{h_1},\ldots,a_{h_{k+1}})$ if and only if $\cH\models R(h_1,\ldots,h_{k+1})$. 
\end{enumerate}
In this case, we also say that $(a_h)_{h\in\cH}$ \textbf{witnesses} that $\varphi(\xbar)$ $R$-codes $\cH$.
\end{definition}

\begin{proposition}\label{prop:codeTk}
Given an $\cL$-formula $\varphi(x_1,\ldots,x_{k+1})$, the following are equivalent.
\begin{enumerate}[$(i)$]
\item $\varphi(\xbar)$ has $\FOP_k$ in $T$.
\item $\varphi(\xbar)$ $R$-codes every finite model of $T_k$.
\item $\varphi(\xbar)$ $R$-codes every small model of $T_k$.
\item $\varphi(\xbar)$ $R$-codes $\cH_k$.
\item $\varphi(\xbar)$ $R$-codes $\cH_k$ witnessed by an $\cH_k$-indexed indiscernible sequence in $\M$.
\end{enumerate}
\end{proposition}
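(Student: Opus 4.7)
The plan is to establish the five statements are equivalent via the cycle $(i) \Rightarrow (ii) \Rightarrow (iii) \Rightarrow (iv) \Rightarrow (v)$, filled in by the trivial $(v) \Rightarrow (iv)$ together with $(iv) \Rightarrow (ii)$ and $(ii) \Rightarrow (i)$. The key tools are the linear-order recharacterization of $\FOP_k$ from Proposition \ref{prop:alleq}(ii), Remark \ref{rem:Tkorder} repackaging a model of $T_k$ as a linear order on $Q \cup P_{k+1}$, universality of the Fra\"{i}ss\'{e} limit, compactness, and the modeling property of $\cH_k$ from Corollary \ref{cor:Gkmod}.

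For $(i) \Leftrightarrow (ii)$, given a finite $\cH \models T_k$, Remark \ref{rem:Tkorder} produces a linear order $<_*$ on $Q(\cH) \cup P_{k+1}(\cH)$, which after enumerating each $P_i(\cH)$ identifies with a linear order on a finite subset of $\omega^k \cup \omega$. Extending arbitrarily to all of $\omega^k \cup \omega$ and applying Proposition \ref{prop:alleq}(ii) yields a sequence from which one reads off a witness to $\varphi(\xbar)$ $R$-coding $\cH$. Conversely, every finite restriction of a linear order on $\omega^k \cup \omega$ arises from some such finite $\cH$; $R$-coding it via (ii) produces a local witness for Proposition \ref{prop:alleq}(ii), and compactness together with saturation of $\M$ pastes these into a global witness, giving $\FOP_k$.

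The remaining ``easy'' implications run as follows. For $(ii) \Rightarrow (iii)$, the theory $T_k$ is universal (every axiom in Definition \ref{def:Tk} can be written as a universal sentence), so every finite substructure of a small model of $T_k$ is again a finite model of $T_k$; (ii) applied to these substructures produces a finitely satisfiable partial type asserting ``$(a_h)_{h \in \cH}$ is an $R$-coding witness'', which is realized in $\M$ by saturation. The implication $(iii) \Rightarrow (iv)$ holds because $\cH_k$ is itself a small model of $T_k$, and $(iv) \Rightarrow (ii)$ holds because universality of the Fra\"{i}ss\'{e} limit embeds every finite model of $T_k$ into $\cH_k$, with the $R$-coding witness restricting along such an embedding. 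Finally, $(v) \Rightarrow (iv)$ is just forgetting the indiscernibility.

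The one implication demanding a little care is $(iv) \Rightarrow (v)$, and this is the main point where the work of the section pays off. Apply Corollary \ref{cor:Gkmod} to the $R$-coding witness $(a_h)_{h \in \cH_k}$ to obtain an $\cH_k$-indiscernible sequence $(a'_h)_{h \in \cH_k}$ based on it, and verify both clauses of $R$-coding. The sort condition transfers because $\cH_k$-indiscernibility forces $|a'_h|$ to depend only on $\qftp_{\cL_k}^{\cH_k}(h)$, which identifies the unique $P_i$ containing $h$. For the coding condition, fix $(h_1, \ldots, h_{k+1}) \in P_1(\cH_k) \times \cdots \times P_{k+1}(\cH_k)$ and apply the ``based on'' property with $\Delta = \{\varphi\}$ to produce $(i_1, \ldots, i_{k+1})$ of the same quantifier-free type and the same $\varphi$-type; since $R \in \cL_k$, the qftp equality gives $\cH_k \models R(h_1, \ldots, h_{k+1}) \Leftrightarrow \cH_k \models R(i_1, \ldots, i_{k+1})$, and combining this with the original $R$-coding property of $(a_h)$ together with the $\varphi$-type equality transfers $\M$-satisfaction from $(a_{i_\bullet})$ to $(a'_{h_\bullet})$. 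The main obstacle here is purely one of bookkeeping, keeping the three layered definitions (indiscernibility, ``based on'', and $R$-coding) straight; no structural input beyond Corollary \ref{cor:Gkmod} is needed.
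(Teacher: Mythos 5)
Your proposal is correct and follows essentially the same route as the paper: Proposition \ref{prop:alleq}$(ii)$ together with Remark \ref{rem:Tkorder} handles the passage between $\FOP_k$ and $R$-coding of models of $T_k$, universality/heredity handles $(iv)\Rightarrow(ii)$, compactness and saturation handle $(ii)\Leftrightarrow(iii)$, and Corollary \ref{cor:Gkmod} handles $(iv)\Leftrightarrow(v)$ exactly as you spell out. The only (harmless) difference is that the paper gets $\FOP_k$ from $(iii)$ directly, by building a countable model of $T_k$ on the domain $\omega^k\cup\omega$ from a given linear order, whereas you derive it from $(ii)$ via finite restrictions plus an extra compactness step.
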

\begin{proof}
First we discuss the implications that are easy or immediate from the results above. Since $T_k$ is a universal theory in a  relational language the equivalence of $(ii)$ and $(iii)$ follows from saturation of $\M$ and standard compactness arguments. Moreover, the equivalence of $(iv)$ and $(v)$ follows from Corollary \ref{cor:Gkmod}. Finally, $(iii)\Rightarrow (iv)$ is trivial, and $(iv)\Rightarrow (ii)$ is immediate since $R$-coding is  hereditary. Altogether, to complete the equivalences, it suffices to show $(iii)\Rightarrow (i)\Rightarrow (ii)$.

$(iii)\Rightarrow (i)$. Assume $(iii)$. We prove $(i)$ via Proposition \ref{prop:alleq}$(ii)$. Fix a linear order $<_*$ on $\omega^k\cup\omega$. Define an $\cL_k$-structure $\cH$ with domain $\omega^k\cup\omega$ as follows.
\begin{enumerate}[\hspace{5pt}$\ast$] 
\item For $1\leq i\leq k$, $(P_i(\cH),<)$ is a copy of $(\omega,<)$.
\item $(P_{k+1}(\cH),<)$ is $(\omega,{<_*}{\upharpoonright}\omega)$.
\item  $P_1(\cH)<\ldots< P_{k+1}(\cH)$.
\item $<_k$ is ${<_*}{\upharpoonright}\omega^k$.
\item $R$ is ${<_*}{\upharpoonright}(\omega^k\times\omega)$.
\end{enumerate}
Then $\cH\models T_k$. So $\varphi(x_1,\ldots,x_{k+1})$ $R$-codes $\cH$ by $(iii)$, as desired.

$(i)\Rightarrow (ii)$. Assume $(i)$ and fix a finite model $\cH\models T_k$. Without loss of generality, assume that for each $i\in [k+1]$, $(P_i(\cH),<)$ is an initial segment of a copy of $(\omega,<)$. With this identification, let $<_*$ be any linear order on $\omega^k\cup\omega$ extending the order on $Q(\cH)\cup P_{k+1}(\cH)$ given by Remark \ref{rem:Tkorder}. Now apply Proposition \ref{prop:alleq}[$(i)\Rightarrow(ii)$] to $<_*$, to obtain  $(a^1_i)_{i<\omega}$, \ldots, $(a^{k+1}_i)_{i<\omega}$  such that $\M\models\varphi(a^1_{i_1},\ldots,a^{k+1}_{i_{k+1}})$ if and only if $(i_1,\ldots,i_k)<_* i_{k+1}$. By construction the initial segments of these sequences corresponding to $P_1(\cH),\ldots,P_{k+1}(\cH)$ witness that $\varphi(\xbar)$ $R$-codes $\cH$. 
 \end{proof}

From this, we obtain one direction of our main result on characterizing $\FOP_k$ via indiscernible collapse.

\begin{definition}
Let $\cL'_k=\cL_k\backslash\{R\}$.
\end{definition}

\begin{corollary}\label{cor:genind}
Let $T$ be a complete $\cL$-theory with monster model $\M$. If every $\cH_k$-indiscernible set in $\M$ is $\cL'_k$-indiscernible, then $T$ is $\NFOP_k$.
\end{corollary}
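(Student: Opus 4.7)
The plan is to prove the contrapositive: assume $T$ has $\FOP_k$ and produce an $\cH_k$-indiscernible sequence in $\M$ that fails to be $\cL'_k$-indiscernible.

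First, suppose $T$ has $\FOP_k$, witnessed by some $\cL$-formula $\varphi(x_1,\ldots,x_{k+1})$. By the implication $(i)\Rightarrow(v)$ of Proposition \ref{prop:codeTk}, there is a sequence $(a_h)_{h\in\cH_k}$ in $\M$ which is $\cH_k$-indexed indiscernible and witnesses that $\varphi$ $R$-codes $\cH_k$. This is the candidate sequence: it is $\cH_k$-indiscernible by construction, and the task is to exhibit two tuples in $\cH_k$ with the same quantifier-free $\cL'_k$-type whose images under $(a_h)$ have different $\cL$-types.

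The key structural observation is that, for any single $(k+1)$-tuple $(h_1,\ldots,h_{k+1})$ with $h_i\in P_i(\cH_k)$, the quantifier-free $\cL'_k$-type is completely determined and depends only on the sort data. Indeed, the unary predicates $P_1,\ldots,P_{k+1}$ are pinned down by the sorts; the linear order $<$ respects the sort partition $P_1<\ldots<P_{k+1}$ in every model of $T_k$, so it too is determined; and the $2k$-ary relation $<_k$ is trivial on a single $k$-element tuple from $Q$. So any two such tuples $(h_1,\ldots,h_{k+1})$ and $(h'_1,\ldots,h'_{k+1})$ in $\cH_k$ with $h_i,h'_i\in P_i$ realize the same quantifier-free $\cL'_k$-type. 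On the other hand, the class $\bH_k$ clearly contains the two $\cL_k$-structures on a $(k+1)$-point set with one element in each $P_i$, one with $R$ holding on the unique such tuple and one without (axiom (5) of $T_k$ is vacuous in both cases). Since $\cH_k$ is the Fra\"iss\'e limit of $\bH_k$, both of these structures embed into $\cH_k$, so we may pick tuples $(h_1,\ldots,h_{k+1})$ and $(h'_1,\ldots,h'_{k+1})$ as above with $\cH_k\models R(h_1,\ldots,h_{k+1})$ and $\cH_k\models\neg R(h'_1,\ldots,h'_{k+1})$.

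Combining the two observations produces the desired contradiction: if $(a_h)_{h\in\cH_k}$ were $\cL'_k$-indiscernible, then from $\qftp_{\cL'_k}^{\cH_k}(h_1,\ldots,h_{k+1})=\qftp_{\cL'_k}^{\cH_k}(h'_1,\ldots,h'_{k+1})$ we would conclude $\tp^T_{\cL}(a_{h_1},\ldots,a_{h_{k+1}})=\tp^T_{\cL}(a_{h'_1},\ldots,a_{h'_{k+1}})$, and in particular the formula $\varphi$ would hold of one iff it holds of the other. But since $\varphi$ $R$-codes $\cH_k$ via $(a_h)$, we have $\M\models\varphi(a_{h_1},\ldots,a_{h_{k+1}})$ while $\M\models\neg\varphi(a_{h'_1},\ldots,a_{h'_{k+1}})$, a contradiction. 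The only step that might warrant double-checking is the verification that the two singleton configurations really lie in $\bH_k$ (i.e.\ satisfy $T_k$), but this is immediate from Proposition \ref{prop:Hkmod} since the relevant axioms are vacuous on one-point $P_i$'s.
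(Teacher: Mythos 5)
Your proposal is correct and follows essentially the same route as the paper: pass to the contrapositive, invoke Proposition \ref{prop:codeTk} to obtain an $\cH_k$-indiscernible sequence witnessing that some $\varphi$ $R$-codes $\cH_k$, and then exhibit two tuples with equal quantifier-free $\cL'_k$-type on which $\varphi$ disagrees. The only (inessential) difference is that the paper chooses the two tuples to share their $P_{k+1}$-coordinate, whereas you observe more generally that any transversal tuple $(h_1,\ldots,h_{k+1})$ with $h_i\in P_i(\cH_k)$ has a uniquely determined quantifier-free $\cL'_k$-type; both observations are valid.
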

\begin{proof}
Suppose $T$ has $\FOP_k$. By Proposition \ref{prop:codeTk} we have an $\cH_k$-indiscernible sequence $(a_g)_{g\in \cH_k}$ in $\M$ witnessing that some formula $\varphi(x_1,\ldots,x_{k+1})$ $R$-codes $\cH_k$. Fix $\gbar,\gbar'\in Q(\cH_k)$ and $h\in P_{k+1}(\cH_k)$ such that $\cH_k\models R(\gbar,h)\wedge\neg R(\gbar',h)$. So $(a_g)_{g\in H_k}$ is not $\cL'_k$-indiscernible since $\qftp^{\cH_k}_{\cL'_k}(g_1,\ldots,g_k,h)=\qftp^{\cH_k}_{\cL'_k}(g'_1,\ldots,g'_k,h)$, but $\M\models\varphi(a_{g_1},\ldots,a_{g_k},a_h)\wedge\neg\varphi(a_{g'_1},\ldots,a_{g'_k},a_h)$.
\end{proof}
 
In the next section we will prove the converse of the previous result. We finish this subsection with some comments about the Fra\"{i}ss\'{e} limit $\cH_k$.

\begin{remark}\label{rem:Gkconditions}$~$
\begin{enumerate}[$(1)$]
\item The structure on $\cH_k$, other than $<$ on $P_i$ for $1\leq i<k+1$, is definable from $R$ (using quantifiers). 
\begin{enumerate}[$\ast$]
	\item For $1\leq i\leq k+1$, the  predicate $P_i(x_i)$ is defined by $\exists x_{<i}\exists x_{>i}R(x_1,\ldots,x_{k+1})$. 
	\item The ordering $\xbar <_k\ybar$ on $Q$ is defined by $\exists z(R(\xbar,z)\wedge\neg R(\ybar,z))$.
	\item The ordering $x< y$ on $P_{k+1}$ is defined by $\exists z_1\ldots z_k(R(\zbar,y)\wedge \neg R(\zbar,x))$.
\end{enumerate}
Moreover, we thank the anonymous referee for noting that this part of the structure can also be defined using only universal quantifiers. 
\begin{enumerate}[$\ast$]
	\item For $1\leq i\leq k+1$, $P_i(x)$ is defined by $$\forall y_1\cdots y_k\bigwedge_{\ell=1,\ldots, k+1; \ell\neq i}\lnot R(y_1,\ldots y_{\ell-1},x,y_{\ell}, \ldots,y_{k}).$$ 
	\item The ordering $\xbar <_k\ybar$ on $Q$ is defined by $\xbar\neq \ybar \land \forall z(R(\ybar,z)\to R(\xbar,z))$.
	\item The ordering $x< y$ on $P_{k+1}$ is defined by $x\neq y\land \forall \zbar(R(\zbar,x)\to R(\zbar,y))$.
\end{enumerate}
Along with quantifier elimination, this gives us model completeness of the theory of the modified Fra\"{i}ss\'{e} limit  discussed in Remark \ref{rem:ordextra} (in which $<$ is restricted to $P_{k+1}$).

\item Since $\cH_k$ is countable, we can identify each $P_i(\cH_k)$ with a copy of $\omega$. Thus $Q(\cH_k)\cup P_{k+1}(\cH_k)$ is identified with $\omega^k\cup\omega$ and we obtain a ``canonical/generic" linear order $<_*$ on $\omega^k\cup\omega$ given by Remark \ref{rem:Tkorder}. By Proposition \ref{prop:codeTk}, we then obtain a variation of condition $(ii)$ in Proposition \ref{prop:alleq} in which one only needs to find sequences for this ``canonical/generic" linear order on $\omega^k\cup\omega$. 

\item For any $s<\omega$, we can also use $\cH_k$ to obtain a canonical/generic partition $\omega^k= E_1\cup\ldots \cup E_s$.  Let $\Q_1,\Q_2$ be a dense/co-dense partition of $\Q$, and let $Q(\calH_k)=\{e_q:q\in \Q_1\}$, $P_{k+1}(\calH_k)=\{x_q: q\in \Q_2\}$ be  enumerations so that $\calH_k\models R(e_q,x_{q'})$ if and only if $q<q'$.  Choose any $r_1<\ldots<r_{s-1}\in \Q_2$. Now let $I_1=\Q_1\cap (-\infty,r_1)$, $I_s=\Q_1\cap (r_{s-1},\infty)$, and for each $2\leq j\leq s-1$, let $I_j=\Q_1\cap (r_{j-1},r_j)$.  Setting $E_j=\{e_q: q\in I_j\}$ for each $1\leq j\leq s$, we obtain a partition $\omega^k=E_1\cup \ldots \cup E_s$ in which the edge colors $E_1,\ldots, E_s$ behave ``generically." 
Note then that for $y_1=x_{r_1},\ldots, y_s=x_{r_s}$ we have that $\calH_k\models R(e_q,y_s)$ if and only if $q\in \bigcup_{j<s}E_j$.  We can thus obtain a variant on  Proposition \ref{prop:alleq}$(iii)$ which says such $y_1,\ldots, y_s$ exist with respect to such a single ``canonical/generic" partition $E_1\cup \ldots \cup E_s=\omega^k$. We note that this variant would also differ in that a strict inequality appears in $\bigcup_{j<s}E_j$. This is purely a cosmetic issue arising from our choice of axioms for $\calH_k$, and can be done away with via a compactness argument. 
\end{enumerate}
\end{remark}

\subsection{First applications and examples}\label{sec:examples}

As a quick application of the previous results, we consider the question of preservation of $\NFOP_k$ under Boolean combinations. 
Recall that closure of $\NFOP_k$ formulas under negation was observed by the third author and Wolf in \cite{TW2} (see  Corollary \ref{cor:easyFOPk}). It was also shown in \cite{TW2} that $\NFOP_2$ formulas are closed under arbitrary Boolean combinations. However, unlike negations, the proof for disjunctions was hard, and relied on results in hypergraph regularity only currently available in the $k=2$ case. In this subsection we use generalized indiscernibles to give a much shorter proof for disjunctions, which  works for arbitrary $k$.

\begin{theorem}\label{thm:FOPkbool}
$\NFOP_k$ formulas are closed under Boolean combinations.  In particular, for any complete theory $T$, if  $\varphi(x_1,\ldots,x_{k+1})$ and $\psi(x_1,\ldots,x_{k+1})$ are $\NFOP_k$ in $T$, then so are $\neg\varphi(x_1,\ldots,x_{k+1})$ and  $\varphi(x_1,\ldots, x_{k+1})\vee \psi(x_1,\ldots, x_{k+1})$.
\end{theorem}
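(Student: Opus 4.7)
The plan is to deduce closure under Boolean combinations by combining the $R$-coding characterization of $\FOP_k$ from Proposition \ref{prop:codeTk} with the ultrahomogeneity of the Fra\"{i}ss\'{e} limit $\cH_k$. The key structural observation driving the proof is that a $(k+1)$-tuple $\bar h \in P_1(\cH_k)\times\cdots\times P_{k+1}(\cH_k)$ has its quantifier-free $\cL_k$-type essentially determined by the single bit $\cH_k\models R(\bar h)$: the $P_i$-memberships and $<$-relations are forced by the choice of sorts, and although $<_k$ is $2k$-ary only one $k$-tuple from $Q$ is in play. This replaces the hypergraph-regularity argument used for $k=2$ in \cite{TW2} with a short Fra\"{i}ss\'{e}-theoretic one valid for all $k$.

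Closure under negation is Corollary \ref{cor:easyFOPk}, and closure under conjunction reduces to disjunction via De Morgan, so it suffices to prove closure under disjunction. First I would suppose, toward a contradiction, that $\varphi(x_1,\ldots,x_{k+1})$ and $\psi(x_1,\ldots,x_{k+1})$ are $\NFOP_k$ in $T$ but that $\varphi\vee\psi$ has $\FOP_k$ in $T$. By Proposition \ref{prop:codeTk}$(v)$, I obtain an $\cH_k$-indexed indiscernible sequence $(a_h)_{h\in\cH_k}$ in $\M$ witnessing that $\varphi\vee\psi$ $R$-codes $\cH_k$; thus for every tuple $\bar h\in P_1(\cH_k)\times\cdots\times P_{k+1}(\cH_k)$,
\[
\M\models(\varphi\vee\psi)(a_{h_1},\ldots,a_{h_{k+1}})\miff \cH_k\models R(h_1,\ldots,h_{k+1}).
\]

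The core of the argument is a formula-level collapse claim: for any $\NFOP_k$ formula $\chi(x_1,\ldots,x_{k+1})$, the truth of $\chi(a_{h_1},\ldots,a_{h_{k+1}})$ is constant as $\bar h$ varies over $P_1(\cH_k)\times\cdots\times P_{k+1}(\cH_k)$. To establish this, I would suppose it fails; then by $\cH_k$-indiscernibility there exist $\bar h,\bar h'$ in $P_1\times\cdots\times P_{k+1}$ with different quantifier-free $\cL_k$-types and with $\chi(a_{\bar h})\wedge\neg\chi(a_{\bar h'})$. By the structural observation above, the only possible difference is the value of $R$, and indiscernibility applied to every such pair then forces either $\chi(a_{\bar h})\leftrightarrow R(\bar h)$ or $\chi(a_{\bar h})\leftrightarrow\neg R(\bar h)$ to hold uniformly on $P_1\times\cdots\times P_{k+1}$. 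Hence $\chi$ or $\neg\chi$ $R$-codes $\cH_k$ via $(a_h)$, so by Proposition \ref{prop:codeTk} and Corollary \ref{cor:easyFOPk} we contradict $\chi\in\NFOP_k$. Applying the claim to $\varphi$ and to $\psi$ separately shows that both $\varphi(a_{\bar h})$ and $\psi(a_{\bar h})$ are constant on these tuples, and hence so is $(\varphi\vee\psi)(a_{\bar h})$. But the displayed $R$-coding equation forces this constant to equal $R(\bar h)$, which takes both truth values on $\cH_k$ (both options are realized in $\bH_k$ and therefore in its Fra\"{i}ss\'{e} limit), yielding the desired contradiction.

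The main obstacle is really just verifying the formula-level collapse claim, which is essentially a sharpening of Corollary \ref{cor:genind} to a single formula. The argument is short precisely because quantifier-free $\cL_k$-types of $(k+1)$-tuples in $P_1\times\cdots\times P_{k+1}$ are controlled by a single $R$-bit, so no use of the (harder) forward direction of Theorem \ref{thm:genind}, nor any further Ramsey-theoretic input beyond Corollary \ref{cor:Gkmod}, is required. This is what makes the proof for general $k$ so much shorter than the hypergraph-regularity argument used in \cite{TW2} for $k=2$.
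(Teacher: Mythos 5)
Your proposal is correct and follows essentially the same route as the paper's proof: both rest on Proposition \ref{prop:codeTk} (obtaining an $\cH_k$-indiscernible witness of $R$-coding for $\varphi\vee\psi$), the observation that the quantifier-free $\cL_k$-type of a tuple in $P_1\times\cdots\times P_{k+1}$ is determined by the single $R$-bit, and Corollary \ref{cor:easyFOPk}. The only difference is presentational — you package the key step as a ``constancy'' claim applied to $\varphi$ and $\psi$ separately, whereas the paper argues directly that (WLOG) $\varphi$ itself $R$-codes $\cH_k$ — but the underlying mechanism is identical.
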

\begin{proof}
We only need to deal with the disjunction case. Suppose $(\varphi\vee\psi)(\xbar)$ has $\FOP_k$ in $T$. By Proposition \ref{prop:codeTk}, we have an $\cH_k$-indiscernible sequence $(a_g)_{g\in H_k}$ in $\M$ witnessing that $(\varphi\vee\psi)(\xbar)$ $R$-codes $\cH_k$. Thus, for any $\gbar\in P_1(\cH_k)\times\ldots\times P_{k+1}(\cH_k)$, if $\cH_k\models\neg R(\gbar)$ then $\M\models \neg\varphi(a_{g_1},\ldots,a_{g_{k+1}})\wedge \neg\psi(a_{g_1},\ldots,a_{g_{k+1}})$.

Now fix some $\bar{h}\in P_1(\cH_k)\times\ldots\times P_{k+1}(\cH_k)$ such that $\cH_k\models R(\bar{h})$. Then 
\[
\M\models (\varphi\vee\psi)(a_{h_1},\ldots,a_{h_{k+1}}),
\]
so without loss of generality assume $\M\models \varphi(a_{h_1},\ldots,a_{h_{k+1}})$. For any other $\gbar\in P_1(\cH_k)\times\ldots\times P_{k+1}(\cH_k)$, if $\cH_k\models R(\gbar)$ then $\qftp^{\cH_k}_{\cL_k}(\gbar)=\qftp^{\cH_k}_{\cL_k}(\bar{h})$, and so  by $\cH_k$-indiscernibility of $(a_g)_{g\in H_k}$, we get $\M\models \varphi(a_{g_1},\ldots,a_{g_{k+1}})$. Altogether $\varphi(\xbar)$ $R$-codes $\cH_k$ using $(a_g)_{g\in H_k}$, and thus has $\FOP_k$ by Proposition \ref{prop:codeTk}.  
\end{proof}

To our knowledge, the previous theorem is the first instance in which generalized indiscernibles are used to prove closure under Boolean combinations for a particular dividing line. So we point out that the proof method is quite flexible. For example, a similar argument would work to prove that $\NIP_k$ formulas are closed under Boolean combinations (which was shown by Chernikov, Palac\'{i}n, and Takeuchi \cite{CPT} using a higher-arity version of the Sauer-Shelah lemma).

Next we discuss some examples of theories with and without $\FOP_k$, which demonstrate various hallmark properties.

\begin{proposition}\label{prop:arity}
Suppose $\cL$ is a relational language of arity at most $k$, and $T$ is an $\cL$-theory with quantifier elimination. Then $T$ is $\NFOP_k$. 
\end{proposition}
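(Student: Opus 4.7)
The plan is to reduce to atomic formulas via quantifier elimination and Boolean closure, and then exploit the arity bound directly. Since $T$ has QE, every $\cL$-formula is a Boolean combination of atomic formulas, so by Theorem \ref{thm:FOPkbool} (Boolean closure of $\NFOP_k$) it suffices to show that every atomic $\cL$-formula is $\NFOP_k$ in $T$. Any such atomic formula is either an equality $y_1=y_2$ between single variables, or $R(y_1,\ldots,y_t)$ for a relation symbol $R$ of arity $t\leq k$.

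Next I will establish as a key lemma that if a formula $\theta(x_1,\ldots,x_{k+1})$ does not depend on $x_j$ for some $j\in[k+1]$, then $\theta$ is $\NFOP_k$. Suppose for contradiction that $(a_f)_{f\colon\omega^{k-1}\to\omega}$ and $(b^t_i)_{t\in[k],\,i<\omega}$ witness $\FOP_k$ for $\theta$. If $j=1$, then the truth value of $\theta(a_f,b^1_{i_1},\ldots,b^k_{i_k})$ is constant in $f$, while the required condition $i_k\leq f(i_1,\ldots,i_{k-1})$ varies as we change the value of $f$ at a fixed input. If $j\geq 2$, varying $i_{j-1}$ with everything else fixed leaves the left side constant but changes the right side, either because $i_{j-1}$ is an argument of $f$ when $j\leq k$, or because $i_{j-1}=i_k$ when $j=k+1$. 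Now any atomic $\cL$-relation of arity $t\leq k$ mentions coordinates from at most $k$ of the tuples $x_1,\ldots,x_{k+1}$, and when $k\geq 2$ any equality also mentions at most $2\leq k$ of them; in all these cases $\theta$ is independent of some $x_j$, and the lemma applies.

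The one remaining case is $k=1$ with $\theta = (y_1 = y_2)$ where $y_1$ is a coordinate of $x_1$ and $y_2$ a coordinate of $x_2$. Here $\FOP_1$ is the classical order property, and I will show equality cannot have it: putative witnesses $(a_i)_{i<\omega}$ and $(b_j)_{j<\omega}$ with $a_i = b_j$ exactly when $j\leq i$ would force $b_0 = a_i$ for every $i$ (taking $j=0$), so all $a_i$ share a single value $c$; then $a_0 = c = b_1$, contradicting $a_0 \neq b_1$ (required since $1 \not\leq 0$).

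I expect the main obstacle to be verifying the key lemma, where one must handle the subcases $j=1$, $2\leq j\leq k$, and $j=k+1$ separately and track how varying each input parameter interacts with the defining condition of $\FOP_k$; once this is in place, the remaining arguments are essentially bookkeeping via the arity hypothesis and the short ruling-out of the order property for equality.
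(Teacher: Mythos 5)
Your proof is correct, but it takes the route the paper deliberately avoids: you reduce via quantifier elimination to atomic formulas, observe that any atomic formula in a language of arity at most $k$ (and, for $k\geq 2$, any equality) fails to depend on at least one of the partitioned variables $x_1,\ldots,x_{k+1}$ and hence is $\NFOP_k$ by a direct witness-contradiction argument, handle the lone $k=1$ equality case by ruling out the order property for $=$, and then invoke Theorem \ref{thm:FOPkbool} to close under Boolean combinations. The paper's own proof instead works directly with the indiscernible-collapse criterion of Corollary \ref{cor:genind}: given an $\cH_k$-indiscernible sequence, QE plus the arity bound reduce types to length-$k$ subtuples, where the quantifier-free $\cL'_k$-type already determines the $\cL_k$-type because $R$ is $(k+1)$-ary on distinct elements. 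In fact the remark immediately following the proposition in the paper explicitly describes your route and explains why the authors chose otherwise: the Boolean-closure theorem you cite is itself proved through the generalized-indiscernibles machinery (via Proposition \ref{prop:codeTk}), so your argument is not more elementary overall, it just packages the indiscernible work inside Theorem \ref{thm:FOPkbool}, whereas the paper's proof exposes that connection directly. Your key lemma and its case analysis are sound; two small points worth making explicit when writing it up are that in the $j=1$ and $2\leq j\leq k$ cases you must choose $i_k\geq 1$ and an $f$ whose values at the two relevant inputs straddle $i_k$ (available since the $\FOP_k$ witness ranges over all $f\colon\omega^{k-1}\to\omega$), and in the $k=1$ equality case the step $c=b_1$ uses $b_1=a_1$ from $1\leq 1$.
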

\begin{proof}
Suppose $(a_g)_{g\in H_k}$ is an $\cH_k$-indexed indiscernible sequence in $\M\models T$.  We will show $(a_g)_{g\in H_k}$ is $\cL'_k$-indiscernible, and thus $T$ is $\NFOP_k$ by Corollary \ref{cor:genind}. Fix arbitrary $g_1,\ldots, g_n,h_1,\ldots,h_n\in \cH_k$ with $\qftp^{\cH_k}_{\calL'_k}(g_1,\ldots, g_n)=\qftp^{\cH_k}_{\calL'_k}(h_1,\ldots, h_n)$. We need to show $\tp^T_{\cL}(a_{g_1},\ldots, a_{g_n})=\tp^T_{\cL}(a_{h_1},\ldots, a_{h_n})$. By quantifier elimination for $T$, and the assumption on $\cL$, the type of a tuple in $\M$ is completely determined by its subtuples of length $k$. So it suffices to 
 fix $1\leq i_1\leq \ldots\leq i_k\leq n$, and show $\tp^T_{\cL}(a_{g_{i_1}},\ldots, a_{g_{i_k}})=\tp^T_{\cL}(a_{h_{i_1}},\ldots, a_{h_{i_k}})$. For this, it suffices by assumption on $(a_g)_{g\in H_k}$, to show $\qftp^{\cH_k}_{\calL_k}(g_{i_1},\ldots, g_{i_k})=\qftp^{\cH_k}_{\calL_k}(h_{i_1},\ldots, h_{i_k})$. But since $R$ is a $(k+1)$-ary relation that only holds on tuples of \emph{distinct} elements, this follows from the assumption that $\qftp^{\cH_k}_{\calL'_k}(g_{i_1},\ldots, g_{i_k})=\qftp^{\cH_k}_{\calL'_k}(h_{i_1},\ldots, h_{i_k})$.
\end{proof}

Of course, one could give an alternate proof of the previous result by observing that a relation of arity at most $k$ must be $\NFOP_k$, and then applying preservation of $\NFOP_k$ under Boolean combinations. However, said preservation result also uses generalized indiscernibles (via Proposition \ref{prop:codeTk}), and thus the previous proof makes this connection more explicit.

As a consequence of Proposition \ref{prop:arity}, we see that $\FOP_k$ crosscuts many of the standard binary dividing lines (except of course for stability and NIP). For example, the Henson graph is $\NFOP_k$ for all $k\geq 2$, but has $\text{TP}_2$ and $\text{SOP}_3$ (and hence is not simple); while on the other hand the generic $(k+1)$-uniform hypergraph is simple but has $\FOP_k$ (in fact, $\IP_k$). Recall also  that we have the general implications $\NFOP_k\Rightarrow\NIP_k\Rightarrow \NFOP_{k+1}$ (see  Proposition \ref{prop:IPk}). We can now show that these implications are strict. We will need the following lemma.

\begin{lemma}\label{lem:IPorder}
    Let $T$ be a complete theory and suppose $\varphi(\xbar;\ybar)$ is $\NIP$ in $T$, with $|\xbar|=k=|\ybar|$. Let $z_1,\ldots,z_{k+1}$ be a partition of the $2k$ free variables in $\varphi(\xbar;\ybar)$. Then $\theta(z_1,\ldots,z_{k+1})\coloneqq \varphi(\xbar,\ybar)$ is $\NIP_k$ in $T$.
\end{lemma}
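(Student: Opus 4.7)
The plan is to prove the contrapositive: assuming $\theta(z_1,\ldots,z_{k+1})$ has $\IP_k$ in $T$, I will extract witnesses of $\IP$ for $\varphi(\xbar;\ybar)$. First I would handle a degenerate case: if any $z_j$ is empty, then $\theta$ does not effectively depend on its $j$-th argument, which immediately precludes $\IP_k$ (the corresponding sequence in Definition~\ref{def:FOPk2} plays no role, forcing the coded relation to be trivial in the $j$-th coordinate); hence we may assume every $z_j$ is nonempty.

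For each $j\in[k+1]$, write $z_j=(\xbar_j,\ybar_j)$ according to the split into $\xbar$- and $\ybar$-variables, and set $A=\{j:\xbar_j\neq\emptyset\}$ and $B=\{j:\ybar_j\neq\emptyset\}$. The decisive pigeonhole step is that $|A|\leq|\xbar|=k<k+1$ and $|B|\leq k$, while nonemptiness of every $z_j$ forces $A\cup B=[k+1]$; hence $B\setminus A\supseteq[k+1]\setminus A\neq\emptyset$ and symmetrically $A\setminus B\neq\emptyset$. Fix $j_x\in A\setminus B$ (so $z_{j_x}\seq\xbar$) and $j_y\in B\setminus A$ (so $z_{j_y}\seq\ybar$).

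Now I would apply the symmetric characterization of $\IP_k$ in Proposition~\ref{prop:IPkeq}$(ii)$ with the relation $R(i_1,\ldots,i_{k+1})\iff E(i_{j_x},i_{j_y})$, where $E\seq\omega\times\omega$ is arbitrary; this yields sequences $(a^t_i)_{i<\omega}$ in $\M^{z_t}$ for $t\in[k+1]$ satisfying
\[
\M\models\theta(a^1_{i_1},\ldots,a^{k+1}_{i_{k+1}})\iff E(i_{j_x},i_{j_y}).
\]
Decompose each $a^j_i=(a^{j,x}_i,a^{j,y}_i)$ along $z_j=(\xbar_j,\ybar_j)$. For $\alpha,\beta<\omega$, define $p_\alpha\in\M^{\xbar}$ by placing $a^{j_x,x}_\alpha$ in the $\xbar_{j_x}$-slot and $a^{j,x}_0$ in the $\xbar_j$-slots for $j\in A\setminus\{j_x\}$, and define $q_\beta\in\M^{\ybar}$ analogously using $j_y$ and $\beta$. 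Because $z_{j_x}\seq\xbar$ and $z_{j_y}\seq\ybar$, setting $i_{j_x}=\alpha$, $i_{j_y}=\beta$, and $i_j=0$ otherwise reassembles the tuple $(a^1_{i_1},\ldots,a^{k+1}_{i_{k+1}})$ into precisely $(p_\alpha,q_\beta)$, so
\[
\M\models\varphi(p_\alpha;q_\beta)\iff E(\alpha,\beta).
\]
Varying $E$ then codes arbitrary bipartite relations via $\varphi$, contradicting $\NIP$ of $\varphi(\xbar;\ybar)$.

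The main work is the pigeonhole choice of $j_x$ and $j_y$, exploiting the size mismatch $|\xbar|+|\ybar|=2k<2(k+1)$; once these indices are isolated, the rest is bookkeeping on how the components of each $a^t_i$ get reassembled into the $\xbar$- and $\ybar$-tuples for $\varphi$, together with a direct appeal to Proposition~\ref{prop:IPkeq}.
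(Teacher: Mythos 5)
Your proof is correct and follows essentially the same route as the paper's: both arguments locate one block contained in $\xbar$ and one contained in $\ybar$, then freeze the remaining blocks at fixed parameters to convert $\IP_k$ for $\theta$ into $\IP$ for $\varphi(\xbar;\ybar)$. The differences are minor and expository — you count the blocks meeting each side instead of first extracting two singleton blocks, you treat possibly empty blocks explicitly, and you justify the parameter-freezing step via Proposition \ref{prop:IPkeq}$(ii)$ where the paper simply asserts that suitable parameters $\cbar$ exist.
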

\begin{proof}
We first claim that there are $i,j\in [k+1]$ such that $z_i\seq \xbar$ and $z_j\seq \ybar$. 
    Since $\varphi$ only contains $2k$ free variables, it follows that there are distinct $i,j\in [k+1]$ such that $|z_i|=|z_j|=1$. If $z_i\in \xbar$ and $z_j\in \ybar$ then the claim follows. So suppose without loss of generality that $z_i,z_j\in\xbar$. Then $\xbar\backslash\{z_i,z_j\}$ contains $k-2$ variables, while there are $k-1$ remaining indices in $[k+1]\backslash\{i,j\}$. So there is some $j'\in [k+1]\backslash\{i,j\}$ such that $z_{j'}\seq \ybar$, and we have our claim.

Now for a contradiction suppose $\theta(z_1,\ldots,z_{k+1})$ has $\IP_k$. Then, with $i,j\in[k+1]$ as above, we can find $\cbar\in \prod_{\ell\in[k+1]\backslash\{i,j\}}M^{x_\ell}$ such that, if $\theta(z_i,z_j;\cbar)$ is obtained from $\theta(z_1,\ldots,z_{k+1})$ by instantiating $z_\ell$  with $c_\ell$ for all $\ell\in[k+1]\backslash\{i,j\}$, then $\theta(z_i,z_j;\cbar)$ has the independence property. But since $z_i\seq \xbar$ and $z_j\seq \ybar$, this would yield the independence property for $\varphi(\xbar;\ybar)$, which is a contradiction.    
\end{proof}

\begin{proposition}\label{prop:IPkstrict}$~$
\begin{enumerate}[$(a)$]
\item The theory of the generic $(k+1)$-uniform hypergraph is $\NFOP_{k+1}$ and $\IP_k$.
\item $\Th(\cH_k)$ is $\NIP_k$ and $\FOP_k$.
\end{enumerate}
\end{proposition}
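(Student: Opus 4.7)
For part (a), my plan is to let $T_H$ denote the theory of the generic $(k{+}1)$-uniform hypergraph in the language $\{E\}$ with $E$ of arity $k{+}1$. Since $T_H$ eliminates quantifiers in a relational language of arity $k{+}1$, $\NFOP_{k+1}$ will be immediate from Proposition \ref{prop:arity}. For $\IP_k$, I would fix pairwise distinct vertices $(b^t_i)_{t\in[k],\,i<\omega}$ in the monster; by the extension property of the generic hypergraph, for each finite $X_0\subseteq\omega^k$ one can find a vertex $a_{X_0}$ with $E(a_{X_0},b^1_{i_1},\ldots,b^k_{i_k})$ if and only if $(i_1,\ldots,i_k)\in X_0$, and a compactness argument upgrades this to arbitrary $X\subseteq\omega^k$.

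For part (b), the $\FOP_k$ assertion will be immediate from Proposition \ref{prop:codeTk}: taking $a_g=g$ for $g\in H_k$, the formula $R(x_1,\ldots,x_{k+1})$ $R$-codes $\cH_k$ in the trivial way. To establish $\NIP_k$ for $\Th(\cH_k)$, my approach is to combine three ingredients: quantifier elimination for $\Th(\cH_k)$ in $\cL_k$, closure of $\NIP_k$-formulas under Boolean combinations (a standard consequence of the Chernikov--Palac\'{i}n--Takeuchi characterization via collapse of ordered $(k{+}1)$-uniform hypergraph indiscernibles), and a direct check that each atomic $\cL_k$-formula is $\NIP_k$ under every $(k{+}1)$-partition of its free variables. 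The cases $P_i(x)$ and $x<y$ will be trivial since at most two variables are involved.

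The two nontrivial cases are $<_k$ (arity $2k$) and $R$ (arity $k{+}1$). For $<_k(\xbar,\ybar)$, since it is a linear order on $Q$ with $|\xbar|=|\ybar|=k$, viewing it as a $\NIP$ formula $\varphi(\xbar;\ybar)$ together with Lemma \ref{lem:IPorder} will give $\NIP_k$ under every $(k{+}1)$-partition of the $2k$ variables. The main obstacle is $R$, whose arity $k{+}1$ exactly matches that of an $\IP_k$-configuration. The saving observation, from Remark \ref{rem:Tkorder}, is that $R(\xbar,y)$ amounts to $\xbar<_* y$ in the extended linear order on $Q\cup P_{k+1}$, so $R$ is essentially binary. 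Padding $x_{k+1}$ with $k{-}1$ dummy variables to meet the hypothesis $|\ybar|=k$, the formula remains $\NIP$ (its coded sets are upward-closed final segments of a linear order), and Lemma \ref{lem:IPorder} will deliver $\NIP_k$ under every $(k{+}1)$-partition, including the natural one where each $x_i$ stands alone. Intuitively, any $\IP_k$-coding by $R$ would require arbitrary $X\subseteq\omega^k$ to arise as $\{(i_1,\ldots,i_k):(a_X,b^1_{i_1},\ldots,b^{k-1}_{i_{k-1}})<_* b^k_{i_k}\}$, but the fixed linear order on the $b^k_i$ already forbids patterns like the diagonal, ruling this out.
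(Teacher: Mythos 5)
Your proposal is correct, and for most of the statement it coincides with the paper's argument: part $(a)$ is handled exactly as in the paper (Proposition \ref{prop:arity} for $\NFOP_{k+1}$; the paper simply cites $\IP_k$ as well known, which your genericity-plus-compactness argument makes explicit), the $\FOP_k$ claim in $(b)$ is the same $R$-coding via Proposition \ref{prop:codeTk}, and the reduction to atomic formulas via quantifier elimination and Boolean closure, with $<_k$ dispatched by Lemma \ref{lem:IPorder}, is identical. The one genuine divergence is the relation $R$. The paper rules out $\IP_k$ for $R$ directly: by Proposition \ref{prop:IPkeq}, $\IP_k$ would force every finite $(k+1)$-partite $(k+1)$-uniform hypergraph to appear, but axiom $(5)$ of $T_k$ forbids already the alternating $2\times 2$ pattern $R(\abar,c)\wedge R(\bbar,d)\wedge\neg R(\abar,d)\wedge\neg R(\bbar,c)$ — a one-line combinatorial obstruction. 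You instead exploit Remark \ref{rem:Tkorder} to view $R$ as an essentially binary, order-like relation: by the universal axiom $(5)$, the sets $\{\xbar\in Q: R(\xbar,b)\}$ form a chain in any model of $\Th(\cH_k)$, so the bipartitioned formula $R(x_1,\ldots,x_k;y)$ is $\NIP$, and after padding $y$ with $k-1$ dummy variables Lemma \ref{lem:IPorder} yields $\NIP_k$. This works, but it quietly relies on two small observations you should make explicit: that the chain/monotonicity property transfers to the monster model (it does, since axiom $(5)$ is part of the theory), and that padding a variable group with dummies changes neither $\NIP$ of the bipartitioned formula nor the presence of $\IP_k$ (so the conclusion for the padded formula really does give $\NIP_k$ for $R$ itself). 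The trade-off: your route reuses the same lemma that handles $<_k$, giving a uniform treatment of both atomic relations, while the paper's route avoids the padding bookkeeping entirely and isolates the combinatorial reason $R$ is $k$-dependent, namely the forbidden half-graph-style configuration.
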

\begin{proof}
Part $(a)$. $\NFOP_{k+1}$ follows from Proposition \ref{prop:arity} and, as noted above, $\IP_k$ is well-known (and follows essentially by definition of $\IP_k$).

Part $(b)$. In $\cH_k$, the relation  $R(x_1,\ldots,x_{k+1})$ obviously $R$-codes $\cH_k$, and thus has $\FOP_k$ by Proposition \ref{prop:codeTk}. To verify that $\Th(\cH_k)$ is $\NIP_k$, we first recall  that $\NIP_k$ formulas are closed under Boolean combinations and arbitrary permutations of variables (see Corollaries 3.15 and 5.3 in \cite{CPT}). So by quantifier elimination, it suffices to just check that the relations $R$ and $<_k$ (under any $(k+1)$-partition of the $2k$ variables) are $\NIP_k$. For $<_k$, this follows from Lemma \ref{lem:IPorder} and the fact that any linear order is NIP. Finally, if  $R(x_1,\ldots,x_{k+1})$ were $\IP_k$ then by Proposition \ref{prop:IPkeq} it would embed any finite $(k+1)$-partite $(k+1)$-hypergraph as an induced subgraph; but this is not the case since, for example, there are no $\abar,\bbar\in Q(\cH_k)$ and $c,d\in P_{k+1}(\cH_k)$ such that $\cH_k\models R(\abar,c)\wedge R(\bbar,d)\wedge\neg R(\abar,d)\wedge\neg R(\bbar,c)$.
\end{proof}

Next we discuss another natural example of a structure with $\FOP_k$ and $\NIP_k$.

\begin{definition}\label{def:LQ}
Let $\cL^Q_k=\{P_1,\ldots,P_k,<,<_k\}$ and let $\cF_k$ be the $\cL^Q_k$-structure obtained from $\cH_k$ by forgetting $R$ and removing $P_{k+1}(\cH_k)$, i.e. $\cF_k$ has universe $\cH_k\backslash P_{k+1}(\cH_k)$. 
\end{definition}

Alternatively, one can describe $\cF_k$ as the Fra\"{i}ss\'{e} limit of the class of finite $\cL^Q_k$-structures in which $P_1,\ldots,P_k$ yields an $<$-ordered partition in the usual way, and $<_k$ is a $2k$-ary relation inducing an arbitrary linear order on $P_1\times\ldots\times P_k$. Thus $\cF_k$ can be viewed as the ``generic order on $k$-tuples" (with a base order $<$).

\begin{proposition}\label{prop:FkFOPk}
$\Th(\cF_k)$ is $\FOP_k$ and $\NIP_k$.
\end{proposition}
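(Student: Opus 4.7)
The plan is to establish $\FOP_k$ and $\NIP_k$ separately, both via the genericity of $<_k$ on $Q(\cF_k)$ but using it in two different ways.

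For $\FOP_k$, I would take as witness the formula
\[
\varphi(x_1,\ldots,x_k,y_1,\ldots,y_k)\coloneqq {<_k}(x_1,\ldots,x_k,y_1,\ldots,y_k),
\]
regarded as a $(k+1)$-partitioned formula whose $i$th block is the singleton $x_i$ for $i\in[k]$ and whose $(k+1)$st block is the $k$-tuple $(y_1,\ldots,y_k)$. I would verify condition $(ii)$ of Proposition \ref{prop:alleq}: given any linear order $<_*$ on $\omega^k\cup\omega$, I would produce, in a monster model $\M$ of $\Th(\cF_k)$, sequences $(a^t_i)_{i<\omega}$ of pairwise distinct elements of $P_t(\M)$ for each $t\in[k]$, and a sequence $(b_j)_{j<\omega}$ of $k$-tuples in $Q(\M)$ whose components are disjoint from all the $a^t_i$, so that
\[
(a^1_{i_1},\ldots,a^k_{i_k})<_k b_j \iff (i_1,\ldots,i_k)<_* j.
\]
This amounts to prescribing a specific linear order on the countable subset of $Q(\M)$ consisting of the grid $\{(a^1_{i_1},\ldots,a^k_{i_k})\}$ together with the fresh $k$-tuples $b_j$. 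Because the Fraisse class defining $\cF_k$ permits $<_k$ to be any linear order on $Q$, every finite sub-configuration of this type is already realized in $\cF_k$, and compactness then delivers the full configuration in $\M$.

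For $\NIP_k$, I would first use that $\cF_k$ is the Fraisse limit of a strong amalgamation class in a finite relational language, so $\Th(\cF_k)$ eliminates quantifiers. By closure of $\NIP_k$ under Boolean combinations and permutations of variables (\cite[Corollaries 3.15 and 5.3]{CPT}), the task reduces to checking $\NIP_k$ for each atomic formula under every $(k+1)$-partition of its free variables. The atomic symbols $=$, $P_1,\ldots,P_k$, $<$ all have arity at most $2$, hence are NIP, and NIP trivially implies $\NIP_k$ for all $k\geq 1$ by specializing extra parameters. For the remaining relation $<_k$, viewed as a binary-partitioned formula $<_k(\xbar,\ybar)$ with $|\xbar|=|\ybar|=k$, it defines a linear order on $Q(\cF_k)$ and is therefore NIP. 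Lemma \ref{lem:IPorder} then yields $\NIP_k$ for every $(k+1)$-partition of the $2k$ variables.

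The only place requiring any real care is the realization step for $\FOP_k$, namely the verification that the Fraisse class for $\cF_k$ admits arbitrary finite linear orders on finite subsets of $Q$. This is immediate from the explicit description of the class, so no genuine obstacle arises beyond a standard Fraisse-plus-compactness argument.
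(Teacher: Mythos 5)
Your proof is correct, and the two halves line up with the paper as follows. The $\NIP_k$ half is the same argument as the paper's (quantifier elimination, closure of $\NIP_k$ under Boolean combinations and permutations, Lemma \ref{lem:IPorder} for $<_k$, and low arity for the remaining symbols). For the $\FOP_k$ half you use the same witnessing formula $(x_1,\ldots,x_k)<_k(y_1,\ldots,y_k)$ and the same underlying idea (genericity of the order $<_k$ on $Q$), but you route it through Proposition \ref{prop:alleq}$(ii)$ directly, realizing each finite order pattern inside the Fra\"{i}ss\'{e} class and invoking compactness; the paper instead goes through the $R$-coding characterization of Proposition \ref{prop:codeTk}, building a single countable $\cL^Q_k$-structure in which $P_{k+1}(\cH_k)$ is replaced by a product $E_1\times\cdots\times E_k$ and embedding it into $\cF_k$ by universality, so that the witness sits inside $\cF_k$ itself with no compactness step. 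Since Proposition \ref{prop:codeTk} is itself derived from Proposition \ref{prop:alleq}$(ii)$, the two routes are interchangeable; yours is marginally more elementary, the paper's is uniform with how $\FOP_k$ is handled elsewhere via $\cH_k$. Two small points you should make explicit but which cause no trouble: the tuples $b_j$ must be pairwise distinct (take fresh components for each $j$), and the prescribed constraints, including the negative instances which force $b_j<_k(a^1_{i_1},\ldots,a^k_{i_k})$, extend to a linear order on the finite $Q$ because they are pulled back injectively from the linear order $<_*$ and hence are acyclic.
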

\begin{proof}
$\NIP_k$ for $\Th(\cF_k)$ follows from Lemma \ref{lem:IPorder}, quantifier elimination, and \cite[Proposition 3.15]{CPT} (preservation of $\NIP_k$ under Boolean combinations). For $\FOP_k$, let $\varphi(x_1,\ldots,x_k,x_{k+1})$ be the formula $(x_1,\ldots,x_k)<_k x_{k+1}$, where $x_{k+1}=(y_1,\ldots,y_k)$. We show that $\varphi(\xbar)$ $R$-codes $\cH_k$, and so $\Th(\cF_k)$ has $\FOP_k$ by Proposition \ref{prop:codeTk}. Define an $\cL^Q_k$-structure $\cH$ as follows. 
\begin{enumerate}[\hspace{5pt}$\ast$]
\item For $i\in[k]$, $P_i(\cH)=P_i(\cH_k)\sqcup E_i$, where $E_i$ is a countably infinite set, and the $E_i$'s are disjoint.
\item $<^{\cH}$ is a linear order such that $P_1(\cH)<^{\cH}\ldots<^{\cH}P_k(\cH)$. 
\item Let $f\colon E_1\times\ldots\times E_k\to P_{k+1}(\cH_k)$ be a bijection, and extend $f$ to $W\coloneqq Q(\cH_k)\cup (E_1\times\ldots\times E_k)$ so that $f(\gbar)=\gbar$ for all $\gbar\in Q(\cH_k)$. Note that $W$ is a subset of $Q(\cH)$. Let $<^{\cH}_k$ be any linear order on $Q(\cH)$ such that, for $\alpha,\beta\in W$, $\cH\models \alpha<_k\beta$ if and only if $\cH_k\models f(\alpha)<_* f(\beta)$, where $<_*$ is defined as ${<_k}\cup R\cup{(\neg R)^{\textnormal{opp}}}\cup {({<}{\upharpoonright}P_{k+1})}$ as in Remark \ref{rem:Tkorder}. 
\end{enumerate}
By construction, $\cH$ is a countable model of the universal theory of $\cF_k$, and so we may assume $\cH\seq \cF_k$. For $1\leq i\leq k$ and $h\in P_i(\cH_k)$, let $a_h=h\in P_i(\cH)$. For $h\in P_{k+1}(\cH)$, let $a_h=f^{\text{-}1}(h)\in Q(\cH)$. Then for $(h_1,\ldots,h_k)\in P_1(\cH_k)\times\ldots\times P_{k+1}(\cH_k)$, we have
\begin{multline*}
\cF_k\models \varphi(a_{h_1},\ldots,a_{h_k},a_{h_{k+1}})\miff (h_1,\ldots,h_k)<^{\cH}_k f^{\text{-}1}(h_{k+1})\\
\miff \cH_k\models (h_1,\ldots,h_k)<_* h_{k+1}\miff \cH_k\models R(h_1,\ldots,h_k,h_{k+1}).
\end{multline*}
So $\varphi(x_1,\ldots,x_{k+1})$ $R$-codes $\cH_k$ in $\cF_k$.
\end{proof}

\begin{remark}\label{rem:Fk}
Regarding the connection between $\FOP_k$ and $\cF_k$, we do have some behavior similar to $\cH_k$. In particular, if $T$ is an $\cL$-theory with monster model $\M$, then the following holds.
\begin{enumerate}[$(i)$]
\item Let  $\varphi(x_1,\ldots,x_k,y_1,\ldots,y_k)$ be an $\cL$-formula. Suppose that for any linear order $<_k$ on $\omega^k$, there are sequences $(a^1_i)_{i<\omega}$, \ldots, $(a^k_i)_{i<\omega}$ such that
\[
\M\models \varphi(a^1_{i_1},\ldots,a^k_{i_k},a^1_{j_1},\ldots,a^k_{j_k})\miff (i_1,\ldots,i_k) <_k (j_1,\ldots,j_k).
\]
Then $\varphi(x_1,\ldots,x_k,x_{k+1})$ has $\FOP_k$ in $T$, where $x_{k+1}=(y_1,\ldots,y_k)$.
\item If $T$ is $\NFOP_k$ then every $\cF_k$-indexed indiscernible sequence in $\M$ is $\cL^Q_k\backslash\{<_k\}$-indiscernible.
\end{enumerate}
Indeed, $(i)$ follows using the same coding of $\cH_k$ with $\varphi$ as in the proof of Proposition \ref{prop:FkFOPk}. As for $(ii)$, the argument is similar to what we will do in the next section to obtain the characterization of $\NFOP_k$ using indiscernible collapse with $\cH_k$. We omit details since the statement of $(ii)$ is not used in our work and, as the reader is about to see, the proof for $\cH_k$ is already complicated enough.

On the other hand, we expect that neither $(i)$ nor $(ii)$ leads to a characterization of $\FOP_k$. In particular, the theory of the generic $(k+1)$-uniform hypergraph is $\IP_k$ but should not contain a formula as in $(i)$. It also seems likely that this theory satisfies the conclusion of $(ii)$. The theory of $\cH_k$ should also satisfy the conclusion of $(ii)$. But again, we will not pursue these details here.  
\end{remark}

\section{$\NFOP_k$ and collapsing indiscernibles: Part II}\label{sec:collapse}

\subsection{Characterizing $\FOP_k$}
The goal of this subsection is to prove the characterization of $\NFOP_k$ via collapse of indiscernibles (Theorem \ref{thm:genind} below). Recall that in Corollary \ref{cor:genind}, we showed that a certain kind of indiscernible collapse implies $\NFOP_k$. So it remains to show that such a collapse happens in \emph{any} $\NFOP_k$ theory. Given Corollary \ref{cor:genind}, a natural choice for the ``collapsed sublanguage" is $\cL'_k$. However, our main application of the results in this subsection will be to prove a reduction to one variable for $\FOP_k$ (see Theorem \ref{thm:FOPkone}). For this to work, we will need to involve a different sublanguage of $\cL_k$.

\begin{definition}\label{def:L''}
Let $\cL''_k\coloneqq \cL_k\backslash\{<_k,R\}=\{P_1,\ldots,P_{k+1},<\}$.
\end{definition}

We use $\cong$ and $\cong''$ for isomorphism in $\cL_k$ and $\cL''_k$, respectively. Recall that any model $\cA\models T_k$ admits a canonical linear order $<_*$ on $Q(\cA)\cup P_{k+1}(\cA)$, as described in Remark \ref{rem:Tkorder}.  Moreover, we have the following converse.

\begin{remark}\label{rem:addorder}
Let $T''_k$ be the $\cL''_k$-reduct of $T_k$, and suppose $\cA\models T''_k$. Let $<_*$ be a linear order on $Q(\cA)\cup P_{k+1}(\cA)$ such that $<_*$ coincides with $<$ on $P_{k+1}(\cA)$. Expand $\cH$ to an $\cL_k$-structure $\cA^*$ by interpreting $<_k$ as ${<_*}{\upharpoonright}(Q(\cA)\times Q(\cA))$ and $R$ as ${<_*}{\upharpoonright}(Q(\cA)\times P_{k+1}(\cA))$. Then $\cA^*\models T_k$.
\end{remark}

The overall strategy of this subsection will be reminiscent of the proof that every indiscernible sequence in a model of a stable theory is an indiscernible set.  Said proof uses the fact that any permutation of a finite set  can be generated by consecutive transpositions.  We will use a similar idea, but with the notion of ``transposition" applied to a pair of  models of $T_k$.

\begin{definition}\label{def:transpo}
Suppose $\cA_1$ and $\cA_2$ are two models of $T_k$ with $\cA_1{\upharpoonright}\cL''_k=\cA_2{\upharpoonright}\cL''_k$ (so $(P_i(\cA_1),<)=(P_i(\cA_2),<)$ for all $1\leq i\leq k$). Let $P_i=P_i(\cA_1)=P_i(\cA_2)$ and $Q=Q(\cA_1)=Q(\cA_2)$. 
\begin{enumerate}[$(1)$]
\item $(\cA_1,\cA_2)$ is a \textbf{$QP$-transposition} if there is $(\ebar,v)\in Q\times P_{k+1}$ such that
\begin{enumerate}[$(i)$]
\item $\ebar$ and $v$ are $<_*$-adjacent in both $\cA_1$ and $\cA_2$,
\item $<^{\cA_1}_*$ and $<^{\cA_2}_*$ agree on all pairs except $(\bar{e},v)$.
\item $\cA_1\models \bar{e}<_*v$ if and only if $\cA_2\models v<_*\bar{e}$. 
\end{enumerate}
\item $(\cA_1,\cA_2)$ is a \textbf{$QQ$-transposition} if there is $(\dbar,\ebar)\in Q\times Q$ such that
\begin{enumerate}[$(i)$]
\item $\dbar$ and $\ebar$ are $<_*$-adjacent in both $\cA_1$ and $\cA_2$,
\item $<^{\cA_1}_*$ and $<^{\cA_2}_*$ agree on all pairs except $(\dbar,\ebar)$.
\item $\cA_1\models \dbar<_*\ebar$ and $\cA_2\models \ebar <_* \dbar$. 
\end{enumerate}
\item $(\cA_1,\cA_2)$ is a \textbf{transposition} (\textbf{witnessed by $(\alpha,\beta)$}) if it is a $QP$-transposition (and $(\alpha,\beta)=(\ebar,v)$ in $(1)$) or a $QQ$-transposition (and $(\alpha,\beta)=(\dbar,\ebar)$ in $(2)$).
\end{enumerate}
\end{definition}

\begin{definition}
Fix $\cA,\cB\models T_k$, and suppose $\alpha,\beta\in Q(\cA)\cup P_{k+1}(\cA)$ are $<^{\cA}_*$-adjacent. Then a function $f\colon \cA\to \cB$ is an \textbf{$(\alpha,\beta)$-deficient $\cL_k$-isomorphism} if it is an $\cL''_{k}$-isomorphism that also preserves the $<_*$-ordering among all pairs except possibly $(\alpha,\beta)$. 
\end{definition}

Putting the two previous definitions together, we have the following observation.

\begin{remark}\label{rem:trantocong}
Suppose $(\cA_1,\cA_2)$ is a transposition witnessed by $(\alpha,\beta)$. Then the identity map from $\cA_1$ to $\cA_2$ is an $(\alpha,\beta)$-deficient $\cL_k$-isomorphism. 
\end{remark}

Next we show that for any finite model of $T''_k$, transpositions ``generate" the class of expansions to a model of $T_k$.

\begin{lemma}\label{lem:transposition}
Fix finite $\cA,\cB\models T_k$ with $\cA\cong'' \cB$. Then there are $\cA_1,\ldots,\cA_n\models T_k$ such that $\cA_1=\cA$, $\cA_n\cong \cB$, and each pair $(\cA_i,\cA_{i+1})$ is a  transposition.
\end{lemma}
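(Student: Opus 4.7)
My strategy is to bubble-sort the canonical $<_*$-order, exploiting Remarks \ref{rem:Tkorder} and \ref{rem:addorder} to move freely between models of $T_k$ and linear orders on $Q\cup P_{k+1}$ that extend the fixed $<$ on $P_{k+1}$. First I would use the $\cL''_k$-isomorphism $\phi\colon\cA\to\cB$ to pull back the $\cL_k$-structure of $\cB$ to the underlying set of $\cA$, producing an $\cL_k$-structure $\cB^*$ with $\cB^*\cong\cB$ and $\cB^*{\upharpoonright}\cL''_k=\cA{\upharpoonright}\cL''_k$. It then suffices to construct a transposition chain from $\cA$ to $\cB^*$ and take $\cA_n=\cB^*$.

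Let $<_*^\cA$ and $<_*^{\cB^*}$ denote the canonical linear orders on $Q\cup P_{k+1}$ from Remark \ref{rem:Tkorder}; since $\cA$ and $\cB^*$ share their $\cL''_k$-reducts, both restrict to $<$ on $P_{k+1}$. Call a pair $\{x,y\}$ an \emph{inversion} if $x<_*^\cA y$ while $y<_*^{\cB^*}x$. No inversion can lie entirely in $P_{k+1}$. The plan is to induct on the number of inversions. If there are none, then $<_*^\cA=<_*^{\cB^*}$, and since $<_*$ determines both $<_k$ and $R$, this forces $\cA=\cB^*$. Otherwise, enumerating $Q\cup P_{k+1}$ in $<_*^\cA$-order and applying transitivity of $<_*^{\cB^*}$ produces two $<_*^\cA$-consecutive elements $\alpha<_*^\cA\beta$ with $\beta<_*^{\cB^*}\alpha$; since no $PP$-pair is inverted, either both lie in $Q$ or one lies in $Q$ and one in $P_{k+1}$.

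Next, define $<_*^{\cA_2}$ to be the linear order obtained from $<_*^\cA$ by swapping $\alpha$ and $\beta$. This order still agrees with $<$ on $P_{k+1}$ (as at most one of $\alpha,\beta$ is a $P_{k+1}$-element), so Remark \ref{rem:addorder} furnishes an $\cL_k$-expansion $\cA_2\models T_k$ of $\cA{\upharpoonright}\cL''_k$ realizing this order. By construction $(\cA,\cA_2)$ is a $QQ$- or $QP$-transposition witnessed by $(\alpha,\beta)$. Swapping a single adjacent pair removes exactly one inversion and creates none, so $\cA_2$ and $\cB^*$ have strictly fewer inversions and the induction hypothesis supplies the rest of the chain. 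The main (and rather mild) obstacle is verifying the existence of an adjacent inversion; beyond that, Remark \ref{rem:addorder} is exactly what makes the bubble-sort legitimate by guaranteeing that each intermediate rearrangement of $<_*$ still yields a model of $T_k$.
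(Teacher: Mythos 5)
Your proposal is correct and follows essentially the same route as the paper: pull the $\cL_k$-structure of $\cB$ back along the $\cL''_k$-isomorphism to a structure on $A$ with the same $\cL''_k$-reduct, then connect the two canonical $<_*$-orders by adjacent $QQ$/$QP$ swaps, invoking Remark \ref{rem:addorder} to see each intermediate order yields a model of $T_k$. The only difference is cosmetic: you make explicit (via counting inversions) the bubble-sort step that the paper's proof states as an immediate consequence.
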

\begin{proof}
Since $\cA$ and $\cB$ are finite and linearly ordered by $<$, it follows from  $\cA\cong''\cB$ that there is a unique $\cL''_k$-isomorphism $f\colon \cA\to \cB$. Let $\cA'$ be the (unique) $\cL_k$-structure with domain $A$ such that $f\colon \cA'\to \cB$ is an $\cL_k$-isomorphism. Then, in particular, we have $\cA{\upharpoonright}\cL''_k=\cA'{\upharpoonright}\cL''_k$. To simplify notation, let ${<}$ denote ${<^{\cA}}={<^{\cA'}}$, $Q$ denote $Q(\cA)=Q(\cA')$, and $P$ denote $P_{k+1}(\cA)=P_{k+1}(\cA')$. Let ${<_*}={<^{\cA}_*}$ and ${<'_*}={<^{\cA'}_*}$. 

Consider the linear orders $(Q\cup P,<_*)$ and $(Q\cup P,<'_*)$. Note that $<_*$ and $<'_*$ both coincide with $<$ when restricted to $P$. It follows that there is a sequence ${<^1_*},\ldots,{<^n_*}$ of linear orders on $Q\cup P$ such that:
\begin{enumerate}[$(i)$]
\item  ${<^1_*}={<_*}$ and ${<^n_*}={<'_*}$,
\item each $<^i_*$ agrees with $<$ on $P$, and
\item $<^{i+1}_*$ is obtained from $<^i_*$ by swapping an $<^i_*$-adjacent pair from either $Q\times P$ or $Q\times Q$.
\end{enumerate}
Now let $\cA_n$ be the unique $\cL_k$-structure such that $\cA_n{\upharpoonright}{\cL''_k}=\cA{\upharpoonright}\cL_k$ and ${<^{\cA_i}_*}={<^i_*}$. By Remark \ref{rem:addorder}, $\cA_i\models T_k$ for all $1\leq i \leq n$. Moreover, each pair $(\cA_i,\cA_{i+1})$ is a transposition. By construction $\cA_1=\cA$ and $\cA_n=\cA'\cong\cB$.
\end{proof}

When proving that instability arises from an indiscernible sequence which is not an indiscernible set, one first reduces to the case that the failure of set indiscerniblity  is witnessed by an (order) transposition. Then this transposition is removed and replaced by an infinite linear order used to produce the order property. In our setting, we will need to work with an elaborate variation on this idea. In particular, for each of the two types of transpositions of models of $T_k$ defined above, we need to formulate a way to remove the transposition and replace it with some other model of $T_k$. We start with the setting of $QP$-transpositions, since it is a bit easier.

\begin{definition}[The structure $\cA\oslash_{(\bar{e},v)}\cB$]\label{def:Hproduct1}
Fix $\cA\models T_k$ and suppose $(\bar{e},v)\in Q(\cA)\times P_{k+1}(\cA)$ is an $<^{\cA}_*$-adjacent pair. Let $\cB$ be an arbitrary model of $T_k$ with $A\cap B=\emptyset$. We define an $\cL_k$-structure $\cC$ according to the following procedure.
\begin{enumerate}[$(1)$]
\item The domain of $\cC$ is $C=A_0\cup B$, where $A_0=A\backslash\ebar v$.
\item For each $1\leq i\leq k+1$, $(P_i(\cC),<^{\cC})$ is obtained from $(P_i(\cA),<^{\cA})$ by replacing $e_i$ with $(P_i(\cB),<^{\cB})$, where we let  $e_{k+1}=v$. 
\item Extend $<^{\cC}$ to all of $C$ so that $P_1(\cC)<^{\cC}\ldots<^{\cC} P_{k+1}(\cC)$.
\end{enumerate}
Before defining $<^{\cC}_k$ and $R^{\cC}$, we set some notation. Given a tuple $\cbar=(c_1,\ldots,c_k)\in Q(\cC)$, let $I(\cbar)=\{i\in [k]:c_i\in P_i(\cA)\}$. Define the tuple $\cbar^*=(c^*_1,\ldots,c^*_k)$ so that $c^*_i=c_i$ for all $i\in I(\cbar)$, and $c^*_i=e_i$ for all $i\not\in I(\cbar)$. Note that $\cbar^*\in Q(\cA)$. For $\abar \in Q(\cA)$, set $X(\abar)=\{\cbar\in Q(\cC):\cbar^*=\abar\}$. Then $\{X(\abar)\}_{\abar\in Q(\cA)}$ is a partition of $Q(\cC)$. Note that $X(\ebar)=Q(\cB)$. 
\begin{enumerate}[$(4)$]
\item[$(4)$] Next we define  $<^{\cC}_k$.  Let $<^{\cC}_k$ agree with $<^{\cB}_k$ on $X(\ebar)= Q(\cB)$. For $\abar\in Q(\cA)\backslash \ebar$, let $<^{\cC}_k$ be an arbitrary linear order on $X(\abar)$. Then extend $<^{\cC}_k$ to $Q(\cC)=\bigcup_{\abar\in Q(\cA)}X(\abar)$ so that $X(\abar)<^{\cC}_k X(\abar')$ for all $\abar,\abar'\in Q(\cA)$ such that $\abar<^{\cA}_k \abar'$. 

\item[$(5)$] Finally, we define $R^{\cC}$. Fix $\cbar\in Q(\cC)$ and $w\in P_{k+1}(\cC)$. Fix the unique $\abar\in Q(\cA)$ such that $\cbar\in X(\abar)$ We have three cases.
\begin{enumerate}[$(i)$]
\item If $w\in P_{k+1}(\cA)$, then set $R^{\cC}(\cbar,w)$ if and only if $R^{\cA}(\abar,w)$.
\item If $w\in P_{k+1}(\cB)$ and $\abar\neq \ebar$, then set $R^{\cC}(\cbar,w)$ if and only if $R^{\cA}(\abar,v)$.
\item If $w\in P_{k+1}(\cB)$ and $\abar=\ebar$, then set $R^{\cC}(\cbar,w)$ if and only if $R^{\cB}(\cbar,w)$.
\end{enumerate}
\end{enumerate}
This finishes the definition of the $\cL_k$-structure $\cC$, which we will denote by $\cA\oslash_{(\bar{e},v)}\cB$. 
\end{definition}

\begin{lemma}\label{lem:Hproduct}
Fix $\cA\models T_k$ and suppose $(\bar{e},v)\in Q(\cA)\times P_{k+1}(\cA)$ is an $<^{\cA}_*$-adjacent pair. Let $\cB$ be an arbitrary model of $T_k$ with $A\cap B=\emptyset$. 
\begin{enumerate}[$(a)$]
\item The substructure of $\cA$ on $A\backslash\ebar v$ embeds in $\cA\oslash_{(\ebar,v)}\cB$ via the inclusion map.
\item $\cB$ embeds in $\cA\oslash_{(\ebar,v)}\cB$ via the inclusion map.
\item $\cA\oslash_{(\ebar,v)}\cB$ is a model of $T_k$.
\item Given $\bar{e}'\in Q(\cB)$ and $v'\in P_{k+1}(\cB)$, let $\cA_{(\bar{e}',v')}$ denote the substructure of  $\cA\oslash_{(\ebar,v)}\cB$ on $A\backslash\ebar v\cup \bar{e}'v'$. Then the map fixing $A\backslash\ebar v$ pointwise and sending $\bar{e}v$ to $\bar{e}'v'$ is an $(\bar{e},v)$-deficient $\cL_k$-isomorphism from $\cA$ to $\cA_{(\bar{e}',v')}$.  
\end{enumerate}
\end{lemma}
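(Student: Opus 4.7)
The plan is to verify each of the four parts directly from the construction of $\cA \oslash_{(\bar{e},v)} \cB$ given in Definition \ref{def:Hproduct1}; no additional ingredients are needed beyond the fact that both $\cA$ and $\cB$ satisfy $T_k$ (in particular, axiom (5)) together with the adjacency hypothesis on $(\bar{e},v)$. Parts (a) and (b) will be almost immediate: the clauses of the definition were set up so that the inclusion $A \setminus \bar{e}v \hookrightarrow C$ preserves $P_1,\ldots,P_{k+1}$, $<$, $<_k$, and $R$ (the key observation is that for $\bar{a} \in Q(\cA)$ with $\bar{a} \neq \bar{e}$ the class $X(\bar{a})$ contains $\bar{a}$ itself with the same $R$-behavior, via clauses (5)(i) and (5)(ii)), and similarly the inclusion $\cB \hookrightarrow \cC$ is an embedding because $X(\bar{e})=Q(\cB)$, $<^{\cC}_k$ restricts to $<^{\cB}_k$ on $X(\bar{e})$ by clause (4), and $R^{\cC}$ restricts to $R^{\cB}$ on $Q(\cB) \times P_{k+1}(\cB)$ by clause (5)(iii).

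For (c), axioms (1)--(4) of $T_k$ are transparent from clauses (2)--(4) of the construction. The real work is axiom (5): fix $\bar{x},\bar{y} \in Q(\cC)$ and $w,z \in P_{k+1}(\cC)$ with $\bar{x} \leq^{\cC}_k \bar{y}$, $R^{\cC}(\bar{y},w)$, and $w \leq^{\cC} z$; we must show $R^{\cC}(\bar{x},z)$. Setting $\bar{a}=\bar{x}^*$ and $\bar{b}=\bar{y}^*$, split into cases according to which of $P_{k+1}(\cA)\setminus\{v\}$ or $P_{k+1}(\cB)$ contains $w$ and $z$, and according to whether $\bar{a}$ and $\bar{b}$ equal $\bar{e}$. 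In each case, use clauses (5)(i)--(iii) to translate the $R^{\cC}$ hypotheses into genuine $R^{\cA}$- or $R^{\cB}$-statements, then invoke axiom (5) inside $\cA$ or $\cB$. For example, if $w,z \in P_{k+1}(\cA)\setminus\{v\}$ and $\bar{a}\neq\bar{e}$, then $R^{\cC}(\bar{y},w)$ becomes $R^{\cA}(\bar{b},w)$ by (5)(i), and axiom (5) in $\cA$ yields $R^{\cA}(\bar{a},z)$, which is $R^{\cC}(\bar{x},z)$ by (5)(i) again.

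The main obstacle is that certain mixed configurations look dangerous: when $w \in P_{k+1}(\cA) \setminus \{v\}$ and $z \in P_{k+1}(\cB)$ (so $z$ sits at $v$'s position and hence $w <^{\cA} v$), or when $\bar{a}=\bar{e}$ while $\bar{b}\neq \bar{e}$, a direct translation seems to require $\cB$-information we do not possess. The key point is that adjacency of $(\bar{e},v)$ in $<^{\cA}_*$ makes these subcases vacuous. For instance, with $\bar{a}=\bar{e} <^{\cA}_k \bar{b}$ and $z\in P_{k+1}(\cB)$, combining $R^{\cA}(\bar{b},w)$ with $w <^{\cA} v$ via axiom (5) in $\cA$ yields $R^{\cA}(\bar{b},v)$, so $\bar{b} <^{\cA}_* v$; but $\bar{e} <^{\cA}_* \bar{b} <^{\cA}_* v$ contradicts adjacency. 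Analogous adjacency arguments eliminate the other worrisome subcases, leaving only the cases that reduce cleanly to axiom (5) in $\cA$ (via clauses (5)(i)--(ii)) or in $\cB$ (via clause (5)(iii)).

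For (d), the map $f$ that fixes $A \setminus \bar{e}v$ and sends $\bar{e}\mapsto \bar{e}'$, $v\mapsto v'$ is clearly an $\cL''_k$-isomorphism from $\cA$ onto $\cA_{(\bar{e}',v')}$, since these structures agree in their $P_i$-decompositions and their $<$-restrictions. The preservation of $<_*$ on pairs not equal to $(\bar{e},v)$ amounts to checking that $R$-incidences between $A \setminus \bar{e}v$ and $\bar{e}'v'$ in $\cA_{(\bar{e}',v')}$ agree with those between $A \setminus \bar{e}v$ and $\bar{e}v$ in $\cA$, and that $<_k$-relations involving $\bar{e}'$ match those involving $\bar{e}$: both follow by inspecting clauses (4) and (5)(i)--(ii) of the construction, which were arranged precisely so that every tuple in $X(\bar{a})$ for $\bar{a}\neq\bar{e}$ has the same $R$-behavior toward $P_{k+1}(\cB)$ as $\bar{a}$ has toward $v$ in $\cA$, and symmetrically for the $P_{k+1}(\cB)$-elements.
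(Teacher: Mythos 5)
Your proposal is correct and follows the paper's proof essentially verbatim: parts (a) and (b) are read off the construction, part (c) is the same case analysis over the locations of $w,z$ (in $P_{k+1}(\cA)\setminus\{v\}$ versus $P_{k+1}(\cB)$) and over whether $\bar{a},\bar{b}$ equal $\bar{e}$, translating hypotheses through clauses (5)(i)--(iii) and invoking axiom (5) in $\cA$ or $\cB$, with the adjacency of $(\bar{e},v)$ handling the mixed configurations, and part (d) is verified exactly as you indicate via $f(\bar{a})\in X(\bar{a})$ and clauses (4), (5)(i)--(ii). The only imprecision is your closing claim that adjacency serves solely to eliminate worrisome subcases: in the subcases where $\bar{b}=\bar{e}$ and $w\in P_{k+1}(\cB)$ (so $R^{\cC}(\bar{y},w)$ carries only $\cB$-information), adjacency is used positively rather than for a contradiction --- to upgrade $v<^{\cA}_{*}z$ to $R^{\cA}(\bar{e},z)$ when $z\in P_{k+1}(\cA)$, and to upgrade $\bar{a}<^{\cA}_{*}\bar{e}$ to $R^{\cA}(\bar{a},v)$ when $z\in P_{k+1}(\cB)$ and $\bar{a}\neq\bar{e}$ --- so these cases do not reduce to axiom (5) alone, though the same adjacency trick you already demonstrated settles them.
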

\begin{proof}
Parts $(a)$ and $(b)$ are clear from Definition \ref{def:Hproduct1} (for part $(a)$, note that if $\abar\in Q^{\cA}(A\backslash\ebar v)$ then $X(\abar)=\{\abar\}$). 

To ease notation in parts $(c)$ and $(d)$, we let $\cC=\cA\oslash_{(\ebar,v)}\cB$. We will also make reference to the sets $X(\abar)$ in Definition \ref{def:Hproduct1}.

Part $(c)$.  It is clear that $P_1(\cC),\ldots,P_{k+1}(\cC)$ and $<^{\cC}$ satisfy the necessary axioms, and that $<^{\cC}_k$ is a linear order on $Q(\cC)$. So we just need to verify axiom $(5)$ of Definition \ref{def:Tk}. Fix $\cbar,\dbar\in Q(\cC)$ and $s,t\in P_{k+1}(\cC)$ such that $\cbar\leq ^{\cC}_k \dbar$, $R^{\cC}(\dbar,s)$, and $s\leq ^{\cC}t$. We need to show $R^{\cC}(\cbar,t)$. Fix $\abar,\bbar\in Q(\cA)$ such that $\cbar\in X(\abar)$ and $\dbar\in X(\bbar)$. By definition of $<^{\cC}_k$, it follows from $\cbar\leq^{\cC}_k\dbar$ that $\abar\leq^{\cA}_k\bbar$. We now have a case analysis. 
\begin{enumerate}[$(1)$]
\item $s,t\in P_{k+1}(\cA)$. Then $s\leq^{\cA} t$ and $R^{\cA}(\bbar,s)$. Since $\abar\leq^{\cA}_k\bbar$, we get $R^{\cA}(\abar,t)$, which yields $R^{\cC}(\cbar,t)$.
\item $s\in P_{k+1}(\cA)$ and $t\in P_{k+1}(\cB)$. Then $s<^{\cC} t$ implies $s<^{\cA} v$. So $s<^{\cA}_* v$, which implies $s<^{\cA}_* \ebar$ since $\ebar$ and $v$ are $<^{\cA}_*$-adjacent. Also, $R^{\cC}(\dbar,s)$ implies $R^{\cA}(\bbar,s)$, and so $\bbar<^{\cA}_* s$. Therefore, $\bbar<^{\cA}_*\ebar$, i.e., $\bbar<^{\cA}_k \ebar$. This implies $\abar\neq \ebar$. Since $\abar\leq^{\cA}_k\bbar$, $R^{\cA}(\bbar,s)$, and $s<^{\cA} v$, we have $R^{\cA}(\abar,v)$. Thus, $R^{\cC}(\cbar,t)$.  
\item $s\in P_{k+1}(\cB)$ and $t\in P_{k+1}(\cA)$. Then $s<^{\cC} t$ implies $v<^{\cA} t$. So $v<^{\cA}_* t$, which implies $\ebar<^{\cA}_* t$ since $\ebar$ and $v$ are $<^{\cA}_*$-adjacent. If $\bbar=\ebar$, then $\abar\leq^{\cA}_* \ebar<^{\cA}_* t$, and so $R^{\cA}(\abar,t)$, which implies $R^{\cC}(\cbar,t)$. So assume $\bbar\neq\ebar$. Then $R^{\cC}(\dbar,s)$ implies $R^{\cA}(\bbar,v)$. So   $R^{\cA}(\abar,t)$, which implies $R^{\cC}(\cbar,t)$. 

\item $s,t\in P_{k+1}(\cB)$. There are four subcases.
\begin{enumerate}[$\ast$]
\item  $\abar=\ebar$ and $\bbar=\ebar$. Then $\cbar,\dbar\in Q(\cB)$. So $\cbar<^{\cB}_k \dbar$, $R^{\cB}(\dbar,s)$, and $s<^{\cB} t$. Thus $R^{\cB}(\cbar,t)$, which implies $R^{\cC}(\cbar,t)$.
\item $\abar=\ebar$ and $\bbar\neq \ebar$. Then $\ebar<^{\cA}_* b$ and so $v<^{\cA}_* b$ since $\ebar$ and $v$ are $<^{\cA}_*$-adjacent. But $R^{\cC}(\dbar,s)$ implies $R^{\cA}(\bbar,v)$, which is a contradiction. 
\item $\abar\neq\ebar$ and $\bbar=\ebar$. Then $\abar<^{\cA}_*\ebar$ and so $\abar<^{\cA}_* v$ since $\ebar$ and $v$ are $<^{\cA}_*$-adjacent. Therefore $R^{\cA}(\abar,v)$, which implies  $R^{\cC}(\cbar,t)$. 
\item $\abar\neq\ebar$ and $\bbar\neq\ebar$. Then $R^{\cC}(\dbar,s)$ implies $R^{\cA}(\bbar,v)$. Since $\abar\leq^{\cA}_k\bbar$, we have $R^{\cA}(\abar,v)$, and so $R^{\cC}(\cbar,t)$. 
\end{enumerate}
\end{enumerate}

Part $(d)$. Fix $\ebar'\in Q(\cB)$ and $v'\in P_{k+1}(\cB)$. Let $\cA'=\cA_{(\ebar',v')}$. Define $f\colon \cA\to \cA'$ such that $f(x)=x$ for all $x\in A_0$ and $f(\ebar v)=\ebar'v'$. We need to show that $f$ is an $(\ebar,v)$-deficient $\cL_k$-isomorphism. By construction, it is clear that $f$ preserves the predicates $P_1,\ldots,P_{k+1}$ and is an $<$-order isomorphism. So we need to show that $f$ preserves $<_k$ and all $R$-relations except possibly the one between $\ebar$ and $v$. Note that for any $\abar\in Q(\cA)$, we have $f(\abar)\in Q(\cC)$ and $f(\abar)^*=\abar$ (recall the notation in Definition \ref{def:Hproduct1}), hence $f(\abar)\in X(\abar)$. So it follows from step $(4)$ of Definition \ref{def:Hproduct1} that $f$ preserves $<_k$. Now fix $\abar\in Q(\cA)$ and $t\in P_{k+1}(\cA)$ such that $(\abar,t)\neq (\ebar,v)$.  We want to show $R^{\cA}(\abar,t)$ if and only if $R^{\cC}(f(\abar),f(t))$. Recall that $f(\abar)\in X(\abar)$. If $t\neq v$ then $f(t)=t$, and so we have the desired result by step  $(5)(i)$ of Definition \ref{def:Hproduct1}. If $t=v$ then $\abar\neq\ebar$ and $f(t)=v'\in P_{k+1}(\cB)$, and so we have the desired result by step $(5)(ii)$. 
\end{proof}

Next we need an analogous replacement lemma for $QQ$-transpositions. In this case, the construction is more complicated, because when replacing the transposition by a structure $\cB$, the $P_{k+1}$-predicate of $\cB$ must be inserted in the $P_{i_*}$-predicate of the initial structure for some $i_*\in[k]$ (see  discussion after the definition). 

\begin{definition}[The structure $\cA\oless_{(\dbar,\ebar)}\cB$]\label{def:Hproduct2}
Fix $\cA\models T_k$ and suppose $(\bar{d},\bar{e})\in Q(\cA)\times Q(\cA)$ is an $<^{\cA}_*$-adjacent pair.   Let $\cB$ be an arbitrary model of $T_k$ with $A\cap B=\emptyset$. We define an $\cL_k$-structure $\cC$ according to the following procedure.
\begin{enumerate}[$(1)$]
\item The domain of $\cC$ is $C=A_0\cup B$, where $A_0=A\backslash\dbar e_{i_*}$ and $i_*\in[k]$ is the minimal $i$ such that $d_i\neq e_i$.
\item For  $1\leq i\leq k+1$, $(P_i(\cC),<^{\cC})$ is defined as follows.
\begin{enumerate}[$(i)$]
\item If $i\not\in\{i_*,k+1\}$ then  $(P_i(\cC),<^{\cC})$ is obtained from  $(P_i(\cA),<^{\cA})$ by replacing $d_i$ with  $(P_i(\cB),<^{\cB})$.
\item $(P_{i_*}(\cC),<^{\cC})$ is obtained from $(P_{i_*}(\cA),<^{\cA})$ by replacing $d_{i_*}$ with $(P_{i_*}(\cB),{<^{\cB}})$ and $e_{i_*}$ with $(P_{k+1}(\cB),<^{\cB})$.
\item $(P_{k+1}(\cC),<^{\cC})$ is $(P_{k+1}(\cA),<^{\cA})$.
\end{enumerate}
\item Extend $<^{\cC}$ to all of $C$ so that $P_1(\cC)<^{\cC}\ldots<^{\cC} P_{k+1}(\cC)$.
\end{enumerate}
Before defining $<^{\cC}_k$ and $R^{\cC}$, we set some notation. Given a tuple $\cbar=(c_1,\ldots,c_k)\in Q(\cC)$, let $I(\cbar)=\{i\in [k]:c_i\in P_i(\cA)\}$ as defined in Definition \ref{def:Hproduct1}. Define the tuple $\cbar^*=(c^*_1,\ldots,c^*_k)$ so that $c^*_i=c_i$ for all $i\in I(\cbar)$, and for $i\not\in I(\cbar)$,
\[
c^*_i=
\begin{cases}
d_i & \text{if $i\neq i_*$ or $c_{i_*}\in P_{i_*}(\cB)$}\\
e_{i_*} &\text{if $i=i_*$ and $c_{i_*}\in P_{k+1}(\cB)$}.
\end{cases}
\]
Note that $\cbar^*\in Q(\cA)$. For $\abar\in Q(\cA)$, set $X(\abar)=\{\cbar\in Q(\cC):\cbar^*=\abar\}$. Then $\{X(\abar)\}_{\abar\in Q(\cA)}$ is a partition of $Q(\cC)$. 

\begin{enumerate}[$(1)$]
\item[$(4)$] Next we define $<^{\cC}_k$. We first consider the sets $X(\abar)$.
\begin{enumerate}[$(i)$]
\item  For $\abar\in Q(\cA)\backslash\{\dbar,\ebar\}$, let $<^{\cC}_k$ be an arbitrary linear order on $X(\abar)$. 
\item  We now define $<^{\cC}_k$ on $X(\dbar)\cup X(\ebar)$. Note that $X(\dbar)=Q(\cB)$. Moreover, if for each $t\in P_{k+1}(\cB)$ we set $X_t(\ebar)=\{\cbar\in X(\ebar):c_{i_*}=t\}$, then $\{X_t(\ebar)\}_{t\in P_{k+1}(\cB)}$ partitions $X(\ebar)$. So 
\[
\textstyle X(\dbar)\cup X(\ebar)=Q(\cB)\cup\bigcup_{t\in P_{k+1}(\cB)}X_t(\ebar).
\]
Now $(X(\dbar)\cup X(\ebar),<^{\cC}_k)$ is obtained from $(Q(\cB)\cup P_{k+1}(\cB),<^{\cB}_*)$ by replacing each $t\in P_{k+1}(\cB)$ with $(X_t(\ebar),<_t)$, where $<_t$ is an arbitrary linear order. 
Note that $(X(\dbar),<^{\cC}_k)=(Q(\cB),<^{\cB}_k)$. 
\end{enumerate}
Now extend $<^{\cC}_k$ to all of $Q(\cC)$ so that:
\begin{enumerate}
\item[$(iii)$] $X(\abar)<^{\cC}_k X(\abar')$ for all $\abar,\abar'\in Q(\cA)\backslash\{\dbar,\ebar\}$ such that $\abar<^{\cA}_k\abar'$,
\item[$(iv)$]  $X(\abar)<^{\cC}_k X(\dbar)\cup X(\ebar)$ for all $\abar\in Q(\cA)\backslash\{\dbar,\ebar\}$ such that $\abar<^{\cA}_k\dbar$, and 
\item[$(v)$] $X(\dbar)\cup X(\ebar)<^{\cC}_k X(\abar)$ for all $\abar\in Q(\cA)\backslash\{\dbar,\ebar\}$ such that $\dbar<^{\cA}_k \abar$.
\end{enumerate}
Note that the above linear order is well-defined since $\dbar$ and $\ebar$ are $<^{\cA}_*$-adjacent.
\item[$(5)$] Finally, we define $R^{\cC}$. Fix $\cbar\in Q(\cC)$ and $v\in P_{k+1}(\cC)=P_{k+1}(\cA)$. Fix the unique $\abar\in Q(\cA)$ such that $\cbar\in X(\abar)$. Then we set $R^{\cC}(\cbar,v)$ if and only if $R^{\cA}(\abar,v)$. 
\end{enumerate}
This finishes the definition of $\cL_k$-structure $\cC$, which we will denote by $\cA\oless_{(\dbar,\ebar)}\cB$. 
\end{definition}

Unlike the  construction in Definition \ref{def:Hproduct1}, $\cB$ need not be an $\cL_k$-substructure of $\cA\oless_{(\dbar,\ebar)}\cB$. 
However, as we will see in the next lemma, $\cA\oless_{(\dbar,\ebar)}\cB$  contains substructures  isomorphic to $\cB$ modulo choosing new interpretations of $P_{k+1}$ and $R$. 
First we define some notation needed to make this statement precise.

\begin{definition}
Let $\cA$, $\cB$, $\dbar$, $\ebar$, and $i_*$ be as in Definition \ref{def:Hproduct2}. Let $I=[k]\backslash\{i_*\}$ and let $P_{\dbar,\ebar}(\cB)$ the set of $I$-indexed tuples $(b_i)_{i\in I}$ such that $b_i=e_i$ if $e_i\neq d_i$ and $b_i\in P_i(\cB)$ if $e_i=d_i$. Given $\bbar\in P_{\dbar,\ebar}(\cB)$ and $t\in P_{k+1}(\cB)$, let $\bbar(t)$ be the $k$-tuple obtained from $\bbar$ by setting $b_{i_*}=t$. Note, in particular, that $\bbar(t)\in Q(\cA\oless_{(\dbar,\ebar)}\cB)$. 
\end{definition}

At this point, we recall the language $\cL^Q_k=\{P_1,\ldots,P_k,<,<_k\}$ from Definition \ref{def:LQ}, and we note again that any $\cL_k$-structure $\cB$ determines a canonical $\cL^Q_k$-structure $\cB{\upharpoonright}\cL^Q_k$ obtained by forgetting $P_{k+1}$ and $R$. 

\begin{lemma}\label{lem:Hproduct2}
Fix $\cA\models T_k$ and suppose $(\bar{d},\bar{e})\in Q(\cA)\times Q(\cA)$ is an $<^{\cA}_*$-adjacent pair.   Let $\cB$ be an arbitrary model of $T_k$ with $A\cap B=\emptyset$. Let $i_*$ be the minimal $i\in [k]$ such that $d_i\neq e_i$.
\begin{enumerate}[$(a)$]
\item The substructure of $\cA$ on $A\backslash \dbar e_{i_*}$ embeds in $\cA\oless_{(\dbar,\ebar)}\cB$ via the inclusion map. 
\item 
\begin{enumerate}[$(1)$]
\item $\cB{\upharpoonright}\cL^Q_k$ embeds in $(\cA\oless_{\dbar,\ebar}\cB){\upharpoonright}\cL^Q_k$ via the inclusion map.
\item $(P_{k+1}(\cB),<)$ embeds in $(P_{i_*}(\cA\oless_{(\dbar,\ebar)}\cB),<)$ via the inclusion map.
\item For any $\bbar\in P_{\dbar,\ebar}(\cB)$, $\sbar\in Q(\cB)$, and $t\in P_{k+1}(\cB)$, $\cB\models R(\sbar,t)$  if and only if $\cA\oless_{(\dbar,\ebar)}\cB\models \sbar<_k \bbar(t)$.
\end{enumerate}
\item $\cA\oless_{\dbar,\ebar}\cB$ is a model of $T_k$.
\item Given $\sbar\in Q(\cB)$ and $t\in P_{k+1}(\cB)$, let $\cA_{(\sbar,t)}$ denote the substructure of $\cA\oless_{(\dbar,\ebar)}\cB$ on $A\backslash \dbar e_{i_*}\cup\sbar t$. Then the map fixing $A\backslash \dbar e_{i_*}$ pointwise and sending $\dbar e_{i_*}$ to $\sbar t$ is a $(\dbar,\ebar)$-deficient $\cL_k$-isomorphism from $\cA$ to $\cA_{(\sbar,t)}$. 
\end{enumerate}
\end{lemma}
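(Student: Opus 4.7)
My plan is to mirror the structure of Lemma \ref{lem:Hproduct}, with additional bookkeeping required by the more intricate construction in Definition \ref{def:Hproduct2}. Throughout I will assume without loss of generality that $\dbar<^{\cA}_*\ebar$ (so $\dbar<^{\cA}_k\ebar$) and write $\cC=\cA\oless_{(\dbar,\ebar)}\cB$. The partition $\{X(\abar)\}_{\abar\in Q(\cA)}$ of $Q(\cC)$ will be central, supported by the preliminary observation that $\abar\in Q(\cC)\cap X(\abar)$ whenever $\abar\in Q(\cA)$ has all components in $A\backslash\dbar e_{i_*}$. Parts (a) and (b) will follow by unwinding the construction: for (a), step (2) gives preservation of $<$, step (4)(iii) gives preservation of $<_k$, and step (5) gives preservation of $R$ among tuples from $A\backslash\dbar e_{i_*}$; the three subclaims of (b) will come respectively from step (2) together with step (4) (for (b)(1)), step (2)(ii) (for (b)(2)), and the bijection in step (4)(ii) applied to $\bbar(t)\in X_t(\ebar)$ and $\sbar\in X(\dbar)=Q(\cB)$ (for (b)(3)).

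The main technical content will be part (c), where only axiom (5) of $T_k$ requires a nontrivial argument. My strategy is to assume $\xbar\leq^{\cC}_k\ybar$, $R^{\cC}(\ybar,w)$, and $w\leq^{\cC}z$, set $\abar=\xbar^*$ and $\bbar=\ybar^*$, and reduce via step (5) to showing $R^{\cA}(\abar,z)$ given $R^{\cA}(\bbar,w)$. The analysis will split by whether $\abar$ and $\bbar$ lie in $\{\dbar,\ebar\}$. When at most one of them does, the hypothesis $\xbar\leq^{\cC}_k\ybar$ together with steps (4)(iii)--(v) will yield $\abar\leq^{\cA}_k\bbar'$ for a suitable $\bbar'\in\{\dbar,\ebar,\bbar\}$, and axiom (5) in $\cA$ (using $\dbar<^{\cA}_k\ebar$) will finish. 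When both $\abar,\bbar\in\{\dbar,\ebar\}$, three of the four subcases will similarly follow directly from axiom (5) in $\cA$.

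The remaining subcase, which I expect to be the main obstacle, is $\abar=\ebar$ and $\bbar=\dbar$: here $\xbar\leq^{\cC}_k\ybar$ translates via step (4)(ii) into a statement about $R^{\cB}$, which is irrelevant for producing $R^{\cA}(\ebar,z)$. The key observation I plan to invoke instead is that $R^{\cA}(\dbar,w)$ together with $w\leq^{\cA}z$ forces $R^{\cA}(\dbar,z)$ by axiom (5) in $\cA$, so $\dbar<^{\cA}_*z$; since $\dbar,\ebar$ are $<^{\cA}_*$-adjacent and $z\in P_{k+1}(\cA)$, the element $z$ cannot lie strictly between $\dbar$ and $\ebar$, forcing $\ebar<^{\cA}_*z$ and hence $R^{\cA}(\ebar,z)$. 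This use of adjacency in $<^{\cA}_*$ is what makes the construction work.

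For part (d), I will verify that $f$ is a bijection sending $P_i(\cA)$ onto $P_i(\cA_{(\sbar,t)})$ for each $i$ (using $s_i\in P_i(\cB)\subseteq P_i(\cC)$ for $i\neq i_*$, and $s_{i_*}\in P_{i_*}(\cB)$, $t\in P_{k+1}(\cB)\subseteq P_{i_*}(\cC)$), and that $f$ preserves $<$ by step (2), using that $(P_{i_*}(\cB),<^{\cB})$ and $(P_{k+1}(\cB),<^{\cB})$ are inserted at the positions formerly occupied by $d_{i_*}$ and $e_{i_*}$ in $P_{i_*}(\cA)$. For preservation of $<_k$ and $R$ on pairs other than $(\dbar,\ebar)$, the crucial point is that $f(\abar)\in X(\abar)$ for every $\abar\in Q(\cA)$, which follows by a short direct calculation of the $*$-projection (sending $d_i\to s_i\in P_i(\cB)$ and $e_{i_*}\to t\in P_{k+1}(\cB)$ does not change $*$-projections). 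Then $R$-preservation is immediate from step (5), and $<_k$-preservation on pairs involving at most one of $\dbar,\ebar$ from steps (4)(iii)--(v). The only exception is the pair $(\dbar,\ebar)$ itself: $f(\dbar)=\sbar\in X(\dbar)$ and $f(\ebar)\in X_t(\ebar)$, so $f(\dbar)<^{\cC}_k f(\ebar)$ holds iff $R^{\cB}(\sbar,t)$ by step (4)(ii), and since this depends on the choice of $\sbar$ and $t$ rather than on the fixed relation $\dbar<^{\cA}_k\ebar$, this is exactly where the $(\dbar,\ebar)$-deficiency occurs.
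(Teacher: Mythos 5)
Your proposal is correct and follows essentially the same route as the paper's proof: parts (a) and (b) are read off the construction, part (c) reduces to axiom (5) via the $X(\abar)$-projections with the $<^{\cA}_*$-adjacency of $\dbar,\ebar$ handling exactly the configuration you isolate, and part (d) hinges on the observation $f(\abar)\in X(\abar)$. One caveat: since the construction treats $\dbar$ and $\ebar$ asymmetrically, ``WLOG $\dbar<^{\cA}_*\ebar$'' is not a genuine symmetry reduction (and the lemma is later invoked in a situation with $\ebar'<_k\dbar$), but your adjacency arguments go through verbatim in either order, so this is only a cosmetic issue.
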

\begin{proof}
Parts $(a)$, $(b)(1)$, and $(b)(2)$ are clear from Definition \ref{def:Hproduct2}. To ease notation in the remaining parts, we let $\cC=\cA\oless_{\dbar,\ebar}\cB$. We will also make reference to the sets $X(\abar)$ and $X_t(\ebar)$ from Definition \ref{def:Hproduct2}.

Part $(b)(3)$. Fix $\bbar\in P_{\dbar,\ebar}(\cB)$, $\sbar\in Q(\cB)$, and $t\in P_{k+1}(\cB)$. Recall from Definition \ref{def:Hproduct2} that $X(\dbar)=Q(\cB)$, and so we have $\sbar\in X(\dbar)$. By definition of $\bbar(t)$, we  have $\bbar(t)\in X_t(\ebar)$. So from step $(4)(ii)$ of Definition \ref{def:Hproduct2}, $\sbar<^{\cC}_k \bbar(t)$ if and only if $\sbar <^{\cB}_* t$ (i.e., $R^{\cB}(\sbar,t)$), as desired. 

Part $(c)$. It is clear that $P_1(\cC),\ldots,P_{k+1}(\cC)$ and $<^{\cC}$ satisfy the necessary axioms, and we already observed in step $(4)$ of Definition \ref{def:Hproduct2} that $<^{\cC}_k$ is a linear order on $Q(\cC)$. So we just need to verify axiom $(5)$ of Definition \ref{def:Tk}. Fix $\cbar,\cbar'\in Q(\cC)$ and $t,t'\in P_{k+1}(\cC)$ such that $\cbar\leq^{\cC}_k \cbar'$, $R^{\cC}(\cbar',t)$, and $t\leq^{\cC} t'$. We need to show $R^{\cC}(\cbar,t')$. Fix $\abar,\abar'\in Q(\cA)$ such that $\cbar\in X(\abar)$ and $\cbar'\in X(\abar')$. Then $R^{\cC}(\cbar',t)$ implies $R^{\cA}(\abar',t)$. So $R^{\cA}(\abar',t')$ holds since $t\leq^{\cA} t'$ (recall step $(2)(iii)$ of Definition \ref{def:Hproduct2}) and $\cA\models T$. We need to show $R^{\cA}(\abar,t')$. First note that if $\{\abar,\abar'\}=\{\dbar,\ebar\}$ then from $R^{\cA}(\abar',t')$ and the assumption that $\dbar$ and $\ebar$ are $<^{\cA}_*$-adjacent, we have $R^{\cA}(\abar,t')$, as desired. On the other hand, if $\{\abar,\abar'\}\neq \{\dbar,\ebar\}$ then by step $(4)$ of Definition \ref{def:Hproduct2}, $\cbar\leq^{\cC}_k \cbar'$ implies $\abar\leq^{\cA}_k \abar'$, and so $R^{\cA}(\abar,t')$.

Part $(d)$. Fix $\sbar\in Q(\cB)$ and $t\in P_{k+1}(\cB)$. Let $\cA'=\cA_{(\sbar,t)}$. Define $f\colon \cA\to\cA'$ such that $f(x)=x$ for all $x\in A\backslash\dbar e_{i_*}$ and $f(\dbar e_{i_*})=\sbar t$. We need to show that $f$ is a $(\dbar,\ebar)$-deficient $\cL_k$-isomorphism. By construction, it is clear that $f$ preserves the predicates $P_1,\ldots,P_{k+1}$ and is an $<$-order isomorphism. So we need to show that $f$ preserves $R$ and the $<_k$-order between any two elements from $Q(\cA)$ except possibly $\dbar$ and $\ebar$. Note that for any $\abar\in Q(\cA)$, we have $f(\abar)\in Q(\cC)$ and $f(\abar)^*=\abar$ (recall the notation from Definition \ref{def:Hproduct2}), hence $f(\abar)\in X(\abar)$. So it follows from step $(4)$ of Definition \ref{def:Hproduct2} that $f$ preserves the $<_k$-order between any two elements from $Q(\cA)$ except possibly $\dbar$ and $\ebar$. Now fix $\abar\in Q(\cA)$ and $v\in P_{k+1}(\cA)$. Note that $v\in P_{k+1}(\cC)$ and $f(v)=v$. So we want to show $R^{\cA}(\abar,v)$ if and only if $R^{\cC}(f(\abar),v)$. But this follows from step $(5)$ of Definition \ref{def:Hproduct2} and the fact that $f(\abar)\in X(\abar)$.
\end{proof}

Now we need to start working in the Fra\"{i}ss\'{e} limit $\cH_k$. Note that if $(\cA_1,\cA_2)$ is a transposition, then by definition $\cA_1$ and $\cA_2$ are \emph{distinct} models of $T_k$ on the same domain $A$. If $A$ is finite then of course we can embed $\cA_1$ and $\cA_2$ as substructures of $\cH_k$ with (necessarily) distinct domains in $H_k$. Next we describe how to accomplish this so these domains overlap as much as possible.

\begin{proposition}\label{prop:movetrans}
Let $\cA_1$ be a finite substructure of $\cH_k$ with domain $A$.
\begin{enumerate}[$(a)$]
\item Suppose $(\cA_1,\cA_2)$ is a $QP$-transposition, and let $(\ebar,v)$ be as in Definition \ref{def:transpo}$(1)$. Then there is a substructure $\cA^*_2$ of $\cH_k$ such that $\cA_2$ is isomorphic to $\cA^*_2$ via a map fixing $A\backslash\{v\}$ pointwise.
\item Suppose $(\cA_1,\cA_2)$ is a $QQ$-transposition, and let $(\dbar,\ebar)$ be as in Definition \ref{def:transpo}$(2)$. Set $i_*=\min\{i\in [k]:d_i\neq e_i\}$. 
Then there is a substructure $\cA^*_2$ of $\cH_k$ such that $\cA_2$ is isomorphic to $\cA^*_2$ via a map fixing $A\backslash\{e_{i_*}\}$ pointwise.
\end{enumerate}
\end{proposition}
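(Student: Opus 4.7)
My plan is to invoke the extension property of $\cH_k$ as the Fra\"iss\'e limit of $\bH_k$: for any $\cB' \leq \cC'$ in $\bH_k$ and any embedding $f\colon \cB' \hookrightarrow \cH_k$, there is an embedding $g\colon \cC' \hookrightarrow \cH_k$ extending $f$. I will apply this in both parts, taking $\cC' = \cA_2$ and $\cB'$ to be the common substructure of $\cA_1$ and $\cA_2$ on the appropriate subset of $A$. First, I need to verify that $\cA_2 \models T_k$, and hence $\cA_2 \in \bH_k$ by Proposition~\ref{prop:Hkmod}; this reduces to checking axiom~(5) of Definition~\ref{def:Tk}, which amounts to transitivity of $<_*$, and transitivity (indeed linearity) is preserved when two $<_*$-adjacent elements are swapped.

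For part (a), I will take $\cA'$ to be the common substructure of $\cA_1$ and $\cA_2$ on $A \setminus \{v\}$. By Definition~\ref{def:transpo}(1) the only $\cL_k$-discrepancy between $\cA_1$ and $\cA_2$ is in the truth value of $R(\bar e, v)$, which vanishes once $v$ is removed, so $\cA_1{\upharpoonright}(A \setminus \{v\}) = \cA_2{\upharpoonright}(A \setminus \{v\})$. The inclusion $\cA' \hookrightarrow \cH_k$ (coming from $\cA_1 \leq \cH_k$) then extends, by the extension property applied to $\cA' \leq \cA_2$, to an embedding $g\colon \cA_2 \hookrightarrow \cH_k$; taking $\cA_2^* = g(\cA_2)$ yields the required substructure and isomorphism fixing $A \setminus \{v\}$ pointwise.

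For part (b), I will repeat the argument with $\cA''$ the substructure on $A \setminus \{e_{i_*}\}$. By Definition~\ref{def:transpo}(2), the only $\cL_k$-discrepancy between $\cA_1$ and $\cA_2$ is the $<_k$-relation between $\bar d$ and $\bar e$. The key point is that $d_i \neq e_{i_*}$ for every $i \in [k]$: for $i \neq i_*$ because $P_i(\cA) \cap P_{i_*}(\cA) = \emptyset$, and for $i = i_*$ by the definition of $i_*$. Therefore $\bar d \in Q(\cA'')$, while $\bar e \notin Q(\cA'')$ since its $i_*$-th coordinate $e_{i_*}$ has been deleted, so the discrepancy between $\cA_1$ and $\cA_2$ disappears in the restriction. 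The extension property applied to $\cA'' \leq \cA_2$ then produces the desired $\cA_2^*$.

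The main subtlety, modest as it is, lies in part (b): one must notice that removing the single element $e_{i_*}$ is enough to destroy $\bar e$ as a tuple in $Q$, thereby eliminating the only point of disagreement between $\cA_1$ and $\cA_2$ on the restricted domain. With this observation in hand, the proof reduces to a standard application of Fra\"iss\'e homogeneity to two models of $T_k$ sharing a common substructure.
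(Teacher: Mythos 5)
Your proof is correct and follows essentially the same route as the paper: both arguments observe that after deleting $v$ (resp.\ $e_{i_*}$) the two structures restrict to the same $\cL_k$-structure (the paper packages this via the notion of an $(\alpha,\beta)$-deficient isomorphism from Remark \ref{rem:trantocong}), and then invoke universality and homogeneity of $\cH_k$ to extend the inclusion of that common restriction to an embedding of $\cA_2$. The only superfluous step is your verification that $\cA_2\models T_k$, since this is already built into Definition \ref{def:transpo}.
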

\begin{proof}
We prove both parts simultaneously, using $(\alpha,\beta)$ and $t$ as names for $(\ebar,v)$ and $v$ in part $(a)$, and as names for $(\dbar,\ebar)$ and $e_{i_*}$ in part $(b)$. By Remark \ref{rem:trantocong}, the identity map from $\cA_1$ to $\cA_2$ is an $(\alpha,\beta)$-deficient $\cL_k$-isomorphism. Therefore, if $A_0=A\backslash\{t\}$ and  $\cA_0$ is the substructure of $\cA_1$ with domain $A_0$, then $\beta$ no longer appears in $\cA_0$, and so the inclusion map from $\cA_0$ to $\cA_2$ is an $\cL_k$-isomorphism. By universality and homogeneity of  $\cH_k$, we have an embedding $f\colon \cA_2\to \cH_k$ whose restriction to $A_0$ is the inclusion map. Now let $\cA_2^*$ be the image of $f$.
\end{proof}

\begin{definition}
A pair $(\cA_1,\cA^*_2)$ of finite substructures of $\cH_k$ is called a \textbf{$QP$-transposition in $\cH_k$} if there is some $\cA_2$ such that $(\cA_1,\cA_2)$ is a $QP$-transposition and $\cA_2^*$ is obtained from $\cA_2$ as in Proposition \ref{prop:movetrans}$(a)$. The notions of a \textbf{$QQ$-transposition in $\cH_k$} and a \textbf{transposition in $\cH_k$} are defined analogously.
\end{definition}

We are now ready to prove the main result of this subsection, which shows that any $\NFOP_k$ theory admits a suitable collapse of indiscernibles. Before stating this result, we observe that for any $1\leq i\leq k+1$, all singletons in $P_i(\cH_k)$ have the same $\cL_k$-type.  As noted in Definition \ref{def:genind}$(1)$, it follows that for any $\cH_k$-indexed indiscernible sequence $(a_g)_{g\in H_k}$ in some $\M\models T$, and any $1\leq i\leq k+1$, there is an integer $\ell_i$ such that $|a_g|=\ell_i$ for all $g\in P_i(\cH_k)$.  We call $(\ell_1,\ldots,\ell_{k+1})$ the \emph{length assignment} of $(a_g)_{g\in H_k}$.

\begin{lemma}\label{lem:collapse1}
Let $T$ be a complete theory with monster model $\M$, and suppose $\M$ contains an $\cH_k$-indexed indiscernible sequence $(a_g)_{g\in H_k}$ that is not $\cL''_k$-indiscernible. Then there  is an $\cL$-formula $\varphi(x_1,\ldots,x_{k+1},y)$, a tuple $\gamma\in H_k^y$, and an $\cL_k$-structure $\cH^*$ satisfying the following properties.
\begin{enumerate}[$(i)$]
\item $\cH^*$ is isomorphic to $\cH_k$ and $H^*$ is a subset of $H_k$ disjoint from $\gamma$.
\item For any $g_1,\ldots,g_n,g'_1,\ldots,g'_n\in H^*$ if $\qftp^{\cH^*}_{\cL''_k}(g_1,\ldots,g_n)=\qftp^{\cH^*}_{\cL''_k}(g'_1,\ldots,g'_n)$ then $\qftp^{\cH_k}_{\cL''_k}(g_1,\ldots,g_n,\gamma)=\qftp^{\cH_k}_{\cL''_k}(g'_1,\ldots,g'_n,\gamma)$.
\item If $(\ell_1,\ldots,\ell_k)$ is the length assignment of $(a_g)_{g\in H_k}$, then $|x_i|=|\ell_i|$ for all $1\leq i\leq k$ and $|x_{k+1}|=\ell_{i_*}$ for some $1\leq i_*\leq k+1$.
\item $\varphi(x_1,\ldots,x_{k+1},a_\gamma)$ $R$-codes $\cH^*$ witnessed by $(a_g)_{g\in H^*}$.
\end{enumerate}
\end{lemma}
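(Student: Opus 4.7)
The plan is first to extract a witness of the failure of $\cL''_k$-indiscernibility and reduce it to a single ``transposition'' witness inside $\cH_k$. Concretely, fix finite tuples $\gbar,\gbar'\in H_k^n$ with $\qftp^{\cH_k}_{\cL''_k}(\gbar)=\qftp^{\cH_k}_{\cL''_k}(\gbar')$ and a formula $\psi(\bar x)$ such that $\M\models\psi(a_\gbar)\wedge\neg\psi(a_{\gbar'})$. Let $\cA,\cA'\leq\cH_k$ be the $\cL_k$-substructures on the supports of $\gbar,\gbar'$; both are finite models of $T_k$, and there is a unique $\cL''_k$-isomorphism $h\colon\cA\to\cA'$. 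Pull back through $h$ to get a model $\cA^h$ of $T_k$ on the domain of $\cA$ with $\cA^h{\upharpoonright}\cL''_k=\cA{\upharpoonright}\cL''_k$ and $\cA^h\cong\cA'$. By Lemma~\ref{lem:transposition} there is a chain $\cA=\cA_1,\ldots,\cA_m\cong\cA^h$ in which each consecutive pair is a transposition. Using Proposition~\ref{prop:movetrans} iteratively I embed each $\cA_i$ as $\cA_i^*\leq\cH_k$ so that consecutive $\cA_i^*,\cA_{i+1}^*$ differ in exactly one element, starting from $\cA_1^*=\cA$. By $\cH_k$-indiscernibility, the truth of $\psi$ on the indiscernible image of $\cA_i^*$ depends only on the $\cL_k$-isomorphism type of $\cA_i^*$; this gives $\psi$ true on $\cA_1^*$ and false on $\cA_m^*\cong\cA'$. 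Pigeonhole then picks an index $i_0$ where $\psi$ flips between $\cA_{i_0}^*$ and $\cA_{i_0+1}^*$. Let $\cD\coloneqq\cA_{i_0}^*$, and let $(\alpha,\beta)$ be the pair witnessing the transposition: either $(\alpha,\beta)\in Q(\cD)\times P_{k+1}(\cD)$ (the $QP$ case, with $i_*\coloneqq k+1$), or $(\alpha,\beta)\in Q(\cD)\times Q(\cD)$ of minimal differing coordinate $i_*\in[k]$ (the $QQ$ case). Let $t$ denote the single element altered by Proposition~\ref{prop:movetrans} (so $t=v$ in the $QP$ case, $t=e_{i_*}$ in the $QQ$ case), and set $D_0\coloneqq D\setminus\{t\}$.

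The heart of the argument is amplification. Let $\cB$ be a countable model of $T_k$ isomorphic to $\cH_k$ and disjoint from $D$. I apply Lemma~\ref{lem:Hproduct} (in the $QP$ case) or Lemma~\ref{lem:Hproduct2} (in the $QQ$ case) to form $\cC\coloneqq\cD\oslash_{(\alpha,\beta)}\cB$ or $\cC\coloneqq\cD\oless_{(\alpha,\beta)}\cB$. By parts (a) and (c) of these lemmas, $\cC$ is a countable model of $T_k$ in which $D_0$ embeds by inclusion, and by part (b) it contains a copy of $\cB$ (in the $QP$ case literally, in the $QQ$ case after reinterpreting $P_{k+1}(\cB)$ inside $P_{i_*}(\cC)$ and reading off $R^{\cB}$ from $<_k^{\cC}$ via \ref{lem:Hproduct2}(b)(3)). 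Since $\bH_k$ is a Fra\"{i}ss\'{e} class and $\Age(\cC)\seq\bH_k$, by ultrahomogeneity and universality of the limit $\cH_k$ the inclusion $D_0\hookrightarrow\cH_k$ extends to an embedding $\iota\colon\cC\to\cH_k$. Let $\cH^*\coloneqq\iota(\cB)\leq\cH_k$. Then $\cH^*\cong\cH_k$, and $H^*$ is disjoint from $D_0$, giving (i).

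To finish, take $\gamma$ to be an enumeration of $D_0$ as a tuple, and let $\varphi(x_1,\ldots,x_{k+1},y)$ be obtained from $\psi$ by re-partitioning its variables so that $x_1,\ldots,x_k$ instantiate the positions of the $k$-tuple $\dbar$ (in both cases $\dbar\in Q(\cD)$), $x_{k+1}$ instantiates the position of $t$, and $y$ instantiates $D_0$; since $\dbar\in P_1\times\ldots\times P_k$ and $t\in P_{i_*}$, we obtain $|x_j|=\ell_j$ for $j\in[k]$ and $|x_{k+1}|=\ell_{i_*}$, which is (iii). For any $(\bar s, u)\in P_1(\cH^*)\times\ldots\times P_{k+1}(\cH^*)$, parts (d) of Lemmas~\ref{lem:Hproduct}/\ref{lem:Hproduct2} together with Lemma~\ref{lem:Hproduct2}(b)(3) (in the $QQ$ case) ensure that the substructure of $\cC$ on $D_0\cup\bar s\cup\{u\}$ is an $(\alpha,\beta)$-deficient $\cL_k$-image of $\cD$ whose sole departure from $\cD$ is in the $R^{\cH^*}(\bar s,u)$-datum. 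Then $\cH_k$-indiscernibility of $(a_g)_{g\in H_k}$ applied through $\iota$ gives $\M\models\varphi(a_{\bar s},a_u,a_\gamma)$ exactly when $R^{\cH^*}(\bar s,u)$ holds (possibly after replacing $\varphi$ by $\neg\varphi$; this is consistent with Corollary~\ref{cor:easyFOPk}). This yields (iv) via an $\cH_k$-indexed indiscernible witness as in Proposition~\ref{prop:codeTk}. Property (ii) holds because in both $\oslash$- and $\oless$-constructions the copy of $\cB$ is inserted into precise $<$-intervals of $\cH_k$ delimited by consecutive elements of $D_0$; consequently the $P_i$-predicate of any $g\in H^*$ and its $<$-relation to every element of $\gamma$ are determined by the $\cL''_k$-qftp of $g$ alone, so $\cL''_k$-qftps of tuples in $H^*$ lift canonically to $\cL''_k$-qftps over $\gamma$.

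The main obstacle I anticipate is the bookkeeping that threads the variable-partition of $\varphi$ through the transposition type ($QP$ vs.\ $QQ$) and through the fact that the ``swapped'' element $t$ ends up in $P_{i_*}(\cH_k)$ rather than always in $P_{k+1}(\cH_k)$; this is precisely why the statement allows $|x_{k+1}|=\ell_{i_*}$ for arbitrary $i_*\in[k+1]$. A secondary technicality is verifying that the $\cH_k$-indiscernibility transfers cleanly across the $(\alpha,\beta)$-deficient isomorphism family provided by the blow-up constructions; this relies on the fact that each such deficient isomorphism preserves every atomic datum except the single one corresponding to the original transposition, so $\psi$ toggles exactly in step with that datum, which in the $QQ$ case is itself translated into the $R$-relation of $\cB$ via Lemma~\ref{lem:Hproduct2}(b)(3).
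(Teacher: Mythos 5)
Your proposal follows essentially the same route as the paper's proof: reduce the failure of $\cL''_k$-indiscernibility to a single transposition in $\cH_k$ via Lemma \ref{lem:transposition}, Proposition \ref{prop:movetrans}, and pigeonhole; blow up the adjacent pair with $\oslash$ (QP case) or $\oless$ (QQ case) against a copy of $\cH_k$; embed the result into $\cH_k$ fixing the remaining parameters; and obtain the coding from the $(\alpha,\beta)$-deficient isomorphisms of Lemmas \ref{lem:Hproduct}$(d)$/\ref{lem:Hproduct2}$(d)$ (plus \ref{lem:Hproduct2}$(b)(3)$) together with $\cH_k$-indiscernibility, exactly as in the paper. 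The one slip is your definition $D_0\coloneqq D\setminus\{t\}$: it should remove the entire $<_*$-adjacent pair, i.e.\ the $Q$-tuple together with $t$ (the paper's $\gamma=\gbar\backslash\ebar v$, resp.\ $\gbar\backslash\dbar e_{i_*}$), since otherwise $D_0$ is not contained in the domain of $\cC$ (so the ``inclusion embedding'' via parts $(a)$ fails as stated) and your variable repartition would count the $Q$-tuple both in $y$ and in $x_1,\ldots,x_k$; with $D_0$ read correctly, the rest of your argument is the paper's.
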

\begin{proof}
Given a tuple $\gbar=(g_1,\ldots,g_n)$ from $\cH_k$, let $a_{\gbar}=(a_{g_1},\ldots,a_{g_n})$. 
Note that any $<$-increasing tuple $\gbar$ from $\cH_k$ determines a canonical finite model of $T_k$. Also, if $\gbar,\gbar'$ are $<$-increasing tuples then $\qftp^{\cH_k}_{\cL_k}(\gbar)=\qftp^{\cH_k}_{\cL_k}(\gbar')$ if and only if $\gbar\cong\gbar'$, and similarly for $\cL''_k$.

To ease notation, we write $a\equiv b$ for $\tp^T_{\cL}(a)=\tp^T_{\cL}(b)$.
Since $(a_g)_{g\in H_k}$ is not $\cL''_k$-indiscernible, there is some $n\geq 1$ and $\gbar,\gbar'$ from $\cH_k$ such that $\qftp^{\cH_k}_{\cL''_k}(\gbar)=\qftp^{\cH_k}_{\cL''_k}(\gbar')$, but $a_{\gbar}\not\equiv a_{\gbar'}$. Since $\qftp^{\cH_k}_{\cL''_k}(\gbar)=\qftp^{\cH_k}_{\cL''_k}(\gbar')$, we may assume, after relabeling, that $\gbar$ and $\gbar'$ are $<$-increasing tuples.\medskip

\noindent\textit{Claim:} Without loss of generality, we may assume $(\gbar,\gbar')$ is a transposition in $\cH_k$.

\noindent\textit{Proof:} By assumption,  $g\cong''\gbar'$. So by Proposition \ref{prop:movetrans} and Lemma \ref{lem:transposition}, there is a sequence $\gbar^1,\ldots,\gbar^n$ of tuples in $\cH_k$ such that $\gbar^1=\gbar$, $\qftp^{\cH_k}_{\cL_k}(\gbar^n)=\qftp^{\cH_k}_{\cL_k}(\gbar')$, and $(\gbar^i,\gbar^{i+1})$ is a transposition in $\cH_k$ for all $i$. By Remark \ref{rem:trantocong}, we have $\qftp^{\cH_k}_{\cL''_k}(\gbar^i)=\qftp^{\cH_k}_{\cL''_k}(\gbar)=\qftp^{\cH_k}_{\cL''_k}(\gbar')$ for all $1\leq i\leq n$.

Since $\qftp^{\cH_k}_{\cL_k}(\gbar^n)=\qftp^{\cH_k}_{\cL_k}(\gbar')$, we have $a_{\gbar_n}\equiv a_{\gbar'}$. On the other hand, $a_{\gbar^1}=a_{\gbar}$. So $a_{\gbar}\not\equiv a_{\gbar'}$ implies $a_{\gbar^1}\not\equiv a_{\gbar^n}$. Therefore, there is some $1\leq i<n$ such that $a_{\gbar^i}\not\equiv a_{\gbar^{i+1}}$. So if we replace $(\gbar,\gbar')$ with $(\gbar^i,\gbar^{i+1})$, then $(\gbar,\gbar')$ is  a transposition in $\cH_k$, and we still have $\qftp^{\cH_k}_{\cL''_k}(\gbar)=\qftp^{\cH_k}_{\cL''_k}(\gbar')$ and $a_{\gbar}\not\equiv a_{\gbar'}$. \clqed\medskip

We now need to consider two cases depending on  whether $(\gbar,\gbar')$ is a $QP$-transposition in $\cH_k$ or a $QQ$-transposition in $\cH_k$.\medskip 

\noindent \textit{Case 1:} $(\gbar,\gbar')$ is a $QP$-transposition in $\cH_k$. 

By construction, there are $\ebar\in Q(\gbar)$, $v\in P_{k+1}(\gbar)$, and $v'\in P_{k+1}(\gbar')$ such that $\gbar\backslash\{v\}=\gbar\backslash\{v'\}$ (so, in particular $Q(\gbar)=Q(\gbar')$ and $\ebar\in Q(\gbar')$), $\ebar$ and $v$ are $<_*$-adjacent in $\gbar$, and $\ebar$ and $v'$ are $<_*$-adjacent in $\gbar'$.
Moreover, we have $\ebar<_*v$ (i.e., $R(\ebar,v)$) if and only if $v'<_*\ebar$ (i.e., $\neg R(\ebar,v')$).  So without loss of generality, we can assume $v'<_*\ebar<_*v$.

For each $1\leq i\leq k$, choose  tuples $g^+_i$ and $g^-_i$ so that $g^-_ie_ig^+_i$ is the $<$-enumeration of $P_{i}(\gbar)=P_{i}(\gbar')$. Choose   tuples $h^-$ and $h^+$ so that $h^-vh^+$ and $h^-v'h^+$ are the $<$-enumerations of $P_{k+1}(\gbar)$ and $P_{k+1}(\gbar')$. Altogether:
\begin{enumerate}[\hspace{5pt}$\ast$]
\item $\gbar=g^-_1e_1g^+_1\ldots g^-_k e_kg^+_k h^-v h^+$,
\item $\gbar'=g^-_1e_1g^+_1\ldots g^-_k e_kg^+_k h^-v' h^+$,
\item $\ebar$ and $v$ are $<_*$-adjacent  in $\gbar$, with $\ebar<_*v$, and 
\item $\ebar$ and $v'$ are $<_*$-adjacent in $\gbar'$, with $v'<_*\ebar$.
\end{enumerate}
Let $\gamma$ be the tuple $g^-_1g^+_1\ldots g^-_kg^+_k h^-h^+$ (i.e., $\gamma=\gbar\backslash \ebar v=\gbar'\backslash \ebar v'$).

Let $\cC=\gbar\oslash_{\ebar,v}\cH^*$, where  $\cH^*$ is an isomorphic copy of $\cH_k$ (with domain disjoint from $\gbar$). By Lemma \ref{lem:Hproduct}$(c)$ and universality of $\cH_k$, we may assume $\cC$ is a substructure of $\cH_k$ extending $\gamma$. Note that $\cH^*$ satisfies condition $(i)$ of the lemma. Moreover,  by Lemma \ref{lem:Hproduct}$(b)$, $\cH^*$ is a substructure of $\cH_k$.  By construction of $\cC$ (in Definition \ref{def:Hproduct1}), it follows that for any $1\leq i\leq k+1$, all elements of $P_i(\cH^*)$ have the same $<$-cut in $\gamma$, which yields condition $(ii)$.

Given  $\sbar=(s_1,\ldots,s_k)\in Q(\cH^*)$ and $t\in P_{k+1}(\cH^*)$, let 
\[
g_{\sbar,t}=g^-_1 s_1 g^+_1\ldots g^-_k s_k g^+_k h^- t h^+.
\]
By construction, $g_{\sbar,t}$ $<$-enumerates the substructure of $\cC$ on $\gamma \sbar t$. 
By Lemma \ref{lem:Hproduct}$(d)$, the map fixing $\gamma$ pointwise and sending $\bar{e}v$ to $\bar{s}t$ is an $(\bar{e},v)$-deficient $\cL_k$-isomorphism from $\gbar$ to $g_{\sbar,t}$; moreover, if $R(\sbar,t)$ (i.e., $\bar{s}<_*t)$ then this map is a genuine $\cL_k$-isomorphism. Likewise, the map fixing $\gamma$ pointwise and sending $\bar{e}v'$ to $\bar{s} t$ is a $(\ebar,v')$-deficient $\cL_k$-isomorphism from $\gbar'$ to $g_{\sbar,t}$; moreover, if $\neg R(\sbar,t)$ (i.e., $t<_*\bar{s}$) then this map is a genuine $\cL_k$-isomorphism. Altogether: 
\begin{enumerate}[\hspace{5pt}$(i)$]
\item If $R(\sbar,t)$ then $\qftp^{\cH_k}_{\cL_k}(g_{\sbar,t})=\qftp^{\cH_k}_{\cL_k}(\gbar)$.
\item If $\neg R(\sbar,t)$ then $\qftp^{\cH_k}_{\cL_k}(g_{\sbar,t})=\qftp^{\cH_k}_{\cL_k}(\gbar')$.
\end{enumerate}

Now we return to the $\cH_k$-indiscernible sequence $(a_g)_{g\in H_k}$. 
Recall that $\avec{\bar{g}}\not\equiv \avec{\bar{g}'}$. So there is an $\cL$-formula $\theta(w)$ such that $\M\models\theta(\avec{\gbar})\wedge\neg\theta(\avec{\gbar'})$. Recall also that $(\ell_1,\ldots,\ell_{k+1})$ is the length assignment of $(\avec{g})_{g\in H_k}$. We now partition the free variables $w$ according to the enumerations of $\gbar$ and $\gbar'$ described above. In particular, we can write
\[
w=y^-_1x_1y^+_1\ldots y^-_kx_ky^+_kz^-x_{k+1}z^+,
\]
where:
\begin{enumerate}[\hspace{5pt}$\ast$]
\item for $1\leq i\leq k$, $|y^-_i|=|\avec{g^-_i}|$ and $|y^+_i|=|\avec{g^+_i}|$,
\item  $|z^-|=|\avec{h^i}|$ and $|z^+|=|\avec{h^+}|$, 
\item for $1\leq i\leq k$, $|x_i|=|\avec{e_i}|=\ell_i$, and
\item $|x_{k+1}|=|a_{v}|=\ell_{k+1}$.
\end{enumerate}
Let $y=y^-_1y^+_1\ldots y^-_k y^+_k z^- z^+$. Then after permuting the free variables, we can write $\theta(w)$ as $\varphi(x_1,\ldots,x_{k+1};y)$. Note that we have condition $(iii)$ of the lemma. So to finish the proof in this case, we need to show that $\varphi(x_1,\ldots,x_{k+1},a_\gamma)$ $R$-codes $\cH^*$ witnessed by $(\avec{g})_{g\in H^*}$.

First note that for any $\sbar\in Q(\cH^*)$ and $t\in \cP_{k+1}(\cH^*)$, the formula $\theta(\avec{g_{\sbar,t}})$ is  $\varphi(\avec{s_1},\ldots,\avec{s_{k}},\avec{t};\avec{\gamma})$. Now fix $\sbar\in Q(\cH^*)$ and $t\in P_{k+1}(\cH^*)$. By $(i)$ and $(ii)$ above, and $\cH_k$-indiscernibility of $(a_g)_{g\in H^*}$, we have that $R(\sbar,t)$ implies $\avec{g_{\sbar,t}}\equiv \avec{\gbar}$, and $\neg R(\sbar,t)$ implies $\avec{g_{\sbar,t}}\equiv\avec{\gbar'}$. Since $\M\models\theta(\avec{\gbar})\wedge\neg\theta(\avec{\gbar'})$, it follows that $\M\models\theta(\avec{g_{\sbar,t}})$ if and only if $R(\sbar,t)$, i.e., $\M\models \varphi(\avec{s_1},\ldots,\avec{s_k},\avec{t},a_\gamma)$ if and only if $R(\sbar,t)$, as desired.\medskip

\noindent\textit{Case 2:} $(\gbar,\gbar')$ is a $QQ$-transposition in $\cH_k$. 

By construction, there are $\dbar,\ebar\in Q(\gbar)$ such that $\dbar$ and $\ebar$ are $<_*$-adjacent in $\gbar$, with $\dbar<_k\ebar$, and if $i_*=\min\{i\in [k]:d_i\neq e_i\}$, then $\gbar'=\gbar\backslash\{e_{i_*}\}\cup\{e'\}$ and the map from $\gbar$ to $\gbar'$ fixing $\gbar\backslash\{e_{i_*}\}$ pointwise and sending $e_{i_*}$ to $e'$ is a $(\dbar,\ebar)$-deficient $\cL_k$-isomorphism. Moreover, if $\ebar'=(e_1,\ldots,e_{i_*-1},e',e_{i_*+1},\ldots,e_k)$, then $\dbar,\ebar'$ are in $Q(\gbar')$ and are $<_*$-adjacent in $\gbar'$ with $\ebar'<_k\dbar$. To ease notation, we will also assume $d_{i_*}<e_{i_*}$. This is a nontrivial assumption since $\dbar$ and $\ebar$ have asymmetric roles (beyond just their $<_k$-order). However, there is no loss in generality since the relative $<$-order of $d_{i_*}$ and $e_{i_*}$ is  insignificant in the subsequent argument.

Let $g_{k+1}$ be the $<$-enumeration of $P_{k+1}(\gbar)=P_{k+1}(\gbar')$. For each $i\in[k]\backslash\{i_*\}$,  choose tuples $g^+_i$ and $g^-_i$ so that $g^-_id_ig^+_i$ is the $<$-enumeration of $P_i(\gbar)=P_i(\gbar')$. 
Finally, choose tuples $g^+_{i_*}$, $g_i$, $g^-_i$ so that $g^-_id_{i_*}g_ie_{i_*}g_i$ $<$-enumerates $P_{i_*}(\gbar)$ and $g^-_id_{i_*}g_ie'g_i$ $<$-enumerates $P_{i_*}(\gbar')$. Altogether:
\begin{enumerate}[\hspace{5pt}$\ast$]
\item $\gbar = g^-_1d_1g^+_1\ldots g^-_{i_*-1}d_{i_*-1}g^+_{i_*-1}g^-_{i_*}d_{i_*}g_{i_*}e_{i_*}g^+_{i_*}g^-_{i_*+1}d_{i_*+1}g^+_{i_*+1}\ldots g^-_kd_kg^+_k g_{k+1}$,
\item $\gbar' = g^-_1d_1g^+_1\ldots g^-_{i_*-1}d_{i_*-1}g^+_{i_*-1}g^-_{i_*}d_{i_*}g_{i_*}e'g^+_{i_*}g^-_{i_*+1}d_{i_*+1}g^+_{i_*+1}\ldots g^-_kd_kg^+_k g_{k+1}$,
\item $\dbar$ and $\ebar$ are $<_*$-adjacent in $\gbar$ with $\dbar<_k \ebar$, and 
\item $\dbar$ and $\ebar'$ are $<_*$-adjacent in $\gbar'$ with $\ebar'<_k\dbar$.
\end{enumerate}
Let $\gamma$ be the tuple $g^-_1g^+_1\ldots g^-_{i_*-1}g^+_{i_*-1}g^-_{i_*}g_{i_*}g^+_{i_*}g^-_{i_*+1}g^+_{i_*+1}\ldots g^-_kg^+_kg_{k+1}$ (i.e., $\gamma=\gbar\backslash \dbar e_{i_*}=\gbar'\backslash\dbar e'$). 

 Let $\cC=\gbar\oless_{\dbar,\ebar}\cH^*$, where $\cH^*$ is a copy of $\cH_k$ (with domain disjoint from $\gbar$). Note also that by the above description of $\gbar$ and $\gbar'$, $\cC$ is identical to $\gbar'\oless_{\dbar,\ebar'}\cH^*$. By Lemma \ref{lem:Hproduct2}$(c)$ and  universality of $\cH_k$, we may assume $\cC$ is a substructure of $\cH_k$ extending $\gamma$. Note that $\cH^*$ satisfies condition $(i)$ of the lemma.  Moreover, by Lemma \ref{lem:Hproduct2}$(b)$, and since $\cC$ is a substructure of $\cH_k$, we have the following properties:
 \begin{enumerate}[\hspace{5pt}$(1)$]
 \item $\cH^*{\upharpoonright}\cL^Q_k$ embeds in $\cH_k{\upharpoonright}\cL^Q_k$ via the inclusion map.
 \item $(P_{k+1}(\cH^*),<)$ embeds in $(P_{i_*}(\cH_k),<)$ via the inclusion map. 
 \item For any $\cbar\in P_{\dbar,\ebar}(\cH^*)$, $\sbar\in Q(\cH^*)$, and $t\in P_{k+1}(\cH^*)$, $\cH^*\models R(\sbar,t)$ if and only if $\cH_k\models \sbar<_k \cbar(t)$.
 \end{enumerate}
By construction of $\cC$ (in Definition \ref{def:Hproduct2}), we also have that for any $1\leq i\leq k+1$, all elements of $P_i(\cH^*)$ have the same $<$-cut in $\gamma$, which yields condition $(ii)$.

 Given $\sbar\in Q(\cH^*)$ and $t\in P_{k+1}(\cH^*)$, let
 \[
 g_{\sbar,t}=g^-_1s_1g^+_1\ldots g^-_{i_*-1}s_{i_*-1}g^+_{i_*-1}g^-_{i_*}s_{i_*}g_{i_*}tg^+_{i_*}g^-_{i_*+1}s_{i_*+1}g^+_{i_*+1}\ldots g^-_ks_kg^+_k g_{k+1}.
 \]
 By construction, $g_{\sbar,t}$ $<$-enumerates the substructure of $\cC$ on $\gamma\sbar t$. \medskip
 
 \noindent\emph{Claim.} Suppose $\sbar\in Q(\cH^*)$ and $t\in P_{k+1}(\cH^*)$.
 \begin{enumerate}[$(a)$]
 \item If $\cH^*\models R(\sbar,t)$ then $\qftp^{\cH_k}_{\cL_k}(g_{\sbar,t})=\qftp^{\cH_k}_{\cL_k}(\gbar)$.
 \item If $\cH^*\models \neg R(\sbar,t)$ then $\qftp^{\cH_k}_{\cL_k}(g_{\sbar,t})=\qftp^{\cH_k}_{\cL_k}(\gbar')$.
 \end{enumerate}
 
 \noindent\textit{Proof.} Part $(i)$. By Lemma \ref{lem:Hproduct2}$(d)$, the map $f$ from $\gbar$ to $\gbar_{\sbar,t}$, which fixes $\gamma$ pointwise and sends $\dbar e_{i_*}$ to $\sbar t$, is an $(\dbar,\ebar)$-deficient $\cL_k$-isomorphism. Note also that $f(\ebar)=\bbar(t)$ where $\bbar\in P_{\dbar,\ebar}(\cH^*)$ is such that, for $i\in [k]\backslash\{i_*\}$, $b_i=e_i$ if $e_i\neq d_i$ and $b_i=s_i$ if $e_i=d_i$. Now suppose $\cH^*\models R(\sbar,t)$. Then $\cH_k\models \sbar<_k \bbar(t)$ by $(3)$, i.e., $\cH_k\models f(\dbar)<_k f(\ebar)$. Since $\cH_k\models\dbar<_k \ebar$, it follows that $f$ is an $\cL_k$-isomorphism.
 
 Part $(ii)$. The argument is similar to $(i)$, keeping in mind that $\cC$ is also equal to $\gbar'\oless_{\dbar,\ebar'}\cH^*$. By Lemma \ref{lem:Hproduct2}$(d)$, the map $f$ from $\gbar'$ to $\gbar_{\sbar,t}$, which fixes $\gamma$ pointwise and sends $\dbar e'$ to $\sbar t$, is an $(\dbar,\ebar')$-deficient $\cL_k$-isomorphism. Note also that $f(\ebar')=\bbar(t)$ where $\bbar\in P_{\dbar,\ebar'}(\cH^*)=P_{\dbar,\ebar}(\cH^*)$ is such that, for $i\in [k]\backslash\{i_*\}$, $b_i=e_i$ if $e_i\neq d_i$ and $b_i=s_i$ if $e_i=d_i$.   Now suppose $\cH^*\models \neg R(\sbar,t)$. Then $\cH_k\models \bbar(t)<_k \sbar$ by $(3)$ (and the fact that $\sbar$ and $\bbar(t)$ disagree at the $i_*\uth$ coordinate, hence are distinct tuples), i.e., $\cH_k\models f(\ebar')<_k f(\dbar)$. Since $\cH_k\models \ebar'<_k \dbar$, it follows that $f$ is an $\cL_k$-isomorphism.\clqed\medskip
 
 Now we return to the $\cH_k$-indiscernible sequence $(a_g)_{g\in H_k}$. Recall that $a_{\gbar}\not\equiv a_{\gbar'}$.  So there is an $\cL$-formula $\theta(w)$ such that $\M\models \theta(a_{\gbar})\wedge\neg\theta(a_{\gbar'})$. Recall also that $(\ell_1,\ldots,\ell_{k+1})$ is the length assignment of $(a_g)_{g\in H_k}$. We now partition the free variables $w$ according to the enumerations of $\gbar$ and $\gbar'
 $ described above. In particular, we can write
 \[
 w= y^-_1x_1y^+_1\ldots y^-_{i_*-1}x_{i_*-1}y^+_{i_*-1}y^-_{i_*}x_{i_*}y_{i_*}x_{k+1}y^+_{i_*}y^-_{i_*+1}x_{i_*+1}y^+_{i_*+1}\ldots y^-_kx_ky^+_k y_{k+1}
 \]
 where:
 \begin{enumerate}[\hspace{5pt}$\ast$]
 \item for $1\leq i\leq k$, $|y^-_i|=|a_{g^-_i}|$ and $|y^+_i|=|a_{g^+_i}|$,
 \item $|y_{i_*}|=|a_{g_{i_*}}|$ and $|y_{k+1}|=|a_{g_{k+1}}|$,
 \item for $1\leq i\leq k$, $|x_i|=|a_{d_i}|=\ell_i$, and
 \item $|x_{k+1}|=|a_{e_{i_*}}|=\ell_{i_*}$.
 \end{enumerate}
 Let $y=y^-_1y^+_1\ldots y^-_{i_*-1}y^+_{i_*-1}y^-_{i_*}y_{i_*}y^+_{i_*}y^-_{i_*+1}y^+_{i_*+1}\ldots y^-_ky^+_k y_{k+1}$. Then after permuting the free variables, we can write $\theta(w)$ as $\varphi(x_1,\ldots,x_{k+1};y)$. Note that we have condition $(iii)$ of the lemma. So to finish the proof in this case, we need to show that $\varphi(x_1,\ldots,x_{k+1};a_\gamma)$ $R$-codes $\cH^*$ witnessed by $(a_g)_{g\in H^*}$. 
 
First note that for any $\sbar\in Q(\cH^*)$ and $t\in P_{k+1}(\cH^*)$, the formula $\theta(a_{g_{\sbar,t}})$ is $\varphi(a_{s_1},\ldots,a_{s_k},a_{t};a_\gamma)$. Now fix $\sbar\in Q(\cH^*)$ and $t\in P_{k+1}(\cH^*)$. By parts $(i)$ and $(ii)$ of the claim, and $\cH_k$-indiscernibility of $(a_g)_{g\in H^*}$, we have that $\cH^*\models R(\sbar,t)$ implies $a_{g_{\sbar,t}}\equiv a_{\gbar}$, and $\cH^*\models \neg R(\sbar,t)$ implies $a_{g_{\sbar,t}}\equiv a_{\gbar'}$. Since $\M\models \theta(a_{\gbar})\wedge\neg\theta(a_{\gbar'})$, it follows that $\M\models a_{g_{\sbar,t}}$ if and only if $\cH^*\models R(\sbar,t)$, i.e., $\M\models \varphi(a_{s_1},\ldots,a_{s_k},a_t;a_\gamma)$ if and only if $\cH^*\models R(\sbar,t)$, as desired. 
\end{proof}

The statement of the previous lemma includes extra details that will be used in our main application (in the next subsection). But in any case, we now have our desired characterization of $\NFOP_k$ theories via collapse of indiscernibles.

\begin{theorem}\label{thm:genind}
Let $T$ be a complete theory with monster model $\M$. The following are equivalent.
\begin{enumerate}[$(i)$]
\item $T$ is $\NFOP_k$.
\item Every $\cH_k$-indexed indiscernible sequence in $\M$ is $\cL'_k$-indiscernible.
\item Every $\cH_k$-indexed indiscernible sequence in $\M$ is $\cL''_k$-indiscernible.
\end{enumerate}
\end{theorem}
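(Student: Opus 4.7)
My plan is to dispose of $(iii) \Rightarrow (ii)$ and $(ii) \Rightarrow (i)$ by citation, and prove $(i) \Rightarrow (iii)$ as a short corollary of Lemma \ref{lem:collapse1} together with Proposition \ref{prop:codeTk}. The first implication is immediate from the inclusion $\cL''_k \seq \cL'_k$: tuples agreeing on the quantifier-free $\cL'_k$-type \emph{a fortiori} agree on the quantifier-free $\cL''_k$-type, so $\cL''_k$-indiscernibility is the stronger condition. The second is precisely the content of Corollary \ref{cor:genind}. So everything reduces to showing $(i) \Rightarrow (iii)$.

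For this, I would argue the contrapositive. Assuming $(a_g)_{g \in H_k}$ is an $\cH_k$-indexed indiscernible sequence in $\M$ which fails to be $\cL''_k$-indiscernible, Lemma \ref{lem:collapse1} produces an $\cL$-formula $\varphi(x_1,\ldots,x_{k+1},y)$, a parameter tuple $\gamma \in H_k^y$, and an $\cL_k$-structure $\cH^* \cong \cH_k$ (with $H^* \seq H_k$ disjoint from $\gamma$) such that $\varphi(x_1,\ldots,x_{k+1},a_\gamma)$ $R$-codes $\cH^*$ via $(a_g)_{g \in H^*}$. The only remaining step is to absorb the parameter $a_\gamma$ into the last variable block so as to exhibit a parameter-free $(k+1)$-partitioned $\cL$-formula with $\FOP_k$. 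Concretely, I would set $z \coloneqq (x_{k+1},y)$ and $\psi(x_1,\ldots,x_k,z) \coloneqq \varphi(x_1,\ldots,x_{k+1},y)$; enumerate each $P_t(\cH^*) = \{h^t_i : i<\omega\}$ and take witnesses $c^t_i \coloneqq a_{h^t_i}$ for $1 \leq t \leq k$ together with $c^{k+1}_i \coloneqq (a_{h^{k+1}_i}, a_\gamma)$. Unfolding the definition of $\psi$, the $R$-coding of $\cH^*$ by $\varphi(\ldots,a_\gamma)$ translates into $\M \models \psi(c^1_{i_1},\ldots,c^k_{i_k},c^{k+1}_{i_{k+1}}) \miff \cH^* \models R(h^1_{i_1},\ldots,h^{k+1}_{i_{k+1}})$, so $\psi$ $R$-codes $\cH^* \cong \cH_k$. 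Proposition \ref{prop:codeTk} then gives $\FOP_k$ for $\psi$ in $T$, contradicting $(i)$.

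The entire difficulty of the theorem is concentrated in Lemma \ref{lem:collapse1}, which itself rests on the transposition analysis (Lemma \ref{lem:transposition}, Proposition \ref{prop:movetrans}) and the two replacement constructions $\cA \oslash_{(\ebar,v)} \cB$ and $\cA \oless_{(\dbar,\ebar)} \cB$ of Definitions \ref{def:Hproduct1} and \ref{def:Hproduct2}. Once that lemma is in hand, the present theorem is a clean corollary, and the only remaining move is the variable-regrouping trick above; I expect no technical obstacle there, since it is purely a notational repartition of the coded formula.
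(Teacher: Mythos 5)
Your proposal is correct and follows essentially the same route as the paper: $(iii)\Rightarrow(ii)$ by the language inclusion, $(ii)\Rightarrow(i)$ by Corollary \ref{cor:genind}, and $(i)\Rightarrow(iii)$ via the contrapositive from Lemma \ref{lem:collapse1} plus Proposition \ref{prop:codeTk}. The variable-regrouping step absorbing $a_\gamma$ into the last coordinate is exactly the mechanism the paper intends (and uses explicitly in the proof of Lemma \ref{lem:var2}), so there is no gap.
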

\begin{proof}
 $(i)\Rightarrow (iii)$ follows from Lemma \ref{lem:collapse1} and Proposition \ref{prop:codeTk}. $(iii)\Rightarrow (ii)$ follows from the fact that $\cL''_k$-indiscernibility implies $\cL'_k$-indiscernibility. $(ii)\Rightarrow (i)$ is Corollary \ref{cor:genind}.
\end{proof}

\begin{remark}
The previous theorem implies that if $T$ is $\NFOP_k$ then every $\cH_k{\upharpoonright}\cL'_k$-indexed indiscernible sequence in $\M$ is $\cL''_k$-indiscernible (in particular, any $\cH_k{\upharpoonright}\cL'_k$-indexed indiscernible sequence canonically determines an $\cH_k$-indexed indiscernible sequence). However, we expect that the converse fails, with $\cH_k$ itself being a counterexample. This hypothesis is along the same lines as the  discussion of $\cF_k$ in Remark \ref{rem:Fk} and, once again, we will not pursue any details at this time.
\end{remark}

\subsection{Application: reduction of $\NFOP_k$ to one variable}

In this subsection, we use indiscernible collapse to show that when proving  a complete theory $T$ is $\NFOP_k$, it suffices to consider formulas in which the first $k$ variables are singletons. Lemma \ref{lem:collapse1} will provide the main technical result needed for the proof. That lemma aside, the coarse proof structure is analogous to similar results for $\IP_k$ from \cite{CPT} and \cite{CH}. Throughout this subsection, $T$ is a complete $\cL$-theory with monster model $\M$.

 We will be comparing $\cL_k$-structures to $\cL_{k-1}$-structures. So for the sake of clarity, we assume $k\geq 2$ and we let $R_k$ and $R_{k-1}$ denote the hypergraph relations in $\cL_k$ and $\cL_{k-1}$. Given a $\cL_k$-structure $\cH$ and some $e\in P_1(\cH)$,  the \textbf{$\cL_{k-1}$-structure induced by $e$ in $\cH$} is the $\cL_{k-1}$-structure $\cH^e$ defined as follows:
\begin{enumerate}[\hspace{5pt}$\ast$]
\item For $1\leq i\leq k$, $(P_i(\cH^e),<)$ is $(P_{i+1}(\cH),<)$.
\item For $\gbar,\gbar'\in Q(\cH^e)$, $\cH^e\models \gbar<_{k-1}\gbar'$ if and only if $\cH\models e\gbar <_k e\gbar' $.
\item For $\gbar\in P(\cH^e)\times\ldots\times P_k(\cH^e)$, $\cH^e\models R_{k-1}(\gbar)$ if and only if $\cH\models R_k(e\gbar)$. 
\end{enumerate} 
It is straightforward to check that if $\cH\models T_k$ then $\cH^e\models T_{k-1}$. In the case of the Fra\"{i}ss\'{e} limit $\cH_k$, we note that for any choice of $e$, $\cH_k^e$ satisfies the one-point extension axioms for $\Th(\cH_{k-1})$ (see \cite[Section 2]{KruckAM} for a general exposition of this topic). Thus by $\aleph_0$-categoricity we can identify $\cH^e_k$ and $\cH_{k-1}$.

\begin{lemma}\label{lem:var}
Given an $\cL$-formula $\varphi(x_0,\ldots,x_k)$, the following are equivalent.
\begin{enumerate}[$(i)$]
\item $\varphi(x_0,\ldots, x_k)$ has $\FOP_k$ in $T$.
\item There is a parameter $c\in \M^{x_0}$ such that $\varphi(c,x_1,\ldots,x_k)$ $R_{k-1}$-codes $\cH_{k-1}$ witnessed by an $\cL''_{k-1}$-indiscernible sequence $(a_g)_{g\in H_{k-1}}$ in $\M$. 
\item Condition $(ii)$ holds where $(a_g)_{g\in H_{k-1}}$ is  also  $\cL_{k-1}$-indiscernible over $c$. 
\end{enumerate}
\end{lemma}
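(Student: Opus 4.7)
The plan is to handle $(iii)\Rightarrow (ii)$ trivially and then prove $(i)\Rightarrow (iii)$ and $(ii)\Rightarrow (i)$ separately, leveraging Proposition \ref{prop:codeTk} in both nontrivial directions. For $(i)\Rightarrow (iii)$, I would apply Proposition \ref{prop:codeTk} to obtain an $\cH_k$-indexed indiscernible sequence $(a_g)_{g\in H_k}$ in $\M$ witnessing that $\varphi$ $R_k$-codes $\cH_k$, then fix any $e\in P_1(\cH_k)$, set $c=a_e$, and use the identification $\cH_k^e\cong\cH_{k-1}$ described at the start of the subsection. The sub-sequence $(a_g)_{g\in H_{k-1}}$ then witnesses that $\varphi(c,x_1,\ldots,x_k)$ $R_{k-1}$-codes $\cH_{k-1}$ by construction. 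For $\cL_{k-1}$-indiscernibility over $c$, if two tuples $\gbar,\gbar'$ from $H_{k-1}$ have the same $\cL_{k-1}$-qftp in $\cH_{k-1}$ then $(e,\gbar)$ and $(e,\gbar')$ have the same $\cL_k$-qftp in $\cH_k$, so $\cH_k$-indiscernibility yields $\tp(a_{\gbar}/c)=\tp(a_{\gbar'}/c)$. For $\cL''_{k-1}$-indiscernibility, the key observation is that any tuple $\gbar$ from $H_{k-1}=P_2(\cH_k)\cup\ldots\cup P_{k+1}(\cH_k)$ contains no $Q(\cH_k)$-subtuple (that would require a $P_1$-coordinate), so the relations $<_k$ and $R_k$ in $\cL_k\setminus\cL''_k$ can never appear in an atomic $\cL_k$-formula on $\gbar$. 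Hence the $\cL_k$-qftp of $\gbar$ in $\cH_k$ coincides with its $\cL''_k$-qftp, which under the identification matches its $\cL''_{k-1}$-qftp in $\cH_{k-1}$, and $\cH_k$-indiscernibility transfers at once.

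For $(ii)\Rightarrow (i)$, the idea is to use $c$ together with the freedom afforded by $\cL''_{k-1}$-indiscernibility to manufacture an $R_k$-coding of all of $\cH_k$. I identify $\cH_{k-1}$ with $\cH_k^e$ for some fixed $e\in P_1(\cH_k)$, and for each $e'\in P_1(\cH_k)$ consider the partial type
\[
p_{e'}(x)=\{\varphi(x,a_{g_1},\ldots,a_{g_k})^{R_k(e',g_1,\ldots,g_k)}:g_i\in P_{i+1}(\cH_k),\ 1\leq i\leq k\}
\]
over $(a_g)_{g\in H_{k-1}}$. To check finite satisfiability of $p_{e'}$, take tuples $(g^{(i)})_{i\leq n}$ with prescribed values $s_i=R_k(e',g^{(i)})$ and let $\cA$ be the finite substructure of $\cH_k^{e'}$ on $\bigcup_i g^{(i)}$: this is a finite model of $T_{k-1}$ in which $R_{k-1}^{\cA}(g^{(i)})=s_i$. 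Universality of $\cH_{k-1}$ yields an $\cL_{k-1}$-embedding of $\cA$ into $\cH_{k-1}$ whose image $(g'^{(i)})$ preserves $\cL_{k-1}$-qftp and in particular shares the $\cL''_{k-1}$-qftp of $(g^{(i)})$ while satisfying $R_{k-1}(g'^{(i)})=s_i$ in $\cH_{k-1}$. By $\cL''_{k-1}$-indiscernibility there is $\sigma\in\Aut(\M)$ sending $(a_{g^{(i)}})$ to $(a_{g'^{(i)}})$, and then $\sigma\inv(c)$ realizes the given finite subset of $p_{e'}$ via the $R_{k-1}$-coding property of $c$. Sufficient saturation of $\M$ then produces a family $(c_{e'})_{e'\in P_1(\cH_k)}$ which together with $(a_g)_{g\in H_{k-1}}$ $R_k$-codes $\cH_k$, and Proposition \ref{prop:codeTk} delivers $\FOP_k$ for $\varphi$.

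The main technical step is the finite-satisfiability argument in $(ii)\Rightarrow (i)$: the point is to realize the slice $R_k(e',\cdot)$ as the genuine $R_{k-1}$-structure on some $\cL_{k-1}$-substructure of $\cH_{k-1}$, and this works because $T_{k-1}$ is universal (so the substructure $\cA$ of $\cH_k^{e'}$ on the $g^{(i)}$'s is already a model of $T_{k-1}$) together with the universality of $\cH_{k-1}$. The $(i)\Rightarrow (iii)$ direction is essentially bookkeeping, with the one subtle point being that $\cL''_{k-1}$-indiscernibility of the restricted sequence is automatic because a tuple avoiding $P_1(\cH_k)$ cannot carry any nontrivial $<_k$ or $R_k$ information.
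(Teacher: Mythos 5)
Your proposal is correct, and its two nontrivial implications are sound, but it is organized around a different cycle than the paper's: you prove $(i)\Rightarrow(iii)$ directly and $(ii)\Rightarrow(i)$, whereas the paper proves $(i)\Rightarrow(ii)$, then $(ii)\Rightarrow(iii)$ by invoking the modeling property for $\cH_{k-1}$ (Corollary \ref{cor:Gkmod}), i.e.\ by re-extracting an $\cH_{k-1}$-indiscernible sequence over $c$ based on the given one, and finally $(iii)\Rightarrow(i)$. Your observation that the $\cL_{k-1}$-indiscernibility over $c=a_e$ already falls out of the $\cH_k$-indiscernibility of the full witness (because any atomic $<_k$- or $R_k$-instance inside a tuple $(e,\gbar)$ with $\gbar\subseteq H_{k-1}$ must have $e$ as its unique $P_1$-coordinate, so the $\cL_k$-qftp of $(e,\gbar)$ is determined by the $\cL_{k-1}$-qftp of $\gbar$ in $\cH_k^e$) lets you bypass the modeling-property step entirely within this lemma; the paper's route instead records the standalone upgrade $(ii)\Rightarrow(iii)$, which is conceptually natural but costs an appeal to the Ramsey machinery. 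Your $(ii)\Rightarrow(i)$ is essentially the paper's $(iii)\Rightarrow(i)$ argument (which, as written there, also only uses the $\cL''_{k-1}$-indiscernibility): both exploit universality of $\cH_{k-1}$ to realize a prescribed $R_k(e',\cdot)$-slice as the genuine $R_{k-1}$-structure on an $\cL''$-compatible copy inside $\cH_{k-1}$, then move $c$ by type-equality/homogeneity to obtain the parameters $c_{e'}$; the difference is that you code $\cH_k$ itself via finite satisfiability of the types $p_{e'}$ and saturation, while the paper codes an arbitrary finite model of $T_k$ and lets Proposition \ref{prop:codeTk} do the compactness. Both versions deliver the same equivalences, and your decomposition is, if anything, slightly leaner for the two directions actually used later (in Lemma \ref{lem:var2} and Corollary \ref{cor:var}).
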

\begin{proof}
Let $\qftp_k=\qftp^{\cH_k}_{\cL_k}$ and $\qftp''_k=\qftp^{\cH_k}_{\cL''_k}$ (and similarly for $k-1$).

$(i)\Rightarrow (ii)$. Assume $(i)$. Then by Proposition \ref{prop:codeTk}, $\varphi(x_0,\ldots,x_k)$ $R_k$-codes $\cH_k$ witnessed by an $\cH_k$-indiscernible sequence $(a_g)_{g\in H_k}$ in $\M$. 

Fix some $e\in P_1(\cH_k)$, and let $\cH_{k-1}$ be identified with the $\cL_{k-1}$-structure induced by $e$ in $\cH_k$, as described above. 
Note that the domain of $\cH_{k-1}$ is a subset of that of $\cH_k$. Setting $c=a_e$, it follows by assumption on $(a_g)_{g\in H_k}$, and our construction of $\cH_{k-1}$, that $(a_g)_{g\in H_{k-1}}$ witness that $\varphi(c,x_1,\ldots,x_k)$ $R_{k-1}$-codes $\cH_{k-1}$. Finally, we show that $(a_g)_{g\in H_{k-1}}$ is $\cL''_{k-1}$-indiscernible. Fix $g_1,\ldots,g_n,h_1,\ldots,h_n\in H_{k-1}$ such that $\qftp''_{k-1}(g_1,\ldots,g_n)=\qftp''_{k-1}(h_1,\ldots,h_n)$. Then we immediately have $\qftp''_k(g_1,\ldots,g_n)=\qftp''_k(h_1,\ldots,h_n)$. Moreover, in $\cH_k$, the relations $<_k$ and $R_k$ only hold on tuples whose first coordinate is from $P_1(\cH_k)$. Since $H_{k-1}$ does not intersect $P_1(\cH_k)$, it then follows that $\qftp_k(g_1,\ldots,g_n)=\qftp_k(h_1,\ldots,h_n)$, and thus $\tp^T_{\cL}(a_{g_1},\ldots,a_{g_n})=\tp^T_{\cL}(a_{h_1},\ldots,a_{h_n})$ since $(a_g)_{g\in H_k}$ is $\cL_k$-indiscernible.

$(ii)\Rightarrow (iii)$. Assume $(ii)$. We work in $T$ with constants named for $c$.  By the modeling property for $\cH_{k-1}$ (Corollary \ref{cor:Gkmod}), there is an $\cH_{k-1}$-indiscernible sequence $(a'_g)_{g\in H_{k-1}}$  in $\M$ locally based on $(a_g)_{g\in H_{k-1}}$. So, in particular, $(a'_g)_{g\in H_k}$ is $\cL_{k-1}$-indiscernible over $c$. Moreover, since $(a'_g)_{g\in H_{k-1}}$ is locally based on $(a_g)_{g\in H_{k-1}}$, it follows that $(a'_g)_{g\in H_{k-1}}$ also satisfies the conditions of $(ii)$, namely, it is $\cL''_{k-1}$-indiscernible and witnesses that $\varphi(c,x_1,\ldots,x_k)$ $R_{k-1}$-codes $\cH_{k-1}$.

$(iii)\Rightarrow (i)$. Assume $(iii)$, witnessed by $c\in\M^{x_0}$ and $(a_g)_{g\in H_{k-1}}$.
Fix a finite model $\cH\models T_k$. We will show that $\varphi(x_1,\ldots,x_{k+1})$ $R_k$-codes $\cH$, and so $(i)$ follows by Proposition \ref{prop:codeTk}. We may assume $(P_i(\cH_{k-1}),{<})=(P_{i+1}(\cH_k),{<})$ for all $1\leq i\leq k$. We also view $\cH$ as a substructure of $\cH_k$.
For $1\leq i\leq k$, let $\bar{g}_i$ $<$-enumerate $P_{i+1}(\cH)$.  For each $e\in P_1(\cH)$, let $\cH^e$ be the $\cL_{k-1}$-structure induced by $e$ in $\cH$, and note that $\cH^e\models T_{k-1}$. Moreover the $<$-order in $\cH^e$ is induced by the $<$-order in $\cH_k$, which agrees with the $<$-order on $\cH_{k-1}$. In particular, given $1\leq i\leq k$, $\bar{g}_i$ $<$-enumerates $P_i(\cH^e)$ for all $e\in P_k(\cH)$. 

Fix $e\in P_k(\cH)$. Note that  the domain of $\cH^e$ is a subset of $H_{k-1}$, but $\cH^{e}$ need not coincide with the substructure induced from $\cH_{k-1}$ on this set. However, there is some set $H^e_*\seq H_{k-1}$ such that if $\cH^e_*$ is the substructure of $\cH_{k-1}$ on $H^e_*$, then $\cH^e$ and $\cH^e_*$ are isomorphic via the unique $<$-order preserving map from  the domain of $\cH^e$ to $H^e_*$, which we call $f_e$. Now we have  $\qftp''_{k-1}(\bar{g}^1,\ldots,\bar{g}^k)=\qftp''_{k-1}(f_e(\bar{g}^1),\ldots,f_e(\bar{g}^k))$, and so $\tp^T_{\cL}(a_{\bar{g}^1},\ldots,a_{\bar{g}^k})=\tp^T_{\cL}(a_{f_e(\bar{g}_1)},\ldots,a_{f_e(\bar{g}_k)})$. Pick some $c_e\in \M^{x_0}$ such that $\tp^T_{\cL}(c_e,a_{\bar{g}^1},\ldots,a_{\bar{g}^k})=\tp^T_{\cL}(c,a_{f_e(\bar{g}^1)},\ldots,a_{f_e(\bar{g}^k)})$. 

Now, given $e\in P_1(\cH)$ and $g_t\in \bar{g}^t$ for $1\leq t\leq k$, we have
\begin{align*}
\M\models \varphi(c_e,a_{g_1},\ldots,a_{g_k}) &\miff \M \models \varphi(c,a_{f_e(g_1)},\ldots,a_{f_e(g_k)})\\
&\miff \cH^e_*\models R_{k-1}(f_e(g_1),\ldots,f_e(g_k))\\
&\miff \cH^e\models R_{k-1}(g_1,\ldots,g_k)\\
&\miff \cH\models R_k(e,g_1,\ldots,g_k).
\end{align*}
So $((c_e)_{e\in P_1(\cH)},a_{\bar{g}^1},\ldots,a_{\bar{g}^k})$ witnesses that $\varphi(x_0,\ldots,x_k)$ $R_k$-codes $\cH$. 
\end{proof}

\begin{lemma}\label{lem:var2}
Fix partitioned variables $(x_0,\ldots,x_k)$. Suppose there is a parameter $c\in \M^{x_0}$ and a sequence $(a_g)_{g\in H_{k-1}}$ such that:
\begin{enumerate}[\hspace{5pt}$\ast$]
\item $a_g\in \M^{x_i}$ for all $1\leq i\leq k$ and $g\in P_i(\cH_{k-1})$,
\item $(a_g)_{g\in H_{k-1}}$ is $\cL_{k-1}$-indiscernible over $c$, but not $\cL''_{k-1}$-indiscernible over $c$, and 
\item $(a_g)_{g\in H_{k-1}}$ is $\cL''_{k-1}$-indiscernible over $\emptyset$.
\end{enumerate}
Then there is a formula $\varphi(x_0,\ldots,x_{k-1},y_k)$ with $\FOP_k$ in $T$.
\end{lemma}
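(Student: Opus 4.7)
The plan is to apply Lemma~\ref{lem:collapse1} to the expansion $(T, c)$---that is, $T$ with new constants naming the tuple $c$. The hypothesis that $(a_g)_{g \in H_{k-1}}$ is $\cL_{k-1}$-indiscernible over $c$ (respectively, not $\cL''_{k-1}$-indiscernible over $c$) says precisely that this sequence is $\cH_{k-1}$-indexed indiscernible (respectively, not $\cL''_{k-1}$-indiscernible) in the monster model of $(T, c)$. Applying Lemma~\ref{lem:collapse1} with $k-1$ in place of $k$ then yields an $\cL$-formula $\varphi(x_1, \ldots, x_k, y, z)$ with $|z| = |c|$, a tuple $\gamma \in H_{k-1}^y$, and a substructure $\cH^* \le \cH_{k-1}$ with $\cH^* \cong \cH_{k-1}$ and $H^*$ disjoint from $\gamma$, so that two properties hold: first, condition $(ii)$ of Lemma~\ref{lem:collapse1}, namely that for tuples in $H^*$ the $\cL''_{k-1}$-type in $\cH^*$ determines the $\cL''_{k-1}$-type extended by $\gamma$ in $\cH_{k-1}$; and second, $\varphi(x_1, \ldots, x_k, a_\gamma, c)$ $R_{k-1}$-codes $\cH^*$, witnessed by $(a_g)_{g \in H^*}$.

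Set
\[
\psi(x_0, x_1, \ldots, x_{k-1}, y_k) \coloneqq \varphi(x_1, \ldots, x_{k-1}, u, v, x_0),
\]
with $y_k = (u, v)$, so $\psi$ is a $(k+1)$-partitioned $\cL$-formula. I claim $\psi$ has $\FOP_k$ in $T$, to be verified via the implication $(ii) \Rightarrow (i)$ of Lemma~\ref{lem:var}. Take $c_* \coloneqq c$ and, using the identification $\cH^* \cong \cH_{k-1}$, define an $\cH^*$-indexed sequence $(\bar b_g)_{g \in H^*}$ in $\M$ by
\[
\bar b_g \coloneqq \begin{cases} a_g & \text{if } g \in P_i(\cH^*),\ 1 \le i \le k-1,\\ (a_g, a_\gamma) & \text{if } g \in P_k(\cH^*). \end{cases}
\]
The identity $\psi(c, \bar b_{g_1}, \ldots, \bar b_{g_k}) \Leftrightarrow \cH^* \models R_{k-1}(g_1, \ldots, g_k)$ is then immediate from the $R_{k-1}$-coding of $\cH^*$ by $\varphi(x_1,\ldots,x_k,a_\gamma,c)$.

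To see $(\bar b_g)_{g \in H^*}$ is $\cL''_{k-1}$-indiscernible in $\M$, suppose $\qftp^{\cH^*}_{\cL''_{k-1}}(\gbar) = \qftp^{\cH^*}_{\cL''_{k-1}}(\gbar')$. By condition $(ii)$ of Lemma~\ref{lem:collapse1}, $\qftp^{\cH_{k-1}}_{\cL''_{k-1}}(\gbar, \gamma) = \qftp^{\cH_{k-1}}_{\cL''_{k-1}}(\gbar', \gamma)$; combined with the $\cL''_{k-1}$-indiscernibility of $(a_g)_{g \in H_{k-1}}$ over $\emptyset$ this yields $\tp(a_{\gbar}, a_\gamma) = \tp(a_{\gbar'}, a_\gamma)$, whence $\tp(\bar b_{\gbar}) = \tp(\bar b_{\gbar'})$. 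To promote the base index from $H^*$ to $H_{k-1}$, I apply the modeling property (Corollary~\ref{cor:Gkmod}) to produce an $\cH_{k-1}$-indiscernible sequence $(\bar b'_g)_{g \in H_{k-1}}$ in $\M$ based on $(\bar b_g)_{g \in H^*}$, viewed as $\cH_{k-1}$-indexed via the fixed isomorphism $\cH^* \cong \cH_{k-1}$. The ``based on'' property applied to the single formula $\psi(c, \cdot)$ preserves the $R_{k-1}$-coding; applied to arbitrary $\cL$-formulas together with the $\cL''_{k-1}$-indiscernibility of the base sequence, it also transfers $\cL''_{k-1}$-indiscernibility to the new sequence (any two $\cL''_{k-1}$-equitype tuples in $H_{k-1}$ are realized by tuples in $H^*$ with the same $\cL_{k-1}$-type, hence the same $\cL''_{k-1}$-type, so $\cL''_{k-1}$-indiscernibility of the base applies). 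Hence condition $(ii)$ of Lemma~\ref{lem:var} holds for $\psi$, and $\psi$ has $\FOP_k$ in $T$.

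The main conceptual hurdle is that the tuples $\bar b_g$ for $g \in P_k(\cH^*)$ must incorporate the auxiliary parameter $a_\gamma$, which \emph{a priori} threatens the $\cL''_{k-1}$-indiscernibility of the base sequence, since an $\cL''_{k-1}$-type inside $\cH^*$ says nothing directly about how tuples from $H^*$ sit relative to $\gamma$ in the ambient $\cH_{k-1}$. The extra information provided by condition $(ii)$ of Lemma~\ref{lem:collapse1}---coupled with the hypothesis that $(a_g)_{g\in H_{k-1}}$ is $\cL''_{k-1}$-indiscernible over $\emptyset$, which is not available in the setting of Lemma~\ref{lem:collapse1} itself---is exactly what bridges this gap, explaining why Lemma~\ref{lem:collapse1} was formulated with that conclusion rather than merely the bare $R_{k-1}$-coding.
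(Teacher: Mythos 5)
Your proof is correct and follows essentially the same route as the paper: apply Lemma \ref{lem:collapse1} with $k-1$ and $c$ named by constants, combine its condition $(ii)$ with the $\cL''_{k-1}$-indiscernibility of $(a_g)_{g\in H_{k-1}}$ over $\emptyset$ to get an $\cL''_{k-1}$-indiscernible witness for the $R_{k-1}$-coding after incorporating $a_\gamma$, and conclude via Lemma \ref{lem:var}. The only (harmless) differences are that you absorb $a_\gamma$ into the last-sort tuples before invoking Lemma \ref{lem:var}, whereas the paper keeps $a_\gamma$ as constants and absorbs it into $y_k$ at the end, and your modeling-property promotion step is redundant, since the proof of Lemma \ref{lem:var} $(ii)\Rightarrow(iii)$ already performs exactly that upgrade.
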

\begin{proof}
Applying Lemma \ref{lem:collapse1} with $k-1$ and with $c$ added as a constant, we get a formula $\varphi(x_0,x_1,\ldots,x_{k-1},x^*_k;y)$, a tuple $\gamma$ from $H_{k-1}$ and an $\cL_{k-1}$-structure $\cH^*$ satisfying the following properties.
\begin{enumerate}[$(i)$]
\item $\cH^*$ is isomorphic to $\cH_{k-1}$ and $H^*$ is a subset of $H_{k-1}$ disjoint from $\gamma$.
\item For any $g_1,\ldots,g_n,h_1,\ldots,h_n\in H^*$,
\begin{multline*}
\qftp^{\cH^*}_{\cL''_{k-1}}(g_1,\ldots,g_n)=\qftp^{\cH^*}_{\cL''_{k-1}}(h_1,\ldots,h_n)\\
\mimp \qftp^{\cH_{k-1}}_{\cL''_{k-1}}(g_1,\ldots,g_n,\gamma)=\qftp^{\cH_{k-1}}_{\cL''_{k-1}}(h_1,\ldots,h_n,\gamma).
\end{multline*}
\item $|x^*_k|=|x_{i_*}|$ for some $1\leq i_*\leq k$.
\item $\varphi(c,x_1,\ldots,x_{k-1},x^*_k,a_\gamma)$ $R$-codes $\cH^*$ witnessed by $(a_g)_{g\in H^*}$.
\end{enumerate}
By $(ii)$, and since $(a_g)_{g\in H_{k-1}}$ is $\cL''_{k-1}$-indiscernible over $\emptyset$, it follows that $(a_g)_{g\in H^*}$ is $\cL''_{k-1}$-indiscernible over $a_\gamma$.  Therefore, working with constants added for $a_\gamma$, the formula $\varphi(x_0,\ldots,x_{k-1},x^*_k,a_\gamma)$ satisfies condition $(ii)$ of Lemma \ref{lem:var}. Thus $\varphi(x_0,\ldots,x_{k-1},x^*_k,a_\gamma)$ has $\FOP_k$ in $T$ (with constants for $a_\gamma$). Setting $y_k=x^*_ky$, it follows that $\varphi(x_0,\ldots,x_{k-1},y_k)$ has $\FOP_k$ in $T$. 
\end{proof}

\begin{corollary}\label{cor:var}
Suppose $\varphi(x_0,\ldots,x_k)$ has $\FOP_k$ in $T$. Then there is a formula $\psi(z_0,x_1,\ldots,x_{k-1},y_k)$ with $\FOP_k$ in $T$ where $|z_0|=1$.
\end{corollary}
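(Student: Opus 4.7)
The plan is to induct on $n = |x_0|$, with the base case $n=1$ immediate (set $\psi = \varphi$, $z_0 = x_0$, $y_k = x_k$). For the inductive step with $n \geq 2$, I would apply Lemma~\ref{lem:var}[$(i)\Rightarrow(iii)$] to $\varphi$ to obtain a parameter $c \in \M^{x_0}$ and a sequence $(a_g)_{g \in H_{k-1}}$ that is $\cL''_{k-1}$-indiscernible over $\emptyset$, $\cL_{k-1}$-indiscernible over $c$, and such that $\varphi(c, x_1, \ldots, x_k)$ $R_{k-1}$-codes $\cH_{k-1}$ via $(a_g)$. Then I would write $c = c' c_0$ with $|c_0|=1$ and split into two cases according to whether $(a_g)$ is $\cL''_{k-1}$-indiscernible over the singleton $c_0$.

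In the first case, suppose $(a_g)$ is \emph{not} $\cL''_{k-1}$-indiscernible over $c_0$. Since $c_0 \subseteq c$, the sequence is still $\cL_{k-1}$-indiscernible over $c_0$, so the pair $(c_0,(a_g))$ satisfies the hypotheses of Lemma~\ref{lem:var2} when the parameter variable is reinterpreted as having length $1$. That lemma then directly produces the desired $\FOP_k$ formula $\psi(z_0, x_1, \ldots, x_{k-1}, y_k)$ with $|z_0|=1$.

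In the second case, suppose $(a_g)$ \emph{is} $\cL''_{k-1}$-indiscernible over $c_0$. I would set $u$ to be the part of $x_0$ corresponding to $c'$ (so $|u|=n-1$) and let $\varphi'(u, x_1, \ldots, x_{k-1}, y_k)$ denote the re-partitioning of $\varphi(uz, x_1, \ldots, x_k)$ with $y_k = (z, x_k)$. Construct $(a'_g)_{g \in H_{k-1}}$ by setting $a'_g = a_g$ when $g \in P_i(\cH_{k-1})$ for $i < k$ and $a'_g = (c_0, a_g)$ when $g \in P_k(\cH_{k-1})$. Because the $\cL''_{k-1}$-qftp of an index-tuple determines its $P$-partition, the Case B hypothesis promotes the $\cL''_{k-1}$-indiscernibility of $(a_g)$ over $c_0$ to $\cL''_{k-1}$-indiscernibility of $(a'_g)$ over $\emptyset$; moreover $\varphi'(c', x_1, \ldots, x_{k-1}, y_k)$ $R_{k-1}$-codes $\cH_{k-1}$ via $(a'_g)$ by construction. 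Lemma~\ref{lem:var}[$(ii)\Rightarrow(i)$] then yields $\FOP_k$ for $\varphi'$, whose first variable $u$ has length $n-1 < n$, and the inductive hypothesis applied to $\varphi'$ produces the desired $\psi$.

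The main obstacle is the asymmetry between the two indiscernibility notions: $\cL_{k-1}$-indiscernibility over the full parameter $c$ restricts to $\cL_{k-1}$-indiscernibility over any subset $c_0\subseteq c$, but it does \emph{not} in general upgrade to $\cL''_{k-1}$-indiscernibility over $c_0$, since the $\cL''_{k-1}$ condition is strictly stronger. The case split is tailored precisely to this asymmetry: either a single coordinate $c_0$ already witnesses failure of $\cL''_{k-1}$-indiscernibility, in which case Lemma~\ref{lem:var2} fires with a length-$1$ parameter, or $c_0$ may be harmlessly absorbed into $y_k$ without breaking $\cL''_{k-1}$-indiscernibility, thus reducing $|x_0|$ by one and allowing induction.
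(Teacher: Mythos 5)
Your argument is correct and follows the same skeleton as the paper's proof: induction on $|x_0|$, Lemma \ref{lem:var} to extract the parameter $c$ and the witnessing sequence, a dichotomy on indiscernibility over part of $c$, the trick of absorbing the ``harmless'' part of the parameter into the $P_k$-indexed elements, and Lemma \ref{lem:var2} to convert a failure of $\cL''_{k-1}$-indiscernibility into $\FOP_k$ for a formula with shorter first variable. The two deviations are real but minor. First, you split off a single coordinate $c_0$ rather than an arbitrary bipartition $c=c'c''$, so in your Case A the parameter handed to Lemma \ref{lem:var2} is already a singleton and the induction terminates immediately, whereas the paper inducts in both cases. Second, in your Case B you do not use Lemma \ref{lem:var2} at all: you form the explicit re-partition $\varphi'(u,x_1,\ldots,x_{k-1},y_k)$ with $y_k=(z,x_k)$, check that the modified sequence $(a'_g)$ is $\cL''_{k-1}$-indiscernible over $\emptyset$ and witnesses the $R_{k-1}$-coding for $\varphi'(c',\cdot)$, and invoke Lemma \ref{lem:var}$[(ii)\Rightarrow(i)]$. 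The paper instead keeps the sequence-side bookkeeping ($a^*_g=a_gc''$) and feeds $c'$ together with $(a^*_g)$ back into Lemma \ref{lem:var2}, which requires the additional (parenthetically justified) observation that $(a^*_g)$ is still \emph{not} $\cL''_{k-1}$-indiscernible over $c'$. Your route sidesteps that verification and keeps the intermediate formula explicit, at the cost of running the modeling-property upgrade hidden in Lemma \ref{lem:var}$[(ii)\Rightarrow(iii)]$ once more per induction step; both are perfectly sound, and the resulting formula has the same shape (middle variables unchanged, only the last variable grows), so the induction hypothesis applies exactly as you say.
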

\begin{proof}
We proceed by induction on $|x_0|$, where the base case $|x_0|=1$ is trivial. Assume now $|x_0|>1$ and  the claim holds for any formula $\varphi'(x'_0,x'_1,\ldots,x'_k)$ with $|x'_0|<|x_0|$. Since $\varphi(x_0,x_1,\ldots,x_k)$ has $\FOP_k$ in $T$ we can apply Lemma \ref{lem:var} to obtain some $c\in \M^{x_0}$ such that $\varphi(c,x_1,\ldots,x_k)$ $R_{k-1}$-codes $\cH_{k-1}$ witnessed by an $\cL''_{k-1}$-indiscernible sequence $(a_g)_{g\in H_{k-1}}$, which is also $\cL_{k-1}$-indiscernible over $c$. Moreover, it follows from  $R_{k-1}$-coding that $(a_g)_{g\in H_{k-1}}$ is not $\cL''_{k-1}$-indiscernible over $c$ (or even $\cL'_{k-1}$-indiscernible over $c$, as in the proof of Corollary \ref{cor:genind}). Say $c=c'c''$ with $|c'|,|c''|\geq 1$, and let $x_0=x'_0x''_0$ be the corresponding partition of $x_0$. Note that $(a_g)_{g\in H_{k-1}}$ is $\cL_{k-1}$-indiscernible over $c''$.  

First suppose that $(a_g)_{g\in H_{k-1}}$ is not $\cL''_{k-1}$-indiscernible over $c''$. Then $c''$ and $(a_g)_{g\in H_{k-1}}$ satisfy condition $(ii)$ of Lemma \ref{lem:var2}, which implies there is a formula $\varphi'(x''_0,x_1,\ldots,x_{k-1},y'_k)$ with $\FOP_k$ in $T$. Since $|x''_0|<|x_0|$, we apply induction to obtain a formula $\psi(z_0,x_1,\ldots,x_{k-1},y_k)$ with $\FOP_k$ in $T$ and $|z_0|=1$. 

Finally, suppose that  $(a_g)_{g\in H_{k-1}}$ is $\cL''_{k-1}$-indiscernible over $c''$. For each $g\in P_k(\cH_{k-1})$, let $a^*_g=a_gc''$ and set $a^*_g=a_g$ for all other $g$. Then $(a^*_g)_{g\in H_{k-1}}$ is:
\begin{enumerate}[\hspace{5pt}$\ast$]
\item $\cL''_{k-1}$-indiscernible over $\emptyset$ (since $(a_g)_{g\in H_{k-1}}$ is $\cL''_{k-1}$-indiscernible over $c''$),
\item $\cL_{k-1}$-indiscernible over $c'$ (since $(a_g)_{g\in H_{k-1}}$ is $\cL_{k-1}$-indiscernible over $c$), and
\item not $\cL''_{k-1}$-indiscernible over $c'$ (since $(a_g)_{g\in H_{k-1}}$ is not $\cL''_{k_1}$-indiscernible over $c$).
\end{enumerate}
Thus  $c'$ and $(a^*_g)_{g\in H_{k-1}}$ satisfy condition $(ii)$ of Lemma \ref{lem:var2}. So there is a formula $\varphi'(x'_0,x_1,\ldots,x_{k-1},y'_k)$ with $\FOP_k$ in $T$. Since $|x'_0|<|x_0|$, we apply induction to obtain a formula $\psi(z_0,x_1,\ldots,x_{k-1},y_k)$ with $\FOP_k$ in $T$ and $|z_0|=1$.
\end{proof}

\begin{theorem}\label{thm:FOPkone}
If $T$ has $\FOP_k$ then there is a formula $\varphi(x_1,\ldots, x_{k+1})$ with $\FOP_k$ in $T$ and with $|x_i|=1$ for all $1\leq i\leq k$.
\end{theorem}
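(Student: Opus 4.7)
My plan is to prove Theorem \ref{thm:FOPkone} by iterating Corollary \ref{cor:var} in combination with the permutation closure from Corollary \ref{cor:easyFOPk}. The strategy is to reduce each of the first $k$ variables to a singleton one at a time, letting the final variable absorb any extra components produced by each reduction.

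Starting with a formula $\varphi_0(x_1, \ldots, x_{k+1})$ with $\FOP_k$ in $T$, I will construct a sequence of $\FOP_k$ formulas $\varphi_1, \ldots, \varphi_k$ in $T$, where $\varphi_i$ has partitioned variables $(x_1, \ldots, x_{k+1})$ (possibly with different lengths than in $\varphi_0$) and satisfies $|x_j|=1$ for every $1\leq j\leq i$. Granting such a sequence, $\varphi_k$ is the desired formula.

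To pass from $\varphi_{i-1}$ to $\varphi_i$, let $\sigma$ be the permutation of $[k]$ swapping $1$ and $i$. By Corollary \ref{cor:easyFOPk}, the formula $\varphi_{i-1}^{\sigma}$ has $\FOP_k$ in $T$; in this formula the first partitioned variable is what was $x_i$ in $\varphi_{i-1}$, while the variables in positions $2,\ldots,k$ are (in some order) the original variables $x_1,\ldots,x_{i-1},x_{i+1},\ldots,x_k$. Now apply Corollary \ref{cor:var} to $\varphi_{i-1}^\sigma$, viewing the leading variable as ``$x_0$''. This produces a new $\FOP_k$ formula whose first variable is a singleton, whose variables in the middle positions $2,\ldots,k$ retain their lengths from $\varphi_{i-1}^\sigma$ (in particular, the $i-1$ variables that were already singletons remain singletons), and whose final variable may be longer. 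Applying $\sigma$ once more via Corollary \ref{cor:easyFOPk} to restore the original labeling of the first $k$ variables, I obtain $\varphi_i$, which has $|x_j|=1$ for all $j\leq i$.

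The only subtlety is to verify carefully that Corollary \ref{cor:var} preserves the lengths of the ``middle'' variables $x_1,\ldots,x_{k-1}$ in its statement; this is essential so that singletons produced at earlier stages are not disturbed at later stages. Since this preservation is explicit in the statement of Corollary \ref{cor:var}, and since Corollary \ref{cor:easyFOPk} freely permutes the first $k$ partitioned variables, the iteration goes through without further obstacle. All heavy lifting was already done in the proofs of the supporting results, and Theorem \ref{thm:FOPkone} reduces to this bookkeeping argument.
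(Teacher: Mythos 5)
Your proposal is correct and is essentially the paper's own proof: the paper likewise obtains Theorem \ref{thm:FOPkone} by iterating Corollary \ref{cor:var} and using Corollary \ref{cor:easyFOPk} to cycle each of the first $k$ variables into the leading position, with the final variable absorbing the extra parameters at each step. Your more explicit bookkeeping (tracking that the middle variables, and hence the previously produced singletons, are preserved by Corollary \ref{cor:var}) is exactly the verification the paper leaves implicit.
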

\begin{proof}
Suppose $\varphi'(x'_1,\ldots, x'_{k+1})$ has $\FOP_k$ in $T$.  Iteratively applying Corollary \ref{cor:var}, and using Corollary \ref{cor:easyFOPk} to exchange the roles of the  variables $x'_1,\ldots, x'_k$, we obtain a formula $\varphi(x_1,\ldots, x_{k+1})$ with $\FOP_k$ in $T$ and  $|x_i|=1$ for all $1\leq i\leq k$.
\end{proof}

\section{Vector spaces with a bilinear form}

The primary goal of this section is to prove that certain theories of infinite dimensional vector spaces equipped with a non-degenerate bilinear form (satisfying further properties) are $\NFOP_2$ if and only if the associated field is stable. This is a stability/$\FOP$-analogue of the corresponding result due to Chernikov and Hempel \cite{CH}, who showed that the theories of such  vector spaces  are $\NIP_2$ if and only if the associated field is NIP. The key tools used in \cite{CH} for that result are the $\NIP_2$ ``composition lemma" and a quantifier elimination result due to Granger \cite{Gr}. Our proof strategy is similar to that of \cite{CH}, although we are able to simplify some of the technical step through a slightly different approach (see Remark \ref{rem:terms}). 

Moreover, various mistakes in Granger's quantifier elimination and completeness results have recently come to light.  Correcting these errors requires one to work in a different language. Thus in Section \ref{sec:VS}, we will set out the new language, and prove our main results about theories of vector spaces with bilinear forms, assuming quantifier elimination and completeness in the corrected language. In Section \ref{sec:QE}, we will provide a corrected proof of Granger's quantifier elimination result and completeness results, along with some further results in the general $k$-linear case.

\subsection{Vector spaces}\label{sec:VS}
We start with some basic definitions.

\begin{definition}
Suppose $V$ is a vector space over a field $F$, and $k\geq 2$ is an integer.  A \textbf{$k$-linear form} is a function $f:V^k\rightarrow F$ such that for any $1\leq i\leq k$ and $v_1,\ldots,v_{k-1}\in V$, the map $f(v_1,\ldots,v_{i-1},x,v_i,\ldots,v_{k-1})$ from $V$ to $F$ is linear.
\end{definition}

When $k=2$ we will use the (standard) terminology ``bilinear form". In this case, we have the following terminology. 

\begin{definition}
Let $V$ be a vector space over a field $F$ and suppose $\beta\colon V\times V\to F$ is a bilinear form.
\begin{enumerate}[$(1)$]
\item $\beta$ is \textbf{symmetric} if $\beta(v,w)=\beta(w,v)$ for all $v,w\in V$.
\item $\beta$ is \textbf{alternating} if $\beta(v,v)=0$ for all $v\in V$.
\item Assume $\beta$ is symmetric or alternating. Then $\beta$ is \textbf{non-degenerate} if for all nonzero $v\in V$ there is some $w\in V$ such that $\beta(v,w)\neq 0$.
\end{enumerate}
\end{definition}

Rather than delve into a precise generalization of non-degeneracy for arbitrary $k$, we will instead define a weaker ``genericity" property, which suffices for our purposes (and, in the $k=2$ case follows from non-degeneracy; see Fact \ref{fact:generic}).

\begin{definition}\label{def:generic}
Suppose $V$ is a vector space over a field $F$, and $k\geq 2$ is an integer. Then a $k$-linear map $f\colon V^k\to F$ is \textbf{generic} if for any $n\geq 1$ and any function $\sigma\colon [n]^k\to F$, there is an array $(v_{i,j})_{i\in[k],j\in[n]}$ from $V$ such that $f(v_{1,j_1},\ldots,v_{k,j_k})=\sigma(j_1,\ldots,j_k)$ for all $(j_1\ldots,j_k)\in [n]^k$.
\end{definition}

The next result shows that non-degeneracy is a much stronger assumption than genericity for a bilinear form. 

\begin{fact}\label{fact:generic}
 Suppose $\beta$ is a non-degenerate bilinear form on an infinite dimensional vector space $V$ over a field $F$. Then for any $n\geq 1$, any linearly independent $w_1,\ldots,w_n\in V$, and any $a_1,\ldots,a_n\in F$, there is some $v\in V$ such that $\beta(v,w_i)=a_i$ for all $1\leq i\leq n$. In particular, $\beta$ is generic. 
\end{fact}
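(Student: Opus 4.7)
The plan is to prove the first (parameter-extension) assertion by a standard linear-algebra duality argument, and then to deduce genericity as an immediate consequence.

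First, I would consider the linear map $T\colon V\to F^n$ defined by $T(v)=(\beta(v,w_1),\ldots,\beta(v,w_n))$. Solving the system $\beta(v,w_i)=a_i$ for a given $(a_1,\ldots,a_n)\in F^n$ amounts to showing that $T$ is surjective, so it suffices to rule out the possibility that $\im(T)$ is a proper subspace of $F^n$. Were that the case, there would exist $c_1,\ldots,c_n\in F$, not all zero, with $\sum_{i=1}^n c_i\,\beta(v,w_i)=0$ for every $v\in V$. Bilinearity of $\beta$ collapses this to $\beta(v,w)=0$ for every $v\in V$, where $w=\sum_{i=1}^n c_iw_i$.

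The key step is an argument swap in $\beta$: because $\beta$ is symmetric or alternating, we also get $\beta(w,v)=\pm\beta(v,w)=0$ for every $v\in V$ (in the alternating case, expanding $\beta(v+w,v+w)=0$ yields $\beta(v,w)=-\beta(w,v)$). Non-degeneracy, applied to $w$, then forces $w=0$, contradicting the linear independence of $w_1,\ldots,w_n$. This settles the first claim.

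For the ``in particular'' clause, I would unfold Definition \ref{def:generic} in the case $k=2$: given $\sigma\colon[n]^2\to F$, the task is to produce an array $(v_{i,j})_{i\in[2],\,j\in[n]}$ with $\beta(v_{1,j_1},v_{2,j_2})=\sigma(j_1,j_2)$ for all $(j_1,j_2)\in[n]^2$. Using that $V$ is infinite dimensional, I would pick linearly independent $w_1,\ldots,w_n\in V$ and set $v_{2,j}:=w_j$. Then, for each fixed $j_1\in[n]$, the first part of the fact (applied to the target tuple $(\sigma(j_1,1),\ldots,\sigma(j_1,n))$) yields an element $v_{1,j_1}\in V$ with $\beta(v_{1,j_1},w_j)=\sigma(j_1,j)$ for all $j\in[n]$, completing the construction.

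The only subtle point is the argument swap, which is the unique place where the symmetric-or-alternating hypothesis is used; everything else is routine linear algebra, together with the infinite dimensionality of $V$ (needed just to produce $n$ linearly independent vectors). I do not anticipate any genuine obstacle.
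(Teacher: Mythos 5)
Your proposal is correct and follows essentially the same route as the paper: both prove surjectivity of the map $v\mapsto(\beta(v,w_1),\ldots,\beta(v,w_n))$ by a duality argument combining linear independence of the $w_i$ with non-degeneracy (you phrase it via the annihilator of the image, the paper via linear independence of the functionals $L_i(x)=\beta(x,w_i)$), and then deduce genericity exactly as the paper does. Your explicit handling of the argument swap using the symmetric/alternating hypothesis is a point the paper leaves implicit, but it is the same proof.
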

\begin{proof}
Note that the first statement implies genericity since, given $\sigma\colon[n]^2\to F$, we can let $v_{2,j}=w_j$ for $1\leq j\leq n$, and then, for fixed $1\leq i\leq n$, apply the statement with $a_j=\sigma(i,j)$ for $1\leq j\leq n$ to find the desired $v_{1,i}$. So now suppose $w_1,\ldots,w_n\in V$ and $a_1,\ldots,a_n\in F$ are as in the statement.  Let $w_1,\ldots,w_n$ be linearly independent vectors in $V$. For each $i$, consider the  linear map $L_i\colon V\to F$ such that $L_i(x)=\beta(x,w_i)$. Then since $\beta$ is non-degenerate, linear independence of $w_1,\ldots,w_n$ says precisely that $L_1,\ldots,L_n$ are linearly independent (as elements of the dual vector space of all linear functions from $V$ to $F$). In this case, it follows that the linear map $L\colon V\to F^n$ given by $(L_1,\ldots,L_n)$ has rank $n$, i.e., is surjective. So we can pick $v\in V$ such that $L(v)=(a_1,\ldots,a_n)$, as desired. 
\end{proof}

Next we establish the first-order setting for studying vector spaces with multilinear forms. We will work in a two-sorted setting, where $F$ denotes the field sort, and $V$ denotes the vector space sort. Let 
\[
\calL_{\textit{VF}}=\{V,F;+_F,\cdot_F,-_F, ()\inv_F, 0_F,1_F,+_V,0_V, \cdot_V\},
\]
where $+_F,\cdot_F$ are binary functions on the $F$-sort, $-_F, ()\inv_F$ are unary function symbols on the $F$-sort,  $+_V$ is a binary function symbol on the $V$-sort, $0_F,1_F,0_V$ are constant symbols in the indicated sorts, and $\cdot_V$ is a binary function symbol from $F\times V$ to $V$. Let $T_{\textit{VF}}$ be the $\calL_{\textit{VF}}$-theory that says $V$ is a vector space over the field $F$, where $+_F$, $\cdot_F$, $-_F$, $()\inv_F$, $0_F$, and $1_F$ are together interpreted as the field structure, $+_V$ and $0_V$ are together interpreted as the additive structure of the vector space, and $\cdot _V$ is interpreted as scalar multiplication.

Now let $\cL$ be an arbitrary expansion of $\cL_{\textit{VF}}$ which contains a $k$-ary function symbol $f$ from $V^k$ to $F$. Let $T$ be a complete $\cL$-theory which contains $T_{\textit{VF}}$ along with an axiom saying that $f$ is a $k$-linear form. Let $\cL{\upharpoonright}F$ be the one-sorted language consisting of (one-sorted versions of) the symbols in $\cL$ only involving the $F$ sort. Let $T{\upharpoonright}F$ be the complete $\cL{\upharpoonright}F$-theory induced by $T$ on the $F$ sort. In particular, $T{\upharpoonright}F$ is an expansion of the theory of fields. 

Note also that the genericity of $f$ is first-order expressible in $\cL_{\textit{VF}}\cup\{f\}$ (using an infinite axiom scheme).

\begin{proposition}\label{prop:fieldFOP}
If $T$ implies that $f$ is generic and $T{\upharpoonright}F$ is unstable, then $T$ has $\FOP_k$.  
\end{proposition}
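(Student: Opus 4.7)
The plan is to directly exhibit a $(k+1)$-partitioned $\cL$-formula witnessing $\FOP_k$ by composing an order-property witness from the field sort with the $k$-linear form $f$. First, since $T{\upharpoonright}F$ is unstable I would fix an $\cL{\upharpoonright}F$-formula $\varphi(x,y)$ with the order property, witnessed by sequences $(a_j)_{j<\omega}$ and $(b_i)_{i<\omega}$ in $F(\M)$ arranged so that $\M\models\varphi(a_j,b_i)\iff i\leq j$. The candidate formula is then
\[
\psi(x_1,\ldots,x_k;y)\coloneqq \varphi\bigl(f(x_1,\ldots,x_k),\,y\bigr),
\]
with $x_1,\ldots,x_k$ in sort $V$ and $y$ in sort $F$, making $\psi$ a $(k+1)$-partitioned $\cL$-formula. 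The goal is to show that $\psi$ has $\FOP_k$ in $T$ in the sense of Definition \ref{def:FOPk2}, with the ``function-indexed'' variable being $x_1$.

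Second, I would verify $\FOP_k$ by compactness: it suffices to fix any $m,n<\omega$ and any functions $g_1,\ldots,g_m\colon[n]^{k-1}\to\omega$ and produce vectors $c_1,\ldots,c_m\in V$ together with sequences $v^t_1,\ldots,v^t_n\in V$ for $t\in[k-1]$ satisfying
\[
f\bigl(c_r,\,v^1_{i_1},\ldots,v^{k-1}_{i_{k-1}}\bigr)=a_{g_r(i_1,\ldots,i_{k-1})}
\]
for every $r\in[m]$ and every $(i_1,\ldots,i_{k-1})\in[n]^{k-1}$. This is precisely where genericity of $f$ enters: setting $N\coloneqq\max(m,n)$ and defining $\sigma\colon[N]^k\to F$ by $\sigma(r,i_1,\ldots,i_{k-1})\coloneqq a_{g_r(i_1,\ldots,i_{k-1})}$ on $[m]\times[n]^{k-1}$ (and arbitrarily elsewhere), Definition \ref{def:generic} immediately yields an array $(v_{t,j})_{t\in[k],j\in[N]}$ realizing $\sigma$. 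Taking $c_r\coloneqq v_{1,r}$ and $v^t_i\coloneqq v_{t+1,i}$ gives the desired configuration.

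Third, setting $b^k_{i_k}\coloneqq b_{i_k}$ and unwinding the definition of $\psi$, the order property for $\varphi$ yields
\[
\M\models\psi\bigl(c_r,v^1_{i_1},\ldots,v^{k-1}_{i_{k-1}},b^k_{i_k}\bigr)\iff\varphi\bigl(a_{g_r(i_1,\ldots,i_{k-1})},b_{i_k}\bigr)\iff i_k\leq g_r(i_1,\ldots,i_{k-1}),
\]
which is exactly the required finite fragment of the $\FOP_k$-coding for $\psi$. A standard compactness argument (of the kind used in Proposition \ref{prop:alleq0}) then assembles these fragments into a full witness. I don't anticipate any real obstacle: genericity of $f$ is tailored precisely to make this composition work, and the only piece of bookkeeping to watch is matching the sort-partition of $\psi$ with Definition \ref{def:FOPk2}, in particular that $x_1$ plays the role of the function-indexed parameter while $x_2,\ldots,x_k$ and $y=x_{k+1}$ carry the indexed sequences.
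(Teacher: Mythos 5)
Your proposal is correct and follows essentially the same route as the paper: compose an order-property formula from the field sort with $f(x_1,\ldots,x_k)$ and use genericity of $f$ to realize the required grid of field values, the only cosmetic differences being that you verify Definition \ref{def:FOPk2} directly by a compactness argument (the paper instead routes the verification through Proposition \ref{prop:alleq}$(iii)$) and that you substitute $f(x_1,\ldots,x_k)$ into the other slot of the order-property formula. The one step to make explicit is that instability of $T{\upharpoonright}F$ yields an order-property formula in which the variable receiving $f(x_1,\ldots,x_k)$ is a \emph{single} field variable — otherwise the composition $\varphi(f(x_1,\ldots,x_k),y)$ is not well-formed; this is standard, and the paper cites Shelah's Theorem II.2.13 for exactly this point.
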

\begin{proof}
Assume $T$ implies that $f$ is generic, and suppose $T{\upharpoonright}F$ is unstable. Then there is a formula $\varphi(\ybar,z)$ with the order property in $T$, where $\ybar$ and $z$ are variables of the $F$ sort and $z$ is a singleton  (see \cite[Theorem II.2.13]{Shelah}). We will show that the formula $\varphi(x_1,\ldots,x_k,\ybar) \coloneqq \psi(\ybar,f(x_1,\ldots,x_k))$ has $\FOP_k$ in $T$ by showing part $(iii)$ of Proposition \ref{prop:alleq}. By compactness, it is enough to show that for any $s<\omega$, arbitrarily large $N$, and partition $[N]^k=E_1\cup\ldots\cup E_s$, there is a model $\cM\models T$ and sequences $(c^1_i)_{i=1}^N$, \ldots, $(c^k_i)_{i=1}^N$, $(\dbar_j)_{j\in[s]}$ such that each  $c^t_i\in \cM^{x_t}$, each  $\dbar_j\in \cM^{\ybar}$, and
\[
\textstyle \cM\models\varphi(c^1_{i_1},\ldots,c^k_{i_k},\dbar_j)\miff (i_1,\ldots,i_k)\in \bigcup_{\ell\geq j}E_\ell.
\]

 Fix $s$, $N$ and a partition as above. Let $u(i_1,\ldots, i_k)$ be the $u\in[s]$ so that $(i_1,\ldots, i_k)\in E_u$. By the order property for $\varphi$, there is $\cM\models T$ with $(\abar_i)_{i=1}^N\subseteq \cM^{\ybar}$ and $(b_j)_{j=1}^N\subseteq \cM^z$ such that $\psi(\abar_i,b_j)$ holds if and only if $i\leq j$. By the fact that $f$ is generic, we get $(v_{j,i})_{j\in[k],i\in[n]}$ so that $f(v_{1,i_1},v_{2,i_2},\ldots, v_{k,i_k})=b_{u(i_1,\ldots,i_k)}$. Let $c^j_i=v_{j,i}$ for $j\in[k]$ and $i\in[N]$, and let $\dbar_j=\abar_j$ for $j\in[s]$. We then have that $\varphi(c^1_{i_1},\ldots,c^k_{i_k},\dbar_j) = \psi(\abar_j,f(v_{1,i_1},v_{2,i_2},\ldots, v_{k,i_k})) = \psi(\abar_j, b_{u(i_1,\ldots, i_k)})$ and so
\[
\textstyle \cM\models\varphi(c^1_{i_1},\ldots,c^k_{i_k},\dbar_j)\miff j\leq u(i_1,\ldots, i_k)\miff (i_1,\ldots,i_k)\in \bigcup_{\ell\geq j}E_\ell.\qedhere
\]
\end{proof}

\begin{remark}
    Using Proposition \ref{prop:IPkeq}$(iii)$ in place of Proposition \ref{prop:alleq}$(iii)$,  the same proof shows that if $T$ implies $f$ is generic, and $T{\upharpoonright}F$ has $\IP$, then $T$ has $\IP_k$. 
\end{remark}

Now let $\cL'=\cL\backslash\{f\}$, and let $T'$ be the reduct of $T$ to $\cL'$.  We focus next on the question of transferring tameness from $T'$ to $T$. The key tool needed for this is the composition lemma in Theorem \ref{thm:FOP2comp}, which we only currently have in the $k=2$ case. For this reason, the remaining results in this subsection will be largely restricted to the $k=2$. However, in order to apply Theorem \ref{thm:FOP2comp}, we first need a lemma allowing us to simplify terms involving the bilinear form. Since this is not sensitive to the arity, we will prove this part for arbitrary $k$ (see Lemma \ref{lem:terms} below).

\begin{definition}\label{def:fnbar}
Suppose $\xbar=(x_1,\ldots, x_n)$ is a sequence of singleton variables and $h$ is a function  of arity $m$.  Then we write $\boldsymbol{h}(\xbar)$ to denote the \emph{tuple} 
$$
(h(x_{i_1},\ldots, x_{i_m}))_{(i_1,\ldots, i_m)\in [n]^m}
$$
enumerated by the lexicographic order on $[n]^m$.
\end{definition}

In what follows, we will use this notation where $h$ is our $k$-linear form $f$.

\begin{definition}
A language expanding $\cL_{\textit{VF}}$ is \textbf{$V$-conservative} if it contains no new constant symbols in the $V$-sort and no new function symbols into the $V$-sort.
\end{definition}

\noindent For a tuple of variables $\xbar$, let $\xbar_V$ be the subtuple consisting of vector-sort variables. 

\begin{lemma}    \label{lem:terms}
Assume $\cL$ is $V$-conservative, and let $t(\xbar)$ be an $\calL$-term.  Then there is an $\calL'$-term  $t'(\xbar,\ubar)$ with $|\ubar|=|\boldsymbol{f}(\xbar_V)|$ so that $T\models \forall \xbar(t(\xbar)=t'(\xbar,\boldsymbol{f}(\xbar_V)))$.
\end{lemma}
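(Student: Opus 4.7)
The plan is to proceed by induction on the number of occurrences of the symbol $f$ in $t(\xbar)$. The base case is trivial: if $f$ does not appear in $t$, then $t$ is already an $\cL'$-term and we may take $t'(\xbar,\ubar)\coloneqq t(\xbar)$, with $\ubar$ appearing vacuously.

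The key auxiliary step, used throughout, is the following linearization fact: every $V$-sort $\cL$-term $s(\xbar)$ is provably equal in $T$ to some expression $\sum_{v\in\xbar_V} a_v(\xbar)\cdot_V v$, where each $a_v$ is an $F$-sort $\cL$-term (and may be chosen to be an $\cL'$-term whenever $s$ is one). This follows by a routine structural induction on $s$, using $V$-conservativity: because $\cL$ adds no new constants in $V$ and no new functions outputting into $V$, any $V$-sort term is built up from the variables in $\xbar_V$ using only $0_V$, $+_V$, and $\cdot_V$, and each such constructor preserves the shape of a linear combination.

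For the inductive step, select an innermost occurrence $f(s_1,\ldots,s_k)$ inside $t$; by minimality, each $s_j$ contains no $f$ and is therefore a $V$-sort $\cL'$-term. Applying the linearization fact in $\cL'$, write $s_j = \sum_{v\in\xbar_V} a_{j,v}(\xbar)\cdot_V v$ with each $a_{j,v}$ an $F$-sort $\cL'$-term. Multilinearity of $f$ (which is part of $T$) then yields
\[
f(s_1,\ldots,s_k) \;=\; \sum_{(v_1,\ldots,v_k)\in\xbar_V^k} a_{1,v_1}(\xbar)\cdot_F\cdots\cdot_F a_{k,v_k}(\xbar)\cdot_F f(v_1,\ldots,v_k).
\]
Each factor $f(v_1,\ldots,v_k)$ on the right is an entry of the tuple $\boldsymbol{f}(\xbar_V)$, so substituting the corresponding variable of $\ubar$ for it yields an $\cL'$-term $\sigma(\xbar,\ubar)$ with $T\vdash f(s_1,\ldots,s_k)=\sigma(\xbar,\boldsymbol{f}(\xbar_V))$. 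Replacing that innermost occurrence inside $t$ produces an $\cL$-term $\tilde t(\xbar,\ubar)$ with strictly fewer occurrences of $f$ than $t$, satisfying $T\vdash t(\xbar)=\tilde t(\xbar,\boldsymbol{f}(\xbar_V))$.

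Applying the induction hypothesis to $\tilde t$, viewed as an $\cL$-term in the variables $(\xbar,\ubar)$, yields an $\cL'$-term $\tilde t'(\xbar,\ubar,\wbar)$ with $T\vdash \tilde t(\xbar,\ubar)=\tilde t'(\xbar,\ubar,\boldsymbol{f}(\xbar_V))$; here we use that $(\xbar,\ubar)_V = \xbar_V$ since $\ubar$ is of $F$-sort, so the auxiliary tuple required by the induction hypothesis is again indexed by $\boldsymbol{f}(\xbar_V)$, and in particular $|\wbar|=|\ubar|$. Setting $t'(\xbar,\ubar)\coloneqq \tilde t'(\xbar,\ubar,\ubar)$ then gives the desired $\cL'$-term. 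The main obstacle is really just the bookkeeping of the fresh variables $\ubar$ through the recursion; once the linearization lemma forces every $V$-sort subterm to be a literal linear combination of $V$-sort variables, the removal of each $f$-occurrence is just a mechanical multilinear expansion.
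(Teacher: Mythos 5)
Your proof is correct. It rests on exactly the two facts the paper's proof uses—$V$-conservativity forces every vector-sort term to be (provably in $T$) a linear combination of the vector variables, and $k$-linearity of $f$ then turns $f$ applied to such combinations into a field-sort expression in the entries of $\boldsymbol{f}(\xbar_V)$—but you package them differently. The paper runs a single induction on term length, peeling off one $+_V$ or $\cdot_V$ at a time from an argument of $f$ and never isolating a linearization statement, whereas you prove the linearization as a standalone sublemma, induct on the number of occurrences of $f$, and eliminate each innermost occurrence in one multilinear expansion, introducing fresh variables $\ubar$ for $\boldsymbol{f}(\xbar_V)$ and closing the recursion with the re-substitution $t'(\xbar,\ubar)\coloneqq \tilde t'(\xbar,\ubar,\ubar)$. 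Your route makes the mechanism more transparent (each step visibly replaces one $f$ by a field-sort polynomial in the $u$-variables), at the cost of the extra bookkeeping you acknowledge: the induction hypothesis must be applied in the enlarged variable context $(\xbar,\ubar)$, where the key observation $(\xbar,\ubar)_V=\xbar_V$ (so $|\wbar|=|\ubar|$) and the congruence argument justifying $T\models t(\xbar)=\tilde t(\xbar,\boldsymbol{f}(\xbar_V))$ are exactly the points that need—and get—explicit attention; the paper's single length-induction avoids both the sublemma and this bookkeeping. One cosmetic remark: as in the paper's treatment of the $0_V$ case, you should read the empty linear combination as $0_V$ and the resulting empty expansion as $0_F$ when some argument of $f$ contains no vector variables, but this is a convention, not a gap.
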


\begin{proof}
We proceed by induction on term length. If $|t(\xbar)|=1$, then it is a constant or a variable, and thus already an $\calL'$-term, and so we are done by setting $t'=t$. 

Assume now that $|t(\xbar)|>1$, and assume by induction that the claim holds for all $\calL$-terms of length less than $|t(\xbar)|$. Suppose first that $t(\xbar)=h(t_1(\xbar),\ldots, t_m(\xbar))$ where $h$ is an $m$-ary function symbol other than $f$, and $t_1,\ldots, t_m$ are $\calL$-terms. Clearly, each of $t_1(\xbar),\ldots, t_m(\xbar)$ have length shorter than $|t(\xbar)|$, so by induction, for each $1\leq j\leq m$, there is an $\calL'$-term $t'_j(\xbar,\ubar)$ so that $T\models \forall \xbar(t_j(\xbar)=t'_j(\xbar,\boldsymbol{f}(\xbar_V)))$. Note $h\in \calL'$ since $h\neq f$. Therefore, setting $t'(\xbar,\boldsymbol{f}(\xbar_V))=h(t'_1(\xbar,\ubar),\ldots, t'_m(\xbar,\ubar))$, we have that $t'(\xbar,\ubar)$ is an $\calL'$-term and $T\models \forall \xbar(t(\xbar)=t'(\xbar,\boldsymbol{f}(\xbar_V)))$. 

Finally, suppose $t(\xbar) = f(t_1(\xbar),\ldots, t_k(\xbar))$, where $t_1(\xbar),\ldots, t_k(\xbar)$ are vector valued $\calL$-terms.   If $|t_1(\xbar)|=\ldots=|t_k(\xbar)|=1$, then each $t_i(\xbar)$ is either a constant or variable.  Since we have added no new constants to the $V$-sort, this means each $t_i(\xbar)$ is either $0_V$ or a variable.  If one of the $t_i(\xbar)$ is $0_V$, then $T\models \forall \xbar (t(\xbar)=0_F)$, so setting $t'(\xbar,\ubar)=0_F$, we are done.  If none of the $t_i(\xbar)$ are $0_V$, then each of $t_1(\xbar),\ldots, t_k(\xbar)$ are variables, and thus $f(t_1(\xbar),\ldots, t_k(\xbar))$ appears in the tuple $\boldsymbol{f}(\xbar_V)$, say in the $j^{\text{th}}$ index.  Letting $t'(\xbar,\ubar)=u_j$, we have that $T\models \forall \xbar (t(\xbar)=t'(\xbar,\boldsymbol{f}(\xbar_V)))$.

Assume now there is some $1\leq i\leq m$ so that $|t_i(\xbar)|>1$.  Since $+_V$ and $\cdot_V$ are the only vector-valued functions in $\calL$, this means $t_i(\xbar)$ has the form $s_i(\xbar)\cdot_Vt'_i(\xbar)$ for some scalar valued term $s_i(\xbar)$ and vector valued term $t'_i(\xbar)$ or $t_i'(\xbar)+_Vt_i''(\xbar)$ for some vector valued terms $t'_i(\xbar)$ and $t_i''(\xbar)$.  Thus one of the following hold.
\begin{enumerate}[$(1)$]
\item $T\models \forall \xbar(t(\xbar) = s_i(\xbar)\cdot_V f(t_1(\xbar),\ldots, t_i'(\xbar),\ldots, t_k(\xbar)))
$, or 
\item $T\models \forall \xbar(t(\xbar) = f(t_1(\xbar),\ldots, t_i'(\xbar),\ldots, t_k(\xbar))+_Vf(t_1(\xbar),\ldots, t_i''(\xbar),\ldots, t_k(\xbar)))$.
\end{enumerate}

In case (1), both $s_i(\xbar)$ and $f(t_1(\xbar),\ldots, t_i'(\xbar),\ldots, t_k(\xbar))$ have shorter length than $t(\xbar)$, so there are $\calL'$-terms $s''(\xbar,\ubar)$ and $t''(\xbar,\ubar)$ such that
\[
 T\models \forall \xbar (f(t_1(\xbar),\ldots, t_i'(\xbar),\ldots, t_k(\xbar))=t''(\xbar,\boldsymbol{f}(\xbar_V)))
 \]
  and $T\models \forall \xbar(s_i(\xbar)=s''(\xbar,\boldsymbol{f}(\xbar_V)))$. In this case, setting $t'(\xbar,\ubar)=s''(\xbar,\ubar)\cdot_Vt''(\xbar,\ubar)$, we have that $t'$ is an $\calL'$-term and 
$$
T\models \forall \xbar(t(\xbar)=t'(\xbar,\boldsymbol{f}(\xbar_V))),
$$
as desired. 

In case (2), both $f(t_1(\xbar),\ldots, t_i'(\xbar),\ldots, t_k(\xbar))$ and $f(t_1(\xbar),\ldots, t_i''(\xbar),\ldots, t_k(\xbar))$ have shorter length than $t(\xbar)$, so there are $\calL'$-terms $t''(\xbar,\ubar)$ and $t'''(\xbar,\ubar)$ such that
\[
 T\models \forall \xbar (f(t_1(\xbar),\ldots, t_i'(\xbar),\ldots, t_k(\xbar))=t''(\xbar,\boldsymbol{f}(\xbar_V))),
 \]
  and $T\models \forall \xbar (f(t_1(\xbar),\ldots, t_i''(\xbar),\ldots, t_k(\xbar))=t'''(\xbar,\boldsymbol{f}(\xbar_V)))$. In this case, setting $t'(\xbar,\ubar)=t''(\xbar,\ubar)+_Vt'''(\xbar,\ubar)$, we have that $t'$ is an $\calL'$-term and 
$$
T\models \forall \xbar(t(\xbar)=t'(\xbar,\boldsymbol{f}(\xbar_V))),
$$
as desired. 
\end{proof}

\begin{remark}\label{rem:terms}
The previous lemma is a variation on a similar ``term reduction" argument done by Chernikov and Hempel in \cite[Section 6.3]{CH} (in the $k=2$ case).  In fact their conclusion is much stronger, reducing the  behavior of $\cL$-terms to those of the form  $t'(\xbar_F, \boldsymbol{f}(\xbar_V))$ for some term $t'(\ubar,\vbar)$ in the \emph{field sort} (see the proof of  Theorem 6.3(2) in \cite{CH}). They then transfer $\NIP$ in the field sort to $\NIP_2$ in a bilinear vector space via their composition lemma, applied to the pure field structure.  However, their proof takes place in a language which was later discovered is not sufficient for quantifier elimination.  The ``correct" language has infinitely many additional function symbols, resulting in a more complicated term structure (see Definition \ref{def:bilineartheories}). While an analogous reduction to terms in the field sort can be carried out in this new language, it is substantially more complicated.  We have chosen not to include this, as such a strong reduction is not strictly necessary.

In our case, Lemma \ref{lem:terms} only provides a reduction to terms that remove the symbol $f$. Although a weaker conclusion, this will still suffice for our main results since, in the setting Granger's examples, tameness in the field sort can be preserved in the pure vector space without the bilinear form (see Fact \ref{fact:stableVS} below). 
\end{remark}

\textbf{We now assume $k=2$.}  For emphasis, we will use the symbol $\beta$ for the form, rather than $f$. So $\cL$ is an arbitrary expansion of $\cL_{\textit{VF}}$ containing a binary function symbol $\beta:V\times V\to F$, and $T$ is a complete $\cL$-theory containing $T_{\textit{VF}}$ and an axiom saying $\beta$ is a bilinear form. Recall that $T'=T{\upharpoonright}(\cL\backslash{\{\beta\}})$. 
 We first use Lemma \ref{lem:terms} to prove an abstract test for transferring tameness from $T'$ to $T$.

\begin{lemma} \label{lem:fieldNFOP}
Assume $\cL$ is $V$-conservative. Suppose also that $T$ has quantifier elimination and $T'$ is stable. Then $T$ is $\NFOP_2$. 
\end{lemma}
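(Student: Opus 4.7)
The plan is to use quantifier elimination and closure of $\NFOP_2$ under Boolean combinations to reduce to atomic formulas, and then apply the composition lemma (Theorem \ref{thm:FOP2comp}) by writing each term via Lemma \ref{lem:terms}. Fix an $\cL$-formula $\varphi(\xbar)$ and a partition $\xbar=(x_1,x_2,x_3)$ of its free variables into three groups. By quantifier elimination, $\varphi$ is equivalent in $T$ to a Boolean combination of atomic $\cL$-formulas. By Theorem \ref{thm:FOPkbool}, it therefore suffices to show that every atomic $\cL$-formula is $\NFOP_2$ in $T$ under any partition of its free variables.

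So fix an atomic formula $\psi(\xbar)$ of the form $R(t_1(\xbar),\ldots,t_m(\xbar))$ for some relation symbol $R$ (including equality) and $\cL$-terms $t_1,\ldots,t_m$. Using the $V$-conservative assumption and applying Lemma \ref{lem:terms} to each $t_i$, we obtain $\cL'$-terms $t'_i(\xbar,\ubar)$ with $|\ubar|=|\boldsymbol{\beta}(\xbar_V)|$ such that $T\models \forall\xbar\,(t_i(\xbar)=t'_i(\xbar,\boldsymbol{\beta}(\xbar_V)))$. Consequently $\psi(\xbar)$ is equivalent in $T$ to the formula $\theta(\xbar,\boldsymbol{\beta}(\xbar_V))$, where $\theta(\zbar,\ubar)\coloneqq R(t'_1(\zbar,\ubar),\ldots,t'_m(\zbar,\ubar))$ is an $\cL'$-formula.

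Now apply Theorem \ref{thm:FOP2comp} with $\cL_0=\cL'$, so that $T{\upharpoonright}\cL_0=T'$ is stable by hypothesis. To do so we must exhibit each free variable of $\theta$ as the value of an $\cL$-definable function $f_i(u_i)$, where $u_i$ is one of $x_1,x_2,x_3,(x_1,x_2),(x_2,x_1),(x_1,x_3),(x_3,x_1),(x_2,x_3),(x_3,x_2)$. Each component of $\zbar$ is a component of some $x_i$, so it is the value of the corresponding projection applied to $x_i$. Each component of $\ubar$ has the form $\beta(v,w)$ for some vector variables $v,w\in\xbar_V$; if $v$ is a component of $x_i$ and $w$ is a component of $x_j$, then $\beta(v,w)$ is an $\cL$-definable function of $(x_i,x_j)$ (or of $x_i$ alone when $i=j$). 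Since $\beta$ is binary, every $\beta$-value depends on at most two of the three groups $x_1,x_2,x_3$, which is precisely what Theorem \ref{thm:FOP2comp} requires. The composition lemma then yields that $\psi(x_1,x_2,x_3)$ is $\NFOP_2$ in $T$, completing the reduction.

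The only substantive verification here is that the input to Theorem \ref{thm:FOP2comp} satisfies its hypotheses — in particular, that no functional expression in our rewritten atomic formula is forced to depend on all three of $x_1,x_2,x_3$ simultaneously. This is the crucial place where the binary nature of $\beta$ is used, and it is the reason that this strategy produces a $k=2$ result only: for a $k$-linear form with $k\geq 3$, the corresponding values $f(v_1,\ldots,v_k)$ could involve vectors from all three variable groups, and one would need a higher-arity analogue of Theorem \ref{thm:FOP2comp} to proceed (see Remark \ref{rem:terms}). Once the composition lemma is in hand for $k=2$, the argument is otherwise routine.
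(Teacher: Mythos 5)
Your proposal is correct and follows essentially the same route as the paper: quantifier elimination plus Boolean-combination closure (Theorem \ref{thm:FOPkbool}) reduces to atomic formulas, Lemma \ref{lem:terms} rewrites each atomic formula as a stable $\cL'$-formula $\theta$ evaluated at the variables and at $\boldsymbol{\beta}(\xbar_V)$, and the binary nature of $\beta$ ensures each such value is an $\cL$-definable function of at most two of the three variable groups, so Theorem \ref{thm:FOP2comp} applies. Your closing observation about why this is genuinely a $k=2$ argument matches the paper's Remark \ref{rem:terms} discussion.
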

\begin{proof}
Since $T$ has quantifier elimination and $\NFOP_2$ formulas are closed under Boolean combinations, it suffices to show that every atomic formula $\phi(x_1,x_2,x_3)$ is $\NFOP_2$ in $T$.  So assume $\phi(x_1,x_2,x_3)$ is a tri-partitioned atomic formula, where $x_1$, $x_2$, $x_3$ are tuples of variables.  Then it has the form $R(t_1(\xbar),\ldots, t_d(\xbar))$, where $R$ is a relation of $\calL$ (and so of $\calL'$),  $t_1,\ldots, t_d$ are $\calL$-terms, and $\xbar=(x_1,x_2,x_3)$.  By Lemma \ref{lem:terms}, for each $1\leq j\leq d$, there are $\calL'$-terms $t'_j$ so that 
$$
T\models \forall \xbar \bigwedge_{j=1}^d t_j(\xbar)=t'_j(\xbar,\boldsymbol{\beta}(\xbar_V)).
$$
Thus, $R(t_1(\xbar),\ldots, t_d(\xbar))$ is $T$-equivalent to $R(t'_1(\xbar,\boldsymbol{\beta}(\xbar_V)),\ldots, t'_d(\xbar,\boldsymbol{\beta}(\xbar_V)))$.

Let $\theta(\xbar, \ubar)$ be the $\calL'$-formula $R(t'_1(\xbar,\ubar),\ldots, t'_d(\xbar,\ubar))$. Thus, $\theta(\xbar,\ubar)$ is a stable formula by our assumption that $T'$ is stable. We also have that $\varphi(x_1,x_2,x_3)$ is $T$-equivalent to $\theta(\xbar,\boldsymbol{\beta}(\xbar_V))$. 

Observe that an element the tuple $\boldsymbol{\beta}(\xbar)$ has the form $\beta(u,v)$ for some singleton variables $u,v$ appearing in $x_1x_2x_3$.   If say $u\in x_i$ and $v\in x_j$, then, by adding dummy variables, $\beta(u,v)$ can be thought of as a (definable) function of the form $g(x_i,x_j)$, if $i\neq j$, or $g(x_i)$, if $i=j$. Thus we can write $\boldsymbol{\beta}(\xbar_V)=(g_1(u_1),\ldots, g_n(u_n))$, where each $u_i\in\{x_1,x_2,x_3,(x_1,x_2),(x_2,x_1),(x_1,x_3),(x_3,x_1),(x_2,x_3),(x_3,x_2)\}$.

The formula  $\theta(\xbar,\boldsymbol{\beta}(\xbar_V))$ thus has the form $\theta(x_1,x_2,x_3,g_1(u_1),\ldots, g_n(u_n))$.  Since $\theta(\xbar,\ubar)$ is stable, we have that $\varphi(x_1,x_2,x_3)$ is $\NFOP_2$ in $T$ by Theorem \ref{thm:FOP2comp}.  
\end{proof}

For the rest of this subsection,  let  $K$ be an arbitrary first-order structure expanding a field.

\begin{definition}[The theory $\widetilde{T}^K_\infty$]\label{def:TK}
 Define the language $\widetilde{\cL}^K_{\textit{VF}}=\cL_{\textit{VF}}\cup \cL_K$ where $\cL_K$ is a two-sorted language which copies the language of $K$ onto the $F$-sort (and has no symbols involving the $V$-sort). 
 Let $\widetilde{T}^{K}_{\infty}$ be the $\widetilde{\cL}^K_{\textit{VF}}$-theory consisting of $T_{\textit{VF}}$, the axioms of $\Th(K)$ as $\cL_K$-sentences in the $F$ sort, and axioms specifying that $V$ has infinite dimension over $F$.
\end{definition}

We now prove $\widetilde{T}^K_\infty$ is complete, and is stable if and only if $\Th(K)$ is stable.  The latter result follows from work of d'Elb\'{e}e, Kaplan, and Neuhauser \cite{DKN} on theories of algebraically closed fields with a predicate for a distinguished subfield. 

\begin{fact}\label{fact:stableVS}$~$
\begin{enumerate}[$(a)$]
\item $\widetilde{T}^K_\infty$ is complete. 
\item $\widetilde{T}^K_\infty$ is stable if and only if $\Th(K)$ is stable.
\end{enumerate}
\end{fact}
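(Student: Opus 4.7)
My plan for part (a) is to establish completeness via a back-and-forth argument between two $\aleph_1$-saturated models $M_i = (V_i, F_i)$ of $\widetilde{T}^K_\infty$. By completeness of $\Th(K)$ and saturation, the field sorts $F_1$ and $F_2$ admit arbitrarily large partial $\cL_K$-isomorphisms. Any such partial field-isomorphism extends to a partial $\widetilde{\cL}^K_{\textit{VF}}$-isomorphism by matching bases: infinite dimension of $V_1$ and $V_2$ over the respective fields ensures that every linearly-independent vector on one side can be matched to a fresh linearly-independent vector on the other, and no additional structure on the vector sort constrains this choice. In particular, the standard Ehrenfeucht-Fra\"{i}ss\'{e} argument carries over, giving elementary equivalence.

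For the easy direction of (b), if $\psi(\bar y, \bar z)$ is an $\cL_K$-formula witnessing the order property in $\Th(K)$, then, reread as an $\widetilde{\cL}^K_{\textit{VF}}$-formula in field-sort variables, it witnesses the order property in $\widetilde{T}^K_\infty$.

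For the hard direction, assume $\Th(K)$ is stable. The plan is to bound $|S_n(A)|$ uniformly in $|A|$ by decomposing a type $p(\bar x) \in S_n(A)$ into three pieces: (i) the $\cL_K$-type over $A \cap F$ of the field-sort components of $\bar x$, together with the field-sort coefficients used below; (ii) the $F$-linear dependence pattern of the vector-sort components of $\bar x$ relative to $A \cap V$; and (iii) for each dependent combination, the explicit scalar coefficients (which feed back into (i)). The pure $F$-vector space fragment on $V$ is strongly minimal and contributes at most $|A|^{\aleph_0}$ possibilities to piece (ii), while pieces (i) and (iii) together are controlled by the stability of $\Th(K)$ applied to parameter sets of cardinality at most $|A|$. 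Assembling these bounds gives $|S_n(A)| \le (|A| + |T|)^{|T|}$, so $\widetilde{T}^K_\infty$ is stable.

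The main obstacle is verifying that the decomposition in the hard direction of (b) is genuinely exhaustive, i.e., that scalar multiplication produces no hidden definable configurations outside of the enumerated data. This is precisely the content of the relative quantifier-elimination analysis carried out in \cite{DKN} for pairs consisting of a large algebraic structure together with a distinguished subobject; after the straightforward translation between their setting and ours (with the vector sort playing the analogue of the ``ambient'' structure and $F$ the analogue of the subfield), the cited results supply exactly the preservation statement we need. Completeness (part (a)) is a prerequisite for this bookkeeping to make sense at the type-counting level.
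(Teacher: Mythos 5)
The hard direction of part $(b)$ is where your argument has a genuine gap. Your type-counting plan hinges on the claim that every type in the two-sorted language decomposes into (i) $\cL_K$-data, (ii) a linear dependence pattern, and (iii) scalar coordinates, and you defer exactly this ``exhaustiveness'' step to \cite{DKN}. But \cite{DKN} proves no such relative quantifier elimination or type-decomposition statement for two-sorted vector spaces; it studies algebraically closed fields with a distinguished subfield, and its Theorem 5.16 gives stability of the \emph{pair} $(L,K)$. The way that result reaches $\widetilde{T}^K_\infty$ is not by translating the decomposition, but by an interpretation: take $L$ an algebraically closed extension of $K$ of infinite transcendence degree, note that $(L,K)$ interprets an infinite dimensional $K$-vector space (namely $L$ itself with scalar multiplication by $K$), and transfer stability through the interpretation --- this is the paper's proof, and it bypasses any type-counting. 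Your ``straightforward translation'' (vector sort as the ambient structure, $F$ as the subfield) is not an interpretation and does not match the setting of \cite{DKN}, since the vector sort is not an algebraically closed field. If you insist on the direct decomposition route, you must actually prove the relative quantifier elimination / stable embeddedness of the field sort, which is nontrivial: the paper's own Section 5.2 shows that even the pure vector-space reduct only eliminates quantifiers after adding the coordinate functions $g_{n,i}$ (Corollary \ref{cor:VSqe}), and Remark \ref{rem:terms} notes that the reduction of terms and formulas to field-sort data in the corrected language is substantially more involved than one might expect. Also, your side remark that the vector sort is strongly minimal is false in the two-sorted structure (for infinite $F$ and $\dim\geq 2$, the span of a single vector is definable, infinite, and co-infinite); the cardinality bookkeeping would have to be justified differently.

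Part $(a)$ is essentially sound and close in spirit to the paper, which instead observes that two saturated models of the same cardinality $\kappa$ have isomorphic (saturated) field sorts and $\kappa$-dimensional vector sorts, hence are isomorphic. Two points in your back-and-forth sketch need care. First, since $\Th(K)$ is \emph{not} assumed to eliminate quantifiers in this Fact, the field parts of your partial maps must be partial \emph{elementary} maps (which do have back-and-forth between $\aleph_1$-saturated models), not mere $\cL_K$-substructure isomorphisms; with only quantifier-free preservation the extension step on the field sort can fail. Second, your extension step only treats vectors that are linearly independent over the current domain; when the new vector lies in the span of already-matched vectors, its coordinates may lie outside the field-part domain, so you must first extend the field map to include those coordinates (by saturation) and then map the vector to the corresponding combination. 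Both are routine repairs, but as written the invariant your back-and-forth family preserves is not pinned down.
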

\begin{proof}
Part $(a)$. It suffices to show that any two saturated models of the same sufficiently large cardinality are isomorphic (without loss of set theoretic generality; see \cite{HalKap}). So fix saturated $M,N\models \widetilde{T}^K_\infty$ with $|M|=|N|=\kappa\geq|\widetilde{T}^K_\infty|$. Then $F(M)$ and $F(N)$ are saturated models of  $\Th(K)$ of  cardinality $\kappa$, and thus are isomorphic. We also have $\dim_{F(M)}(V(M))=\kappa=\dim_{F(N)}(V(N))$, and so an $\cL_K$-isomorphism from $F(M)$ to $F(N)$ determines a vector space isomorphism from $V(M)$ to $V(N)$, and hence an $\cL^K_{\textit{VF}}$-isomorphism from $M$ to $N$.  

Part $(b)$. Clearly, if $\widetilde{T}^K_\infty$ is stable then so is $\Th(K)$. So suppose $\Th(K)$ is stable. Let $L$ be an algebraically closed extension of $K$ with infinite transcendence degree over $K$. Let $(L,K)$ be the expansion of $L$ obtained adding a predicate naming $K$ along with its full structure. Then $\Th(L,K)$ is stable by \cite[Theorem 5.16]{DKN}. Since $\Th(L,K)$ interprets $\widetilde{T}^K_\infty$, we conclude that $\widetilde{T}^K_\infty$ is stable. 
\end{proof}

We note that the results of \cite{DKN} can similarly be used to prove the statement of Fact \ref{fact:stableVS}$(b)$ with ``stable" replaced by ``simple", ``$\NIP$'', or ``NSOP$_1$".

\begin{remark}
     Corollary \ref{cor:VSqe} below provides completeness and quantifier elimination for a suitable definitional expansion $T^K_\infty$ of $\widetilde{T}^K_\infty$. We also remark that in the case that $K$ is a \emph{definitional} expansion of the language of fields, completeness of $\widetilde{T}^K_\infty$ was proved by Kuzichev in the 1990s  using a relative quantifier elimination result (see \cite[Corollary 2]{Kuzichev}).
\end{remark}

We now describe the theories of vector spaces with bilinear forms studied in Granger's thesis \cite{Gr}. We will take some care here, as it has been discovered that the enriched two-sorted language which has traditionally been used to study vector spaces with a bilinear form is not sufficient for quantifier elimination.

Recall that $K$ is a fixed first-order expansion of a field. \textbf{For the rest of the subsection, we assume  $\Th(K)$ eliminates quantifiers.}  

\begin{definition}[The theories $T^K_{\infty,\textnormal{alt}}$ and $T^K_{\infty,\textnormal{sym}}$]\label{def:bilineartheories}
First recall the language $\cL^K_{\textit{VF}}$ and the  $\cL^K_{\textit{VF}}$-theory $\widetilde{T}^K_\infty$ from Definition \ref{def:TK}. 
\begin{enumerate}[$(1)$]
\item Define the two-sorted language
\[
\cL^K_\beta=\cL^K_{\textit{VF}}\cup\{\beta\}\cup\{g_{n,i}:1\leq i\leq n<\infty\}
\]
where $\beta$ is a binary function symbol from $V\times V$ to $F$ and each $g_{n,i}$ is an $(n+1)$-ary function symbol from $V^{n+1}$ to $F$. 
\item For each $1\leq i\leq n<\infty$, let $\Gamma_{n,i}$ be an $\cL^K_\beta$-sentence saying that for any $v_1,\ldots, v_{n+1}$ in $V$, $g_{n,i}(v_1,\ldots, v_{n+1})$ is $0_F$ if either $v_1,\ldots, v_n$ are linearly dependent or $v_1,\ldots, v_{n+1}$ are linearly independent, and otherwise equals the unique $a_i\in F$ so that $v_{n+1}=\sum_{j=1}^n a_jv_j$ for some $a_j\in F$.  
\item Let $T^K_{\infty,\beta}$ be the $\cL^K_\beta$-theory consisting of $\widetilde{T}^K_\infty$,   an axiom asserting that $\beta$ is a  bilinear form, and the axioms $\Gamma_{n,i}$ for each $1\leq i\leq n<\infty$.
\item Let $T^K_{\infty,\textnormal{alt}}$ be the expansion of $T^K_{\infty,\beta}$ which says that $\beta$ is non-degenerate and alternating.
\item Assume $\operatorname{char}(K)\neq 2$ and $K$ has square roots. Let $T^K_{\infty,\textnormal{sym}}$ be the expansion of $T^K_{\infty,\beta}$ which says that $\beta$ is non-degenerate and symmetric.
\end{enumerate}
\end{definition}

\begin{remark}\label{rem:infdim}
Note that one has the obvious finite-dimensional analogues $T^K_{m,\textnormal{alt}}$ and $T^K_{m,\textnormal{sym}}$ for $m\geq 1$. These are also studied by Granger but are less interesting for our purposes, since they are interpretable in a suitable expansion of $\Th(K)$ by constants (this is a straightforward exercise; see Section 6.1 of \cite{CH} for details).
\end{remark}

The following is one of the main results from Granger's thesis \cite{Gr}.

\begin{theorem}[Granger]\label{thm:QE}
$T^K_{\infty, \textnormal{alt}}$ and $T^K_{\infty,\textnormal{sym}}$ are complete and have quantifier elimination.
\end{theorem}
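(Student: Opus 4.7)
The plan is to apply the standard substructure embedding criterion for quantifier elimination: given $\cM,\cN\models T$ with $\cN$ sufficiently saturated, every $\cL^K_\beta$-embedding $f\colon \cA\to \cN$ from a finitely generated substructure $\cA\leq\cM$ lifts to an embedding of any one-element extension $\cA\langle b\rangle\leq \cM$. Completeness then follows by iterating this lift step between saturated models of the same sufficiently large cardinality, starting from the substructure generated by the empty set (which is canonical: the prime field of $K$ together with $0_V$).

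The extension step splits according to the sort of $b$. If $b\in F(\cM)\setminus F(\cA)$, neither $\beta$ nor the coefficient functions $g_{n,i}$ contribute anything new, since their values on tuples drawn from $\cA$ already lie in $F(\cA)$. The task therefore reduces to extending a field embedding $F(\cA)\to F(\cN)$, which is possible since $\Th(K)$ has quantifier elimination by assumption, and then noting the extension remains an $\cL^K_\beta$-embedding because the $V$-sort data is untouched.

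The main case is $b\in V(\cM)\setminus V(\cA)$. If $b$ is linearly dependent on $V(\cA)$ (a condition recognized quantifier-freely via the $g_{n,i}$), then the image $b'$ is forced to be the corresponding $F(f(\cA))$-linear combination, and all $\beta$- and $g_{n,i}$-values on tuples involving $b$ are determined by bilinearity together with values already fixed in $f(\cA)$. If instead $b$ is linearly independent from $V(\cA)$, I must find $b'\in V(\cN)$, linearly independent from $V(f(\cA))$, with $\beta(b',f(v))=f(\beta(b,v))$ for each $v$ in a basis of $V(\cA)$ and, in the symmetric case, with $\beta(b',b')=f(\beta(b,b))$. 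The linear system of $\beta$-equations on $b'$ is satisfiable in $\cN$ by non-degeneracy and infinite dimension (via Fact~\ref{fact:generic}) together with saturation of $\cN$. The additional quadratic condition in the symmetric case is then achieved by replacing a first choice $b'$ by $b'+\alpha w$ for a suitable $w\in V(\cN)$ that is $\beta$-orthogonal to $f(V(\cA))$ but self-pairs nontrivially, and for a scalar $\alpha$ obtained by solving a quadratic over $F(\cN)$; this is precisely where $\operatorname{char}(K)\neq 2$ and the existence of square roots in $K$ are used. In the alternating case $\beta(b',b')=0$ holds automatically, so no such adjustment is needed.

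The main obstacle is the symmetric subcase of the linearly independent vector extension, where one must simultaneously realize an affine system of $\beta$-equations and a quadratic normalization in $\cN$, and one must verify that the enlarged language genuinely closes substructures under linear dependences. This is the exact point where Granger's original formulation fails: without the coefficient functions $g_{n,i}$, a substructure need not contain the scalars expressing linear dependences among its vectors, so the quantifier-free type of a prospective new vector does not carry enough information to extend the embedding. Consequently the overall proof structure follows Granger, but each step has to be re-run in $\cL^K_\beta$ and the behaviour of the $g_{n,i}$ under the embedding must be checked carefully, which is the source of most of the technical bookkeeping.
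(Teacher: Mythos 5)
Your route is genuinely different from the paper's. The paper does not run a direct one-point back-and-forth for these two theories: it first characterizes quantifier elimination and completeness of any expansion of $T^K_{m,\beta}$ by the abstract extension property of \emph{richness} (Definition~\ref{def:rich}, Theorem~\ref{thm:rich}), and then verifies richness for $T^K_{\infty,\textnormal{alt}}$ and $T^K_{\infty,\textnormal{sym}}$ in Corollary~\ref{cor:bilinearQE} by transporting a configuration from an auxiliary model via Witt's extension theorem (Theorem~\ref{thm:witt}); arranging the hypotheses of Witt's theorem is what forces the detour through countable-dimensional substructures with saturated field sorts (Fact~\ref{fact:bases}, Lemma~\ref{lem:chain}) and the set-theoretic absoluteness device of Remark~\ref{rem:HalKap}. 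Your plan instead realizes the new vector directly by linear algebra, using Fact~\ref{fact:generic} for the linear conditions and a quadratic adjustment $b'\mapsto b'+\alpha w$ for the self-pairing; if carried out, this bypasses Witt's theorem, Lemma~\ref{lem:chain}, and the absoluteness argument (for completeness you then only need quantifier elimination plus completeness of $\Th(K)$, as in the $(i)\Rightarrow(ii)$ direction of Theorem~\ref{thm:rich}, not saturated models of a common large cardinality). This is close in spirit to Granger's original argument, and you correctly repair two of its known defects: the two-model formulation of the QE test and the need for the coefficient functions $g_{n,i}$.

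However, as written your central step has a genuine gap. The prescribed targets $f(\beta(b,v))$ and $f(\beta(b,b))$ (here $f$ is your embedding, clashing with the paper's symbol for the form) are not defined, because $\beta(b,v)$ and $\beta(b,b)$ do not in general lie in $F(\cA)$; likewise, in your ``linearly dependent'' case the coefficients $g_{n,i}(\vbar,b)$ need not lie in $F(\cA)$. The substructure generated by $\cA\cup\{b\}$ has a strictly larger field part (this is exactly Lemma~\ref{lem:fieldsort}). The missing move is to \emph{first} extend the field-sort embedding to these finitely many new scalars, choosing elements of $F(\cN)$ with the same quantifier-free $\cL_K$-type over the finitely generated field part of the image --- this is where quantifier elimination for $\Th(K)$ and $\aleph_0$-saturation of $F(\cN)$ must be invoked in the vector case too --- and only \emph{then} solve for $b'$ realizing the chosen values; picking $b'$ first and reading off its pairings gives no control over the $\cL_K$-type of the new scalars, so the resulting map need not be an embedding on the field sort. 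With that reordering, your geometric step still needs two checks you glossed over: that $\operatorname{Span}(V(f(\cA)))^{\perp}$ contains an anisotropic vector $w$ (this can be deduced from non-degeneracy via $W^{\perp\perp}=W$ for finite-dimensional $W$, which follows from Fact~\ref{fact:generic}), and that the adjusted vector $b'+\alpha w$ can be kept linearly independent from $V(f(\cA))$ (the quadratic has at most two roots, so this requires a separate argument, e.g.\ using an isotropic vector in the orthogonal complement when the unadjusted solution lies in the span). These points are repairable, but they are precisely the bookkeeping your sketch defers, and without the field-sort-first step the extension claim does not go through as stated.
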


We mention again that an error in Granger's proof of quantifier elimination was later found by Macpherson. In Section \ref{sec:QE} we will provide a corrected and reorganized version of Granger's proof.  In any case, the results above come together to yield our main result.

\begin{theorem}\label{thm:VSFOP2}
Let $T$ be $T^K_{\infty, \textnormal{alt}}$ or $T^K_{\infty,\textnormal{sym}}$. Then $T$ is $\NFOP_2$ if and only if $\Th(K)$ is stable.
\end{theorem}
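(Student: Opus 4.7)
The proof will split into the two directions, each leveraging results already established in the excerpt.

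For the forward direction, I will prove the contrapositive: if $\Th(K)$ is unstable, then $T$ has $\FOP_2$. The theory $T$ contains the axiom that $\beta$ is non-degenerate, so by Fact \ref{fact:generic}, $\beta$ is generic in the sense of Definition \ref{def:generic}. The language $\cL^K_\beta$ contains $\cL_K$ on the $F$-sort, and the induced theory $T{\upharpoonright}F$ is a definitional expansion of $\Th(K)$ (the functions $g_{n,i}$, $\beta$, $\cdot_V$, $+_V$, $0_V$ involve the $V$-sort and are not in the language $\cL{\upharpoonright}F$), so instability of $\Th(K)$ implies instability of $T{\upharpoonright}F$. Applying Proposition \ref{prop:fieldFOP} with $k=2$ then yields $\FOP_2$ for $T$.

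For the backward direction, I will apply Lemma \ref{lem:fieldNFOP} to $T$ and $\cL=\cL^K_\beta$. Three hypotheses need to be checked. First, $V$-conservativity of $\cL^K_\beta$: inspecting the language, the only function symbols into $V$ are $+_V$ and $\cdot_V$ (both in $\cL_{\textit{VF}}$), and the only $V$-sort constant is $0_V$; the added symbols $\beta$ and $g_{n,i}$ all map into $F$, while $\cL_K$ involves only the $F$-sort. So $\cL^K_\beta$ is $V$-conservative. Second, $T$ eliminates quantifiers by Theorem \ref{thm:QE}. Third, stability of $T' = T{\upharpoonright}(\cL^K_\beta\setminus\{\beta\})$: I claim $T'$ is a definitional expansion of $\widetilde{T}^K_\infty$. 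Indeed, each $g_{n,i}(v_1,\ldots,v_{n+1})$ is $\cL^K_{\textit{VF}}$-definable in $\widetilde{T}^K_\infty$ via a case split (linear independence of $v_1,\ldots,v_n$ and of $v_1,\ldots,v_{n+1}$ are both expressible using scalar quantifiers in the vector-space structure, and given linear dependence of the extended tuple, the coefficient $a_i$ exists and is unique, hence definable). Since $\Th(K)$ is assumed stable, Fact \ref{fact:stableVS}$(b)$ gives that $\widetilde{T}^K_\infty$ is stable, hence $T'$ is stable. Now Lemma \ref{lem:fieldNFOP} concludes $T$ is $\NFOP_2$.

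The two technical points most at risk of friction are (i) confirming the $g_{n,i}$ are genuinely definable in $\widetilde{T}^K_\infty$ so that $T'$ is a true definitional expansion (and therefore inherits stability), and (ii) confirming that $T{\upharpoonright}F$ in the forward direction is actually a definitional expansion of $\Th(K)$ in the relevant reduct language $\cL{\upharpoonright}F$, so that the instability transfers. Both should go through routinely by the setup in Definitions \ref{def:TK} and \ref{def:bilineartheories}, but they deserve an explicit sentence each. No deeper obstacle appears — the heavy lifting (the composition lemma Theorem \ref{thm:FOP2comp}, the term reduction Lemma \ref{lem:terms}, and Granger's corrected quantifier elimination Theorem \ref{thm:QE}) is already bundled into Lemma \ref{lem:fieldNFOP} and Proposition \ref{prop:fieldFOP}, so the argument reduces essentially to verifying hypotheses.
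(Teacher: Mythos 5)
Your proposal is correct and follows essentially the same route as the paper's proof: the forward direction via Proposition \ref{prop:fieldFOP} together with Fact \ref{fact:generic} and non-degeneracy of $\beta$, and the converse via Fact \ref{fact:stableVS}, the definability of the $g_{n,i}$ (so that $T'$ inherits stability from $\widetilde{T}^K_\infty$), Theorem \ref{thm:QE}, and Lemma \ref{lem:fieldNFOP}. The extra hypothesis checks you flag ($V$-conservativity and the identification of $T{\upharpoonright}F$ with $\Th(K)$) are routine and consistent with the paper's treatment.
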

\begin{proof}
 If $T$ is $\NFOP_k$ then $\Th(K)$ is stable by Proposition \ref{prop:fieldFOP}, Fact \ref{fact:generic}, and since $T$ implies $\beta$ is non-degenerate. Conversely, suppose $\Th(K)$ is stable. Then $\widetilde{T}^K_\infty$ is stable by Fact \ref{fact:stableVS}. Note that  $T'=T{\upharpoonright}(\cL\backslash\{\beta\})$ is interpretable in $\widetilde{T}^K_\infty$ since each $g_{n,i}$  is $\cL_{\textit{VF}}$-definable. So $T'$ is stable. Since $T$ has quantifier elimination (Theorem \ref{thm:QE}), we can apply   Lemma \ref{lem:fieldNFOP} to conclude that $T$ is $\NFOP_2$. 
\end{proof}

\subsection{Quantifier Elimination}\label{sec:QE}
The goal of this subsection is to correct Granger's proof of Theorem \ref{thm:QE}. Let us first recall all the needed definitions. As in Section \ref{sec:VS}, we let $K$ be a fixed first-order expansion of a field  such that  $\Th(K)$ eliminates quantifiers. We are interested in theories of vector spaces over models of $\Th(K)$ equipped with bilinear forms. While our focus in Section \ref{sec:VS} was on the case of infinite dimensional vector spaces, here we will include the finite dimensional cases as well for completeness. 
\medskip

\noindent\textbf{The languages.} First we recall the basic vector space and field languages.
\begin{enumerate}[\hspace{5pt}$\ast$]
\item $\cL_{\textit{VF}}=\{V,F;+_F,\cdot_F,-_F,  ()\inv_F,0_F,1_F,+_V,0_V,\cdot_V\}$ is a  two-sorted language with sorts $V$ (vector space) and $F$ (field), unary function symbols $-_F$ and  $()\inv_F$ on $F$, binary function symbols $+_F$ and $\cdot _F$ on $F$, a binary function symbol $+_V$ on $V$, a binary function symbol $\cdot_V$ from $F\times V$ to $F$, constant symbols $0_F$ and $1_F$ in $F$, and a constant symbol $0_V$ in $V$. 
\item $\cL_K$ is a copy of the language of $K$ relativized to the $F$-sort (where we identify the field language with the corresponding symbols in $\cL_{\textit{VF}}$).  
\end{enumerate}
We then define:
\begin{enumerate}[\hspace{5pt}$\ast$]
\item $\cL^g_{\textit{VF}}=\cL_{\textit{VF}}\cup\{g_{n,i}:1\leq i\leq n<\infty\}$ where each $g_{n,i}$ is an $(n+1)$-ary function symbol from $V^{n+1}$ to $F$.
\item $\cL^{K}_{\textit{VF}}=\cL^g_{\textit{VF}}\cup \cL_K$. 
\item $\cL^K_f=\cL^{K}_{\textit{VF}}\cup\{f\}$ where $f$ is a $k$-ary function symbol from $V^k$ to $F$
\end{enumerate}
Note that in the case $k=2$, $\calL_f^K$ agrees with how $\cL^K_\beta$ was defined in Definition \ref{def:bilineartheories}.\medskip

\noindent\textbf{The theories.}  
First define $T^g_{\textit{VF}}$ to be the $\cL^g_{\textit{VF}}$-theory with the following axioms:
\begin{enumerate}[\hspace{5pt}$\ast$]
\item $V$ is a vector space over the field $F$ (i.e., the $\cL_{\textit{VF}}$-theory $T_{\textit{VF}}$ from Section \ref{sec:VS}).
 \item For each $1\leq i\leq n<\infty$, a sentence saying that for any $v_1,\ldots, v_{n+1}\in V$, $g_{n,i}(v_1,\ldots, v_{n+1})=0_F$ if either $v_1,\ldots, v_n$ are linearly dependent or $v_1,\ldots, v_{n+1}$ are linearly independent, and otherwise $g_{n,i}(v_1,\ldots, v_{n+1})$ is the unique $a_i\in F$ so that $v_{n+1}=\sum_{j=1}^n a_jv_j$ for some $a_j\in F$ with $j\in[n]\backslash\{i\}$.
\end{enumerate}
Note that $T^g_{\textit{VF}}$ is a definitional expansion of $T_{\textit{VF}}$. 

Next, given $m\in\Z^+\cup\{\infty\}$, define $T^{K}_m$ to be the $\cL^{K}_{\textit{VF}}$-theory consisting of $T^g_{\textit{VF}}$ together with the  following additional axioms:
\begin{enumerate}[\hspace{5pt}$\ast$]
\item $F$ satisfies $\Th(K)$ (relativized to the $F$ sort).
\item $V$ has dimension $m$ over $F$.
\end{enumerate}
Then define $T^K_{m,f}$ to be the $\cL^K_f$-theory consisting of $T^{K}_m$ together with an axiom asserting that $f$ is a $k$-linear form. 

Finally, we define the primary theories of interest in the $k=2$ case (where again we use $\beta$ in place of $f$ for emphasis).

\begin{definition}\label{def:bilinears}
Fix $m\in\Z^+\cup\{\infty\}$, and let $K$ be as above.
\begin{enumerate}[$(1)$]
    \item Assume $m$ is infinite or even. Let $T^K_{m,\textnormal{alt}}$ be the expansion of $T^K_{m,\beta}$ which says that $\beta$ is a non-degenerate alternating bilinear form.
    \item Assume $\operatorname{char}(K)\neq 2$ and $K$ has square roots. Let $T^K_{m,\textnormal{sym}}$ be the expansion of $T^K_{m,\beta}$ which says that $\beta$ is a non-degenerate symmetric bilinear form.
    \end{enumerate}
\end{definition}

The main result of this subsection is quantifier elimination and completeness for the theories defined in $(1)$ and $(2)$. As previously mentioned, this is an amendment to a result from Granger's thesis \cite{Gr} (Corollary \ref{cor:bilinearQE} below). So before continuing, we summarize the key issues in \cite{Gr}. The first of these (part $(1)$ of Remark \ref{rem:errors}) came to the authors' attention at a later stage in the preparation of this manuscript, which necessitated the addition of this subsection. In our efforts to correct Granger's proof, a few more issues emerged.

\begin{remark}\label{rem:errors}$~$
\begin{enumerate}[$(1)$]
\item In \cite{Gr}, Granger defines $T^K_{m,\textnormal{alt}}$ and $T^K_{m,\textnormal{sym}}$ in a  weaker language where, instead of the functions $g_{n,i}$, one has an $n$-ary relation $\theta_n(x_1,\ldots,x_n)$ on the $V$ sort interpreted as linear independence of $n$ vectors.\footnote{Note that the relation $\theta_n(x_1,\ldots,x_n)$ is equivalent mod $T^g_{\textit{VF}}$ to the quantifier-free formula  $g_{n,1}(x_1,\ldots,x_n,x_1)=1_F$.} However, it was pointed out by Macpherson that this language is not sufficient for quantifier elimination in general. A counterexample is described in the discussion after Definition 2.7 in the published version of \cite{Dobrowolski}. It is worth noting that  the $g_{n,i}$ functions are necessary to obtain quantifier elimination even at the level of the pure vector space language (see Corollary \ref{cor:VSqe}).

\item Granger uses a quantifier elimination test that only works for complete theories. More to the point, \cite[Proposition 9.2.8]{Gr} is false without the assumption of completeness. Since Granger derives completeness from quantifier elimination, this introduces a circularity in the proofs. See Remark \ref{rem:QEerror} for details.

\item The key algebraic tool needed for quantifier elimination is a result of Witt (given in Theorem \ref{thm:witt} below). However, Granger does not check one of the assumptions of this theorem, which concerns the existence of a certain $\cL^K_{\beta}$-isomorphism between two fixed models (of the theory in question). His work does suggest a strategy, which involves working with subspaces of countable dimension. But this introduces certain subtleties, which will we need to address. See Remark \ref{rem:wittissue} for further discussion.  
\end{enumerate}
\end{remark}

We now start working toward the proof of Corollary \ref{cor:bilinearQE}. The general ideas  follow Granger's work. However, in contrast to \cite{Gr}, we will start in the case of a general $k$-linear form, and provide a characterization of quantifier elimination in terms of a relatively straightfoward extension property  (see Definition \ref{def:rich} and Theorem \ref{thm:rich} below). We will then show that this extension property holds for the bilinear theories given in Definition \ref{def:bilinears}.  

Recall that $K$ is a fixed first-order expansion a field for which $\Th(K)$ has quantifier elimination. We also fix $m\in\Z^+\cup\{\infty\}$.

For an $\cL_f^K$-structure $M$ and $\zbar\in M$, we write $\langle \zbar \rangle$ for the $\cL_f^K$-substructure generated by $\zbar$. Similarly, if $\zbar\in F(M)$, we write $\langle \zbar \rangle_K$ for the $\cL_K$-substructure of $F(M)$ generated by $\zbar$. Recall from Definition \ref{def:fnbar} that for any $n$-ary function $h$ and a tuple $\xbar$, we write $\boldsymbol{h}(\xbar)$ for the tuple $(h(x_{i_1},\ldots, x_{i_n}))_{(i_1,\ldots, i_n)\in [\ell(\xbar)]^n}$ enumerated according to the lexicographic order on $[\ell(\xbar)]^n$. We first prove a few lemmas to understand the structure of finitely generated substructures of models of $T^K_{m,f}$, illustrating the leverage obtained from  the  $g_{n,i}$ functions.

\begin{lemma}  \label{lem:substruct}$~$
\begin{enumerate}[$(a)$]
    \item Suppose $M\models T^K_{m,f}$. For any tuples $\vbar\in V(M)$ and $\abar\in F(M)$, we have $V(\langle \vbar\abar\rangle)=\Span_{F(\langle \vbar\abar\rangle)}(\vbar)$.
    \item Suppose $M\models T^g_{\textit{VF}}$. If $M'$ is an $\cL^g_{\textit{VF}}$-substructure of $M$ and $\vbar\in V(M')$ is a  tuple, then $\ebar\subseteq \vbar$ is a basis for $\Span_{F(M')}(\vbar)$ if and only if it is a basis for $\Span_{F(M)}(\vbar)$. In particular, $\vbar$ is $F(M')$-linearly independent if and only if it is $F(M)$-linearly independent, and in this case  $$\dim_{F(M)}(\Span_{F(M)}(\vbar))=\dim_{F(M')}(\Span_{F(M')}(\vbar)).$$
\end{enumerate}
\end{lemma}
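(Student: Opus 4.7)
For part $(a)$, the key observation is that among the function symbols of $\cL^K_f$, the only ones whose output lies in the $V$-sort are $+_V$, $\cdot_V\colon F\times V\to V$, and the constant $0_V$; in particular, $f$ and the $g_{n,i}$ all return field elements. Consequently, $V(\langle\vbar\abar\rangle)$ is nothing more than the closure of $\{0_V\}\cup\vbar$ under $+_V$ and scalar multiplication by elements of $F(\langle\vbar\abar\rangle)$, which is exactly $\Span_{F(\langle\vbar\abar\rangle)}(\vbar)$. I would make this rigorous by stratifying the generation of $\langle\vbar\abar\rangle$ as an increasing chain of pairs $(A_n,B_n)_{n<\omega}$ with $A_n\seq V(M)$ and $B_n\seq F(M)$, starting from $A_0=\{0_V\}\cup\vbar$ and $B_0=\{0_F,1_F\}\cup\abar$, alternately closing $B_n$ under the field and $\cL_K$-operations together with $f$ and the $g_{n,i}$ applied to tuples in $A_n$, and then closing $A_n$ under $+_V$ and $B_{n+1}$-scalar multiplication; then a straightforward induction on $n$ shows $A_n\seq\Span_{F(\langle\vbar\abar\rangle)}(\vbar)$, giving the nontrivial inclusion, while the reverse is immediate.

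For part $(b)$, the easy direction is: if $\ebar$ is $F(M)$-linearly independent, then it is $F(M')$-linearly independent, since $F(M')\seq F(M)$. The nontrivial direction is where the $g_{n,i}$ functions do their work. Suppose $\ebar=(e_1,\ldots,e_s)$ is $F(M')$-linearly independent but $F(M)$-linearly dependent; then I select a maximal $F(M)$-linearly independent subtuple $(e_{i_1},\ldots,e_{i_k})$ of $\ebar$ (necessarily with $k<s$) and pick some $e_j$ outside it, so that $e_j=\sum_{\ell=1}^k c_\ell e_{i_\ell}$ for some $c_\ell\in F(M)$. By the defining axiom of $g_{k,\ell}$ we then have $c_\ell=g_{k,\ell}^M(e_{i_1},\ldots,e_{i_k},e_j)$, and because $M'\leq M$ with all these arguments in $V(M')$, every $c_\ell$ lies in $F(M')$, contradicting the $F(M')$-linear independence of $\ebar$. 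The basis statement then follows in both directions: $F(M')$-spanning trivially implies $F(M)$-spanning, while in the other direction the same $g_{n,i}$ argument shows that any $F(M)$-combination expressing $v_j\in\vbar$ from an $F(M)$-linearly independent $\ebar$ already has coefficients in $F(M')$, so $F(M)$-spanning transfers back to $F(M')$-spanning. The dimension statement is then immediate.

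The only real subtlety is recognizing that the $g_{n,i}$ functions are precisely the device designed to block the pathology of a substructure creating spurious linear independence; once one sees this, both parts are essentially bookkeeping.
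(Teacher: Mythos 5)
Your proof is correct and follows essentially the same route as the paper: part $(a)$ by unwinding the generation of the substructure (noting that the only $V$-valued operations are $+_V$, $\cdot_V$, $0_V$), and part $(b)$ by using the $g_{n,i}$ functions to pull the coefficients of any $F(M)$-dependence down into $F(M')$, with the reverse implications being trivial inclusions. The only cosmetic difference is that you phrase the independence transfer as a contradiction via a maximal $F(M)$-independent subtuple, whereas the paper argues directly, but the substance is identical.
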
 
\begin{proof}
    Part $(a)$. This can be shown by induction on terms and is left as an exercise to the reader.

    Part $(b)$. Every dependence relation between elements of $\vbar$ with coefficients in $F(M')$ is trivially a dependence relation with coefficients in $F(M)$. Conversely, assume we have $v_j=\sum_{i\in I_0}c_iv_i$ for some $c_i\in F(M)$ and $I_0$ a finite subset of the index set of $\vbar$ with $j\notin I_0$. Then we can choose an $F(M)$-linearly independent subset $\ebar=e_1\ldots e_n\subseteq (v_i)_{i\in I_0}$ so that $v_j=\sum_{i=1}^nd_ie_i$. Since $M\models T^g_{\textit{VF}}$, we have that $g_{n,i}(\ebar, v_j)=d_i$. Thus, we get that $d_i\in M'$, and so we have a  dependence relation in $F(M')$ between $v_j(v_i)_{i\in I_0}$. 

    Now, let $\ebar\subseteq \vbar$ be an $F(M')$-basis for $\Span_{F(M')}(\vbar)$. By the above, there is no dependence relation in $F(M)$ between elements of $\ebar$, and so $\ebar$ is $F(M)$-linearly independent. Moreover, every $v_j\in\vbar\backslash \ebar$ can be written as $\sum_{i\in I}c_ie_i$ for some finite subset $I$ of the index set of $\ebar$, with $c_i\in F(M')\subseteq F(M)$. Therefore, $\ebar$ also spans $\Span_{F(M)}(\vbar)$. 

    Conversely, let $\ebar\subseteq\vbar$ be an $F(M)$-basis for $\Span_{F(M)}(\vbar)$. By the first paragraph of the proof, $\ebar$ is also $F(M')$-linearly independent. Finally, each $v_j\in \vbar\backslash \ebar$  can be written as $\sum_{i\in I}c_ie_i$ with $c_i\in F(M)$ for some finite $I$. However, by the first paragraph again, we can find $d_i\in F(M')$ for $i\in I$ so that $v_j=\sum_{i\in I}d_ie_i$. Hence, $\ebar$ also spans $\Span_{F(M')}(\vbar)$.
\end{proof}

Part $(b)$ of the previous lemma tells us that linear independence and dimension is preserved under substructures and superstructures when working with models of $T^g_{\textit{VF}}$  (and so also of $T^K_{m,f}$).

In the next proof, we will use the following notation. Let $X$ be a set in the $V$-sort. We will write $f(X)$ to denote the image of $X^k$ under $f$, and similarly we write $g_{t,i}(X)$ to denote the image of $X^{t+1}$. Moreover, we will write $g_{\leq n}(X)$ to denote the set $\bigcup_{1\leq i\leq t\leq n}g_{t,i}(X^{t+1})$. On the other hand, for a tuple $\xbar$, we write $\boldsymbol{g}_{\leq n}(\xbar)$ for the tuple $(\boldsymbol{g}_{t,i}(\xbar))_{1\leq i\leq t\leq n}$ (with $(t,i)$ ordered lexicographically).

\begin{lemma}\label{lem:fieldsort}
    Suppose $M\models T^K_{m,f}$, $\vbar\in V(M)$ is a finite tuple, and $\abar\in F(M)$.  Then $F(\langle \vbar\abar\rangle)=\langle \boldsymbol{f}(\vbar)\boldsymbol{g}_{\leq n}(\vbar)\abar\rangle_K$. In particular, if $\vbar$ is linearly independent, then $F(\langle \vbar\abar\rangle)=\langle \boldsymbol{f}(\vbar)\abar\rangle_K$.
\end{lemma}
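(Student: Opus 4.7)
The $\supseteq$ containment is immediate from the fact that $\boldsymbol{f}(\vbar)$, $\boldsymbol{g}_{\leq n}(\vbar)$, and $\abar$ all lie in $F(\langle \vbar \abar \rangle)$, so the $\cL_K$-substructure they generate is contained in the $\cL_K$-reduct of $F(\langle \vbar\abar\rangle)$. For the $\subseteq$ direction, the plan is to set $R\coloneqq\langle \boldsymbol{f}(\vbar)\boldsymbol{g}_{\leq n}(\vbar)\abar\rangle_K$ and $W\coloneqq\Span_R(\vbar)\seq V(M)$, and then to verify that the pair $(W,R)$ forms an $\cL^K_f$-substructure of $M$ containing $\vbar\abar$. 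If so, then by minimality $\langle \vbar\abar\rangle$ is contained in this substructure, and hence $F(\langle \vbar\abar\rangle)\seq R$ as desired. Note that $R$ is in fact a subfield of $F(M)$, since $\cL_K$ contains the inverse symbol $()\inv_F$.

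Most of the closure verifications should be routine. Closure of $R$ under the $\cL_K$-operations and of $W$ under $+_V$ and scalar multiplication by elements of $R$ are immediate. Closure of $W$ under $f$ applied to tuples from $W$ will use $k$-linearity: if $w_i=\sum_j r_{i,j}v_j$ with $r_{i,j}\in R$, then
\[
f(w_1,\ldots,w_k)=\sum_{j_1,\ldots,j_k}r_{1,j_1}\cdots r_{k,j_k}\,f(v_{j_1},\ldots,v_{j_k}),
\]
which lies in $R$ because each $f(v_{j_1},\ldots,v_{j_k})$ appears in $\boldsymbol{f}(\vbar)$.

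The main obstacle will be closure of $W$ under the $g_{t,i}$ applied to tuples $(w_0,\ldots,w_t)$ from $W$. To handle this, I will first choose an $F(M)$-linearly independent subtuple $\ebar=(e_1,\ldots,e_s)$ of $\vbar$ with $\Span_{F(M)}(\ebar)=\Span_{F(M)}(\vbar)$. By Lemma \ref{lem:substruct}(b) together with the defining axioms of $T^g_{\textit{VF}}$, the coordinates of each $v_i$ relative to $\ebar$ are either $0_F$, $1_F$, or values $g_{s,j}(\ebar,v_i)$, all of which lie in $R$. Writing each $w_j=\sum_\ell d_{j,\ell}e_\ell$ then gives $d_{j,\ell}\in R$. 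By the axioms of $T^g_{\textit{VF}}$ again, $g_{t,i}(w_0,\ldots,w_t)$ equals $0_F$ unless $(w_0,\ldots,w_{t-1})$ is $F(M)$-linearly independent and $w_t=\sum_{j=0}^{t-1}a_jw_j$ for unique $a_j\in F(M)$, in which case it equals $a_i$. In this remaining case, the matrix $(d_{j,\ell})_{0\leq j<t,\,1\leq\ell\leq s}$ has rank $t$ over $F(M)$, and applying Cramer's rule to a $t\times t$ nonsingular submatrix expresses each $a_j$ as a quotient of determinants of elements of $R$; since $R$ is a subfield, this yields $a_j\in R$, as required.

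The ``in particular'' clause should then follow easily: when $\vbar$ is $F(M)$-linearly independent, every subtuple of $\vbar$ either consists of distinct (hence linearly independent) vectors, or else contains a repetition (forcing either the first $t$ entries to be dependent, or the final entry to equal one of the preceding ones). In each situation, the defining axioms of $T^g_{\textit{VF}}$ force the value of $g_{t,i}$ to be $0_F$ or $1_F$, so every entry of $\boldsymbol{g}_{\leq n}(\vbar)$ lies in $\{0_F,1_F\}$ and contributes nothing new to the generated $\cL_K$-substructure. This will yield the desired equality $\langle \boldsymbol{f}(\vbar)\boldsymbol{g}_{\leq n}(\vbar)\abar\rangle_K=\langle\boldsymbol{f}(\vbar)\abar\rangle_K$.
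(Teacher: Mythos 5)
Your proof is correct, and it reaches the conclusion by a different organization than the paper. The paper argues bottom-up: it writes $\langle \vbar\abar\rangle$ as the union of an increasing chain $(F_j,V_j)$ with $F_0=L:=\langle \boldsymbol{f}(\vbar)\boldsymbol{g}_{\leq n}(\vbar)\abar\rangle_K$ and $V_j=\Span_{F_j}(\vbar)$, fixes a basis $\ebar\seq\vbar$, reduces all applications of $g_{t,i}$ on $V_j$ to the values $g_{s,i}(\ebar,\cdot)$ via change-of-basis matrices, and then shows by induction on $j$ that every $F_j$ stays inside $L$. You argue top-down: you exhibit the pair $(\Span_R(\vbar),R)$, with $R=L$, as an $\cL^K_f$-substructure of $M$ containing $\vbar\abar$, and conclude by minimality of the generated substructure. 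The essential computations are shared --- multilinearity of $f$ over the span, and recovering $g_{t,i}$-values of span elements by linear algebra over the subfield $R$ from $\ebar$-coordinates --- but you use a nonsingular $t\times t$ submatrix and Cramer's rule where the paper uses linearity of $g_{s,i}$ in its last argument together with a change-of-basis matrix, and your packaging dispenses with the chain and the induction while giving $V(\langle\vbar\abar\rangle)=\Span_R(\vbar)$ as a by-product (sharpening Lemma \ref{lem:substruct}$(a)$ in this case). Your case split correctly absorbs arities $t>s$, since then the first $t$ vectors are automatically dependent and the value is $0_F$, and your treatment of the ``in particular'' clause matches the paper's. The only point I would spell out is that the Cramer solution of the chosen $t\times t$ subsystem really is the tuple $(a_j)$ from the axiom: the full system has a solution in $F(M)$ by hypothesis, the subsystem has a unique solution, and the former restricts to the latter; this is immediate but worth a sentence.
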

\begin{proof}
Let $n=\ell(\vbar)$.
    Let $N=\langle \vbar\abar\rangle$ and let $L=\langle \boldsymbol{f}(\vbar)\boldsymbol{g}_{\leq n}(\vbar)\abar\rangle_K$. The inclusion $F(N)\supseteq L$ is trivial. It is now enough to show that letting $F(N_0) = L$ and $V(N_0) = \Span_{L}(\vbar)$ gives an $\cL^K_f$-structure $N_0$, which means we must have $N\subseteq N_0$ and so $F(N)\subseteq F(N_0)=L$ as desired. However, it is by construction that $F(N_0)$ is a field and $V(N_0)$ is an $F(N_0)$-vector space. Moreover, it is also clear that $f(\wbar)\in F(N_0)$ and $g_{m,i}(\wbar')\in F(N_0)$ for any $\wbar,\wbar'\in V(N_0)$, $1\leq i\leq m$ and $1\leq m\leq n$. Thus, $F(\langle \vbar\abar\rangle)=F(N) = L = \langle \boldsymbol{f}(\vbar)\boldsymbol{g}_{\leq n}(\vbar)\abar\rangle_K$. 

    For the last part of the conclusion, if $\vbar$ is linearly independent then each coordinate of $\boldsymbol{g}_{\leq n}(\vbar)$ is either $0_F$ or $1_F$, and so 
    \[
    F(\langle \vbar\abar\rangle)=\langle \boldsymbol{f}(\vbar)\boldsymbol{g}_{\leq n}(\vbar)\abar\rangle_K=\langle \boldsymbol{f}(\vbar)\abar\rangle_K.\qedhere
    \]
\end{proof}

Lemma \ref{lem:fieldsort} along with part $(a)$ of Lemma \ref{lem:substruct} give us a complete description of the finitely generated substructures of a model of $T^K_{m,f}$. Now, we will turn our attention to a special class of maps between substructures of models of $T^K_{m,f}$, which will provide the majority of our isomorphisms.

\begin{definition}
    Let $M_1,M_2\models T^K_{m,f}$ and let $N_1\leq M_1$ and $N_2\leq M_2$ be $\calL^K_f$-substructures so that $V(N_1)$ and $V(N_2)$ have the same dimension. Suppose $\sigma_F\colon F(N_1)\to F(N_2)$ is an $\cL_K$-isomorphism and $\vbar\in V(N_1)$, $\wbar\in V(N_2)$ are bases.  We define $\sigma^{\vbar,\wbar}_F:V(N_1)\to V(N_2)$ by setting $\sigma^{\vbar,\wbar}_F(\lambda v_i)=\sigma_F(\lambda)w_i$, and then extending additively. 
\end{definition}

We show that as long as $\sigma_F$ respects the forms, $\sigma_F^{\vbar,\wbar}$ will be an $\cL^K_f$-isomorphism.

\begin{lemma} \label{lem:semisim}
    Suppose $M_1, M_2\models T^K_{m,f}$, with forms $f_1,f_2$ respectively, and $N_1\leq M_1$, $N_2\leq M_2$ are substructures such that $\dim V(N_1)=\dim V(N_2)$. Let $\vbar\in V(N_1)$ and $\wbar\in V(N_2)$ be bases. If $\sigma_F:F(N_1)\to F(N_2)$ is an $\calL_K$-isomorphism such that $\sigma_F(\boldsymbol{f}_1(\vbar))= \boldsymbol{f}_2(\wbar)$, then $(\sigma_F,\sigma^{\vbar,\wbar}_F)\colon N_1\to N_2$ is an $\calL^K_f$-isomorphism. 
\end{lemma}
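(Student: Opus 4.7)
My plan is to show that the pair $(\sigma_F,\sigma^{\vbar,\wbar}_F)$ preserves each class of symbols in $\cL^K_f$ in turn, namely $\cL_K$, the pure vector-space operations in $\cL_{\textit{VF}}$, the auxiliary symbols $g_{n,i}$, and finally the $k$-linear form $f$. The setup is routine: by Lemma~\ref{lem:substruct}$(a)$ every element of $V(N_j)$ has a unique (finite) expansion in the basis $\vbar$ (resp.\ $\wbar$) with coefficients in $F(N_j)$, so $\sigma^{\vbar,\wbar}_F$ is a well-defined bijection from $V(N_1)$ to $V(N_2)$. Additivity is immediate from the definition, and $\sigma^{\vbar,\wbar}_F(\lambda\cdot_V u)=\sigma_F(\lambda)\cdot_V\sigma^{\vbar,\wbar}_F(u)$ follows by expanding $u=\sum \mu_iv_i$ and using that $\sigma_F$ is a field homomorphism. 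Since $\sigma_F$ is an $\cL_K$-isomorphism on the $F$-sort, the $\cL^K_{\textit{VF}}$-portion is handled.

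For the symbols $g_{n,i}$, fix $u_1,\ldots,u_n,y\in V(N_1)$ and write each $u_j=\sum_i \lambda_{j,i}v_i$, $y=\sum_i\mu_iv_i$ with $\lambda_{j,i},\mu_i\in F(N_1)$ (only finitely many nonzero). By Lemma~\ref{lem:substruct}$(b)$, $\ubar=(u_1,\ldots,u_n)$ is $F(N_1)$-linearly independent iff it is $F(M_1)$-linearly independent iff the column space of the coefficient matrix $[\lambda_{j,i}]$ has rank $n$. Applying $\sigma_F$, this is equivalent to the analogous rank condition for $\sigma^{\vbar,\wbar}_F(\ubar)$ (viewed over $F(N_2)$ via the basis $\wbar$), so $\sigma^{\vbar,\wbar}_F$ preserves linear (in)dependence. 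When $\ubar$ is independent and $y\in\Span(\ubar)$, the unique coefficients expressing $y=\sum_i c_i u_i$ are mapped by $\sigma_F$ to the unique coefficients for $\sigma^{\vbar,\wbar}_F(y)$ in the image basis, since $\sigma_F$ is a field isomorphism and $\sigma^{\vbar,\wbar}_F$ is defined by linear extension. In all other cases the value of $g_{n,i}$ is $0_F$ on both sides, so preservation holds.

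Finally, preservation of $f$ follows from $k$-linearity together with the assumption $\sigma_F(\boldsymbol{f}_1(\vbar))=\boldsymbol{f}_2(\wbar)$. Writing $u_j=\sum_i \lambda_{j,i}v_i$ for $j\in[k]$, we compute
\[
\sigma_F(f_1(u_1,\ldots,u_k))=\sum_{i_1,\ldots,i_k}\sigma_F(\lambda_{1,i_1})\cdots\sigma_F(\lambda_{k,i_k})\,\sigma_F(f_1(v_{i_1},\ldots,v_{i_k})),
\]
and the hypothesis converts the last factor to $f_2(w_{i_1},\ldots,w_{i_k})$. On the other side, $k$-linearity of $f_2$ applied to $\sigma^{\vbar,\wbar}_F(u_j)=\sum_i\sigma_F(\lambda_{j,i})w_i$ yields the same sum.

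The main obstacle, and the only step that is not immediate from linearity, is the $g_{n,i}$ calculation: one must invoke Lemma~\ref{lem:substruct}$(b)$ to ensure linear independence is detected identically in $N_1$ and $M_1$ (and similarly on the other side), so that the piecewise definition of $g_{n,i}$ transports correctly through $\sigma_F$. Once this is in hand the rest of the verification is bookkeeping.
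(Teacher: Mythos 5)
Your proof is correct and follows essentially the same route as the paper's: check each class of symbols in turn, with scalar multiplication handled by expanding in the basis and the form handled by $k$-linearity together with the hypothesis $\sigma_F(\boldsymbol{f}_1(\vbar))=\boldsymbol{f}_2(\wbar)$. The only difference is that you spell out the $g_{n,i}$ verification (via Lemma \ref{lem:substruct}$(b)$ and preservation of linear independence), which the paper leaves as an immediate consequence of $\sigma$ being a semilinear bijection over $\sigma_F$.
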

\begin{proof}
    Let $\sigma=(\sigma_F,\sigma^{\vbar,\wbar}_F)$. Observe that $\sigma$ is a bijection on both sorts. Moreover, $\sigma$ trivially respects all constants and functions solely in the $F$-sort.  We also have $\sigma(0_V)=0_V$ and $\sigma(x+y)=\sigma(x)+\sigma(y)$ for $x,y$ in the $V$-sort by definition. Next, it is easy to check that for $\lambda\in F(M)$ and $x\in \Span(\vbar)$ we have $\sigma(\lambda\cdot x) = \sigma(\lambda)\cdot\sigma(x)$, since $\sigma(\lambda\cdot v_i)=\sigma_F(\lambda)\cdot w_i = \sigma(\lambda)\cdot\sigma(v_i)$. It then follows that $\sigma$ respects $g_{n,i}$ for all $1\leq i\leq n<\infty$. 
    Finally, we show that $\sigma$ repsects $f$ on $V(N_1)$. By Lemma \ref{lem:substruct}$(a)$, $V(N_1)=\Span_{F(N_1)}(\vbar)$, so it suffices to show
    \[
    f_2(\sigma(x_1),\ldots,\sigma(x_k))=\sigma(f_1(x_1,\ldots,x_k))
    \]
    for all $x_1,\ldots,x_k\in \Span_{F(N_1)}(\vbar)$. So fix $x_1,\ldots,x_k\in \Span_{F(N_1)}(\vbar)$ and for $t\in [k]$, write $x_t=\sum_{i\in I} c_{t,i}v_i$ with each $c_{t,i}\in F(N_1)$ and $I$ a sufficiently large finite subset of the indices of $\vbar$. Then
    \begin{align*}
        & \textstyle f_2\left(\sigma\left(\sum_{i\in I}c_{1,i}v_i\right),\ldots,  \sigma\left(\sum_{i\in I}c_{k,i}v_i\right)\right) \\
        &\textstyle \hspace{75pt}= f_2\left(\sum_{i\in I}\sigma(c_{1,i})w_i,\ldots,  \sum_{i\in I}\sigma(c_{k,i})w_i\right)\\
    &\textstyle \hspace{75pt} = \sum_{i_1\in I}\cdots\sum_{i_k\in I}\sigma(c_{1,i_1})\cdots\sigma(c_{k,i_k})f_2(w_{i_1},\ldots,w_{i_k})\\
    &\textstyle \hspace{75pt}=\sum_{i_1\in I}\cdots\sum_{i_k\in I_0}\sigma(c_{1,i_1})\cdots\sigma(c_{k,i_k})\sigma(f_1(v_{i_1},\ldots,v_{i_k}))\\
   &\textstyle \hspace{75pt}= \sigma\left(f_1\left(\sum_{i\in I}c_{1,i}v_i,\ldots, \sum_{i\in I}c_{k,i}v_i\right)\right). \qedhere
    \end{align*} 
\end{proof}

Our next goal is to formulate an extension property for expansions of $T^K_{m,f}$ which characterizes quantifier elimination (in the language $\cL^K_f$).

\begin{definition}\label{def:rich}
    Fix an $\cL^K_f$-theory $T\supseteq T^K_{m,f}$.
\begin{enumerate}[$(1)$]
\item  Let $N$ be a  model of $T$.
\begin{enumerate}[$(a)$]
\item Call a finite tuple $(\dbar,\wbar)$ from $N$ an \textbf{extension pair} if $\wbar$ is a linearly independent tuple from $V(N)$, and $\dbar$ is a tuple from $F(N)$ so that $|\dbar \boldsymbol{f}(\wbar)| = (|\wbar|+1)^k$. We say the extension pair $(\dbar,\wbar)$ is \textbf{realized in $N$} if there is some $y\in V(N)$ so that $y\wbar$ is linearly independent and $\boldsymbol{f}(y\wbar)=\dbar \boldsymbol{f}(\wbar)$.

\item Given an extension pair $(\dbar,\wbar)$ from $N$, let $q_{(\dbar,\wbar)}(x\vbar)$ be the quantifier-free type saying that $x\vbar$ is  linearly independent  and $\boldsymbol{f}(x\vbar)\models \qftp_{F(N)}(\dbar \boldsymbol{f}(\wbar))$. We say that $(\dbar,\wbar)$ is \textbf{consistent in $T$} if $T\cup q_{(\dbar,\wbar)}(x\vbar)$ is consistent.  
\end{enumerate}
\item  We say $T$ is \textbf{rich} if for any $\aleph_0$-saturated model $N\models T$ and any extension pair $(\dbar,\wbar)$ from $N$, if $(\dbar,\wbar)$ is consistent in $T$ then it is realized in $N$.
 \end{enumerate}
\end{definition}

The next theorem is our main result for the general $k$-linear case.

\begin{theorem}\label{thm:rich}
   Given an $\cL^K_f$-theory $T\supseteq T^K_{m,f}$, the following are equivalent.
   \begin{enumerate}[$(i)$]
        \item $T$ has quantifier elimination.
        \item $T$ is complete and has quantifier elimination.
        \item $T$ is rich.
   \end{enumerate}
\end{theorem}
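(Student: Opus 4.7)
The plan is to close the cycle by proving $(ii) \Rightarrow (i)$ trivially, $(i) \Rightarrow (ii)$, $(i) \Rightarrow (iii)$, and $(iii) \Rightarrow (i)$. For $(i) \Rightarrow (ii)$, I would observe that the $\cL^K_f$-substructure of any $M \models T$ generated by $\emptyset$ is determined up to isomorphism: in the $V$-sort it is $\{0_V\}$; in the $F$-sort it is the $\cL_K$-substructure of $F(M)$ generated by $\emptyset$, which is fixed up to $\cL_K$-isomorphism since $\Th(K)$ is complete with QE; and the interpretations of $f$ and of each $g_{n,i}$ on this trivial substructure are forced by $k$-linearity and the defining axioms. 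Quantifier elimination then reduces every sentence to a quantifier-free condition about this common substructure, so $T$ is complete.

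For $(i) \Rightarrow (iii)$, let $N \models T$ be $\aleph_0$-saturated and $(\dbar, \wbar)$ a consistent extension pair in $N$. Pick $M' \models T$ and $(y', \vbar')$ realizing $q_{(\dbar,\wbar)}$, and let $\dbar'$ collect the entries of $\boldsymbol{f}(y'\vbar')$ that involve $y'$, so $\dbar'\boldsymbol{f}(\vbar')$ has the same $\cL_K$-quantifier-free type as $\dbar\boldsymbol{f}(\wbar)$. By QE and completeness of $\Th(K)$, there is an $\cL_K$-isomorphism between the $\cL_K$-substructures these tuples generate; Lemma \ref{lem:semisim} (applied with vector-sort bases $\vbar' \leftrightarrow \wbar$) lifts this to an $\cL^K_f$-isomorphism $\sigma \colon \langle \vbar'\dbar'\rangle_{M'} \to \langle \wbar\dbar\rangle_N$. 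Now the formula $\phi(\vbar, \dbar_0) \coloneqq \exists x\bigl(x\vbar \text{ is linearly independent} \wedge \boldsymbol{f}(x\vbar) = \dbar_0 \boldsymbol{f}(\vbar)\bigr)$ holds at $(\vbar', \dbar')$ in $M'$, witnessed by $y'$. By QE for $T$, $\phi$ is equivalent to a quantifier-free $\psi$, which is preserved by $\sigma$; hence $\psi(\wbar, \dbar)$ holds in $N$, giving $\phi(\wbar, \dbar)$ and thus the desired realization $y \in V(N)$.

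For $(iii) \Rightarrow (i)$, the plan is a standard back-and-forth QE test between $\aleph_0$-saturated models $M_1, M_2 \models T$: given an isomorphism $\sigma \colon A_1 \to A_2$ between finitely generated substructures and any $b \in M_1$, extend $\sigma$ to include $b$ in its domain. By Lemmas \ref{lem:substruct} and \ref{lem:fieldsort}, one may assume $A_i = \langle \vbar_i \abar_i\rangle$ with $\vbar_i$ a basis of $V(A_i)$ and $F(A_i) = \langle \boldsymbol{f}(\vbar_i) \abar_i\rangle_K$, and that $\sigma$ is described by an $\cL_K$-isomorphism $\sigma_F$ on the field part together with $\vbar_1 \mapsto \vbar_2$. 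In the first case, $b \in F(M_1)$: use QE for $\Th(K)$ and $\aleph_0$-saturation of $F(M_2)$ to find $c \in F(M_2)$ matching the $\cL_K$-type of $b$ over $\boldsymbol{f}(\vbar_1) \abar_1$, then extend via Lemma \ref{lem:semisim}. In the second case, $b \in \Span_{F(M_1)}(\vbar_1)$: pull the coordinates $g_{n,i}(\vbar_1, b)$ into the field-sort domain by the previous case, after which $b$ is forced. In the third case, $b\vbar_1$ is linearly independent: let $\dbar$ be the entries of $\boldsymbol{f}(b \vbar_1)$ involving $b$, so $(\dbar, \vbar_1)$ is an extension pair in $M_1$ realized by $b$ (hence consistent in $T$); transfer $\dbar$ to $\dbar^* \in F(M_2)$ via QE and saturation so that $(\dbar^*, \vbar_2)$ is a consistent extension pair in $M_2$. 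Richness then supplies $c \in V(M_2)$ realizing $(\dbar^*, \vbar_2)$, and Lemma \ref{lem:semisim} with bases $b\vbar_1 \leftrightarrow c\vbar_2$ assembles the required extension of $\sigma$.

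The main obstacle is the third back-and-forth case, where a genuinely new vector must be produced in $M_2$; richness is tailored precisely for this step. The preceding cases are handled by QE for $\Th(K)$ together with the structural Lemmas \ref{lem:substruct}, \ref{lem:fieldsort}, and \ref{lem:semisim}, which crucially rely on the $g_{n,i}$ functions to describe finitely generated $\cL^K_f$-substructures in a transparent way.
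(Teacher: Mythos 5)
Your proposal is correct, and the two substantive directions largely mirror the paper's argument: your $(i)\Rightarrow(ii)$ (truth of quantifier-free sentences is decided on the substructure generated by $\emptyset$, whose $V$-part is $\{0_V\}$ and whose $F$-part is controlled by completeness of $\Th(K)$) is the same content as the paper's term-induction claim, and your $(iii)\Rightarrow(i)$ back-and-forth between $\aleph_0$-saturated models, with the three cases (new field element via back-and-forth in the $F$-sort; spanned vector forced after adjoining its $g_{n,i}$-coordinates; genuinely new vector via richness plus Lemma \ref{lem:semisim}), is exactly the paper's proof — the paper just front-loads your ``spanned vector'' case into a preliminary Claim that reduces to linearly independent tuples by adding $\boldsymbol{g}_{\leq n}(\vbar)$ to the field parameters, so your ``one may assume $\vbar_i$ is a basis'' is justified by precisely that mechanism. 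Where you genuinely diverge is the direction establishing richness: the paper proves $(ii)\Rightarrow(iii)$ by realizing $q_{(\dbar,\wbar)}$ somewhere (using completeness and $\aleph_0$-saturation of $N$), building the substructure isomorphism via Lemmas \ref{lem:fieldsort} and \ref{lem:semisim}, and then invoking QE to make that isomorphism partial elementary and $\aleph_0$-homogeneity of $N$ to drag the witness $y'$ into $N$; you instead prove $(i)\Rightarrow(iii)$ directly by packaging the realization as a single existential formula $\phi(\vbar,\dbar_0)$ (linear independence being quantifier-free via $g_{n+1,1}(x\vbar x)=1_F$), replacing it by a quantifier-free equivalent $\psi$, and transporting $\psi$ along the Lemma \ref{lem:semisim} isomorphism. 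This is a clean shortcut: it needs neither completeness of $T$ at that stage nor saturation or homogeneity of $N$, while the paper's route keeps the witness-moving argument elementary and self-contained at the cost of those hypotheses. Since you also prove $(i)\Rightarrow(ii)$ and note $(ii)\Rightarrow(i)$ is trivial, your cycle closes correctly.
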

\begin{proof}
    $(i)\Rightarrow (ii)$. Given a term $t(\xbar,\vbar)$, with $\vbar$ variables from the $V$-sort, we write $t(\xbar,0_V)$ to denote the term obtained by plugging in $0_V$ for every variable in $\vbar$. The next claim can be proved by induction on terms, and is left to the reader.\medskip

\noindent \textit{Claim:} Let $t(\xbar,\vbar)$ be an $\cL^K_f$-term, with the variables $\xbar$ in the $F$-sort and the variables $\vbar$ in the $V$-sort. 
    \begin{enumerate}[$(a)$]
        \item If $t(\xbar,\vbar)$ is $V$-valued,  then $T\models \forall \xbar (t(\xbar,0_V)=0_V)$.
        \item If $t(\xbar,\vbar)$ is $F$-valued, then there is an $\cL_K$-term $t'(\xbar)$ such that 
    \[T\models \forall \xbar (t(\xbar,0_V)=t'(\xbar)).\]    
    \end{enumerate}

Now assume $T$ has quantifier elimination. We prove that $T$ is complete. Fix an $\cL^K_f$-sentence $\varphi$. We want to show that $T\models \varphi$ or $T\models\neg\varphi$. Since $T$ has quantifier elimination, and $\cL^K_f$ has a constant symbol, we may assume that $\varphi$ is a quantifier-free $\cL^K_f$-sentence. It then suffices to assume $\varphi$ is atomic.  Since equality is the only relation in the $V$-sort, there are two cases:
\begin{enumerate}[$(1)$]
\item $\varphi$ is  $t_1(\cbar,0_V)=t_2(\dbar,0_V)$ where $t_1$ and $t_2$  are $V$-valued $\cL^f_K$-terms and $\cbar$, $\dbar$ are constants in $\cL_K$. 
\item $\varphi$ is  $R(t_1(\cbar_1,0_V),\ldots,t_n(\cbar_n,0_V))$ where each $t_i$ is an $F$-valued $\cL^f_K$-term, $R$ is an $n$-ary relation symbol in $\cL_K$, and $\cbar_1,\ldots,\cbar_n$ are constants in $\calL_K$.
\end{enumerate}
In case $(1)$, part $(a)$ of the claim implies that $\varphi$ is $T$-equivalent to $0_V=0_V$ and so $T\models\varphi$. In case $(2)$, part $(b)$ of the claim implies that there are $\cL_K$-terms $t'_1(\xbar),\ldots,t'_n(\xbar)$ such that $\varphi$ is $T$-equivalent to $R(t'_1(\cbar_1),\ldots,t'_n(\cbar_n))$, which is an $\cL_K$-sentence. Since $T\supseteq\Th(K)$ and $\Th(K)$ is complete, we thus have that either $T\models\varphi$ or $T\models\neg\varphi$. \medskip

$(ii)\Rightarrow (iii)$. Assume $T$ is complete and has quantifier elimination. Fix an $\aleph_0$-saturated model $N\models T$ and an extension pair $(\dbar,\wbar)$ in $N$ that is consistent in $T$. Then $q_{(\dbar,\wbar)}(x\vbar)$ is consistent with $T$ by assumption. Since $T$ is complete and $N$ is $\omega$-saturated, $q_{(\dbar,\wbar)}(x\vbar)$ is realized in $N$ by some $y'\wbar'$. Fix $\dbar'$ so that $\boldsymbol{f}(y'\wbar')=\dbar'\boldsymbol{f}(\wbar')$. Let $N_1=\langle \dbar' \wbar'\rangle$ and $N_2=\langle \dbar\wbar\rangle$. By Lemma \ref{lem:fieldsort}, we have $F(N_1)=\langle \dbar'f(\wbar')\rangle_K$ and $F(N_2)=\langle \dbar \boldsymbol{f}(\wbar)\rangle_K$. Since $\dbar'\boldsymbol{f}(\wbar')$ and $\dbar \boldsymbol{f}(\wbar)$ have the same quantifier-free type in $\cL_K$, there is an $\cL_K$-isomorphism $\sigma_F\colon F(N_1)\to F(N_2)$ with $\sigma_F(\dbar'\boldsymbol{f}(\wbar'))=\dbar \boldsymbol{f}(\wbar)$. By Lemma \ref{lem:semisim}, we have an $\cL^K_f$-isomorphism $\sigma=(\sigma_F,\sigma^{\wbar',\wbar}_F)\colon N_1\to N_2$. 
    
    Since $T$ has quantifier elimination, the inclusion $N_2\to N$ is a partial elementary map. Thus, $\sigma$ as a map $N_1\to N$ is partial elementary. Since $N_1$ is finitely generated, and $N$ is $\aleph_0$-saturated (hence $\aleph_0$-homogeneous), we can extend $\sigma$ to a partial elementary map $\tau$ whose domain contains $y'$. Let $y=\tau(y')$. Then $\tau(y'\wbar')=y\wbar$, and so $y\wbar$ must also be linearly independent. Moreover,
\[
\boldsymbol{f}(y\wbar)=\boldsymbol{f}(\tau(y'\wbar'))=\tau(\boldsymbol{f}(y'\wbar'))=\tau(\dbar'\boldsymbol{f}(\wbar'))=\sigma_F(\dbar'\boldsymbol{f}(\wbar'))=\dbar \boldsymbol{f}(\wbar).
\]
So $y$ witnesses that $(\dbar,\wbar)$ is realized in $N$.\medskip

$(iii)\Rightarrow (i)$. 
Assume $T$ is rich.  We use the characterization of quantifier elimination given by Proposition 2.29 in \cite{Pillay-MTnotes}. Fix $\aleph_0$-saturated $M, N\models T$. We want to show that the set $I$ of finite partial isomorphisms between $M$ and $N$ has the back-and-forth property. That is, we want to show that for any finite $x,\zbar\in M$ and $\zbar'\in N$ such that $\qftp_{M}(\zbar)=\qftp_{N}(\zbar')$, there is $y\in N$ so that $\qftp_{M}(x\zbar)=\qftp_{N}(y\zbar')$, and similarly for the dual case. 
    
     Fix finite tuples $\vbar\in V(M)$, $\abar\in F(M)$, $\wbar\in V(N)$ and $\bbar \in F(N)$ with $\qftp_M(\vbar\abar)=\qftp_N(\wbar\bbar)$. We first prove a claim, which essentially says that isomorphisms between these structures are determined by what happens on a basis, as long as we also add enough scalars to generate all the vectors from this basis. For use this in subsequent arguments, we will need a rather technical syntactic formulation of this statement.  \medskip

     \noindent\textit{Claim:} There are linearly independent tuples $\vbar'\seq\vbar$ and $\wbar'\seq \wbar$, and finite tuples $\abar'\supseteq\abar$ from $F(M)$ and $\bbar'\supseteq\bbar$ from $F(N)$, such that:
     \begin{enumerate}[$(i)$]
         \item $\qftp_M(\vbar'\abar')=\qftp_N(\wbar'\bbar')$.
         \item For arbitrary $x\in M$ and $y\in N$, if $\qftp_M(x\vbar'\abar')=\qftp_N(y\wbar'\bbar')$, then $\qftp_M(x\vbar\abar)=\qftp_N(y\wbar\bbar)$.
     
     \end{enumerate}
    \noindent\textit{Proof:}    Let $n=\ell(\vbar)=\ell(\wbar)$. Define $\abar'$ to be $\boldsymbol{g}_{\leq n}(\vbar)\abar$, and define $\bbar'$ to be $\boldsymbol{g}_{\leq n}(\wbar)\bbar$. Since $\qftp_M(\vbar\abar)=\qftp_N(\wbar\bbar)$, we have that
    \[
    \qftp_M(\vbar\abar')=\qftp_M(\vbar\boldsymbol{g}_{\leq n}(\vbar)\abar)=\qftp_N(\wbar\boldsymbol{g}_{\leq n}(\wbar)\bbar)=\qftp_N(\wbar\bbar').
    \]

    To ease notation, let $N_1\coloneqq \langle \vbar\abar\rangle$ and $N_2\coloneqq \langle \wbar\bbar\rangle$. Let $\vbar'\subseteq \vbar$ be a basis of $\Span_{F(N_1)}(\vbar)=V(N_1)$ (by Lemma \ref{lem:substruct}$(a)$), and let $\wbar'$ the corresponding subtuple of $\wbar$. Since $\vbar$ and $\wbar$ have the same quantifier-free type, $\wbar'$ is a basis for $\Span_{F(N_2)}(\wbar)=V(N_2)$ (again by Lemma \ref{lem:substruct}$(a)$). Since we are taking subtuples with the same indices and $\qftp_M(\vbar\abar')=\qftp_N(\wbar\bbar')$, we have $\qftp_M(\vbar'\abar')=\qftp_N(\wbar'\bbar')$, which is part $(i)$ of the claim.

    Now, fix $x\in M$ and $y\in N$ so that $\qftp_M(x\vbar'\abar')=\qftp_N(y\wbar'\bbar')$. Let $s=\ell(\vbar')=\ell(\wbar')$. For each $i\in [n]$, we have that $v_i = \sum_{j=1}^sg_{s,j}(\vbar', v_i)v'_j$ and 
    $w_i = \sum_{j=1}^sg_{s,j}(\wbar', w_i)w'_j$. By the construction of $\vbar'\abar'$ and $\wbar'\bbar'$, we have that there is some function $r:[n]\times[s]\to [\ell(\boldsymbol{g}_{\leq n}(\vbar))]$ such that for all $i\in [n]$ and $j\in [s]$, we have $g_{s,j}(\vbar',v_i)=a'_{r(i,j)}$ and $g_{s,j}(\wbar',w_i)=b'_{r(i,j)}$. Therefore, if we let $t_i(\xbar,\zbar)$ be the $\cL^K_f$-term $\sum_{j=1}^sz_{r(i,j)}x_j$, we get that $v_i=t_i(\vbar',\abar')$ and $w_i=t_i(\wbar',\bbar')$ for each $i\in [n]$. Let $\tbar(\xbar,\zbar)$ denote the tuple $(t_i(\xbar,\zbar))_{1\leq i\leq n}$. Fix a quantifier-free formula $\varphi(\zbar')$ with $\ell(\zbar')=\ell(x\vbar\abar)=\ell(y\wbar\bbar)$. Then we have 
    \[
    M\models \varphi(x\vbar\abar)\leftrightarrow \varphi(x\tbar(\vbar',\abar')\abar)\makebox[.5in]{and} N\models \varphi(y\wbar\bbar)\leftrightarrow \varphi(y\tbar(\wbar',\bbar')\bbar).
    \] 
    Let $\psi(x_1\xbar_2\xbar_3\xbar_4)$ be the formula $\varphi(x_1\tbar(\xbar_2,\xbar_3)\xbar_4)$, where $x_1$ is a singleton variable, $\ell(\xbar_2)=\ell(\vbar')=\ell(\wbar')$, $\ell(\xbar_3)=\ell(\abar')-\ell(\abar)=\ell(\bbar')-\ell(\bbar)$, and $\ell(\xbar_4)=\ell(\abar)=\ell(\bbar)$. Then  $\psi \in \qftp_M(x\vbar'\abar')$ if and only if $\psi \in \qftp_N(y\wbar'\bbar')$ by assumption. Thus, $\varphi \in \qftp_M(x\vbar\abar)$ if and only if $\varphi \in \qftp_N(y\vbar\abar)$. Since $\varphi$ was arbitrary, we have  $\qftp_M(x\vbar\abar)=\qftp_N(y\wbar\bbar)$ as desired. 
    \clqed
    \medskip

Now let $x\in M$ be arbitrary. We want to find some $y\in N$ such that $\qftp_M(x\vbar\abar)=\qftp_N(y\wbar\bbar)$. Let  $\vbar'$, $\abar'$, $\wbar'$, and $\bbar'$ as in the Claim. By part $(i)$ in the Claim, we have that $\qftp_M(\vbar'\abar')=\qftp_N(\wbar'\bbar')$. Also, by part $(ii)$ in the Claim, if we find some $y\in N$ such that $\qftp_M(x\vbar'\abar')=\qftp_N(y\wbar'\bbar')$, then $\qftp_M(x\vbar\abar)=\qftp_N(y\wbar\bbar)$. Therefore, without loss of generality, we can assume $\vbar$ and $\wbar$ are linearly independent (over $F(M)$ and $F(N)$, respectively), and set $\vbar'=\vbar$, $\wbar'=\wbar$, $\abar'=\abar$, and $\bbar'=\bbar$. Moreover,
 we clearly may assume $x\not\in\langle \vbar \abar\rangle$.

   Note that $F(M)$ and $F(N)$ are $\aleph_0$-saturated  (as models of $\Th(K)$), and so by quantifier elimination for $\Th(K)$ we have the back-and-forth property for finite partial isomorphisms between $F(M)$ and $F(N)$. In the subsequent arguments, we will refer to this simply as ``back-and-forth in the $F$-sort".  
       
     We now proceed by cases depending on the sort of $x$. Let $n=\ell(\vbar)=\ell(\wbar)$. \medskip

    \textit{Case 1:}  $x\in V(M)$. 
    Since $x\notin \langle \vbar\abar\rangle$, $x\vbar$ is a linearly independent tuple from $V(M)$, and thus $F(\langle x\vbar\abar\rangle) = \langle \boldsymbol{f}(x\vbar)\abar\rangle_K$ by Lemma \ref{lem:fieldsort}. Given any tuple $\zbar$, let $\boldsymbol{f}_1(\zbar)$ be the subtuple of $\boldsymbol{f}(\zbar)$ consisting of terms involving $z_1$. Then $\boldsymbol{f}_1(x\vbar)$ consists of the elements in $\boldsymbol{f}(x\vbar)$ arising as images of tuples involving $x$. By back-and-forth in the $F$-sort we obtain a tuple $\dbar$ from $F(N)$ of  length  $\ell(\boldsymbol{f}_1(x\vbar))$, and an isomorphism $\sigma_F\colon \langle \boldsymbol{f}(x\vbar)\abar\rangle_K\to\langle \dbar \boldsymbol{f}(\wbar)\bbar\rangle_K$ so that
    $\sigma_F(\boldsymbol{f}(x\vbar)\abar)=\dbar \boldsymbol{f}(\wbar)\bbar$.

    Note that $(\dbar,\wbar)$ is an extension pair in $N$. Moreover, $x\vbar\models q_{(\dbar,\wbar)}$ and $M\models T$ so $(\dbar,\wbar)$ is consistent in $T$. Thus, as $T$ is rich and $N\models T$ is $\aleph_0$-saturated, we can find some $y\in V(N)$ such that $y\wbar$ is linearly independent and  $\boldsymbol{f}(y\wbar)=\dbar \boldsymbol{f}(\wbar)$. By Lemma \ref{lem:fieldsort}, we have $F(\langle y\wbar\bbar\rangle)=\langle \boldsymbol{f}(y\wbar)\bbar\rangle_K$, and so $\sigma_F$ is an isomorphism from $F(\langle x\vbar\abar\rangle)$ to $F(\langle y\wbar\bbar\rangle)$. By Lemma \ref{lem:substruct}$(a)$, $V(\langle x\vbar\abar\rangle)$ is the $F(\langle x\vbar\abar\rangle)$-span of $x\vbar$, and similarly for $\langle y\wbar\bbar\rangle$. Therefore $\sigma\coloneqq(\sigma_F,\sigma_F^{x\vbar, y\wbar})\colon \langle x\vbar\abar\rangle\to\langle y\wbar\bbar\rangle$ is an $\cL^K_f$-isomorphism by Lemma \ref{lem:semisim}. Moreover, $\sigma$ witnesses $\qftp_M(x\vbar\abar)=\qftp_N(y\wbar\bbar)$.\medskip 
 
    \textit{Case 2:}  $x\in F(M)$. By back-and-forth in the $F$-sort, we obtain $y\in F(N)$ and an isomorphism $\sigma_F\colon \langle  \boldsymbol{f}(\vbar)x\abar\rangle_K\to\langle  \boldsymbol{f}(\wbar)y\bbar\rangle_K$ such that 
    $\sigma_F(\boldsymbol{f}(\vbar)x\abar)=\boldsymbol{f}(\wbar)y\bbar$. 
    Note that by Lemma \ref{lem:fieldsort}, $F(\langle \vbar x\abar\rangle)=\langle \boldsymbol{f}(\vbar)x\abar\rangle_K$ and $F(\langle \wbar y\bbar\rangle)=\langle  \boldsymbol{f}(\wbar)y\bbar\rangle_K$.  Moreover, by Lemma \ref{lem:substruct}$(a)$, $V(\langle \vbar x\abar\rangle)$ is the $F(\langle \vbar x\abar\rangle)$-span of $\vbar$ and similarly for $\langle \wbar y\bbar\rangle$. By Lemma \ref{lem:semisim}, $\sigma\coloneqq (\sigma_F,\sigma_F^{\vbar,\wbar})\colon \langle \vbar x\abar\rangle\to\langle \wbar y\bbar\rangle$ is an $\cL_f^K$-isomorphism.  Moreover, $\sigma$ witnesses $\qftp_M(x\vbar \abar)=\qftp_N(y\wbar\bbar)$.
\end{proof}

\begin{remark}\label{rem:QEerror}
Continuing from Remark \ref{rem:errors}$(2)$, we point out that when proving quantifier elimination, Granger fixes a single sufficiently saturated model $M$ and shows that if two finite tuples in $M$ have the same quantifier-free type, then they have the same complete type. However, this only implies quantifier elimination if the theory in question is already known to be complete. Otherwise, one must consider  two different models $M$ and $N$. For example, consider the empty theory in the language of equality, which does not have quantifier elimination but does satisfy the above condition for any single model. Proposition 9.2.8 of \cite{Gr} purports to justify this quantifier elimination test for an arbitrary theory, but the proof implicitly assumes completeness. It is worth saying that working with two models introduces some  technical issues related to the application of Theorem \ref{thm:witt} below (in particular, in controlling an isomorphism between the fields of the two  models). We also mention that in an earlier part of his thesis,  Granger asserts that \cite[Proposition 9.1.5]{Gr} can be used to prove completeness. But details are not provided, and it is not clear how one would proceed without adding further assumptions (such as $m<\infty$ or that $\Th(K)$ has a countable saturated model). 
\end{remark}

Now, we turn to the specific case when $k=2$, i.e., $f$ is a bilinear form. As usual, write $\beta$ instead of $f$ to distinguish it from the general case. We first state some facts from linear algebra, but translated to the context of $\cL^K_\beta$-structures.

\begin{fact}\label{fact:bases}
    Let $M$ be a model of $T^K_{m,\beta}$, with $\dim(V(M))\leq\aleph_0$.
\begin{enumerate}[$(a)$]
\item Assume $\beta$ is non-degenerate and symmetric in $M$, and $K$ has square roots. Then there is an orthonormal basis  for $V(M)$, i.e., a basis $(e_i)_{i\in I}$ such that for all $i,j\in I$, $\beta(e_i,e_j)=1_F$ if $i=j$ and $\beta(e_i,e_j)=0_F$ if $i\neq j$.
\item Assume $\beta$ is non-degenerate and alternating in $M$. Then there is a symplectic basis for $V(M)$, i.e., a partitioned basis $(e_i)_{i\in I}(f_i)_{i\in I}$ such that for all $i,j\in I$, $\beta(e_i,e_j)=\beta(f_i,f_j)=0_F$,  $\beta(e_i,f_j)=1_F$ if $i=j$, and $\beta(e_i,f_j)=0_F$ if $i\neq j$.
\end{enumerate}
\end{fact}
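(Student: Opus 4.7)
Both parts are standard linear-algebra statements (existence of orthonormal bases for non-degenerate symmetric forms, and of symplectic bases for non-degenerate alternating forms), adapted to the countable-dimensional setting. The plan is to reduce each to a finite-dimensional extension lemma and then iterate, using a back-and-forth-style argument on an enumerated basis to handle the countably infinite case.

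For part $(a)$, assume $\beta$ is non-degenerate and symmetric and that $K$ has square roots; we work under the standing assumption $\operatorname{char}(K)\neq 2$ (implicit in the context where this fact is applied, namely the theory $T^K_{m,\textnormal{sym}}$). The auxiliary lemma to prove is: if $W$ is a finite-dimensional subspace of $V(M)$ with $\beta|_W$ non-degenerate and $v\in V(M)$, then there is a finite-dimensional $U\supseteq W+\langle v\rangle$ such that $\beta|_U$ is non-degenerate and $U$ has an orthonormal basis extending any orthonormal basis of $W$. The proof uses that non-degeneracy of $\beta|_W$ gives $V(M)=W\oplus W^\perp$ with $\beta|_{W^\perp}$ also non-degenerate; then decompose $v=w+u$ with $u\in W^\perp$, find $u'\in W^\perp$ with $\beta(u,u')\neq 0$, and note that at least one of $u$, $u'$, $u+u'$ has nonzero self-pairing (this is where $\operatorname{char}(K)\neq 2$ is needed: $\beta(u+u',u+u')=2\beta(u,u')$). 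Normalizing by a square root of $\beta(\cdot,\cdot)$ yields one or two new orthonormal vectors perpendicular to $W$ whose span, together with $W$, contains $u$.

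For part $(b)$, the auxiliary lemma is the symplectic analogue: if $W$ is finite-dimensional with $\beta|_W$ non-degenerate and $v\in V(M)$, then there is a finite-dimensional $U\supseteq W+\langle v\rangle$ with $\beta|_U$ non-degenerate and a symplectic basis of $U$ extending one for $W$. The argument is parallel: after decomposing $v$ into a $W$-part and a nonzero $W^\perp$-part $u$, the non-degeneracy of $\beta|_{W^\perp}$ supplies some $u'\in W^\perp$ with $\beta(u,u')\neq 0$, and rescaling gives a symplectic pair $(e,f)=(u,u'/\beta(u,u'))$; alternation automatically forces $\beta(e,e)=\beta(f,f)=0_F$. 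No square-root hypothesis is needed here, which is why part $(b)$ requires no assumption on $K$.

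Having these lemmas, the construction of the basis is routine. In the finite-dimensional case, iterate the lemma starting from $W=0$, each time adjoining a new vector from a fixed basis of $V(M)$; this terminates in $\dim V(M)$ steps. In the countable case, enumerate a basis $(v_n)_{n<\omega}$ of $V(M)$ and iteratively apply the lemma to build finite-dimensional subspaces $W_0\subseteq W_1\subseteq\ldots$ such that $\beta|_{W_n}$ is non-degenerate, $v_0,\ldots,v_{n-1}\in W_n$, and $W_n$ carries an orthonormal (resp.\ symplectic) basis extending the previous one; then $\bigcup_n W_n=V(M)$ and the union of the chosen bases is as required. The main obstacle is purely bookkeeping: verifying the non-degenerate direct-sum decompositions at each step and the normalization in the symmetric case, which fails in characteristic $2$ and is precisely where the square-root hypothesis on $K$ enters decisively.
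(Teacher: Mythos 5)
The paper gives no proof of this Fact; it is quoted as standard linear algebra (the classical existence of orthonormal and symplectic bases in dimension at most $\aleph_0$, in the spirit of the references to Gross), so there is nothing to compare against except the standard argument, and yours is a correct rendering of it: the decomposition $V(M)=W\oplus W^\perp$ with $\beta|_{W^\perp}$ non-degenerate, extraction of one or two normalized vectors (resp.\ a hyperbolic pair) whose span together with $W$ contains the new vector, and iteration along an enumerated basis. You also rightly note that part $(a)$ tacitly uses $\operatorname{char}(K)\neq 2$ (without it the statement fails, e.g.\ for an alternating form, which is symmetric in characteristic $2$), and this hypothesis is indeed in force wherever the Fact is applied, namely for $T^K_{m,\textnormal{sym}}$. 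The only compressed step is the isotropic case of $(a)$: after producing the first unit vector $e_1\in W^\perp$ with $\beta(u,e_1)\neq 0$, one must check that $u-\beta(u,e_1)e_1$ has self-pairing $\beta(u,u)-\beta(u,e_1)^2=-\beta(u,e_1)^2\neq 0$ before normalizing; this is routine and is what your ``one or two new orthonormal vectors'' implicitly relies on.
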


The next algebraic result is a restatement of Theorem 5.3.5 in Granger's thesis \cite{Gr}, which seems to be based on Remark 8 in \cite[Section I.7]{Gross}. We state the result in the specific case we need, which is when the form is a bilinear form that is non-degenerate symmetric or alternating. In the case that the form is non-degenerate symmetric, we need to assume $\textnormal{char}(K)\neq 2$ to get our desired conclusion without any extra assumptions. However, when the form is non-degenerate alternating, we can allow the characteristic to be arbitrary since an alternating form is always trace-valued (see Chapter I in \cite{Gross} for definitions and  details). Definition 10 in \cite[Section I.7]{Gross} defines a ``similitude", which is used in the statement of Remark 8. We note that an $\cL^K_\beta$-isomorphism is easily seen to be the same as a ``similitude with multiplier $1$". Therefore, in our terminology we have:

\begin{theorem}\label{thm:witt} 
    Suppose $M,N\models T^K_{m,\beta}$ and $\sigma_F\colon F(M)\to F(N)$ is an $\cL_K$-isomorphism. Assume that either $\beta$ is non-degenerate symmetric in $M, N$ and $\operatorname{char}(K)\neq 2$, or that $\beta$ is non-degenerate alternating in $M,N$. Suppose also that there is some $\sigma_V\colon V(M)\to V(N)$ so that $(\sigma_F,\sigma_V)\colon M\to N$ is an $\cL_\beta^K$-isomorphism. Then for any subspaces $W_1\subseteq V(M)$ and $W_2\subseteq V(N)$, and any $\rho\colon W_1\to W_2$ so that $(\sigma_F,\rho)\colon (F(M), W_1)\to (F(N), W_2)$ is an $\cL_\beta^K$-isomorphism, there is an extension of $(\sigma_F,\rho)$ to an $\cL^K_\beta$-isomorphism from $M$ to $N$.
\end{theorem}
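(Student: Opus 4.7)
The plan is to reduce the statement to a classical Witt-type extension problem inside the single space $(V(M),\beta^M)$, and then invoke the infinite-dimensional Witt extension theorem from \cite{Gross}.

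First, I would use the given $\cL^K_\beta$-isomorphism $(\sigma_F,\sigma_V)$ to transport everything to $V(M)$. Set $W_2' := \sigma_V^{-1}(W_2) \subseteq V(M)$ and $\rho' := \sigma_V^{-1}\circ\rho \colon W_1 \to W_2'$. Since $\sigma_V^{-1}$ is $\sigma_F^{-1}$-semilinear and $\rho$ is $\sigma_F$-semilinear, $\rho'$ is $F(M)$-linear. For any $u,v\in W_1$,
\[
\beta^M(\rho'(u),\rho'(v)) = \sigma_F^{-1}(\beta^N(\rho(u),\rho(v))) = \sigma_F^{-1}(\sigma_F(\beta^M(u,v))) = \beta^M(u,v),
\]
so $\rho'$ is a genuine $F(M)$-linear isometry between subspaces of $(V(M),\beta^M)$. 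If I can extend $\rho'$ to an isometry $\tilde\rho' \colon V(M) \to V(M)$, then setting $\tilde\rho := \sigma_V\circ\tilde\rho'$ yields a $\sigma_F$-semilinear isometry $V(M)\to V(N)$ extending $\rho$, and $(\sigma_F,\tilde\rho)$ is the desired $\cL^K_\beta$-isomorphism $M\to N$. The hypothesized $\sigma_V$ is used only to effect this reduction; it is not itself extended.

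The remaining task is the pure isometry-extension statement: every $F(M)$-linear isometry between subspaces of $(V(M),\beta^M)$ extends to an isometry of $V(M)$. When $m<\infty$ this is the classical Witt extension theorem, valid for nondegenerate symmetric forms in characteristic $\ne 2$ and for nondegenerate alternating forms in any characteristic. For general $m$ I would invoke the infinite-dimensional version available in \cite{Gross}. Alternatively, one can argue directly via Zorn's lemma: take a maximal partial isometry $\rho^*\colon U_1\to U_2$ extending $\rho'$ and assume for contradiction that $U_1\ne V(M)$ (the case $U_2\ne V(M)$ being handled symmetrically). Pick $v\in V(M)\setminus U_1$. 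Using nondegeneracy of $\beta^M$, one finds $w\in V(M)\setminus U_2$ satisfying the system $\beta^M(w,\rho^*(u)) = \beta^M(v,u)$ for all $u\in U_1$, together with the scalar constraint $\beta^M(w,w)=\beta^M(v,v)$ (automatic in the alternating case). A case split on whether $v$ is isotropic and whether $v$ lies in $U_1 + U_1^\perp$ (mirroring the standard Witt argument) then shows that the partial isometry $\rho^* \cup \{(v,w)\}$ extends to a partial isometry on $U_1 + F(M)\cdot v$, contradicting maximality of $\rho^*$. The existence of orthonormal or symplectic bases furnished by Fact \ref{fact:bases} makes the construction of $w$ explicit when the need arises.

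The main obstacle will be the infinite-dimensional step of the Witt argument: at the maximal stage, the system of linear conditions on $w$ has potentially infinite rank, so solvability in $V(M)$ depends crucially on nondegeneracy together with the well-behaved bases from Fact \ref{fact:bases}. The reduction to the fixed field $F(M)$ via $\sigma_V$ is what allows us to use these bases freely, since it removes the need to simultaneously transport across the field isomorphism $\sigma_F$; this is precisely the subtlety noted in Remark \ref{rem:errors}(3).
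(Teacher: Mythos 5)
Your reduction via $\sigma_V$ is fine: $\rho'=\sigma_V^{-1}\circ\rho$ is an $F(M)$-linear $\beta$-isometry of $W_1$ onto $\sigma_V^{-1}(W_2)$, and any isometric extension of $\rho'$ to all of $V(M)$ composes with $\sigma_V$ to give the desired $\cL^K_\beta$-isomorphism. Note, however, that the paper does not prove Theorem \ref{thm:witt} at all: it is imported as a black box, being a restatement of Theorem 5.3.5 of Granger's thesis \cite{Gr}, which in turn rests on Remark 8 in Section I.7 of \cite{Gross}. So what you are attempting is to supply an argument the paper deliberately outsources, and the question is whether your self-contained argument actually works.

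It does not, because the lemma you reduce to --- every linear isometry between \emph{arbitrary} subspaces of a non-degenerate symmetric (char $\neq 2$) or alternating space extends to an isometry of the whole space --- is false for infinite-dimensional subspaces. Take $V$ with orthonormal basis $(e_i)_{i<\omega}$, let $U=\operatorname{Span}(e_i:i\geq 1)$ and let $\rho'$ be the shift $e_i\mapsto e_{i-1}$, an isometry of $U$ \emph{onto} $V$; no bijection of $V$ can extend it, since injectivity would force $e_0\in U$. This is exactly where your Zorn's lemma step breaks: when $U_1$ is infinite-dimensional, the conditions $\beta(w,\rho^*(u))=\beta(v,u)$ for all $u\in U_1$ form an infinite system, and non-degeneracy (Fact \ref{fact:generic}) only represents functionals prescribed on finitely many linearly independent vectors --- no solution $w$ need exist, and in the example above none does. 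The classical Witt induction you mirror is intrinsically finite-dimensional, and your fallback of ``invoking the infinite-dimensional version in \cite{Gross}'' is just the paper's citation again; whatever is proved there must carry restrictive hypotheses (on the subspaces or their orthogonal complements), precisely because of such examples, so it cannot be quoted for the unrestricted claim you state. (The same example shows the blanket statement of Theorem \ref{thm:witt} for arbitrary subspaces requires such a caveat; in the paper it is only ever applied with finite-dimensional $W_1,W_2$, namely the spans of $\wbar'$ and $\wbar$ in Corollary \ref{cor:bilinearQE}.) If you restrict to finite-dimensional $W_1$, your one-step argument can be completed: Fact \ref{fact:generic} solves the resulting finite linear system, and after finitely many steps one matches the orthogonal complements using the orthonormal or symplectic bases of Fact \ref{fact:bases}. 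As written, though, the proposal proves neither the stated theorem nor a correct substitute for it.
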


\begin{remark}\label{rem:wittissue}
Continuing from Remark \ref{rem:errors}$(3)$, we note that Granger does not explicitly justify the existence of the $\cL^K_\beta$-isomorphism $(\sigma_F,\sigma_V)$ when applying Theorem $\ref{thm:witt}$. To address this, we use an approach suggested by \cite[Proposition 9.1.5]{Gr}. In particular, Fact \ref{fact:bases} provides a method for building $\cL^K_\beta$-isomorphisms between spaces of $\emph{countable}$ dimension. However, one must take care to preserve  the existence of an $\cL_K$-isomorphism between the respective fields. This introduces some tension between applying downward Lowenheim-Skolem to obtain countable dimension, and  working with saturated models to obtain isomorphic fields. One must also be sure to preserve the non-degeneracy of the form when passing to a countable dimension subspace. 
\end{remark}

The next lemma includes an extra set-theoretic assumption regarding the existence of saturated models, which we will discuss further after the proof. As suggested by the previous remark, we will also temporarily assume that $K$ is in a countable language.

\begin{lemma}\label{lem:chain}
Assume $\cL_K$ is countable. Let $T$ be either $T^K_{m,\textnormal{sym}}$ or  $T^K_{m,\textnormal{alt}}$, and suppose $\kappa$ is a cardinal such that any countable model of $T$ has  a saturated elementary extension of cardinality $\kappa$.  Fix $M\models T$ and a countable set $A\seq M$. Then there is an $\cL^K_\beta$-structure $N$ satisfying the following properties:
\begin{enumerate}[$(i)$]
\item $A\seq N$ and  $N\models T$.
\item $N$ is a substructure of a model of $\Th_A(M)$.
\item $\dim(V(N))\leq\aleph_0$.
\item Either $K$ is finite or $F(N)$ is a saturated model of $\Th(K)$ of cardinality $\kappa$.
\end{enumerate}
\end{lemma}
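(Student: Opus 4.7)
The plan is to construct $N$ as an $\cL^K_\beta$-substructure of a carefully chosen ambient saturated model. \textbf{Step 1:} By downward Löwenheim--Skolem (using $|\cL^K_\beta| = \aleph_0$), fix a countable elementary substructure $M_0 \preceq M$ with $A \subseteq M_0$. Use the hypothesis on $\kappa$ to fix a saturated elementary extension $M_1 \succeq M_0$ of cardinality $\kappa$. Then $M_1 \models \Th_A(M)$, since $A \subseteq M_0 \preceq M_1$ and $A \subseteq M_0 \preceq M$ force $M_1 \equiv_A M$. Moreover, $F(M_1)$ is $\kappa$-saturated as an $\cL_K$-structure: any $\cL_K$-type over $B \subseteq F(M_1)$ of size $< \kappa$ translates to an $\cL^K_\beta$-type realized in $M_1$ by saturation, and the realization must lie in $F(M_1)$. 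When $K$ is infinite, the type $\{x \in F\} \cup \{x \neq b : b \in B\}$ for $B \subseteq F(M_1)$ of size $<\kappa$ is consistent and realized, forcing $|F(M_1)| = \kappa$; so $F(M_1)$ is saturated of cardinality $\kappa$.

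\textbf{Step 2:} Extract a subspace $V_0 \subseteq V(M_1)$ of countable dimension over $F(M_1)$ that contains $A \cap V(M_1)$ and on which $\beta$ restricts to a non-degenerate alternating (resp.\ symmetric) form. If $m$ is finite, take $V_0 = V(M_1)$. Otherwise, build $V_0$ together with a symplectic (resp.\ orthonormal) basis $B$ via a Gram--Schmidt-style induction: enumerate $A \cap V(M_1) = \{a_n : n < \omega\}$, and at stage $n$ add at most one or two new basis vectors to $B$ so that $a_n$ lies in the $F(M_1)$-span of the current basis, while preserving orthogonality against previous basis vectors. At each step, non-degeneracy of $\beta$ on $V(M_1)$ (Fact \ref{fact:generic}) supplies the required partner vectors, and the square-root hypothesis normalizes diagonals in the symmetric case. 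Setting $V_0 = \Span_{F(M_1)}(B)$, we get $\dim_{F(M_1)} V_0 = \aleph_0$ and, since $V_0$ has a symplectic/orthonormal basis, $\beta|_{V_0}$ is non-degenerate.

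\textbf{Step 3:} Define $N$ to be the $\cL^K_\beta$-substructure of $M_1$ with $F(N) = F(M_1)$ and $V(N) = V_0$; this is closed under $\cL^K_\beta$ since the values of each $g_{n,i}$ are coefficients of linear combinations, which remain in $F(N)$ by Lemma \ref{lem:substruct}. Condition (ii) is immediate since $N \leq M_1 \models \Th_A(M)$; condition (iii) follows from $\dim V(N) \leq \aleph_0$; condition (iv) was handled in Step 1; and for (i), $A \subseteq N$ by construction while $N \models T$ because $V(N)$ is a subspace over $F(N)$ of the correct dimension equipped with a non-degenerate alternating (resp.\ symmetric) form. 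The main obstacle is the Gram--Schmidt-style construction in Step 2, especially the symmetric case where an intermediate vector $a'$ may be isotropic ($\beta(a',a') = 0$) and must first be adjusted by adding a scalar multiple of a fresh orthogonal partner before normalizing; the hypotheses $\operatorname{char}(K) \neq 2$ and square roots in $K$ make this step elementary.
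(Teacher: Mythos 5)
Your route is sound but genuinely different from the paper's. The paper first passes to a countable elementary submodel $M'\preceq M$ containing $A$, invokes Fact \ref{fact:bases} to get a countable symplectic/orthonormal basis $\ebar$ of $V(M')$, then takes a saturated extension $M''\succeq M'$ and sets $N=\langle F(M'')\ebar\rangle$; linear independence of $\ebar$ persists in $M''$ by Lemma \ref{lem:substruct}$(b)$, and non-degeneracy of $\beta$ on $N$ is checked directly from the basis being symplectic/orthonormal. You instead go straight to the saturated model $M_1$ and build the special basis there by a relative Gram--Schmidt whose span is forced to contain the vector part of $A$ --- in effect re-proving Fact \ref{fact:bases} in a relativized form inside a space of possibly huge dimension, using Fact \ref{fact:generic} for partner vectors and the char $\neq 2$/square-root hypotheses to handle isotropic residues in the symmetric case. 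What the paper's detour through $M'$ buys is precisely that this construction comes for free (the countable model already has countable dimension, so Fact \ref{fact:bases} applies verbatim); what your route buys is a slightly more self-contained argument and an explicit verification that $F(M_1)$ is a saturated model of $\Th(K)$ of size $\kappa$, which the paper leaves implicit. Your saturation argument for the $F$-sort is fine: a finitely satisfiable $\cL_K$-type over $B\subseteq F(M_1)$ is finitely satisfiable in $M_1$, hence realized there by an element of the $F$-sort.

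One concrete gap: you assert that $\dim_{F(M_1)} V_0=\aleph_0$, but the construction as described only adds basis vectors as needed to capture the enumeration of $A\cap V(M_1)$. If that set is finite (or spans a finite-dimensional space --- e.g.\ if $A$ lies entirely in the field sort), your $B$ is finite, so $V_0$ is finite-dimensional and then $N$ fails the infinite-dimensionality axiom of $T^K_{\infty,\textnormal{alt}}$ or $T^K_{\infty,\textnormal{sym}}$, breaking condition $(i)$ in the case $m=\infty$. The fix is trivial: interleave stages in which you pick any vector of $V(M_1)$ outside the current span (possible since $V(M_1)$ is infinite-dimensional) and run the same extension step on it, so that the basis is genuinely countably infinite; but as written the claim is unjustified and in the worst case false. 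With that padding added, all four conditions go through as you describe.
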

\begin{proof}
    Since $\cL^K_{\beta}$ is countable, we may let $M'\preceq M$ be a countable elementary substructure of $M$ containing $A$. Then $\dim(M')\leq\aleph_0$ and so by Fact \ref{fact:bases}, we may choose a countable basis $\ebar\in V(M')$ which is either orthonormal or sympletic (depending on $T$). Let $M''\succeq M'$ be a saturated elementary extension of cardinality $\kappa$. Finally, let $N=\langle  F(M'')\ebar\rangle$. Then $(iii)$ clearly holds by Lemma \ref{lem:substruct}$(b)$, and $N$ is a substructure of $M''\models \Th_A(M)$, so we have $(ii)$. Since $F(N)=F(M'')$, we have $(iv)$ by choice of $M''$. It remains to verify $(i)$. First, note that using Lemma \ref{lem:substruct}$(a)$ and the fact that $F(M')\subseteq F(M'')$ we have $$A\seq V(M')=\Span_{F(M')}(\ebar)\seq \Span_{F(M'')}(\ebar)=V(N).$$ Since $N$ is a substructure of $M''\models T$, and $F(N)=F(M'')$, we immediately get that $N$ satisfies all axioms of $T$ except possibly non-degeneracy. So it remains to show that $\beta$ is non-degenerate in $N$. Fix some nonzero $v\in V(N)=\Span_{F(M'')}(\ebar)$. Let $v=\sum_{i\in I}a_i e_i$ where $I$ is a (nonempty) finite subset of the index set of $\ebar$ and each $a_i$ is nonzero. Fix $t\in I$. Then since $\ebar$ is either orthogonal or symplectic, we can choose some coordinate $e_j$ such that $\beta(e_t,e_j)=1$ and $\beta(e_i,e_j)=0$ for all $i\neq t$. Therefore $\beta(v,e_j)=a_t\neq 0$, as desired. 
\end{proof}

\begin{remark}\label{rem:HalKap}
By \cite[Corollary 2.5]{HalKap}, in order to give a $\mathsf{ZFC}$ proof of quantifier elimination for a given theory (in a countable language), it suffices to exhibit a proof assuming the existence of some $\kappa$ as in the previous lemma. 
\end{remark}

\begin{remark}
    It seems reasonable that Lemma \ref{lem:chain} could be proved without the countability assumption on $\cL_K$, either through more finely tuned tools from linear algebra, or a careful Skolem hull argument in this particular language. We also note that this issue does not arise in Granger's thesis, since he assumes $\cL_K$ is a definitional expansion of the field language, hence is countable.
\end{remark}

Now we can prove the main goal in the bilinear case. 

\begin{corollary}\label{cor:bilinearQE}
Let $T$ be either $T^K_{m,\textnormal{sym}}$ or  $T^K_{m,\textnormal{alt}}$ (as  defined in Definition \ref{def:bilinears}). Then $T$ is complete and has quantifier elimination.
\end{corollary}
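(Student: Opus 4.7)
The plan is to invoke Theorem~\ref{thm:rich}: since richness implies both completeness and quantifier elimination, it suffices to show that $T$ (either $T^K_{m,\textnormal{sym}}$ or $T^K_{m,\textnormal{alt}}$) is rich. So fix an $\aleph_0$-saturated model $N \models T$ and an extension pair $(\dbar,\wbar)$ in $N$ consistent in $T$; the goal is to produce $y \in V(N)$ with $y\wbar$ linearly independent and $\boldsymbol{\beta}(y\wbar) = \dbar\,\boldsymbol{\beta}(\wbar)$. By $\aleph_0$-saturation of $N$, it suffices to realize the corresponding finitary type over $\dbar\wbar$ in some elementary extension of $N$. Via Remark~\ref{rem:HalKap} we may also assume a cardinal $\kappa$ as in Lemma~\ref{lem:chain} exists.

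From consistency of $(\dbar,\wbar)$ in $T$, choose $M^* \models T$ and a linearly independent tuple $x'\vbar' \in V(M^*)$ such that $\boldsymbol{\beta}(x'\vbar')$ has the same quantifier-free $\cL_K$-type as $\dbar\,\boldsymbol{\beta}(\wbar)$. Apply Lemma~\ref{lem:chain} to $M^*$ (with $A = x'\vbar'$) and to $N$ (with $A = \dbar\wbar$) to produce countable-dimension $\cL^K_\beta$-structures $M$ and $N'$ satisfying $T$ and containing $x'\vbar'$ and $\dbar\wbar$ respectively, with $N'$ embedded into some elementary extension of $N$, and with $F(M)$ and $F(N')$ each either finite or saturated models of $\Th(K)$ of cardinality $\kappa$. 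In either case $F(M) \cong_{\cL_K} F(N')$, and by back-and-forth coming from quantifier elimination for $\Th(K)$, I can choose an $\cL_K$-isomorphism $\sigma_F \colon F(M) \to F(N')$ with $\sigma_F(\boldsymbol{\beta}(x'\vbar')) = \dbar\,\boldsymbol{\beta}(\wbar)$.

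Since $V(M)$ and $V(N')$ have the same (countable or prescribed finite) dimension, Fact~\ref{fact:bases} furnishes orthonormal bases in the symmetric case and symplectic bases in the alternating case. Matching such bases and extending $\sigma_F$-semi-linearly produces $\sigma_V \colon V(M) \to V(N')$ making $(\sigma_F,\sigma_V)$ an $\cL^K_\beta$-isomorphism $M \to N'$ by Lemma~\ref{lem:semisim}. I then invoke Theorem~\ref{thm:witt} with $W_1 = \Span_{F(M)}(\vbar')$, $W_2 = \Span_{F(N')}(\wbar)$, and $\rho \colon W_1 \to W_2$ defined by $\rho(\vbar') = \wbar$ extended $\sigma_F$-semi-linearly; the pair $(\sigma_F,\rho)$ is a valid $\cL^K_\beta$-isomorphism on the subspaces because the $\boldsymbol{\beta}(\vbar')$-sub-tuple of $\sigma_F(\boldsymbol{\beta}(x'\vbar'))$ is precisely $\boldsymbol{\beta}(\wbar)$. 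Witt's theorem extends $(\sigma_F,\rho)$ to an $\cL^K_\beta$-isomorphism $\tau \colon M \to N'$, and setting $y = \tau(x')$ yields $y\wbar = \tau(x'\vbar')$ linearly independent with $\boldsymbol{\beta}(y\wbar) = \sigma_F(\boldsymbol{\beta}(x'\vbar')) = \dbar\,\boldsymbol{\beta}(\wbar)$, completing the argument.

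The main obstacle is precisely the coordination identified in Remark~\ref{rem:wittissue}: Witt's theorem requires a global $\cL^K_\beta$-isomorphism $(\sigma_F,\sigma_V) \colon M \to N'$ whose field component is the \emph{same} $\sigma_F$ that must carry $\boldsymbol{\beta}(x'\vbar')$ to $\dbar\,\boldsymbol{\beta}(\wbar)$. Producing this $(\sigma_F,\sigma_V)$ globally requires countable $V$-dimension (so Fact~\ref{fact:bases} delivers bases that can be matched across the two models) together with field saturation (so that $\sigma_F$ with its prescribed action on $\boldsymbol{\beta}(x'\vbar')$ can be extended by back-and-forth to a full $\cL_K$-isomorphism of fields). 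Lemma~\ref{lem:chain} is engineered to make both of these available simultaneously, and the remainder of the proof is then just an organized bookkeeping of the resulting Witt extension.
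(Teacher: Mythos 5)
Your proposal reproduces the paper's argument almost step for step: richness via Theorem \ref{thm:rich}, a model $M^*$ realizing $q_{(\dbar,\wbar)}$ from consistency, Lemma \ref{lem:chain} (with Remark \ref{rem:HalKap}) to pass to countable-dimensional models with saturated fields, extension of the partial field map to a full $\cL_K$-isomorphism $\sigma_F$, the global $\cL^K_\beta$-isomorphism built from Fact \ref{fact:bases} and Lemma \ref{lem:semisim}, the application of Theorem \ref{thm:witt}, and finally pulling the realization back to $N$ by $\aleph_0$-saturation. Your diagnosis in the last paragraph of why Lemma \ref{lem:chain} is exactly what makes Witt's theorem applicable matches the paper's Remark \ref{rem:wittissue}.

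The one genuine omission is that Lemma \ref{lem:chain} is stated only under the hypothesis that $\cL_K$ is countable, and you invoke it without checking this. The paper's proof opens by reducing to that case: since $\Th(K)$ eliminates quantifiers, one writes $\cL_K=\bigcup_{\alpha<\lambda}\cL_\alpha$ with each $\cL_\alpha$ a countable expansion of the field language such that $\Th(K{\upharpoonright}\cL_\alpha)$ still has quantifier elimination; then $T=\bigcup_\alpha T^{K_\alpha}_{m,c}$, and quantifier elimination for each $T^{K_\alpha}_{m,c}$ yields it for $T$ (completeness then follows from Theorem \ref{thm:rich}). As written, your argument only covers countable $\cL_K$; you should either add this reduction or reprove a version of Lemma \ref{lem:chain} without the countability hypothesis. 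Two smaller points worth a sentence each: (i) your claim that $V(M)$ and $V(N')$ have matching dimension should be justified (it follows because both model $T$, which pins the dimension at $m$ if $m<\infty$ and forces infinite dimension otherwise, combined with the $\leq\aleph_0$ bound from the lemma; the paper instead guarantees it by putting a basis, resp.\ a countably infinite independent set, into the sets $A$ and $B$ fed to Lemma \ref{lem:chain}); and (ii) the realization found in $N'$ transfers to the ambient model of $\Th_{\dbar\wbar}(N)$ only because $q_{(\dbar,\wbar)}$ is a quantifier-free type preserved under the embedding $N'\hookrightarrow$ that model --- $N'$ itself is merely a substructure, not an elementary extension of $N$, so this word of justification is needed before quoting saturation of $N$.
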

\begin{proof}
We first explain that we can reduce to the case that $\cL_K$ is countable. In particular, since $\Th(K)$ has quantifier elimination, we can write $\cL_K$ as a union $\bigcup_{\alpha<\lambda}\cL_\alpha$ where each $\cL_\alpha$ is countable extension of the field language and, if $K_\alpha\coloneqq K{\upharpoonright}\cL_\alpha$, then $\Th(K_\alpha)$ also has quantifier elimination. Now we have $T=\bigcup_{\alpha<\lambda}T^{K_\alpha}_{m,c}$, where $c$ denotes either ``sym" or ``alt". If we can prove quantifier elimination for each $T^{K_\alpha}_{m,c}$, then this will establish quantifier elimination for $T$ (and thus also completeness by Theorem \ref{thm:rich}).

So without loss of generality, we assume $\cL_K$ is countable. To prove quantifier elimination for $T$, it suffices by Theorem \ref{thm:rich} to show that $T$ is rich. Let $N$ be an $\aleph_0$-saturated model of $T$ and fix an extension pair $(\dbar,\wbar)$ in $N$ which is consistent in $T$. We want to show that $(\dbar,\wbar)$ is realized in $N$.

By assumption, $T\cup q_{(\dbar,\wbar)}$ is consistent. Thus we can find $M\models T$ and  $y'\wbar'\in V(M)$ linearly independent so that $\qftp_{F(M)}(\boldsymbol{\beta}(y'\wbar'))=\qftp_{F(N)}(\dbar\boldsymbol{\beta}(\wbar))$. Let $\dbar'\in F(M)$ be such that $\boldsymbol{\beta}(y'\wbar')=\dbar'\beta(\wbar')$. It follows that we have an $\cL_K$-isomorphism $\sigma^*_F\colon \langle \dbar'\boldsymbol{\beta}(\wbar')\rangle_K\to \langle \dbar\boldsymbol{\beta}(\wbar)\rangle_K$.

Let $A\seq M$ be a countable set containing $y'\wbar'$, along with either a basis for $V(M)$ if $m<\infty$, or a countably infinite independent set in $V(M)$ if $m=\infty$. Let $B\seq N$ be a countable set containing $\dbar\wbar$, along with either a basis for $V(N)$ if $m<\infty$, or a countably infinite independent set in $V(N)$ if $m=\infty$. 

Applying Lemma \ref{lem:chain} to $M$ and $N$ with $A$ and $B$ (and employing Remark \ref{rem:HalKap}), we  obtain  $\cL^K_\beta$-structures $M',N'$ satisfying the following properties.
\begin{enumerate}[$(i)$]
\item $A\seq M'$, $B\seq N'$, and $M',N'\models T$.
\item $N'$ is a substructure of a model of $\Th_B(N)$.
\item $\dim(V(M'))$ and $\dim(V(N'))$ are countable.
\item $F(M')$ and $F(N')$ are saturated models of $\Th(K)$ of the same cardinality.\footnote{Note that if $K$ is finite then this is automatic since models of $\Th(K)$ are saturated and pairwise isomorphic. In the case of finite $K$, it would also be reasonable to add constants for all elements of $K$, in which case $\langle X\rangle_K$ can be identified with $K$ for any set $X$.}
\end{enumerate}

Recall that $\sigma^*_F$ is an $\cL_K$-isomorphism between (finitely generated) substructures of $F(M')$ and $F(N')$. Since $\Th(K)$ has quantifier elimination, $\sigma_F$ is a partial elementary map from $F(N')$ to $F(N')$. Since $F(M')$ and $F(N')$ are saturated models of $\Th(K)$ of the same  cardinality, we can extend $\sigma^*_F$ to an  $\cL_K$-isomorphism $\sigma_F\colon F(M')\to F(N')$.

 Now let $W_1\seq V(M')$ and $W_2\seq V(N')$ be the subspaces generated by $\wbar'$ and $\wbar$, respectively. Then $W_1$ and $W_2$ have the same dimension since $\wbar'$ and $\wbar$ are linearly independent tuples of the same length. Since $\sigma_F(\boldsymbol{\beta}(\wbar'))=\boldsymbol{\beta}(\wbar)$, it follows from Lemma \ref{lem:semisim} that $(\sigma_F,\sigma^{\wbar',\wbar}_F)\colon (F(M'),W_1)\to (F(N'),W_2)$ is an $\cL^K_\beta$-isomorphism. 

Next, note that by choice of $A$ and $B$, condition $(ii)$ implies  that $V(M')$ and $V(N')$ have the same countable dimension (in particular, both dimensions are $m$ if $m<\infty$, and otherwise both are $\aleph_0$). Since $M',N'\models T$, we can use Fact \ref{fact:bases} to obtain bases $\ebar_1$ of $V(M')$ and $\ebar_2$ of $V(N')$ of the same length and with the property that \emph{any}  field automorphism from $F(M')$ to $F(N')$ (e.g., $\sigma_F$) will map $\beta(e_{1,i},e_{1,j})$ to $\beta(e_{2,i},e_{2,j})$ for all $i,j$. Thus  $(\sigma_F,\sigma_F^{\ebar_1,\ebar_2})$ is an $\cL^K_\beta$-isomorphism from $M'$ to $N'$ by Lemma \ref{lem:semisim}.

We now have the $\cL^K_\beta$-isomorphisms $(\sigma_F,\sigma^{\wbar',\wbar}_F)\colon (F(M'),W_1)\to (F(N'),W_2)$ and $(\sigma_F,\sigma_F^{\ebar_1,\ebar_2})\colon M'\to N'$. Applying Theorem \ref{thm:witt} with $M'$, $N'$, $\sigma_V=\sigma_F^{\ebar_1,\ebar_2}$, and $\rho=\sigma^{\wbar',\wbar}_F$, we obtain an $\cL^K_\beta$-isomorphism $\sigma\colon M'\to N'$ extending $(\sigma_F,\sigma_F^{\wbar',\wbar})$. Let $y^*=\sigma(y')$. Then we have that
\[
\boldsymbol{\beta}(y^*\wbar)=\boldsymbol{\beta}(\sigma(y'\wbar'))=\sigma(\boldsymbol{\beta}(y'\wbar'))=\sigma_F(\dbar'\boldsymbol{\beta}(\wbar'))=\dbar\boldsymbol{\beta}(\wbar).
\]
 Let $p(z)$ be the partial quantifier-free $\cL^K_\beta$-type, with parameters $\dbar\wbar\in N$ saying that $z\wbar$ is linearly independent and $\boldsymbol{\beta}(z\wbar)=\dbar\boldsymbol{\beta}(\wbar)$. Then we have just shown that $p(z)$ is realized in $N'$ (by $y^*$). Since $N'$ is a substructure of a model of $\Th_{\dbar\wbar}(N)$, and $\dbar\wbar\in B$, we have that $\Th(N)\cup p(z)$ is consistent. Since $N$ is $\aleph_0$-saturated, it follows that $p(z)$ is realized in $N$. Therefore, $(\dbar,\wbar)$ is realized in $N$, as desired.
 \end{proof}

We end this subsection with some further  applications of Theorem \ref{thm:rich} that are worth mentioning. 
First, we observe that Theorem \ref{thm:rich} can be used to analyze  the case of \emph{linear} forms. So now assume $k=1$ and so, in particular, $T^K_{m,f}$ asserts that $f$ is a linear form.  For $n_1,n_2\in \Z^+\cup\{\infty\}$, let $T^K_{(n_1,n_2)}$ be the $\cL^K_f$-theory extending $T^K_{n_1+n_2,f}$ which says that the kernel of $f$ has dimension $n_1$ and codimension $n_2$. Note that \textit{any} complete extension $T\supseteq T^K_{m,f}$ will specify the dimension and codimension of the kernel of $f$, so the following result characterizes all such theories. 

\begin{corollary}\label{cor:linearQE}
    $T^K_{(n_1,n_2)}$ is complete and has quantifier elimination.
\end{corollary}
\begin{proof}
    By Theorem \ref{thm:rich}, it is enough to show $T^K_{(n_1,n_2)}$ is rich. Fix an $\aleph_0$-saturated $N\models T^K_{(n_1,n_2)}$ and an extension pair $(\dbar,\wbar)$ from $N$ consistent in $T^K_{(n_1,n_2)}$. We want to show that $(\dbar,\wbar)$ is realized in $N$. 
    
    By the consistency assumption, there is some $M\models T^K_{(n_1,n_2)}$ with $y'\wbar'\in V(M)$ linearly independent so that $\qftp_{F(M)}(\boldsymbol{f}(y'\wbar'))=\qftp_{F(N)}(\dbar\boldsymbol{f}(\wbar))$. Thus, we can choose an $\cL_K$-isomorphism $\sigma_F^*:\langle \boldsymbol{f}(y'\wbar')\rangle_K\to\langle \dbar\boldsymbol{f}(\wbar)\rangle_K$. Choose saturated elementary extensions $N'\succeq N$ and $M'\succeq M$ of the same cardinality (using Remark \ref{rem:HalKap}). Then $F(N')$ and $F(M')$ are saturated models of  $\Th(K)$ with the same cardinality. Since $\Th(K)$ has quantifier elimination in $\cL_K$, the partial isomorphism $\sigma_F^*$ is a partial elementary map from $F(M')$ to $F(N')$. Therefore, there is an $\cL_K$-isomorphism $\sigma_F:F(M')\to F(N')$ extending $\sigma_F^*$. 
    
    Now, we choose a basis $\ebar=\ebar_1\ebar_2$ of $V(M')$ and a basis $\ebar^* = \ebar^*_1\ebar^*_2$ of $V(N')$ so that $\ebar_1$ and $\ebar^*_1$ span the kernel of $f$ in $M'$ and $N'$, respectively. Note that regardless of whether $n_1$ and $n_2$ are finite or infinite, we have  $\ell(\ebar_1)=\ell(\ebar^*_1)$ and $\ell(\ebar_2)=\ell(\ebar^*_2)$ by saturation. Moreover, we have  $f(e_{1,i})=0_F$ for every $i\leq \ell(\ebar_1)$ and $f(e_{2,i})\neq0_F$ for every $i\leq \ell(\ebar_2)$. Without loss of generality, we can replace each $e_{2,i}$ by $f(e_{2,i})\inv e_{2,i}$, and thus assume  $f(e_{2,i})=1_F$ for every $i\leq \ell(\ebar_2)$. Similarly, we can assume that $f(e^*_{1,i})=0_F$ for $i\leq \ell(\ebar^*_1)$ and $f(e^*_{2,i})=1_F$ for $i\leq \ell(\ebar^*_2)$. Therefore, $\sigma_F(\boldsymbol{f}(\ebar))=\boldsymbol{f}(\ebar^*)$, and so by Lemma \ref{lem:semisim}, we have an isomorphism $\sigma=(\sigma_F,\sigma_F^{\ebar,\ebar^*})\colon M'\to N'$. Now $\sigma(y'\wbar')\in N'$ is a linearly independent tuple of vectors so that $\boldsymbol{f}(\sigma(y'\wbar'))=\sigma(\boldsymbol{f}(y'\wbar'))=\dbar\boldsymbol{f}(\wbar)$. Therefore, as $N'\succeq N$ and $N$ is $\aleph_0$-saturated, we can find $y\in N$ so that $y\wbar$ is linearly independent and $\boldsymbol{f}(y\wbar)=\dbar\boldsymbol{f}(\wbar)$. That is, $(\dbar,\wbar)$ is realized in $N$ as desired.
\end{proof}

Finally, recall the theory $\widetilde{T}^K_m$ (defined for $m=\infty$ in Definition \ref{def:TK}) which axiomatizes vector spaces of dimension $m$ over models of $\Th(K)$ (in the two-sorted setting with a binary map for scalar multiplication). At the beginning of this subsection, we also defined the theory $T^K_m$ in the expanded language with the $g_{n,i}$ functions and associated axioms. In particular, $T^K_m$ is a definitional extension of $\widetilde{T}^K_m$. We  now see that this expansion is sufficient to obtain quantifier elimination. The idea is that we can further expand $T^K_m$ to an $\cL^K_\beta$-theory asserting that $\beta$ is identically $0$, in which case it is easy to show that we get a rich theory.

\begin{corollary}\label{cor:VSqe}
For any $m\in\Z^+\cup\{\infty\}$, $T^K_m$ is complete and has quantifier elimination.
\end{corollary}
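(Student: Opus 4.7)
The plan is to apply Theorem~\ref{thm:rich} after expanding $T^K_m$ to an $\cL^K_\beta$-theory in which the bilinear form is trivial. I would let $T^\ast$ be the $\cL^K_\beta$-theory obtained from $T^K_m$ by adjoining the axiom $\forall x,y\in V\,(\beta(x,y)=0_F)$; every model of $T^K_m$ expands uniquely to a model of $T^\ast$ by setting $\beta\equiv 0$, so $T^\ast$ is consistent and extends $T^K_{m,\beta}$.

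The key step is to verify that $T^\ast$ is rich. Fix an $\aleph_0$-saturated $N\models T^\ast$ and an extension pair $(\dbar,\wbar)$ that is consistent in $T^\ast$. Since every model of $T^\ast$ satisfies $\beta\equiv 0$, the requirement $\boldsymbol{\beta}(x\vbar)\models\qftp_{F(N)}(\dbar\boldsymbol{\beta}(\wbar))$ appearing in $q_{(\dbar,\wbar)}(x\vbar)$ reduces to $\boldsymbol{0}\models\qftp_{F(N)}(\dbar\cdot\boldsymbol{0})$, forcing every coordinate of $\dbar$ to be $0_F$. Moreover, consistency yields a model of $T^\ast$ realizing $x\vbar$ as a linearly independent tuple of length $|\wbar|+1$, which combined with the dimension axiom $\dim V = m$ forces $|\wbar| < m$. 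Back in $N$, since $\dim V(N) = m > |\wbar|$, any $y\in V(N)\setminus\Span_{F(N)}(\wbar)$ gives $y\wbar$ linearly independent and $\boldsymbol{\beta}(y\wbar)=\boldsymbol{0}=\dbar\boldsymbol{\beta}(\wbar)$, realizing the pair.

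By Theorem~\ref{thm:rich}, it will follow that $T^\ast$ is complete and has quantifier elimination in $\cL^K_\beta$. To descend to $T^K_m$, I would observe that each atomic $\cL^K_\beta$-subformula involving $\beta$ is $T^\ast$-equivalent to an $\cL^K_{\textit{VF}}$-formula obtained by replacing each $\beta$-term with $0_F$, so any $\cL^K_{\textit{VF}}$-formula $\varphi$ is $T^\ast$-equivalent to a quantifier-free $\cL^K_{\textit{VF}}$-formula $\varphi'$. Since every model of $T^K_m$ expands to a model of $T^\ast$ with identical $\cL^K_{\textit{VF}}$-reduct, this gives $T^K_m\models\varphi\leftrightarrow\varphi'$, establishing quantifier elimination for $T^K_m$; the same conservativity argument applied to $\cL^K_{\textit{VF}}$-sentences transfers completeness. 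The main subtlety is in the richness verification, and in particular the observation that consistency of the extension pair in $T^\ast$ forces both $\dbar=\boldsymbol{0}$ and $|\wbar|<m$; once these constraints are identified, the realization in $N$ is automatic from the dimension axiom and the descent from $T^\ast$ to $T^K_m$ is standard.
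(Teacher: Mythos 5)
Your proof is correct and follows essentially the same route as the paper: expand $T^K_m$ by the axiom $\beta\equiv 0_F$ to a theory extending $T^K_{m,\beta}$, verify richness by noting that consistency of an extension pair forces $\dbar=\boldsymbol{0}$ and $\dim V(N)\geq \ell(\wbar)+1$ so that any $y\notin\Span_{F(N)}(\wbar)$ realizes the pair, and then apply Theorem \ref{thm:rich}. The descent to $T^K_m$, which you spell out by eliminating $\beta$-terms, is exactly the definitional-expansion observation the paper treats as routine.
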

\begin{proof}
Note that $T^K_{(m,0)}$ is the definitional extension of $T^K_m$ by a constantly $0$ linear form. Since any model of $T^K_m$ expands uniquely to a model of $T^K_{(\infty,0)}$, the result follows from Corollary \ref{cor:linearQE}.
\end{proof}

\section{Verification that $\bH_k$ is a Ramsey class}\label{sec:ramsey}

The goal of this section is prove Corollary \ref{cor:Gkmod}, which says that the Fra\"{i}ss\'{e} limit $\cH_k$ of the class $\bH_k$ from Definition \ref{def:Hk} has the modeling property.  By Theorem \ref{thm:scow}, this is equivalent to showing $\bH_k$ has the Ramsey property, and that is the main result of this section. 

\begin{theorem}\label{thm:hkramsey}
$\bH_k$ has the Ramsey Property.
\end{theorem}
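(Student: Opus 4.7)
The plan is to derive the Ramsey property for $\bH_k$ from the general framework for Ramsey expansions developed by Hubi\v{c}ka--Ne\v{s}et\v{r}il \cite{HN} and Evans--Hubi\v{c}ka--Ne\v{s}et\v{r}il \cite{EHN}. The starting observation, from Remark \ref{rem:Tkorder} and Proposition \ref{prop:Hkmod}, is that a structure in $\bH_k$ is determined by the partition $P_1,\ldots,P_{k+1}$, linear orders on each $P_i$, and a single linear order $<_*$ on $Q\cup P_{k+1}$ that agrees with $<$ on $P_{k+1}$. This reformulation turns $\bH_k$ into a class of finite linearly ordered relational structures, where the ``exotic'' feature is that $<_*$ orders $k$-tuples from $Q$ interleaved with singletons from $P_{k+1}$.

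First I would identify a simple ``base'' Ramsey class to build on. A natural candidate is the class of finite linearly ordered $(k+1)$-partite sets with linearly ordered parts (i.e., no $R$ and no $<_k$), for which the Ramsey property is immediate from classical finite Ramsey theory for ordered structures. Then I would view $\bH_k$ as an expansion of this class by the single extra piece of data $<_*$, which can be encoded as a $(2k)$-ary relation on $Q$ together with a $(k+1)$-ary relation on $Q\times P_{k+1}$, both ``definably'' encoding a single linear order.

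Next I would verify the hypotheses of the main Ramsey expansion theorems of \cite{HN} and \cite{EHN}. Proposition \ref{prop:fraisseop} already gives disjoint amalgamation for $\bH_k$, and an inspection of that proof shows that the amalgamation can be performed while prescribing any extension of $<_*$ consistent with $T_k$. Combined with the fact that the ``Cartesian product closure'' sending $A\subseteq P_1\cup\ldots\cup P_{k+1}$ to $A\cup Q{\upharpoonright}A$ is locally finite, this places $\bH_k$ squarely inside the setup of \cite{HN,EHN}: a locally finite strong amalgamation class enriched by a free linear order on a uniformly definable set of ``imaginary'' elements (the Cartesian product tuples), on top of an already Ramsey base.

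The step I expect to require the most care is matching the bookkeeping of $<_*$ with the general framework. Because $<_*$ lives on $Q\cup P_{k+1}$ rather than on the underlying vertex set, one must present $<_*$ as a free linear order on an auxiliary sort and then verify the interaction of this order with the closure and the amalgamation in the precise form required by Theorem~2.1 of \cite{HN} (or its refinement in \cite{EHN}). Once this is done, the cited theorems yield the Ramsey property for $\bH_k$, and Corollary \ref{cor:Gkmod} follows by Theorem \ref{thm:scow}.
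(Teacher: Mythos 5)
Your overall strategy --- encode the order on $k$-tuples via imaginary points and invoke the closure machinery of \cite{HN,EHN} --- is the same route the paper takes, but as written there are genuine gaps, and they are not mere bookkeeping. First, neither of your two encodings of $<_*$ produces a class to which the cited theorems apply off the shelf. If you keep $<_*$ as relations on vertices (a $2k$-ary relation on $Q$ plus a $(k+1)$-ary relation on $Q\times P_{k+1}$), the resulting class is not a free amalgamation class --- comparisons between tuples that mix the two sides of an amalgam must be decided --- so Theorem \ref{thm:ehn} does not apply directly; one must instead exhibit an ambient Ramsey class and verify the local finiteness hypothesis of Theorem \ref{thm:hn}, and your sketch never constructs that ambient class. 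If, instead, you pass to imaginary elements for the tuples of $Q$ (as the paper does via the partial function $f$ and the classes $\vec{\bR}_k$ and $\bS_k$), then a free linear order on those imaginaries only induces a total strict \emph{pre}-order on $Q$: nothing in the free setting forces distinct tuples to receive distinct imaginaries, so this route lands in $\bH_k^{\textnormal{pre}}$, not in $\bH_k$. Closing that gap is a further application of Theorem \ref{thm:hn} (Lemma \ref{lem:ramseylem}), whose local finiteness verification rests on the transitive-closure argument on $\sim_k$-classes; it is not a matter of ``matching the bookkeeping.''

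Second, even granting the imaginaries, the theorems of \cite{HN,EHN} are applied to classes in a fixed (functional) language, while $\bH_k$ is relational, and the Ramsey property does not transfer automatically because the passage between $\bS_k$-structures and $\bH_k^{\textnormal{pre}}$-structures is not a bi-interpretation. The paper needs the explicit correspondence $\phi_{\cC},\psi_{\cC}$ (Lemma \ref{lem:corr}, Corollary \ref{cor:phipsi}, Proposition \ref{prop:sramsey}) to transport colorings, and it needs Lemma \ref{lem:skramsey} to cut the total-function, convexly ordered class $\bS_k$ out of $\vec{\bR}_k$ in the first place; these completion arguments and the accompanying bounds are the bulk of the proof. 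So your plan points in the right direction, but it omits the ambient Ramsey class, both local-finiteness verifications, the preorder-versus-linear-order issue, and the transfer back to the relational class $\bH_k$.
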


For the reader's convenience, we recall the description of $\bH_k$ from Section \ref{sec:Hk}. Define a first-order language $\cL_k=\{P_1,\ldots,P_{k+1},<,<_k,R\}$, where $P_1,\ldots, P_{k+1}$ are unary relations, $<$ is a binary relation, $<_k$ is a $2k$-ary relation, and $R$ is a $(k+1)$-ary relation. 
Recall that $Q$ denotes the $k$-ary relation $P_1\times\ldots\times P_k$. Then $\bH_k$ is the class of finite $\cL_k$-structures satisfying the following axioms.

\begin{enumerate}[$(1)$]
\item $P_1,\ldots, P_{k+1}$ is a partition.
\item $<$ is a linear order with $P_1<\ldots< P_{k+1}$.
\item $R$ only holds on $P_1\times\ldots\times P_{k+1}$ (which we also view as $Q\times P_{k+1}$).
\item $<_k$ only holds on $Q\times Q$, and is a linear order on $Q$. 
\item For any $\xbar,\ybar\in Q$ and $w,z\in P_{k+1}$, $\big(\xbar\leq_k\ybar \wedge R(\ybar,w)\wedge w\leq z\big)\rightarrow R(\xbar,z)$.
\end{enumerate}

As the reader will see, the proof of Theorem \ref{thm:hkramsey} is quite long and could be ripe for optimization using a different approach. For example, it would be interesting to give a direct proof that the automorphism group of the Fra\"{i}ss\'{e} limit $\cH_k$ is extremely amenable (which, by a result of Kechris, Pestov, and Todorcevic \cite{KPT}, characterizes the Ramsey property for a Fra\"{i}ss\'{e} class). See also Problem \ref{prob:genram} below, which describes a more general approach that might shed light on streamlining the proof.

On the other hand, despite its length, the proof we give does demonstrate some nice connections to state of the art tools in Ramsey theory. Moreover, the proof proceeds by showing that a series of auxiliary classes have the Ramsey property. These classes are defined in \emph{functional} languages, which is compelling given the definition of $\FOP_k$. 

The key ingredients we will use are two general theorems about Ramsey classes, one due to Hubi\v{c}ka and Ne\v{s}et\v{r}il \cite{HN} and the other due to Evans, Hubi\v{c}ka, and Ne\v{s}et\v{r}il \cite{EHN}. Both results take place slightly outside the framework of classical first order logic.  In particular, \cite{HN} allows functions with partial domains, while in \cite{EHN}, functions are both partial and set-valued.  For this reason, we require some preliminaries in order to state the relevant theorems.

\textbf{In this section, we will work with a nonstandard notion of first-order structure in which function symbols are allowed to be interpreted as partial functions.}  Our terminology follows \cite{HN}.  Since this differs from the standard framework, we will give complete definitions. Suppose $\calL$ is a first-order language.  We use $\rho(S)$ to denote the arity of a symbol $S\in\cL$.  We define an \textbf{$\calL$-structure $\calM$} to consist of a set $M$ along with, for each relation symbol $R\in \calL$, a set $R^{\calM}\subseteq M^{\rho(R)}$, and for each function symbol $f\in \calL$, a set $\dom^{\calM}(f)\subseteq M^{\rho(f)}$ and a function $f^{\calM}:\dom(f)\rightarrow M$.

\begin{definition}
Suppose $\calM$ and $\calN$ are $\calL$-structures and $\eta\colon M\to N$ is a function.
\begin{enumerate}[$(1)$]
    \item $\eta$ is a \textbf{homomorphism (from $\cM$ to $\cN$}) if the following hold.
\begin{enumerate}[$(i)$]
\item For every relation symbol $R\in \calL$ and every $\abar\in M^{\rho(R)}$, if $\calM\models R(\abar)$ then $\calN\models R(\eta(\abar))$.
\item For every function symbol $f\in \calL$ and $\abar\in M^{\rho(f)}$, if  $\abar\in \dom(f^{\cM})$ then $\eta(\abar)\in \dom(f^{\cN})$ and  $\eta(f^{\calM}(\abar))=f^{\calN}(\eta(\abar))$.
\end{enumerate} 
\item  $\eta$ is a \textbf{embedding (from $\cM$ to $\cN$)} if it is injective and the following hold.
\begin{enumerate}[$(i)$]
\item For every relation symbol $R\in \calL$ and every $\abar\in M^{\rho(R)}$, $\calM\models R(\abar)$ if and only if $\calN\models R(\eta(\abar))$.
\item For every function symbol $f\in \calL$ and every $\abar\in M^{\rho(f)}$, $\abar\in \dom(f^{\cM})$ if and only if $\eta(\abar)\in\dom(f^{\cN})$, and in this case $\eta(f^{\calM}(\abar))=f^{\calN}(\eta(\abar))$.
\end{enumerate}
\end{enumerate}
\end{definition}

 As usual, we write $\eta\colon\cM\to \cN$ to indicate that $\eta$ is a homomorphism (or an embedding) from an $\cL$-structure $\cM$ to an $\cL$-structure $\cN$.

\begin{definition}\label{def:ramseyarrow}
Given $\calL$-structures, $\cA$ and $\cB$, define ${\cA\choose \cB}$ to be the set of embeddings $\sigma\colon \cB\to \cA$. We write $\cC\rightarrow (\cB)_n^{\cA}$ to mean that for all functions $\chi:{\cC\choose \cA}\rightarrow [n]$, there exists some embedding  in  $ {\cC\choose \cB}$ with image $\cB'$ such that $\chi$ is constant on ${\cB'\choose \cA}$.

A class $\bK$  of finite $\calL$-structures has the \textbf{Ramsey property} if for every  $\cA,\cB\in\bK$ and positive integer $n$ there is $\cC \in \bK$ such that $\cC\rightarrow (\cB)_n^{\cA}$.  
\end{definition}

\begin{remark}
Note that if $\cL$ is relational then the above definitions are the same as the usual first-order setting. Thus we recover the standard definition of the Ramsey property for a class of finite relational structures. In particular, this applies to $\bH_k$, which is a class of structures in the relational language $\cL_k$.
\end{remark}

We now state several standard definitions from Fra\"{i}ss\'{e} theory.

\begin{definition}
Suppose $\calA,\calB,\calC$ are $\calL$-structures, and $\alpha_1:\cC\rightarrow \cA$, $\alpha_2:\cC\rightarrow \cB$ are embeddings.  We say an $\calL$-structure $\calD$ is an \textbf{amalgamation of $\calA$ and $\calB$ over $\calC$ with respect to $\alpha_1$ and $\alpha_2$} if there are embeddings $\beta_1:\cA\rightarrow \cD$, $\beta_2:\cB\rightarrow \cD$ so that $\beta_1\circ \alpha_1=\beta_2\circ\alpha_2$.

We say the amalgamation is \textbf{strong} if $\beta_1(A\backslash \alpha_1(C))\cap \beta_2(B\backslash \alpha_2(C))=\emptyset$. We say the amalgamation is \textbf{free} if for all $\xbar\in D$, if $\xbar\cap \beta_1(A\backslash \alpha_1(C))\neq \emptyset$ and $\xbar\cap \beta_2(B\backslash \alpha_2(C))\neq \emptyset$, then $\xbar\notin \dom^{\calC}(f)$ for any function symbol $f\in\cL$ and $\xbar\notin R^{\calC}$ for any relation symbol $R\in \calL$. 
\end{definition}

\begin{remark}
    The previous notion of free amalgamation illustrates the extra leverage obtained by working with this more general notion of $\cL$-structure in which partial functions are allowed. 
\end{remark}

The Hereditary Property (HP) and Joint Embedding Property (JEP) are defined identically to the classical case (see Section 1.1 of \cite{HN}). 

\begin{definition}
Suppose $\bK$ is a class of finite $\calL$-structures.  We say $\bK$ has the \textbf{(strong/free) amalgamation property} if for any $\calA,\calB,\calC\in \bK$, and any embeddings $\alpha_1:\cC\rightarrow \cA$, $\alpha_2:\cC\rightarrow \cB$, there exists a (strong/free) amalgamation of $\calA$ and $\calB$ over $\calC$ with respect to $\alpha_1$ and $\alpha_2$.

We say $\bK$ is a \textbf{(strong/free) amalgamation class} if it has the (strong/free) amalgamation property, along with HP and JEP.
\end{definition}

\begin{definition} An $\calL$-structure is \textbf{irreducible} if it is not the free amalgamation of two of its proper substructures.  
\end{definition}

\begin{definition}Given a class $\bK$ of finite $\calL$-structures, let $\vec{\bK}$ be the class of all finite $(\calL\cup\{<\})$-structures obtained by expanding some structure in $\bK$ by a linear order.  
\end{definition}

The next result, from \cite{EHN}, is a generalization of the Ne\v{s}et\v{r}il-R\"{o}dl Theorem \cite{NR77, NR77b}, which says that  if $\bK$ is a free amalgamation class of finite structures in a relational language, then $\vec{\bK}$ has the Ramsey property.

\begin{theorem}[Evans-Hubi\v{c}ka-Ne\v{s}et\v{r}il \cite{EHN}]\label{thm:ehn}
Suppose $\bK$ is a free-amalgamation class of finite $\calL$-structures.  Then $\vec{\bK}$ has the Ramsey Property.
\end{theorem}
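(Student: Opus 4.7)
The plan is to apply Theorem~\ref{thm:ehn} (EHN) to an auxiliary free amalgamation class defined in a language with partial unary function symbols, and then transfer the Ramsey property to $\bH_k$ via a completion argument using the multiamalgamation framework of Hubi\v{c}ka--Ne\v{s}et\v{r}il [HN]. Introduce a language $\cL^*_k$ with unary predicates $P_1,\ldots,P_{k+1},Q$ partitioning the universe, partial unary function symbols $\pi_1,\ldots,\pi_k$ where $\pi_i$ is intended to map $Q$ into $P_i$, a binary relation $<$ linearly ordering $\bigcup_i P_i$ with $P_1<\cdots<P_{k+1}$, and a binary relation $<_*$ linearly ordering $Q\cup P_{k+1}$ and agreeing with $<$ on $P_{k+1}$. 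Let $\bK^*_k$ be the class of finite $\cL^*_k$-structures in which each $\pi_i$ is total on $Q$ and the induced map $Q\to P_1\times\cdots\times P_k$ is injective (but not necessarily surjective). Each $\cH\in\bH_k$ yields a canonical $\bK^*_k$-structure in which this map is in fact a bijection, by using the linear order $<_*$ from Remark~\ref{rem:Tkorder} to jointly encode $<_k$ and $R$.

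Next, I verify that $\bK^*_k$ is a free amalgamation class in the partial-function sense: given $\cA,\cB\in\bK^*_k$ sharing a substructure $\cD$, the free amalgam is the disjoint union over $\cD$, with $<$ and $<_*$ extended arbitrarily to new elements subject to preserving the existing orders, and with each $\pi_i$ left undefined on any ``cross-side'' element whose projection would otherwise land on the opposite side; this is permissible because $\pi_i$ is a partial function. The verification is a minor adaptation of Proposition~\ref{prop:fraisseop}. Theorem~\ref{thm:ehn} then implies that $\vec{\bK^*_k}$, obtained by adding a single global linear order $\prec$ on the whole universe, has the Ramsey property.

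To deduce Ramseyness of $\bH_k$, I apply the completion theorem of [HN]. Given $\cA,\cB\in\bH_k$ and $n\geq 1$, view them as $\vec{\bK^*_k}$-structures via the canonical embedding (with $\prec$ chosen to refine all existing orders), obtain a $\vec{\bK^*_k}$-Ramsey witness $\cC^*$, and complete $\cC^*$ to an object $\cC\in\bH_k$ by adjoining all missing tuples of $P_1(\cC^*)\times\cdots\times P_k(\cC^*)$ to $Q(\cC^*)$ at chosen positions in $<_*$ (and analogously in $\prec$). The main obstacle is to show that $\cC\to(\cB)^\cA_n$ holds in $\bH_k$: a $\bH_k$-embedding $\cA\hookrightarrow\cC$ may route through tuples newly added in the completion, and thus need not come from any $\vec{\bK^*_k}$-embedding $\cA\hookrightarrow\cC^*$, so naive pull-back of colorings is insufficient. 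This is the situation addressed by the [HN] framework: the transfer goes through provided $\bH_k$ admits strong amalgamation (which is Proposition~\ref{prop:fraisseop}) and that the completion operation satisfies the relevant locality condition, which here holds essentially for free because completion is fully determined by the $P_i$'s and does not enlarge them. Verifying these hypotheses carefully, and in particular matching the partial-function conventions between our class and the [HN] setup, is the technical heart of the proof.
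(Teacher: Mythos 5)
There is a fundamental mismatch here: the statement you were asked to prove is Theorem \ref{thm:ehn} itself, but your proposal begins by \emph{applying} Theorem \ref{thm:ehn} to an auxiliary class, which is circular as a proof of that theorem. What you have actually sketched is an argument for Theorem \ref{thm:hkramsey} (that $\bH_k$ has the Ramsey property), a different result. Theorem \ref{thm:ehn} is a general statement about an arbitrary free amalgamation class $\bK$ of finite structures in an arbitrary language with (partial) function symbols, and it is not proved in this paper at all: it is quoted from \cite{EHN}, where the proof is a substantial piece of structural Ramsey theory, namely a generalization of the Ne\v{s}et\v{r}il--R\"{o}dl theorem to languages with partial (indeed set-valued) functions, obtained via the partite construction and partite lemma machinery. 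Nothing in your proposal engages with that level of generality or with those techniques: there is no partite construction, no Ramsey-theoretic induction, and no argument that would apply to a general $\bK$ rather than to the specific classes built from $\bH_k$.

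Even read charitably as an attempt at Theorem \ref{thm:hkramsey}, the sketch leaves its decisive step unproved: you acknowledge that an $\bH_k$-embedding of $\cA$ into the completed object $\cC$ need not arise from an embedding into the Ramsey witness $\cC^*$, and you defer this to "the [HN] framework" without verifying the locality/local finiteness hypotheses that make Theorem \ref{thm:hn} applicable. The paper handles exactly this difficulty by a different and fully worked-out route: it introduces the auxiliary classes $\bR_k$ and $\bS_k$ in a functional language and the pre-order class $\bH_k^{\textnormal{pre}}$, applies Theorem \ref{thm:ehn} once (to $\bR_k$) and Theorem \ref{thm:hn} twice (to $\bS_k$ inside $\vec{\bR}_k$, and to $\bH_k$ inside $\bH_k^{\textnormal{pre}}$), with explicit local finiteness verifications in Lemmas \ref{lem:skramsey} and \ref{lem:ramseylem} and an explicit correspondence (Lemma \ref{lem:corr}) replacing your unproved completion transfer. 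But the primary issue remains that your proposal does not prove the stated theorem: to do so you would need to reproduce or replace the argument of \cite{EHN}, not invoke it.
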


We note here that Theorem \ref{thm:ehn} in \cite{EHN} is in fact more general, applying to the setting of structures with set-valued functions.  

The second theorem we will use, Theorem \ref{thm:hn} below, comes from  \cite{HN}, and gives us a sufficient criterion for a subclass of a free amalgamation class to inherit the Ramsey property.  To state Theorem \ref{thm:hn}, we require a few more definitions.

\begin{definition} Suppose $\calA$ and $\calB$ are $\calL$-structures.  A homomorphism $\eta:\calA\rightarrow \calB$ is a \textbf{homomorphism-embedding} if, for every irreducible substructure $\calC\subseteq \calA$, $\eta|_{\calC}$ is an embedding. 
\end{definition}

\begin{definition} Given an $\calL$-structure $\calA$, a \textbf{completion of $\cA$} is an irreducible $\cL$-structure $\calC$ so that there exists a homomorphism-embedding from $\calA$ into $\calC$. If, moreover, $\cC$ is from a given class $\bR$, then we call $\cC$ an \textbf{$\bR$-completion} of $\cA$.
\end{definition}

\begin{definition}
Let $\bR$ be a class of finite  $\calL$-structures. A subclass $\bK$ of $\bR$ is \textbf{locally finite} if there is  $n\colon \bR\to \N$ such that for any $\cL$-structure $\cA$, if:
\begin{enumerate}[$(i)$]
\item every irreducible substructure of $\cA$ is in $\bK$,
\item $\cA$ has an $\bR$-completion $\cC$, and
\item every substructure of $\cA$ of size at most $n(\cC)$ has a $\bK$-completion,
\end{enumerate}
then $\cA$ has a $\bK$-completion.
\end{definition}

\begin{theorem}[Hubi\v{c}ka-Ne\v{s}et\v{r}il \cite{HN}]\label{thm:hn}
Let $\bR$ be a Ramsey class of  finite irreducible $\calL$-structures and let $\bK$ be a hereditary locally finite subclass of $\bR$ with the strong amalgamation property. Then $\bK$ is a Ramsey class.
\end{theorem}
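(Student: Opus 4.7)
The proof will combine the two cited Ramsey-theoretic tools, Theorem \ref{thm:ehn} of Evans--Hubi\v{c}ka--Ne\v{s}et\v{r}il and Theorem \ref{thm:hn} of Hubi\v{c}ka--Ne\v{s}et\v{r}il. Since $\bH_k$ is not itself a free amalgamation class (the $2k$-ary relation $<_k$ must be a total linear order on $Q$, and the monotonicity axiom binds it to $R$ and $<$), the plan is to work in an auxiliary language with \emph{partial} projection functions, which does permit free amalgamation.

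Introduce the language $\calL^*_k=\{P_1,\ldots,P_{k+1},\hat Q,\pi_1,\ldots,\pi_k\}$, where $\hat Q$ is a new unary predicate and each $\pi_i$ is a partial unary function symbol. Let $\bR$ be the class of finite $\calL^*_k$-structures in which $\{P_1,\ldots,P_{k+1},\hat Q\}$ partitions the universe and each $\pi_i$ is a partial function on $\hat Q$ with values in $P_i$. Since the $\pi_i$ are allowed to be partial, $\bR$ is a free amalgamation class: the free amalgam of $\calA,\calB\in\bR$ over a common substructure $\calC$ is the disjoint union modulo $\calC$ with each $\pi_i$ kept only where previously defined. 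By Theorem \ref{thm:ehn}, the ordered expansion $\vec{\bR}$ has the Ramsey Property. Moreover, since $\vec{\bR}$-structures carry a total linear order $\prec$ in the language, any two distinct points are comparable under $\prec$, so no $\vec{\bR}$-structure is a free amalgam of two proper substructures; hence $\vec{\bR}$ is a Ramsey class of finite irreducible structures, as required by the hypothesis of Theorem \ref{thm:hn}.

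Associate each $\calA\in\bH_k$ with the $\vec{\bR}$-structure $\Phi(\calA)$ on universe $A\sqcup Q(\calA)$: keep the $P_i$'s of $\calA$, set $\hat Q^{\Phi(\calA)}=Q(\calA)=P_1(\calA)\times\cdots\times P_k(\calA)$, define $\pi_i(\bar a)=a_i$ for $\bar a\in Q(\calA)$, and let $\prec$ place $P_1,\ldots,P_k$ first in their original $<$-order followed by $\hat Q\cup P_{k+1}$ interleaved according to the canonical order $<_*$ from Remark \ref{rem:Tkorder}. Every $\vec{\bR}$-embedding between $\Phi$-images is uniquely determined by its restriction to the $P_i$-parts, since $\pi_i$ is total and the projection map $\hat Q\to\prod_i P_i$ is a bijection on $\Phi$-images; thus embeddings in $\bH_k$ correspond bijectively to $\vec{\bR}$-embeddings of $\Phi$-images. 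Let $\bK^*$ be the class of $\vec{\bR}$-structures in which $\pi_i$ is total, the projection map $\hat Q\to\prod_i P_i$ is injective, and $\prec$ satisfies $P_1\prec\cdots\prec P_k\prec (\hat Q\cup P_{k+1})$; one checks that $\bK^*$ is exactly the hereditary closure of $\Phi(\bH_k)$ in $\vec{\bR}$. The Ramsey property for $\bK^*$ transfers to $\bH_k$: any $\bK^*$-witness $\calC^*$ of a Ramsey arrow for $\Phi(\calA),\Phi(\calB)$ can be completed to some $\Phi(\calC)\in\Phi(\bH_k)$ by adding the missing $\hat Q$-elements, and the monochromatic copies of $\Phi(\calB)$ continue to lie inside the original $\calC^*$-part.

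It thus remains to apply Theorem \ref{thm:hn} to the pair $(\vec{\bR},\bK^*)$. Hereditarity of $\bK^*$ is immediate from the definition. Strong amalgamation for $\bK^*$ adapts the disjoint amalgamation of $\bH_k$ proved in Proposition \ref{prop:fraisseop}: given $\bK^*$-structures $\calA,\calB$ over a common $\calD\in\bK^*$, take the disjoint union of the $P_i$- and $P_{k+1}$-parts modulo $\calD$, and fill out the $\hat Q$-part by identifying $\hat Q$-elements from the two sides whose projections lie entirely in $\calD$'s $P_i$-parts and coincide; a routine check shows that totality and injectivity of the $\pi_i$ are preserved, and the order $\prec$ can be extended compatibly. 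The main obstacle is \emph{local finiteness}: for each $\calC\in\vec{\bR}$ we must produce an integer $n(\calC)$ such that any $\calL^*_k$-structure $\calA$ whose irreducible substructures are all in $\bK^*$, which admits an $\vec{\bR}$-completion into $\calC$, and whose substructures of size at most $n(\calC)$ admit $\bK^*$-completions, itself admits a global $\bK^*$-completion. The plan is to take $n(\calC)$ of the form $|C|+O(k)$ and to show that each possible ``defect'' of $\calA$ (an undefined $\pi_i$ value, two $\hat Q$-points with coincident projections, or a violation of the order constraint $P_i\prec\hat Q\cup P_{k+1}$) is witnessed by at most $k+2$ points and can be repaired uniformly using the elements of $\calC$ and the freedom of $\prec$. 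This verification will follow the general template of the closure--completion arguments developed in \cite{HN}, though the interplay between the partial projections $\pi_i$ and the order $\prec$ makes the required case analysis somewhat intricate, and this is where the bulk of the technical work will lie.
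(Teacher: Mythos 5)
Your proposal does not prove the statement at issue. Theorem \ref{thm:hn} is the general Hubi\v{c}ka--Ne\v{s}et\v{r}il transfer theorem: for an \emph{arbitrary} language $\calL$, an arbitrary Ramsey class $\bR$ of finite irreducible $\calL$-structures, and an arbitrary hereditary, locally finite subclass $\bK$ with strong amalgamation, $\bK$ is Ramsey. The paper does not prove this; it quotes it from \cite{HN} and uses it as a black box. What you have written is instead a strategy for the paper's \emph{application} of that theorem, namely Theorem \ref{thm:hkramsey} (that $\bH_k$ is a Ramsey class): you build specific auxiliary classes with partial projection functions, invoke Theorem \ref{thm:ehn} to get a Ramsey class of ordered structures, and then say ``it remains to apply Theorem \ref{thm:hn} to the pair $(\vec{\bR},\bK^*)$.'' As an argument for Theorem \ref{thm:hn} itself this is circular: you are assuming the very statement you were asked to prove. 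A genuine proof would have to reproduce the machinery of \cite{HN} --- the completion/closure analysis and the iterated partite-construction style amalgamation arguments in the setting of structures with partial functions --- none of which appears in your sketch.

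Even read charitably as an attempt at Theorem \ref{thm:hkramsey}, your route diverges from the paper's (which encodes $Q$ via a single $k$-ary partial function $f$ into a new ``value'' sort, passes through the classes $\bR_k$, $\bS_k$ and $\bH_k^{\textnormal{pre}}$, and applies Theorem \ref{thm:hn} twice), and the two steps you flag as routine are exactly where the paper's work lies: the local finiteness verification (your ``$n(\calC)=|C|+O(k)$'' with a deferred case analysis) and the faithful back-and-forth transfer of embeddings and colorings between $\bH_k$ and the auxiliary class (the analogue of Lemmas \ref{lem:corr} and \ref{lem:ramseylem}). In particular, recovering the linear order $<_k$ on $Q$ from your ordered expansion requires the order on $\hat Q$ to be controlled through completions, which is precisely the kind of defect-repair argument you have not carried out. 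So the proposal both targets the wrong statement and, for the statement it implicitly targets, leaves the essential technical content unproved.
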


We will apply Theorem \ref{thm:hn} twice.  We end this subsection with Fact \ref{fact:irred} below, which will simplify these applications.  In particular, under certain conditions, to check a map is a homomorphism-embedding, it suffices to show it is an embedding on all irreducible substructures.  Given an $\calL$-structure $\calM$, a set $X\subseteq M$, and a function symbol in $\calL$, we say that $X$ is \emph{closed under $f^{\calM}$} if, for every $\xbar\in X^{\rho(f)}\cap \dom(f^{\calM})$, $f^{\calM}(\xbar)\in X$.  If $X$ is the domain of a substructure of $\calM$, we write $\calM[X]$ to denote this substructure. 

\begin{fact}\label{fact:irred}
Suppose $\calA$ is an $\calL$-structure such that for all function symbols $f\in \calL$, and all $\xbar\in \dom(f^{\calA})$, $\xbar\cup \{f^{\calA}(\xbar)\}$ is closed under $f^{\calA}$. 
Let $\calB$ be an $\calL$-structure, and suppose $h\colon A\rightarrow B$ has the property that for every irreducible substructure $\calC\leq \calA$, $h|_C\colon \calC\rightarrow \calB$ is an embedding.  Then $h\colon \calA\rightarrow \calB$ is a homomorphism. 
\end{fact}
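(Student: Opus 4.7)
The plan is to verify the two clauses defining a homomorphism --- preservation of relations and preservation of function values/domains --- by, in each case, exhibiting a small irreducible substructure of $\calA$ that carries the atomic fact we want to preserve, and then invoking the hypothesis that $h$ restricts to an embedding on every irreducible substructure. In both cases the small substructure will be the one induced on the underlying set of the tuple witnessing the atomic fact (together with the function output, in the functional case), and the hypothesis on $\calA$ is precisely what guarantees that this minimal set actually supports a substructure.

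For the relational clause, fix a relation symbol $R\in\calL$ and a tuple $\abar\in A^{\rho(R)}$ with $\calA\models R(\abar)$, and let $X$ denote the set of entries of $\abar$. The first step is to show that $\calC:=\calA[X]$ is a substructure of $\calA$; this is automatic in the intended application to $\bH_k$ because $\cL_k$ is purely relational, and more generally the hypothesis on $\calA$ ensures that any function application to a tuple from $X$ already takes its value in $X$. To see $\calC$ is irreducible, suppose for contradiction it were the free amalgamation of two proper substructures $\calC_1,\calC_2$ over some $\calC_0$. Then $X$ would decompose so that some entries lie in $C_1\setminus C_0$ and others in $C_2\setminus C_0$, but the tuple $\abar\in R^{\calC}$ itself mixes the two sides, contradicting the definition of free amalgamation. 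Hence $\calC$ is irreducible, so by hypothesis $h|_C$ is an embedding, which forces $\calB\models R(h(\abar))$. The functional clause is handled identically: given $f\in\calL$ and $\xbar\in\dom^{\calA}(f)$ with $y:=f^{\calA}(\xbar)$, the hypothesis makes $Y:=\xbar\cup\{y\}$ the domain of a substructure $\calC:=\calA[Y]$, and a nontrivial free-amalgamation decomposition of $\calC$ would split the tuple consisting of $\xbar$ and its value $y$ across the two sides, violating freeness. So $h|_C$ is an embedding, yielding $h(\xbar)\in\dom^{\calB}(f)$ and $h(f^{\calA}(\xbar))=f^{\calB}(h(\xbar))$.

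The main obstacle --- and the only real subtlety --- is verifying that the minimal witness sets $X$ and $Y$ above really are domains of substructures of $\calA$; the hypothesis of the fact is formulated precisely to make this so, and in the main application to $\bH_k$ the issue disappears entirely since $\cL_k$ has no function symbols. Once that substructure step is in hand, the irreducibility argument in both cases reduces to the uniform observation that a single atomic fact whose tuple spans the entire underlying set prohibits any nontrivial free-amalgamation decomposition.
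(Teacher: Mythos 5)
Your route is the same as the paper's: for each atomic fact, restrict $h$ to the substructure on the minimal witnessing set and invoke the hypothesis that $h$ embeds every irreducible substructure. The paper simply asserts that these substructures are irreducible, whereas you try to prove it, and that is where the gap lies. In the functional case your appeal to freeness does not work as stated: the freeness condition in the definition of free amalgamation only constrains tuples that lie in some relation or in the domain of some function, and the $(\rho(f)+1)$-tuple $\xbar y$ (with $y=f^{\calA}(\xbar)$) is neither. So a putative decomposition of $\calA[\xbar\cup\{y\}]$ in which all entries of $\xbar$ lie on one side and $y$ lies on the other is not excluded by freeness. It is excluded for a different reason: the side containing $\xbar$ is (the domain of) a substructure, hence closed under $f$, hence contains $y=f^{\calA}(\xbar)$, hence is all of $\xbar\cup\{y\}$, contradicting properness. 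The correct dichotomy is: either $\xbar$ itself meets both sides, and then freeness is violated because $\xbar$ lies in the domain of $f$; or $\xbar$ lies entirely on one side, and closure of that side forces it to be improper.

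A secondary point: in the relational case you claim that the hypothesis on $\calA$ guarantees that the entry set $X$ of a tuple $\abar\in R^{\calA}$ is closed under the functions, so that $\calA[X]$ is a substructure. The hypothesis says no such thing: it only concerns sets of the form $\xbar\cup\{f^{\calA}(\xbar)\}$ with $\xbar\in\dom(f^{\calA})$, and nothing prevents some $\ybar\in X^{\rho(f)}\cap\dom(f^{\calA})$ from having $f^{\calA}(\ybar)\notin X$. The paper's own proof makes the same silent identification (it writes $\calA[\{a_1,\ldots,a_m\}]$ without comment), and in the two places the fact is applied this is harmless (in Lemma \ref{lem:ramseylem} the language is relational, and in Lemma \ref{lem:skramsey} the closed sets $X_{\abar}$ carry the load), so this is not really a divergence from the paper; but your stated justification overreaches. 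If you want an argument covering both clauses cleanly, replace the minimal witness set by the substructure it generates: if that generated substructure were a free amalgam of proper substructures, the witnessing tuple would either meet both sides (contradicting freeness, since it is in a relation or in the domain of $f$) or lie in one side, which, being closed, would contain the entire generated substructure; this version needs no hypothesis on $\calA$ at all.
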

\begin{proof}  
Suppose $R(x_1,\ldots, x_m)$ is a relation of $\calL$, $a_1,\ldots, a_m\in A$, and $R^{\calA}(a_1,\ldots, a_m)$.  Then $\calA'=\calA[\{a_1,\ldots, a_m\}]$ is irreducible, so $h|_{\calA'}$ is an embedding, and thus $\calB\models R(h(a_1),\ldots, h(a_m))$. 

Suppose now $f\in\cL$ is an $m$-ary function symbol and $a_1,\ldots, a_m\in \dom(f^{\calA})$.  By assumption, $\calA'=\calA[\{a_1,\ldots, a_m,f^{\calA}(a_1,\ldots, a_m)\}]$ is an irreducible substructure of $\calA$. So $h|_{\calA'}$ is an embedding.  This implies $(h(a_1),\ldots, h(a_m))\in \dom(f^{\calB})$ and $h(f^{\calA}(a_1,\ldots, a_m))=f^{\calB}(h(a_1),\ldots, h(a_m))$.  Thus, $h$ is a homomorphism to $\calB$.
\end{proof}

\subsection{Proof of the main Ramsey result.}

In this subsection, we prove Theorem \ref{thm:hkramsey}.  We will do this by defining several auxiliary classes of finite structures, and using Theorems \ref{thm:ehn} and \ref{thm:hn}. 

Our first auxiliary class is defined in the following functional language.

\begin{definition}
 Let $\calL_{f,k}=\{U_1,\ldots, U_{k+1}, f\}$, where $U_1,\ldots,U_{k+1}$ are unary relation symbols, and $f$ is a $k$-ary function symbol.  
 \end{definition}
 
Now we define our first auxiliary class $\bR_k$, which tells us that the relations $U_1,\ldots, U_{k+1}$ are a partition and gives restrictions on the domain and range of $f$. 

\begin{definition}
Let $\bR_k$ be the class of finite $\calL_{f,k}$-structures $\calM$ where the following hold.
\begin{enumerate}
\item $M=U_1^{\calM}\cup \ldots\cup U_{k+1}^{\calM}$ is a partition.
\item $\dom(f^{\cM})\subseteq U_1^{\calM}\times \ldots \times U_k^{\calM}$ and $\im(f^{\cM})\subseteq U_{k+1}^{\calM}$.
\end{enumerate}
\end{definition}

Due to its simplicity, it is not difficult to show that $\bR_k$ is a free-amalgamation class.  We include a verification for completeness.  We note to the reader that for this to hold, it is crucial that our setting allows function symbols to have partial domains.

\begin{lemma}
$\bR_k$ is a free amalgamation class.
\end{lemma}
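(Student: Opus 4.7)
The plan is to verify the three required properties (HP, JEP, and free amalgamation) directly from the definition of $\bR_k$, exploiting the fact that the function symbol $f$ is allowed to have a partial domain.

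First I would dispatch HP. Given $\cM \in \bR_k$ and a subset $M' \subseteq M$, define the substructure $\cM'$ on $M'$ by setting $U_i^{\cM'} = U_i^{\cM} \cap M'$ and letting $\dom(f^{\cM'}) = \{\xbar \in (M')^k : \xbar \in \dom(f^{\cM})$ and $f^{\cM}(\xbar) \in M'\}$ with $f^{\cM'} = f^{\cM}{\upharpoonright}\dom(f^{\cM'})$. The partition condition and the domain/range restriction on $f$ are immediately inherited, so $\cM' \in \bR_k$.

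Next, for the free amalgamation property, let $\cA, \cB, \cC \in \bR_k$ with embeddings $\alpha_1\colon \cC \to \cA$ and $\alpha_2\colon \cC \to \cB$. I would construct $\cD$ as the pushout on underlying sets: take $D = \beta_1(A) \sqcup_{\sim} \beta_2(B)$ where $\beta_1(\alpha_1(c)) \sim \beta_2(\alpha_2(c))$ for each $c \in C$, and otherwise $\beta_1, \beta_2$ are injective with disjoint images outside the identified copy of $C$. Define the predicates by $U_i^{\cD} = \beta_1(U_i^{\cA}) \cup \beta_2(U_i^{\cB})$; this is well-defined and still a partition because $\alpha_1, \alpha_2$ are embeddings and therefore respect the partitions in $\cC, \cA, \cB$. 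For the function, declare $\dom(f^{\cD})$ to consist of those $\xbar \in D^k$ that lie entirely in $\beta_1(A)$ with $\beta_1^{\text{-}1}(\xbar) \in \dom(f^{\cA})$, or entirely in $\beta_2(B)$ with $\beta_2^{\text{-}1}(\xbar) \in \dom(f^{\cB})$; in the first case set $f^{\cD}(\xbar) = \beta_1(f^{\cA}(\beta_1^{\text{-}1}(\xbar)))$ and analogously in the second. The only step requiring care is well-definedness on the overlap: if $\xbar \in \beta_1(\alpha_1(C))^k = \beta_2(\alpha_2(C))^k$, the embedding property of $\alpha_1$ and $\alpha_2$ says that $\xbar \in \dom(f^{\cA})$ (via $\beta_1^{\text{-}1}$) if and only if the corresponding tuple in $C$ lies in $\dom(f^{\cC})$ if and only if $\xbar \in \dom(f^{\cB})$ (via $\beta_2^{\text{-}1}$), and in that case the two candidate values for $f^{\cD}(\xbar)$ coincide with $\beta_1(\alpha_1(f^{\cC}(\cbar))) = \beta_2(\alpha_2(f^{\cC}(\cbar)))$.

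It remains to verify that $\cD \in \bR_k$, that $\beta_1, \beta_2$ are embeddings, and that the amalgamation is free. Membership in $\bR_k$ is immediate: the partition on $D$ holds by construction, and since $f^{\cA}, f^{\cB}$ satisfy the domain/range restriction, so does $f^{\cD}$. That $\beta_1, \beta_2$ are embeddings follows because each is injective, respects each $U_i$ by construction, and by construction a tuple lies in $\dom(f^{\cD})$ (with the appropriate value) exactly when its preimage does in the source. Finally, freeness is essentially built into the construction: the only relation symbols are unary, so they can never witness a failure of freeness; and if $\xbar \in D^k$ has coordinates from both $\beta_1(A \setminus \alpha_1(C))$ and $\beta_2(B \setminus \alpha_2(C))$, then $\xbar$ lies entirely in neither $\beta_1(A)$ nor $\beta_2(B)$, hence $\xbar \notin \dom(f^{\cD})$ by definition. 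JEP is then the special case of free amalgamation in which $\cC$ is taken to be the empty structure (which lies in $\bR_k$), so no separate argument is required. No single step is a serious obstacle; the only real point to watch is the consistency of $f^{\cD}$ on the identified copy of $C$, which is handled by the embedding clause of Definition of embedding applied to both $\alpha_1$ and $\alpha_2$.
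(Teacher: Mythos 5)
Your proof is correct and follows essentially the same route as the paper's: the free amalgam is the disjoint union of the two structures glued along the common copy of the base, with predicates given by unions, $f$ defined piecewise on the two images, well-definedness on the overlap coming from the biconditional in the embedding clause for partial functions, and freeness holding because mixed tuples are excluded from $\dom(f)$. The only cosmetic differences are that the paper verifies JEP by a direct disjoint-union construction rather than amalgamating over the empty structure, and treats HP as immediate (in this framework the domain of a genuine substructure is already closed under the partial function, so your clause restricting $\dom(f^{\cM'})$ to tuples with $f^{\cM}(\xbar)\in M'$ is unnecessary for HP as stated).
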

\begin{proof}
Suppose $\calA$, $\calB_1$, $\calB_2$ are in $\bR_k$, and for each $i\in [2]$,  $\alpha_i:\calA\rightarrow \calB_i$ is an embedding.  Define 
$$
C\coloneqq(B_1\backslash \alpha_1(A))\sqcup (B_2\backslash \alpha_2(A))\sqcup A.
$$
For $i\in [2]$, let $\beta_i\colon B_i\rightarrow C$ be the identity of $B_i\backslash \alpha_i(A)$ and $\alpha_i\inv$ on $\alpha_i(A)$.  Clearly each $\beta_i$ is an injection from $B_i$ into $C$, and $\beta_1(B_1)\cap \beta_2(B_2)=A$.  We now define a $\calL_{f,k}$-structure $\calC$ with domain $C$ as follows. For each $i\in [k+1]$, interpret
$$
U_i^{\calC}=(U_i^{\calB_1}\backslash \alpha_1(A))\cup (U_i^{\calB_2}\backslash \alpha_2(A)) \cup U_i^{\calA}.
$$
Then define $\dom(f^{\calC})=\beta_1(\dom(f^{\calB_1}))\cup \beta_2(\dom(f^{\calB_2}))$, and for each $i\in [2]$ and $\xbar\in \beta_i(\dom(f^{\calB_i}))$, set $f^{\calC}(\xbar)=\beta_i(f^{\calB_i}(\beta_i^{-1}(\xbar)))$.  This is well defined since, if $\xbar\in \beta_1(\dom(f^{\calB_1}))\cap \beta_2(\dom(f^{\calB_2}))$, then $\xbar\subseteq A$, and thus for each $i\in [2]$, 
\[
\beta_i(f^{\calB_i}(\beta_i^{-1}(\xbar)))=\beta_i(f^{\calB_i}(\alpha_i(\xbar))=\beta_i(\alpha_i(f^{\calA}(\xbar)))=f^{\calA}(\xbar).
\]

It is straightforward to check that $\beta_1$ and $\beta_2$ are $\calL_{f,k}$-embeddings into $\calC$, and $\beta_1\circ \alpha_1=\beta_2\circ\alpha_2=Id_A$.  Thus $\calC$ is an amalgam of $\calB_1$ and $\calB_2$ over $\calA$. By construction, $\calC$ is moreover a free amalgam. It is clear that $\calC$ is an element of $\bR_k$, so we have shown that $\bR_k$ has the free amalgamation property.

That $\bR_k$ has the hereditary property is obvious.  For the Joint Embedding Property\footnote{Again, we follow the convention that structures are nonempty.}, suppose $\calA,\calB\in \bR_k$.  Define $\calC$ to be the structure with domain $A\sqcup B$, so that for each $i\in [k+1]$, $U_i^{\calC}=U_i^{\calA}\sqcup U_i^{\calB}$, so that $\dom(f^{\calC})=\dom(f^{\calA})\sqcup \dom(f^{\calB})$, and so that $f^{\calC}$ is equal to $f^{\calA}$ on $\dom(f^{\calA})$ and equal to $f^{\calB}$ on $\dom(f^{\calB})$. Its clear $\calC$ is in $\bR_k$, and the natural inclusion maps give $\calL_{f,k}$-embedding from $\calA$ and $\calB$ into $\calC$, respectively.  Thus $\bR_k$ has the Joint Embedding Property.
\end{proof}

We obtain the following as an immediate corollary of Theorem \ref{thm:ehn}.  

\begin{corollary}\label{cor:rk}
$\vec{\bR}_k$ has the Ramsey property.
\end{corollary}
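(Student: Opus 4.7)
The plan is to apply Theorem \ref{thm:ehn} directly, with $\calL = \calL_{f,k}$ and $\bK = \bR_k$. The lemma immediately preceding the corollary establishes exactly the hypothesis we need: $\bR_k$ is a free amalgamation class of finite $\calL_{f,k}$-structures, where the underlying notion of $\calL$-structure is the one from the start of Section \ref{sec:ramsey}, i.e., function symbols are allowed to have partial domains. Under these conventions, Theorem \ref{thm:ehn} then yields the Ramsey property for $\vec{\bR}_k$ in a single line. So the body of the proof will literally read: ``By the previous lemma, $\bR_k$ is a free amalgamation class, and the result follows from Theorem \ref{thm:ehn}.''

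The one point I would flag for a reader is that the conventions genuinely match. Theorem \ref{thm:ehn} as quoted from \cite{EHN} is stated for free amalgamation classes in the partial-function (and, in the original source, set-valued-function) framework, and the notion of free amalgamation used in the preceding lemma is exactly this one: the amalgam $\calC$ constructed there is free because $f^{\calC}$ is declared undefined on every tuple that straddles $\beta_1(B_1 \setminus \alpha_1(A))$ and $\beta_2(B_2 \setminus \alpha_2(A))$, which is only legitimate because $f$ is permitted to be a partial function symbol. Verifying this alignment requires no work beyond pointing back to the definitions set up earlier in the section.

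The only ``hard part'' is thus not in the corollary itself but in the inputs: (i) the free-amalgamation lemma just proved, and (ii) the deep generalization of the Ne\v{s}et\v{r}il--R\"odl theorem provided by Theorem \ref{thm:ehn}. Both are already in hand, so Corollary \ref{cor:rk} is genuinely immediate, and its role in the section is to serve as the Ramsey class $\bR$ to which Theorem \ref{thm:hn} will later be applied (with $\bK$ taken to be successively refined subclasses) in order to eventually obtain the Ramsey property for $\bH_k$.
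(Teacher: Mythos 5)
Your proposal matches the paper exactly: the paper also obtains Corollary \ref{cor:rk} as an immediate consequence of Theorem \ref{thm:ehn} applied to the preceding lemma that $\bR_k$ is a free amalgamation class in the partial-function framework. Your remarks about the conventions aligning and the role of the corollary in the later application of Theorem \ref{thm:hn} are accurate but not needed beyond the one-line deduction.
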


Recall that the elements of $\vec{\bR}_k$ are $\vec{\cL}_{f,k}$-structures, where $\vec{\cL}_{f,k}=\calL_{f,k}\cup \{<\}$. We now define a special subclass of $\vec{\bR}_k$ in which the domain of $f$ is all of $U_1\times \ldots \times U_k$ and where $U_1<\ldots<U_{k+1}$.

\begin{definition} Let $\bS_k$ be the class of all $\calM$ in $\vec{\bR}_k$ such that $\dom(f^{\cM})=U_1^{\calM}\times \ldots \times U_k^{\calM}$ and $U_1^{\calM}<\ldots <U_{k+1}^{\calM}$. 
\end{definition}

To ease notation, given an $\calL_{f,k}$-structure (or $\vec{\cL}_{f,k}$-structure) $\calM$, we let $Q^{\calM}=\prod_{i=1}^kU_i^{\calM}$.  With this notation, we have that for any $\calM$ in $\vec{R}_k$, $\dom(f^{\calM})\subseteq Q^{\calM}$, and for any $\calN\in \bS_k$, $\dom(f^{\calN})=Q^{\calN}$.  This notation is a purposeful analogy to notation set out in previous sections, where, given an $\calL_k$-structure $\calN$, we let $Q^{\calN}=\prod_{i=1}^kP_i^{\calM}$ (see the remarks directly before Definition \ref{def:Tk}). 

Our next goal is to show Lemma \ref{lem:skramsey} below, which says $\bS_k$ is an irreducible locally finite subclass of $\vec{\bR}_k$.  Combined with Theorem \ref{thm:hn}, this will show $\bS_k$ is a Ramsey class.

\begin{lemma}\label{lem:skramsey}
$\bS_k$ is an irreducible locally finite subclass of $\vec{\bR}_k$. 
\end{lemma}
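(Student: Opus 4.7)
The plan is to establish two properties: that every element of $\bS_k$ is irreducible, and that $\bS_k$ is locally finite in $\vec{\bR}_k$.

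For irreducibility, I would argue more generally that every $\cM \in \vec{\bR}_k$ is irreducible, using only that $<^{\cM}$ is a total linear order. If $\cM$ were the free amalgamation of proper substructures $\cM_1, \cM_2$ over $\cM_0 = \cM_1 \cap \cM_2$, there would exist $a \in M_1 \setminus M_0$ and $b \in M_2 \setminus M_0$, both nonempty by properness. Since $<^{\cM}$ is total, $a$ and $b$ must be comparable, producing a tuple of $<^{\cM}$ spanning both $M_1 \setminus M_0$ and $M_2 \setminus M_0$, contradicting freeness. Since $\bS_k \subseteq \vec{\bR}_k$, this gives irreducibility of every element of $\bS_k$.

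For local finiteness, I would take $n: \vec{\bR}_k \to \N$ to be any constant function and argue that condition (i) of the definition alone already forces $\cA$ to admit an $\bS_k$-completion (so neither (ii) nor (iii) is actually used). The first step is to extract structural consequences of (i) by examining small irreducible substructures of $\cA$: singletons (which are irreducible) force $U_1,\ldots,U_{k+1}$ to partition $A$; pairs $\{a,b\}$ related by $<^{\cA}$ (which are irreducible) force $<^{\cA}$ to be strict, antisymmetric, and stratified (no $b <^{\cA} a$ when $a \in U_i^{\cA}$, $b \in U_j^{\cA}$, $i<j$); any cycle in $<^{\cA}$ would yield an irreducible substructure inadmissible in a linearly ordered class, so no such cycle exists; and for any $\xbar \in \dom(f^{\cA})$, the substructure on $\xbar \cup \{f^{\cA}(\xbar)\}$ is irreducible (since $\xbar$ lying in $\dom(f)$ pins both $\xbar$ and $f^{\cA}(\xbar)$ onto one side of any attempted free amalgamation), and being in $\bS_k$ then yields $\dom(f^{\cA}) \subseteq Q^{\cA}$ and $\im(f^{\cA}) \subseteq U_{k+1}^{\cA}$.

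Next, I would construct $\cC^* \in \bS_k$ with domain $C^* = A \cup \{v_{\xbar} : \xbar \in Q^{\cA} \setminus \dom(f^{\cA})\}$, placing each new element $v_{\xbar}$ in $U_{k+1}^{\cC^*}$, setting $f^{\cC^*}(\xbar) = f^{\cA}(\xbar)$ when $\xbar \in \dom(f^{\cA})$ and $f^{\cC^*}(\xbar) = v_{\xbar}$ otherwise, and choosing $<^{\cC^*}$ to be any linear order on $C^*$ that extends $<^{\cA}$ and satisfies $U_1^{\cC^*} < \ldots < U_{k+1}^{\cC^*}$. Such a linear extension exists because combining $<^{\cA}$ (acyclic) with the partition ordering (which only goes from lower-indexed to higher-indexed $U_i$) yields a DAG, since every edge of $<^{\cA}$ respects the stratification by step two above. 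By construction $\cC^* \in \bS_k$.

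The main obstacle will be verifying that the inclusion $\eta: \cA \to \cC^*$ is a homomorphism-embedding. That $\eta$ is a homomorphism is immediate from the construction. The subtle point is the embedding condition on irreducible substructures $\cB \leq \cA$: I must show that the linear extension of $<^{\cA}$ and the newly introduced $f$-values do not perturb the induced structure on $B$. The key observation resolving this is that since $\cB \in \bS_k$ by (i), the order $<^{\cB} = <^{\cA}|_B$ is already total on $B$ (so the extension $<^{\cC^*}$ introduces no new $<$-comparisons within $B$), and likewise $\dom(f^{\cB}) = Q^{\cB}$ forces all $f$-values of $k$-tuples from $B$ to already lie in $A$ (so no $v_{\xbar}$ is referenced within $B$). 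Thus $\eta|_B$ is an embedding of $\cB$ into $\cC^*$, completing the verification.
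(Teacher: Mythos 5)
Your irreducibility argument is fine (it is the same observation the paper makes: everything in a linearly ordered class is irreducible), but the local finiteness argument has a genuine gap, and in fact its central simplification is false. You claim that hypothesis $(i)$ alone (every irreducible substructure lies in $\bS_k$) already forces an $\bS_k$-completion, so that $(ii)$ and $(iii)$ are never used. Here is a counterexample: take $\cA$ with four points $x_1,x_2,x_3,x_4$, all in $U_1$, with $f^{\cA}$ empty and $<^{\cA}$ consisting exactly of the cyclic edges $x_1<x_2<x_3<x_4<x_1$. Its irreducible substructures are only singletons and single $<$-edges, all of which lie in $\bS_k$, yet $\cA$ has no $\bS_k$-completion: any homomorphism preserves the four $<$-edges, and a linear order admits no such cycle (even allowing the map to be non-injective). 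The specific false step in your argument is the parenthetical ``any cycle in $<^{\cA}$ would yield an irreducible substructure'': a cycle of length at least $4$ is the free amalgam of two paths over the two ``endpoints'', hence is \emph{not} irreducible, so $(i)$ sees nothing wrong with it. This example does not contradict local finiteness only because such an $\cA$ also has no $\vec{\bR}_k$-completion, i.e.\ hypothesis $(ii)$ fails --- which is exactly why $(ii)$ cannot be discarded.

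A second, related gap: you repeatedly treat small subsets of $\cA$ (singletons, $<$-related pairs, and $\xbar\cup\{f^{\cA}(\xbar)\}$) as irreducible \emph{substructures}. But $\cA$ is an arbitrary $\vec{\cL}_{f,k}$-structure, not a member of $\vec{\bR}_k$, so these sets need not be closed under the partial function $f^{\cA}$ (e.g.\ $f$ could be defined on a tuple with repeated entries, or on a tuple from $\{x,y\}$ with value outside $\{x,y\}$), hence need not be substructures at all; your justification only addresses irreducibility, not closure. This is precisely where the paper's proof spends its effort: its Claim 1 uses the homomorphism-embedding into the $\vec{\bR}_k$-completion $\cC_0$ (hypothesis $(ii)$) to show that $X_{\abar}=\abar\cup\{f^{\cC}(\abar)\}$ is closed under $f$ and consists of $k+1$ distinct points, and it uses hypothesis $(iii)$ with $n=k+3$ to obtain the stratification of $<$ across the $U_i$'s, because the relevant two-point configuration may only be contained in a substructure such as $X_{\abar}\cup\{y\}$, which is in general reducible and so is not controlled by $(i)$. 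Your completion construction (adjoining a fresh $U_{k+1}$-point for each $\xbar\in Q^{\cA}\setminus\dom(f^{\cA})$ and extending the order) is in the same spirit as the paper's, but the structural facts about $\cA$ that make it work must be derived using all three hypotheses, as in the paper, not from $(i)$ alone.
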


\begin{proof}
That $\bS_k$ is irreducible is immediate from the fact that its elements are all linearly ordered by $<$.  We now check local finiteness.  Fix $\calC_0\in \vec{\bR}_k$, and let $n=k+3$.  Suppose $\calC$ is a finite $\vec{\calL}_{f,k}$-structure such that: 
\begin{enumerate}
\item $\calC_0$ is an $\vec{\bR}_k$-completion of $\calC$,
\item every irreducible substructure of $\calC$ is in $\bS_k$, and
\item every substructure of $\calC$ with at most $n$ vertices has an $\bS_k$-completion.
\end{enumerate}
We will show $\calC$ has a $\bS_k$-completion.  Let $h\colon \calC\rightarrow \calC_0$ be a homomorphism-embedding from $\calC$ to $\calC_0$, which is guaranteed to exist by (1).  We begin with a few observations about $\calC$.\medskip

\noindent\textit{Claim 1.} 
Given $\abar\in \dom(f^{\calC})$, $X_{\abar}:=\{a_1,\ldots, a_k,f^{\calC}(\abar)\}$ is closed under $f^{\calC}$.

\noindent\textit{Proof.} Since $h$ is a homomorphism-embedding into $\calC_0$,  $h(f^{\calC}(\abar))=f^{\calC_0}(h(\abar))$.  Since $\calC_0\in \vec{\bR}_k$, $h(\abar)\in \dom(f^{\calC_0})$ implies $h(\abar)\in Q^{\calC_0}$ and $h(f^{\calC}(\abar))=f^{\calC_0}(h(\abar))\in U_{k+1}^{\calC_0}$.  Since  $C_0=U_1^{\calC_0}\cup \ldots \cup U_{k+1}^{\calC_0}$ is a partition, this tells us that $h(\abar)$ is a tuple of $k$ distinct elements, none of which are $f^{\calC_0}(h(\abar))$.   In other words, we must have that $|h(X_{\abar})|=k+1$.  Clearly, $|X_{\abar}|\leq k+1$, so it follows that $|X_{\abar}|=k+1$ and $h|_{X_{\abar}}$ is an injection.  We now show that $X_{\abar}$ is closed under $f^{\calC}$.  Suppose towards a contradiction that there is some $\xbar\in X_{\abar}^k$ so that $\xbar\in \dom(f^{\calC})$, but $f^{\calC}(\xbar)\notin X_{\abar}$.  This implies that $\xbar\neq \abar$.  By the same argument as above, we must have $h(\xbar)\in Q^{\calC_0}$ and $h(f^{\calC}(\xbar))\in U_{k+1}^{\calC_0}$.  Since $\xbar\neq \abar$ and $h|_{X_{\abar}}$ is injective, $h(\xbar)\neq h(\abar)$. Since $h(\xbar)$ and $h(\abar)$ are distinct elements of $Q^{\calC_0}$, there must be some element of $h(X_{\abar})$ appearing in the tuple $h(\xbar)$ which does not appear in $h(\abar)$. The only element in $h(X_{\abar})$ not appearing in $h(\abar)$ is $h(f^{\calC}(\abar))=f^{\calC_0}(h(\abar))$, so we must have that $f^{\calC_0}(h(\abar))$ appears in $h(\xbar)$.  But  $h(\xbar)\in Q^{\calC_0}$, while $f^{\calC_0}(h(\abar))\in U_{k+1}^{\calC_0}$.  This is a contradiction since $U_{k+1}^{\calC_0}$ is disjoint from $U_1^{\calC_0},\ldots, U_k^{\calC_0}$.  Thus, $X_{\abar}$ is closed under $f^{\calC}$.\clqed\medskip

We can now prove the following facts about $\calC$. 
\begin{enumerate}[$(a)$]
\item $C=U_1^{\calC}\cup\ldots\cup U_{k+1}^{\calC}$ is a partition of $C$,
\item $\dom(f^{\calC})\subseteq Q^{\calC}$ and $\im(f^{\calC})\subseteq U_{k+1}^{\calC}$,
\item For all $x,y\in C$, if $x<^{\calC}y$, then $x\in U_i^{\calC}$ and $y\in U_j^{\calC}$ for some $i,j\in [k+1]$ satisfying $i\leq j$.
\end{enumerate}

For $(b)$, suppose $\abar\in \dom(f^{\calC})$. Claim 1 implies that there is a substructure $\calC[X_{\abar}]\leq \calC$ with domain $X_{\abar}$.  It is not difficult to verify that $\calC[X_{\abar}]$ is irreducible, and thus, by (2), is in $\bS_k$.  Therefore $\abar\in Q^{\calC}$ and $f^{\calC}(\abar)\in U_{k+1}^{\calC}$.  This shows $(b)$.

For $(a)$, fix $x\in C$. We claim that there is a set $A_x\subseteq C$ so that $x\in A_x$ and so that $\calC[A_x]$ is an irreducible substructure of $\calC$.  If $x\in X_{\abar}$ for some $\abar\in \dom(f^{\calC})$, let $A_x=X_{\abar}$.  If $x\in C\backslash (\bigcup_{\abar\in \dom(f^{\calC})}X_{\abar})$, let $A_x=\{x\}$.  In either case, $\calC[A_x]$ contains $x$ and is an irreducible substructure of $\calC$, and is therefore in $\bS_k$ by (2).  Thus, there is a unique $i\in [k+1]$ so that $x\in U_i^{\calC}$.  This shows $(a)$. 

For $(c)$, suppose $x,y\in C$ and $x<^{\calC}y$. Since we have already shown $(a)$, we know $x\in U_i^{\calC}$ and $y\in U_j^{\calC}$ for some $i,j\in [k+1]$.  We want to show $i\leq j$.  We claim there is a subset $A_{x,y}\subseteq C$ so that $x,y\in A_{x,y}$, $|A_{x,y}|\leq n$, and $\calC[A_{x,y}]$ is a substructure of $\calC$. If $x$ and $y$ appear in $X_{\abar}$ for some $\abar\in \dom(f^{\calC})$, let $A_{x,y}=X_{\abar}$.   If $x,y\in C\backslash (\bigcup_{\abar\in \dom(f^{\calC})}X_{\abar})$, let $A_{x,y}=\{x,y\}$.  If $x\in X_{\abar}$ for some $\abar\in \dom(f^{\calC})$ and $y\in C\backslash (\bigcup_{\abar\in \dom(f^{\calC})}X_{\abar})$, let $A_{x,y}=X_{\abar}\cup \{y\}$.  In each of these cases, $\calC[A_{x,y}]$ is a substructure of $\calC$ of size at most $k+2\leq n$.  By (3), $\calC[A_{x,y}]$ has an $\bS_k$-completion. Say $g\colon\calC[A_{x,y}]\rightarrow \calY$ is a homomorphism-embedding into some  $\calY\in \bS_k$.  Then $x<^{\calC}y$ implies $g(x)<^{\calY}g(y)$ and $x\in U_i^{\calC},y\in U_j^{\calC}$ implies $g(x)\in U_i^{\calY}$ and $g(y)\in U_j^{\calY}$. Since $\calY\in \bS_k$, this implies $i\leq j$, as desired.  This shows $(c)$.

For each $\xbar\in Q^{\calC_0}$, let $d_{\xbar}$ be a new constant symbol and set 
$$
D:=C_0\cup \{d_{\xbar}: \xbar\in Q^{\calC_0}\backslash \dom(f^{\calC_0})\}.
$$
We now define an $\vec{\cL}_{f,k}$-structure $\calD$ with underlying set $D$ as follows.  For each $i\in [k]$, set $U_i^{\calD}=U_i^{\calC_0}$, and let
$$
U_{k+1}^{\calD}:=U_{k+1}^{\calC_0}\cup \{d_{\xbar}: \xbar\in Q^{\calC_0}\backslash \dom(f^{\calC_0})\}.
$$
Let $\dom(f^{\calD})=Q^{\calD}=Q^{\calC_0}$, and for $\xbar\in Q^{\calD}$, set 
\[
f^{\calD}(\xbar)=\begin{cases} f^{\calC_0}(\xbar)&\text{ if }\xbar\in \dom(f^{\calC_0})\\
d_{\xbar}&\text{ if }\xbar\notin \dom(f^{\calC_0}).\end{cases}
\]
We now define $<^{\calD}$. If $x,y\in U_i^{\calD}$ for some $i\in [k]$, we let  $x<^{\calD}y$ hold if and only if $x<^{\calC_0}y$.  If $x\in U_i^{\calD}$ and $y\in U_j^{\calD}$ for some $1\leq i<j\leq k+1$, define $(x<^{\calD}y)\wedge \neg (y<^{\calD}x)$.  If $x\in U_{k+1}^{\calD}\backslash U_{k+1}^{\calC_0}$ and $y\in U_{k+1}^{\calC_0}$, define $(x<^{\calD}y)\wedge \neg (y<^{\calD}x)$.  Now let $<'$ be any linear order on $U_{k+1}^{\calD}\backslash U_{k+1}^{\calC_0}$.  Given $x,y\in U_{k+1}^{\calD}\backslash U_{k+1}^{\calC_0}$, define $x<^{\calD}y$ if and only if $x<'y$.  It is not difficult to check that by construction, $\calD\in \bS_k$.

Recall that $h\colon \cC\rightarrow \cC_0$ is a homomorphism-embedding.  By construction $C_0\subseteq D$, so we can consider $h$ as a map from $C$ to $D$.  We show that considered as such, $h$ is a homomorphism-embedding from $\calC$ to $\calD$.  By Claim 1 and Fact \ref{fact:irred}, it suffices to show that for every irreducible $\calA\leq \calC$, $h|_{\calA}$ is an embedding into $\calD$.

Suppose $\calA\leq \calC$ is an irreducible substructure of $\calC$.  By assumption, $h|_A$ is an embedding into $\calC_0$.  Therefore, $h|_A$ is an injection, and the following hold.
\begin{enumerate}[$(i)$]
\item $h(\dom(f^{\calA}))=\dom(f^{\calC_0})\cap h(A)^k$.
\item For all $\xbar\in \dom(f^{\calA})$, $h(f^{\calA}(\xbar))=f^{\calC_0}(h(\xbar))$.
\item For all $a\in A$ and $i\in [k+1]$, $a\in U_i^{\calC}$ if and only if $h(a)\in U_i^{\calC_0}$.
\item For all $x,y\in A$, $x<^{\calA}y$ if and only if $h(x)<^{\calC_0}h(y)$.
\end{enumerate}
We want to show the following.
\begin{enumerate}[$(i)'$]
\item $h(\dom(f^{\calA}))=\dom(f^{\calD})\cap h(A)^k$.
\item For all $\xbar\in \dom(f^{\calA})$, $h(f^{\calA}(\xbar))=f^{\calD}(h(\xbar))$.
\item For all $a\in A$ and $i\in [k+1]$, $a\in U_i^{\calA}$ if and only if $h(a)\in U_i^{\calD}$.
\item For all $x,y\in A$, $x<^{\calA}y$ if and only if $h(x)<^{\calD}h(y)$.
\end{enumerate}

That $(iii)'$ holds follows from the fact that $\im(h)\subseteq C_0$, the definition of $\calD$, and $(iii)$.  That $(ii)'$ holds follows from $(ii)$ and the fact that, by construction, $\dom(f^{\calC_0})\subseteq \dom(f^{\calD})$, and $f^{\calC_0}$ and $f^{\calD}$ agree on $\dom(f^{\calC_0})$. 

We now show $(iv)'$.  Suppose $x,y\in A$.  If $x,y\in U_i^{\calC}$ for some $i\in [k+1]$, then by construction, $h(x)<^{\calC_0}h(y)$ if and only if $h(x)<^{\calD}h(y)$.  Combining this with $(iii)$, we have that $x<^{\calA}y$ if and only if $h(x)<^{\calD}h(y)$, as desired.  Assume now $x\in U_i^{\calC}$ and $y\in U_j^{\calC}$ for some $1\leq i\neq j\leq k+1$.  Since $<^{\calC_0}$ is a linear order, exactly one of $h(x)<^{\calC_0}h(y)$ or $h(y)<^{\calC_0}h(x)$ holds. Without loss of generality, say $(h(x)<^{\calC_0}h(y))\wedge (\neg (h(y)<^{\calC_0}h(y)))$. By $(iii)$, this implies $(x<^{\calA}y)\wedge \neg(y<^{\calA}x)$.  By $(c)$, and since $i\neq j$ by assumption, we have $i<j$.  Since we already showed that $(iii)'$ holds, we know that $h(x)\in U_i^{\calD}$ and $h(y)\in U_j^{\calD}$. Thus, $(h(x)<^{\calD}h(y))\wedge (\neg (h(y)<^{\calD}h(y)))$ holds by construction of $\calD$ and because $i<j$. Therefore,  $x<^{\calA}y$ if and only if $h(x)<^{\calD}h(y)$, as desired.

We have left to show $(i)'$.  By construction, $\dom(f^{\calD})\cap h(A)^k=Q^{\calD}\cap h(A)^k$. Since we already know $(iii)'$, $Q^{\calD}\cap h(A)^k=h(Q^{\calA})$.  Thus we want to show $h(\dom(f^{\calA}))=h(Q^{\calA})$.  Since  $h$ is injective, this is equivalent to showing $\dom(f^{\calA})=Q^{\calA}$.  

Suppose towards a contradiction that $\dom(f^{\calA})\neq Q^{\calA}$. By $(b)$,  $\dom(f^{\calA})\subseteq Q^{\calA}$, so this means $\dom(f^{\calA})\subsetneq Q^{\calA}$.  Let $\abar=(a_1,\ldots, a_k)\in Q^{\calA}\backslash \dom(f^{\calA})$, and let $X=\{a_1,\ldots, a_k\}$. Since $\dom(f^{\calC})\subseteq Q^{\calC}$, and $X^k\cap Q^{\calC}=\{\abar\}$, we must have that $X$ is closed under $f^{\calC}$.  Let $\calX$ be the substructure of $\calA$ whose underlying set is just $X$. Since $<^{\calC_0}$ is a linear order, $(ii)$ implies that $<^{\calA}$ linearly orders $\calA$.  Thus $<^{\calX}$ linearly orders $\calX$, so $\calX$ is irreducbile.  But now $\calX$ is an irreducible substructure of $\calC$ of size $k\leq n$ which is not in $\bS_k$ (since $\dom(f^{\calX})=\emptyset\neq Q^{\calX}$).  This contradicts (2).  This finishes our proof that $h$ is a homomorphism-embedding into $\calD$.
\end{proof}

We obtain the following immediate corollary of Theorem \ref{thm:hn} and Lemma \ref{lem:skramsey}.

\begin{corollary}\label{cor:ramseysk}
$\bS_k$ has the Ramsey property.
\end{corollary}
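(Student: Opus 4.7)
The plan is to invoke Theorem \ref{thm:hn} with the ambient Ramsey class $\bR = \vec{\bR}_k$ and subclass $\bK = \bS_k$. Corollary \ref{cor:rk} supplies the Ramsey property for $\vec{\bR}_k$, and Lemma \ref{lem:skramsey} gives local finiteness of $\bS_k$ inside $\vec{\bR}_k$. So the remaining hypotheses of Theorem \ref{thm:hn} to verify are (a) every $\calM \in \vec{\bR}_k$ is irreducible, (b) $\bS_k$ is hereditary, and (c) $\bS_k$ has strong amalgamation.

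For (a), any $\calM \in \vec{\bR}_k$ is linearly ordered by $<^{\calM}$. A free-amalgam decomposition of $\calM$ into proper substructures $\calM_1 \cup_{\calN} \calM_2$ would forbid any $\calL$-relations, in particular $<$-relations, across $M_1 \setminus N$ and $M_2 \setminus N$, contradicting linearity of $<^{\calM}$. For (b), suppose $\calM \leq \calN$ with $\calN \in \bS_k$. The partition $U_1^{\calM}, \ldots, U_{k+1}^{\calM}$, the linear order $<^{\calM}$, and the block ordering $U_1^{\calM} <^{\calM} \ldots <^{\calM} U_{k+1}^{\calM}$ transfer directly from $\calN$. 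Since the inclusion is an embedding, $\dom(f^{\calM}) = M^k \cap \dom(f^{\calN}) = M^k \cap Q^{\calN} = Q^{\calM}$, and so $\calM \in \bS_k$.

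For (c), given $\calB$ embedded in both $\calA_1$ and $\calA_2$ with $\calA_1, \calA_2 \in \bS_k$, I would build $\calC$ on the disjoint union $A_1 \sqcup_B A_2$ (identifying the copies of $\calB$), enlarged by a fresh element $c_{\xbar} \in U_{k+1}^{\calC}$ for every ``cross tuple'' $\xbar \in Q^{\calC}$ whose coordinates meet both $A_1 \setminus B$ and $A_2 \setminus B$. Set $f^{\calC}(\xbar) = c_{\xbar}$ on cross tuples and inherit $f$-values from the $\calA_i$ otherwise, giving $\dom(f^{\calC}) = Q^{\calC}$. Extend the partial order induced by the two $\calA_i$ to a linear order on $C$ consistent with $U_1^{\calC} <^{\calC} \ldots <^{\calC} U_{k+1}^{\calC}$, placing each new $c_{\xbar}$ anywhere within $U_{k+1}^{\calC}$. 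Then $\calC \in \bS_k$ and the inclusions $\calA_1, \calA_2 \hookrightarrow \calC$ have images disjoint outside $\calB$, as required for strong amalgamation.

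With (a), (b), (c), and local finiteness in hand, Theorem \ref{thm:hn} delivers the Ramsey property for $\bS_k$. The only step involving real content is the strong-amalgamation construction in (c); the main subtlety there is arranging $\dom(f^{\calC}) = Q^{\calC}$ by introducing witness elements for the cross tuples without collapsing the fresh portions of $\calA_1$ and $\calA_2$. Everything else is either immediate from definitions or direct quotation of Corollary \ref{cor:rk} and Lemma \ref{lem:skramsey}.
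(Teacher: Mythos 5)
Your proposal is correct and takes essentially the same route as the paper, which deduces the corollary immediately from Theorem \ref{thm:hn} together with Corollary \ref{cor:rk} and Lemma \ref{lem:skramsey}, leaving the irreducibility of the ambient class, hereditariness of $\bS_k$, and strong amalgamation as unstated routine checks. Your explicit verifications, including the strong amalgamation built by adding fresh $U_{k+1}$-witnesses for the cross tuples and extending the union of the two linear orders (which is consistent because both orders agree on $\calB$ and respect the block ordering), correctly fill in exactly the details the paper treats as immediate.
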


We are now ready to introduce our third and final auxiliary class, $\bH_k^{\textnormal{pre}}$, which is defined below.  In contrast to $\vec{\bR}_k$ and $\bS_k$, which consist of $\vec{\calL}_{f,k}$-structures, $\bH_k^{\textnormal{pre}}$ will be a class of  $\calL_k$-structures.  To make the definition, we require the following notion of a total strict pre-order.  

\begin{definition}
Suppose $X$ is a set.  A \textbf{total strict  pre-order} on $X$ is a binary relation $\prec$ on $X$ such that the following hold.
\begin{enumerate}
\item For all $x\in X$, $\neg (x\prec x)$,
\item For all $x\neq y\in X$, $x\prec y$ or $y\prec x$ (or both),
\item For all $x,y,z\in X$, if $x\prec y$ and $y\prec z$, then $x\prec z$.
\end{enumerate}
In this case, we also let $\preceq $ denote the binary relation $x\prec y\vee x=y$. 
\end{definition}
 
It is easy to see that given a total strict pre-order $\prec $ on $X$, and $x,y\in X$, the relation $x\sim y$ given by 
$$
x\sim y \Leftrightarrow (x\preceq y \wedge y\preceq x))
$$
is an equivalence relation. Moreover $\prec$ induces a linear order on the equivalence classes $[x]$, which we will also denote by $\prec$.

We now define $\bH_k^{\textnormal{pre}}$, which is very similar to $\bH_k$.  The difference between the two classes is that $\bH_k^{\textnormal{pre}}$ will allow $<_k$ to be a strict pre-order on $Q$, while $\bH_k$ insists that $<_k$ is a linear order on $Q$. 

\begin{definition} Let $\bH_k^{\textnormal{pre}}$ be the class of finite $\calL_k$-structures $\calM$ satisfying the following theory, which we call $T_k^{\textnormal{pre}}$.
\begin{enumerate}[$(1)$]
\item $P_1,\ldots, P_{k+1}$ is a partition.
\item $<$ is linear order with $P_1<\ldots< P_{k+1}$.
\item $R$ only holds on $P_1\times\ldots\times P_{k+1}$ (which we also view as $Q\times P_{k+1}$).
\item $<_k$ only holds on $Q\times Q$.
\item $<_k$ is a total strict pre-order on $Q$,
\item For any $\xbar,\ybar\in Q$ and $w,z\in P_{k+1}$, $\big(\xbar\leq_k\ybar \wedge R(\ybar,w)\wedge w\leq z\big)\rightarrow R(\xbar,z)$.
\end{enumerate}
\end{definition}

As usual, the notation $x\leq_ky$ is shorthand for $x<_ky \vee x=y$ and $x\leq y$ is shorthand for $x<y\vee x=y$.  It is easy to check that $\bH_k^{\textnormal{pre}}$ has the hereditary property, as the axioms above can be expressed universally. Given an element in $\calY\in \bH_k^{\textnormal{pre}}$ and $y\in Q^{\calY}$, we let $[y]_{\calY}$ denote the equivalence class of $y$ in the strict pre-order $<_k^{\calY}$ on $ Q^{\calY}$, i.e. 
$$
[y]_{\calY}=\{z\in Q^{\calY}:  (y\leq_k^{\calY}z)\wedge (z\leq_k^{\calY}y)\}.
$$
When $a,b\in Q^{\calY}$, we write $a\sim^{\calY}_kb$ if and only if $[a]_{\calY}=[b]_{\calY}$.  Clearly $\sim_k$ is a quantifier-free definable relation in $\calL_k$.  

We will show $\bH_k^{\textnormal{pre}}$ has the Ramsey property by using a strong correspondence between structures in $\bH_k^{\textnormal{pre}}$ and $\bS_k$ (see Lemma \ref{lem:corr} below). This correspondence essentially maps structures in $\bS_k$ to structures in $\bH_k^{\textnormal{pre}}$ in such a way that no information is lost.  However, it is not, formally speaking, a bi-interpretation.  In what follows, we will necessarily be passing frequently between structures in the languages $\vec{\calL}_{f,k}$ and $\calL_k$.  We will use substructure/isomorphism to mean either $\vec{\calL}_{f,k}$-substructure/$\vec{\calL}_{f,k}$-isomorphism or $\calL_k$-substructure/$\calL_k$-isomorphism depending on the context, which should be sufficiently clear to the reader.

We begin by defining a way to generate an $\vec{\calL}_{f,k}$-structure from a structure in $\bH_k^{\textnormal{pre}}$. 
\begin{definition}
Suppose $\calB\in \bH_k^{\textnormal{pre}}$.  For each equivalence class $[\xbar]_{\calB}$ from the strict pre-order $<_k^{\calB}$, let $d_{[\xbar]_{\calB}}$ be a new constant symbol.  Define $\calB_f$ to be the $\calL_{f,k}$-structure with domain $B_f=B\cup \{d_{[\xbar]_{\calB}}: \xbar\in Q^{\calB}\}$, and where the following hold.
\begin{enumerate}[\hspace{5pt}$\ast$]
\item For $i
\in [k]$, $U_i^{\calB_f}=P_i^{\calB}$ (so $Q^{\calB_f}=Q^{\calB}$).
\item $U_{k+1}^{\calB_f}=P_{k+1}^{\calB}\cup \{d_{[\xbar]_{\calB}}: \xbar\in Q^{\calB}\}$. 
\item $U_1<^{\calB_f}\ldots<^{\calB_f}U_{k+1}$. 
\item For each $i\in [k+1]$ and $x,y\in P_i^{\calB}$, $x<^{\calB_f}y$ if and only if $x<^{\calB}y$.
\item For $y\in P_{k+1}^{\calB}$ and $\xbar\in Q^{\calB}$, $d_{[\xbar]}<^{\calB_f}y$ if and only if $R^{\calB}(\xbar,y)$.
\item $\dom(f^{\calB_f})=Q^{\calB}$ and for each $\xbar\in Q^{\calB}$, $f^{\calB_f}(\xbar)=d_{[\xbar]}$.
\item For $\xbar,\xbar'\in Q^{\calB}$, $d_{[\xbar]}<^{\calB_f}d_{[\xbar']}$ if and only if $(\xbar<_k\xbar')\wedge \neg(\xbar'<_k\xbar)$. 
\end{enumerate}
\end{definition}

It is straightforward to verify that for any $\calB\in \bH_k^{\textnormal{pre}}$, we have $\calB_f\in \bS_k$, and that the pre-images of points under $f^{\calB_f}$ are exactly the equivalence classes in $Q^{\calB}$ under  $<_k^{\calB}$.  We have a corresponding way of generating an $\calL_k$-structure from a structure in $\bS_k$.

\begin{definition}
Given $\calC\in \bS_k$, define $\calC_{R}$ to be the $\calL_k$-structure with domain $C_R=C\backslash \im(f^{\calC})$ such that the following hold.
\begin{enumerate}[\hspace{5pt}$\ast$]
\item For each $i\in [k]$, $P_i^{\calC_R}=U_i^{\calC}$. 
\item $P_{k+1}^{\calC_R}=U_{k+1}^{\calC}\backslash \mathrm{Im}(f^{\calC})$.
\item For all $x,y\in C_R$, $x<^{\calC_R}y$ if and only if $x<^{\calC}y$.
\item For  $\xbar,\xbar'\in Q^{\calC_R}$, $\xbar<_k^{\calC_R}\xbar'$ if and only if $\xbar\neq \xbar'$ and $f^{\calC}(\xbar)\leq f^{\calC}(\xbar')$.
\item For all $\xbar\in Q^{\calC_R}$ and $z\in P_{k+1}^{\calC_R}$, $R^{\calC_R}(\xbar, z)$ if and only if $f(\xbar)<^{\calC}z$. 
\end{enumerate} 
\end{definition}

It is straightforward to verify that if $\calC\in \bS_k$, then $\calC_R\in \bH_k^{\textnormal{pre}}$, and that the equivalence classes under the pre-order $<_k^{\calC_R}$ are exactly the pre-images of points under $f^{\calC}$. Furthermore, for any $\xbar,\xbar'\in Q^{\calC_R}$, $f^{\calC}(\xbar)<^{\calC}f^{\calC}(\xbar')$ holds if and only if $x\nsim_k^{\calC_R} y$ and $\xbar<^{\calC_R}_k\xbar'$.

Given  $\calC\in \bS_k$, we will need to consider a correspondence between substructures of $\calC$ and substructures of $\calC_R$.  For this we will need the following notation.

\begin{definition}\label{def:phipsi}
Suppose $\calC\in \bS_k$ and $\calA\in \bH_k^{\textnormal{pre}}$.  
\begin{enumerate}
\item Given $\calD\leq \calC$, let $\phi_{\calC}(\calD)$ be the substructure of $\calC_R$ with domain $D\setminus \im(f^{\calD})$,
\item Given $\calW\leq \calC_R$, let $\psi_{\calC}(\calW)$ be the substructure of $\calC$ with domain 
$$
W\cup \{f^{\calC}(\xbar): \xbar\in Q^{\calW}\}.
$$
\end{enumerate}
\end{definition}

If $\calC\in \bS_k$ then, since $\bS_k$ is hereditary, we have $\psi_{\calC}(\calW)\in \bS_k$ for any $\calW\leq \calC_R$.  Similarly, since $\calC_R\in \bH_k^{\textnormal{pre}}$ and $\bH_k^{\textnormal{pre}}$ is hereditary, we have that $\phi_{\calC}(\calD)\in \bH_k^{\textnormal{pre}}$ for any $\calD\leq \calC$.  We will also use the following lemma, which is easily verified from Definition \ref{def:phipsi}.

\begin{lemma}\label{lem:phipsi}
Suppose $\calC\in \bS_k$ and $\calA\in \bH_k^{\textnormal{pre}}$. 
\begin{enumerate}[$(a)$]
\item Given $\calD\leq \calC$ and $\calW\leq \calC_R$, $\psi_{\calC}(\phi_{\calC}(\calD))=\calD$ and  $\phi_{\calC}(\psi_{\calC}(\calW))=\calW$.
\item  $\calE\leq \calD\leq \calC$ implies $\phi_{\calC}(\calE)\leq \phi_{\calC}(\calD)\leq \calC_R$ and $\calX\leq \calY\leq \calC_R$ implies $\psi_{\calC}(\calX)\leq \psi_{\calC}(\calY)\leq \calC$.
\end{enumerate}
\end{lemma}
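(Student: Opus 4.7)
My plan is to verify both parts by carefully tracking the underlying sets and using that all structures in question are substructures of the same ambient structure ($\calC$ or $\calC_R$), so structural agreement reduces to set-theoretic agreement of domains.

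For part $(a)$, the key observation is that the elements of $Q^{\calC}$ live in $U_1^{\calC} \cup \ldots \cup U_k^{\calC}$, while $\im(f^{\calC}) \subseteq U_{k+1}^{\calC}$, so these are disjoint. Moreover, since $\calC \in \bS_k$ has $\dom(f^{\calC}) = Q^{\calC}$, any substructure $\calD \leq \calC$ satisfies $\dom(f^{\calD}) = Q^{\calD}$, whence $\im(f^{\calD}) = f^{\calC}(Q^{\calD}) \subseteq D$. Given $\calD \leq \calC$, $\phi_{\calC}(\calD)$ has domain $D \setminus f^{\calC}(Q^{\calD})$, and since this removes no elements of $Q^{\calD}$, we get $Q^{\phi_{\calC}(\calD)} = Q^{\calD}$. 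Therefore $\psi_{\calC}(\phi_{\calC}(\calD))$ has domain $(D \setminus f^{\calC}(Q^{\calD})) \cup f^{\calC}(Q^{\calD}) = D$, matching $\calD$. Conversely, given $\calW \leq \calC_R$, since $W \subseteq C_R = C \setminus \im(f^{\calC})$ is disjoint from $\im(f^{\calC})$, the set $\psi_{\calC}(\calW)$ has domain $W \sqcup f^{\calC}(Q^{\calW})$, with $Q^{\psi_{\calC}(\calW)} = Q^{\calW}$. Applying $\phi_{\calC}$ then removes exactly $f^{\calC}(Q^{\calW})$, leaving $W$. In both cases the two sides are substructures of the same ambient structure ($\calC$, resp.\ $\calC_R$) on the same underlying set, hence equal.

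For part $(b)$, monotonicity of each operation follows directly from monotonicity of the set operations used to define them. If $\calE \leq \calD \leq \calC$, then $E \subseteq D$, $Q^{\calE} \subseteq Q^{\calD}$, and $\im(f^{\calE}) = f^{\calC}(Q^{\calE}) \subseteq f^{\calC}(Q^{\calD}) = \im(f^{\calD})$; the elements removed from $E$ in forming $\phi_{\calC}(\calE)$ lie in $\im(f^{\calE}) \subseteq \im(f^{\calD})$, so $E \setminus \im(f^{\calE}) \subseteq D \setminus \im(f^{\calD})$, yielding $\phi_{\calC}(\calE) \leq \phi_{\calC}(\calD)$ as substructures of $\calC_R$. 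Similarly, if $\calX \leq \calY \leq \calC_R$, then $X \subseteq Y$ and $Q^{\calX} \subseteq Q^{\calY}$, so $\{f^{\calC}(\xbar) : \xbar \in Q^{\calX}\} \subseteq \{f^{\calC}(\xbar) : \xbar \in Q^{\calY}\}$, and adjoining these to $X \subseteq Y$ preserves the inclusion, giving $\psi_{\calC}(\calX) \leq \psi_{\calC}(\calY)$ as substructures of $\calC$.

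Since each step is a direct unpacking of Definition \ref{def:phipsi} together with the disjointness $Q \cap U_{k+1} = \emptyset$ in $\bS_k$-structures, there is no genuine obstacle; the only point requiring a moment's care is the identification $Q^{\phi_{\calC}(\calD)} = Q^{\calD}$ and $Q^{\psi_{\calC}(\calW)} = Q^{\calW}$, which is what makes the two roundtrips cancel cleanly.
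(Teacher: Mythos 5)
Your verification of part $(a)$ and of the $\psi$-half of part $(b)$ is the routine check the paper has in mind (the paper omits the proof as ``easily verified''), and it reads correctly. However, your justification of the $\phi$-half of part $(b)$ contains an invalid step: from $E\subseteq D$ and $\im(f^{\calE})\subseteq\im(f^{\calD})$ you conclude $E\setminus\im(f^{\calE})\subseteq D\setminus\im(f^{\calD})$, but set difference is antitone in the subtracted set, so enlarging both the ambient set and the removed set yields no inclusion in general. What is actually needed is $E\cap\im(f^{\calD})\subseteq\im(f^{\calE})$, i.e.\ that no element of $E$ is an $f^{\calC}$-image of a tuple from $Q^{\calD}$ without already being the image of a tuple from $Q^{\calE}$. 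This is not a formal consequence of $\calE\leq\calD$ alone: if $c=f^{\calC}(\abar)$ for some $\abar\in Q^{\calD}$ and $\calE$ is the substructure of $\calD$ on $\{c\}$ (a legitimate substructure, since no tuple from $\{c\}$ lies in $\dom(f^{\calC})\subseteq Q^{\calC}$), then $E\setminus\im(f^{\calE})=\{c\}$, which is not contained in $D\setminus\im(f^{\calD})$.

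The one-line repair uses the same implicit hypothesis that makes Definition \ref{def:phipsi} well-posed, namely that $\phi_{\calC}(\calE)$ really is a substructure of $\calC_R$, so that $E\setminus\im(f^{\calE})\subseteq C_R=C\setminus\im(f^{\calC})$. Granting this, $E\setminus\im(f^{\calE})$ is disjoint from $\im(f^{\calC})\supseteq\im(f^{\calD})$ and is contained in $D$, which gives the desired inclusion of domains (equivalently, well-definedness says exactly that $E\cap\im(f^{\calC})\subseteq\im(f^{\calE})$, hence $E\cap\im(f^{\calD})\subseteq\im(f^{\calE})$). Note that the same implicit assumption already underlies your computation in part $(a)$: for an arbitrary substructure $\calD\leq\calC$ the set $D\setminus\im(f^{\calD})$ need not lie in $C_R$ (the example above with $\calD$ in place of $\calE$ shows this), so strictly speaking the condition $D\cap\im(f^{\calC})\subseteq\im(f^{\calD})$ is being used throughout. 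That imprecision is inherited from the paper's definition rather than introduced by you; but in part $(b)$ your stated inference would fail even where the conclusion holds, so it should be replaced by the disjointness argument just described.
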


We can now prove our crucial lemma, which shows that, relative to a fixed $\calC\in \bS_k$, the maps $\phi_{\calC}$ and $\psi_{\calC}$ preserve isomorphism in a certain sense.

\begin{lemma}\label{lem:corr}
Suppose $\calC\in \bS_k$ and $\calA\in \bH_k^{\textnormal{pre}}$.  
\begin{enumerate}[$(a)$]
\item Given $\calW\leq \calC_R$, if $\calW\cong \calA$ then $\psi_{\calC}(\calW)\cong \calA_f$.
\item Given $\calD\leq \calC$, if $\calD\cong \calA_f$ then $\phi_{\calC}(\calD)\cong \calA$.
\end{enumerate}
\end{lemma}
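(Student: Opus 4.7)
The plan is to exploit the fact that, structurally, the operations $\calB \mapsto \calB_f$ and $\calC \mapsto \calC_R$ are canonical and essentially inverse to each other (at the level of individual structures): no $\cL_k$-information is lost in $\calB_f$, and no $\vec{\cL}_{f,k}$-information is lost in $\calC_R$. Given this, for both parts I will construct the required isomorphism by directly extending or restricting the given one, and then verify preservation of each symbol by a case analysis.

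For part $(a)$, I will let $h\colon \calA \to \calW$ be the given $\cL_k$-isomorphism and define $\tilde h\colon \calA_f \to \psi_{\calC}(\calW)$ by setting $\tilde h(a) = h(a)$ for $a \in A$ and $\tilde h(d_{[\xbar]_{\calA}}) = f^{\calC}(h(\xbar))$ for $\xbar \in Q^{\calA}$. The first task is to check well-definedness and bijectivity: if $\xbar \sim_k^{\calA} \xbar'$ then $h(\xbar) \sim_k^{\calW} h(\xbar')$, which in $\calC_R$ translates to $f^{\calC}(h(\xbar)) = f^{\calC}(h(\xbar'))$; conversely $f^{\calC}$ is injective on $\sim_k^{\calC_R}$-classes, and the image exhausts $W \cup \{f^{\calC}(\ybar):\ybar\in Q^{\calW}\}$. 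Then I will verify preservation of $U_i$ and $f$ (immediate from the defining clauses of $\calA_f$, $\psi_{\calC}(\calW)$, and the choice $\tilde h(d_{[\xbar]}) = f^{\calC}(h(\xbar))$), and finally preservation of $<$ in cases: two elements inside the same $U_i$, elements in different $U_i$'s, between $d_{[\xbar]}$'s and $P_{k+1}^{\calA}$-elements, and between two $d_{[\xbar]}$'s — in each case the equivalence reduces, via $h$, to an identity in $\calC_R$ that unpacks (by the definition of $\calC_R$) to an identity in $\calC$ matching the defining clauses of $\calB_f$.

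For part $(b)$, I will let $g\colon \calA_f \to \calD$ be the given $\vec{\cL}_{f,k}$-isomorphism. Because $g$ maps $\im(f^{\calA_f}) = \{d_{[\xbar]_{\calA}}\}$ bijectively onto $\im(f^{\calD})$, the restriction $\tilde g := g|_A$ is a bijection from $A$ onto $D \setminus \im(f^{\calD})$, which is precisely the domain of $\phi_{\calC}(\calD)$. Preservation of $P_i$ and $<$ is immediate from the fact that $\phi_{\calC}(\calD) \leq \calC_R$ inherits both from $\calC$ and $g$ preserves $U_i$ and $<$. For $<_k$ the core observation is that, for $\xbar, \xbar' \in Q^{\calA}$, the chain of equivalences $g(\xbar) <_k^{\calC_R} g(\xbar') \Leftrightarrow g(\xbar) \neq g(\xbar') \text{ and } f^{\calC}(g(\xbar)) \leq f^{\calC}(g(\xbar'))\Leftrightarrow g(d_{[\xbar]}) \leq^{\calD} g(d_{[\xbar']})\text{ (with }g(\xbar)\neq g(\xbar'))\Leftrightarrow d_{[\xbar]} \leq^{\calA_f} d_{[\xbar']}\text{ (with }\xbar\neq\xbar')$ unpacks via the defining clauses of $\calA_f$ and the axioms of $T_k^{\textnormal{pre}}$ to exactly $\xbar <_k^{\calA} \xbar'$; here I will use that for $\xbar \neq \xbar'$ in a total strict pre-order, $\xbar \sim_k \xbar'$ forces $\xbar <_k \xbar' \wedge \xbar' <_k \xbar$. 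Finally, for $R$, I will use that $R^{\calC_R}(\ybar,z) \Leftrightarrow f^{\calC}(\ybar) <^{\calC} z$, which transfers under $g$ back to $d_{[\xbar]} <^{\calA_f} z$, i.e.\ to $R^{\calA}(\xbar,z)$.

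The main obstacle is the order case analysis in part $(a)$, since the ordering of the new constants $d_{[\xbar]}$ in $\calA_f$ is prescribed by two different rules (via $R$ when compared with $P_{k+1}^{\calA}$, and via $<_k$ when compared among themselves), and each rule must be matched with the ordering of the corresponding images $f^{\calC}(h(\xbar))$ in $\calC$. However, each case is mechanical once one invokes the defining clauses: the $R$/order rule in $\calA_f$ corresponds to the $R$-condition $f^{\calC}(\ybar) < z$ in $\calC_R$ transferred back through $h$, and the $<$-rule among the $d_{[\xbar]}$'s corresponds to the asymmetric part of $<_k^{\calC_R}$ witnessed by $f^{\calC}$-values, so no essential new ingredient beyond the defining equivalences of $\calB_f$ and $\calC_R$ is needed.
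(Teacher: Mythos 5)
Your proposal is correct and follows essentially the same route as the paper: in part $(a)$ you extend $h$ by sending $d_{[\xbar]_{\calA}}$ to $f^{\calC}(h(\xbar))$ and verify bijectivity and preservation of $U_i$, $f$, and $<$ by the same case analysis, and in part $(b)$ you restrict the given isomorphism (merely in the opposite direction from the paper's map) and unwind the defining clauses of $\calA_f$ and $\calC_R$ exactly as the paper does, including the same use of strict pre-order asymmetry plus injectivity to pass from $\leq_k$ to $<_k$. Your explicit well-definedness check on the $d_{[\xbar]}$'s is a minor point the paper leaves implicit, but it introduces no divergence in method.
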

\begin{proof}
Fix $\calC\in \bS_k$ and $\calA\in \bH_k^{\textnormal{pre}}$.  To ease notation, for the rest of the proof, we will write $\phi$ and $\psi$ to mean $\phi_{\calC}$ and $\psi_{\calC}$.

We first  show part $(a)$.  Fix $\calW\leq \calC_R$, and assume $\calW\cong \calA$.  Say $h\colon \calA\rightarrow \calW$ is an $\calL_k$-isomorphism.  We define a map $h'\colon \calA_f\rightarrow \psi(\calW)$ as follows.    Recall that by definition of $\calW$ and $\calA_f$, for each $i\in [k]$, $U_i^{\psi(\calW)}=P_i^{\calW}$ and $U_i^{\calA_f}=P_i^{\calA}$, and 
\begin{align*}
U_{k+1}^{\calA_f}=P_{k+1}^{\calA}\cup \{d_{[\xbar]_{\calA}}: \xbar\in Q^{\calA}\}\text{ and } U_{k+1}^{\psi(\calW)}&=P_{k+1}^{\calW}\cup \{f^{\calC}(w): w\in Q^{\calW}\}.
\end{align*}
Thus, $h$ induces a bijection from $U_i^{\calA_f}$ to $U_i^{\psi(\calW)}$ for all $i\in [k]$, as well as a bijection from  $P_{k+1}^{\calA}$ into $P_{k+1}^{\calW}$. So for all $x\in U_1^{\calA_f}\cup \ldots \cup U_k^{\calA_f}\cup P_{k+1}^{\calA}$, we define $h'(x)=h(x)$. We then extend $h'$ to include the rest of $U_{k+1}^{\calA_f}$ by setting, for each $\xbar\in Q^{\calA}$, $h'(d_{[\xbar]_{\calA}})=f^{\calC}(h(\xbar))$. Note that for each $\xbar\in Q^{\calA}$, $h(\xbar)\in \calW$, so $f^{\calC}(h(\xbar))\in \psi(\calW)$ since $\psi(\calW)\leq \calC$.  Thus $h'$ is indeed a map from $A_f$ to $\psi(\calW)$.  To show $h'$ is a bijection, we just need to show it induces a bijection from $\{d_{[\xbar]_{\calA}}: \xbar\in Q^{\calA}\}$ to $\{f^{\calC}(\wbar): \wbar\in Q^{\calW}\}$.

Assume $d_{[\xbar]_{\calA}}\neq d_{[\xbar']_{\calA}}$.  Then $[\xbar']_{\calA}\neq [\xbar]_{\calA}$, so 
\[
\calA\models \big((\xbar<_k\xbar')\wedge \neg (\xbar'<_k\xbar)\big)\vee \big((\xbar'<_k\xbar)\wedge \neg (\xbar<_k\xbar')\big) 
\]
  Since $h$ is an isomorphism, this implies that 
  \[
  \calW\models \big((h(\xbar)<_kh(\xbar'))\wedge \neg (h(\xbar')<_kh(\xbar))\big)\vee \big((h(\xbar')<_kh(\xbar))\wedge \neg (h(\xbar)<_kh(\xbar'))\big).
  \]
    Since $\calW\leq \calC_R$, we have that  
    \[
    \calC_R\models \big((h(\xbar)<_kh(\xbar'))\wedge \neg (h(\xbar')<_kh(\xbar))\big)  \vee \big((h(\xbar')<_kh(\xbar))\wedge \neg (h(\xbar)<_kh(\xbar'))\big) .
    \]
      By  definition of $\calC_R$, this implies $f^{\calC}(h(\xbar))\neq f^{\calC}(h(\xbar'))$, and thus, $h'(d_{[\xbar]_{\calA}})\neq h'(d_{[\xbar']_{\calA}})$.  Now fix $f^{\calC}(\wbar)$ for some $\wbar\in Q^{\calW}$.  Since $h$ is an isomorphism from $\calA$ to $\calW$, there is $\xbar\in Q^{\calA}$ so that $h(\xbar)=\wbar$.  Then by construction, $f^{\calC}(\wbar)=f^{\calC}(h(\xbar))=h'(d_{[\xbar]_{\calA}})$.  

We now have that $h':\calA_f\rightarrow \psi(\calW)$ is a bijection, and its easy to see it respects the predicates $U_1,\ldots, U_{k+1}$ by construction.  To finish showing it is an isomorphism, we just need to show it respects $<$ and $f$.

By construction, $\dom(f^{\psi(\calW)})=Q^{\psi(\calW)}=Q^{\calW}=h'(Q^{\calA})=Q^{\calA_f}=\dom(f^{\calA_f})$. So $h'(\dom(f^{\calA_f}))=\dom(f^{\psi(\calW)})$.  By construction, for any $\xbar\in Q^{\calA_f}$, $h'(f^{\calA_f}(\xbar))=h'(d_{[\xbar]_{\calB}})=f^{\calC}(h(\xbar))=f^{\psi(\calW)}(h'(\xbar))$.  This shows that $h'$ respects $f$. 

Now fix $a,b\in A_f$.  We show $a<^{\calA_f}b$ if and only if $h'(a)<^{\psi(\calW)}h'(b)$.  We will have a few cases depending on where $a,b$ live inside $\calA_f$.

Suppose first $a,b\in U_1^{\calA_f}\cup \ldots\cup U_k^{\calA_f}\cup P_{k+1}^{\calA}$.  By definition of $\calA_f$, $a<^{\calA_f}b$ if and only if $a<^{\calA}b$.  Since $h$ is an isomorphism, this holds if and only if  $h(a)<^{\calW}h(b)$.  By definition of $\psi(\calW)$, this holds if and only if $h(a)<^{\psi(\calW)}h(b)$, i.e., if and only if $h'(a)<^{\psi(\calW)}h'(b)$ (since $h$ and $h'$ agree on $a,b$). 

Now, suppose that $a,b\in U_{k+1}^{\calA_f}\backslash P_{k+1}^{\calA}$. This means $a=d_{[\xbar]_{\calA}}$ and $b=d_{[\xbar']_{\calA}}$ for some $\xbar,\xbar'\in Q^{\calA}$.  By definition of $\calA_f$,  $d_{[\xbar]_{\calA}}\leq^{\calA_f}d_{[\xbar']_{\calA}}$ if and only if $\xbar\leq _k^{\calA}\xbar'$.  Since $h$ is an isomorphism, this holds if and only if $h(\xbar)\leq_k^{\calW}h(\xbar')$.  Since $\calW\leq \calC_R$, this holds if and only if $h(\xbar)\leq_k^{\calC_R}h(\xbar')$.  By definition of $\calC_R$, this holds if and only if $f^{\calC}(h(\xbar))\leq^{\calC}f^{\calC}(h(\xbar))$.  By definition of $h'$, this holds if and only if $h'(d_{[\xbar]_{\calA}})\leq^{\calC}h'(d_{[\xbar']_{\calA}})$.  Since $\psi(\calW)\leq \calC$, this holds if and only if $h'(d_{[\xbar]_{\calA}})\leq^{\psi(\calW)}h'(d_{[\xbar']_{\calA}})$.  We have now shown $a\leq^{\calA_f}b$ if and only if $h'(a)\leq^{\psi(\calW)}h'(b)$.  Since $h'$ is injective, this means $a<^{\calA_f}b$ if and only if $h'(a)<^{\psi(\calW)}h'(b)$. 

Assume next that $b\in U_{k+1}^{\calA_f}\backslash P_{k+1}^{\calA}$ and $a\in P_1^{\calA}\cup \ldots \cup P_k^{\calA}$. Since $P_i^{\calA_f}<^{\calA_f}P_{k+1}^{\calA}$ holds for each $i\in [k]$, we have $a<^{\calA_f}b$.  On the other hand, $h'(a)=h(a)\in U_1^{\psi(\calW)}\cup \ldots \cup U_k^{\psi(\calW)}$, and $h'(b)\in U_{k+1}^{\psi(\calW)}$.  Thus, since $U_i^{\psi(\calW)}<^{\psi(\calW)}U_{k+1}^{\psi(\calW)}$ holds for each $i\in [k]$, $h'(a)<^{\psi(\calW)}h'(b)$. 

Lastly, assume that $b\in U_{k+1}^{\calA_f}\backslash P_{k+1}^{\calA}$ and $a\in P_{k+1}^\calA$.  So $b=d_{[\xbar]_{\calA}}$ for some $\xbar\in Q^{\calA}$.  By construction,  $d_{[\xbar]_{\calA}}<^{\calA_f}a$ if and only if $R^{\calA}(\xbar,a)$.  Since $h$ is an isomorphism, this holds if and only if $R^{\calW}(h(\xbar),h(a))$. Since $\calW\leq \calC_R$, this holds if and only if $R^{\calC_R}(h(\xbar),h(a))$.  By definition of $\calC_R$, this holds if and only if $f^{\calC}(h(\xbar))<^{\calC}h(a)$.   By definition of $h'$, this holds if and only if $h'(d_{[\xbar]_{\calA}})<^{\calC}h'(a)$.  Since $\psi(\calW)\leq \calC$, this holds if and only if $h'(d_{[\xbar]_{\calA}})<^{\psi(\calW)}h'(a)$.  This finishes our verficiation that $h'$ respects $<$, and is thus an $\vec{\calL}_{f,k}$-isomorphism.  This finishes the proof of part $(a)$. 

We now show part $(b)$.  Fix $\calD\leq \calC$ and assume $\calD\cong \calA_f$.  Say $g\colon \calD\rightarrow \calA_f$ is an $\vec{\calL}_{f,k}$-isomorphism.  We define a map $g'\colon \phi(\calD)\rightarrow \calA$.  Note that by construction, for each $i\in [k]$, $U_i^{\calD}=P_i^{\phi(\calD)}$ and $U_i^{\calA_f}=P_i^{\calA}$.  Therefore, $g$ induces a bijection from $U_i^{\calD}$ to $U_i^{\calA_f}$ for each $i\in [k+1]$.  

Recall that by definition of $\phi(\calD)$, $P_{k+1}^{\phi(\calD)}=U_{k+1}^{\calD}\backslash \im(f^{\calD})$.   Since $g$ is an $\vec{\calL}_{f,k}$-isomorphism,
\[
\im(f^{\calD})=g\inv(\im(f^{\calA_f}))=g\inv(\{d_{[\xbar]_{\calB}}: \xbar\in Q^{\calA}\}),
\]
where the second equality is because  $\im(f^{\calA_f})=\{d_{[\xbar]_{\calB}}: \xbar\in Q^{\calA}\}$ by construction.  
Thus, $P_{k+1}^{\phi(\calD)}=U_{k+1}^{\calD}\backslash \im(f^{\calD})=U_{k+1}^{\calD}\backslash g\inv(\{d_{[\xbar]_{\calB}}: \xbar\in Q^{\calA}\})$, and therefore 
\begin{multline*}
g(P_{k+1}^{\phi(\calD)})=g(U_{k+1}^{\calD}\backslash g\inv(\{d_{[\xbar]}: \xbar\in Q^\calA\})) \\
=g(U_{k+1}^{\calD})\backslash \{d_{[\xbar]}: \xbar\in Q^{\calA}\}
=U_{k+1}^{\calA_f}\backslash \im(f^{\calA_f}) =P_{k+1}^{\calA},
\end{multline*}
where the last equality uses that $U_{k+1}^{\calA_f}=P_{k+1}^{\calA}\cup \{d_{[\xbar]_{\calB}}: \xbar\in Q^{\calA}\}$ by construction.  Thus, $g$ also induces a bijection from $P_{k+1}^{\phi(\calD)}$ to $P_{k+1}^{\calA}$.  Therefore, setting $g'(x)=g(x)$ for all $x\in P_1^{\phi(\calD)}\cup \ldots \cup P_{k+1}^{\phi(\calD)}$, we have a bijection from $\phi(\calD)$ to $\calA$.

We will show $g'$ is an $\calL_k$-isomorphism. Above, we have seen that it respects the predicates $P_1,\ldots, P_{k+1}$ in the desired way, so we just have to check that it respects $<_k$, $R$, and $<$.

Fix $a,b\in \phi(\calD)$.  It is clear from the definition of $\phi(\calD)$ that $a<^{\phi(\calD)}b$ if and only if $a<^{\calD}b$.  Since $g$ is an isomorphism, this holds if and only if  $g(a)<^{\calA_f}g(b)$.  By above, $g'(a)=g(a)$ and $g'(b)=g(b)$, and $g(a),g(b)\in \calA$.  Therefore, by definition of $\calA_f$, $g(a)<^{\calA_f}g(b)$ if and only if $g'(a)<^{\calA}g'(b)$. Thus, $a<^{\phi(\calD)}b$ if and only if $g'(a)<^{\calA}g'(b)$.

Fix $\xbar, \xbar'\in Q^{\phi(\calD)}$.  If $\xbar=\xbar'$, then clearly $\neg (\xbar<_k^{\phi(\calD)}\xbar')$ and $\neg (g(\xbar)<^{\calA}_kg(\xbar'))$, as $<_k$ is a strict pre-order on $\calA$ and $\phi(\calD)$.

Assume now that $\xbar\neq \xbar'$. We have by definition of $\phi(\calD)$ that $\xbar<_k^{\phi(\calD)}\xbar'$ if and only if $f^{\calD}(\xbar)\leq^{\calD}f^{\calD}(\xbar')$.  Since $g$ is an isomorphism, this holds if and only if $g(f^{\calD}(\xbar))\leq^{\calA_f}g(f^{\calD}(\xbar'))$ if and only if $f^{\calA_f}(g(\xbar'))\leq^{\calA_f}f^{\calA_f}(g(\xbar'))$.    By definition of $\calA_f$, this holds if and only if $g(\xbar)\leq_k^{\calA}g(\xbar')$.  Since $g$ is an injection, $\xbar\neq \xbar'$, and $<^{\calA}_k$ is a strict pre-order, this holds if and only if $g(\xbar)<_k^{\calA}g(\xbar')$.  Thus, $\xbar<_k^{\phi(\calD)}\xbar'$ if and only if $g'(\xbar)<_k^{\calA}g'(\xbar')$.

Now fix $\xbar\in Q^{\phi(\calD)}$ and $z\in P_{k+1}^{\phi(\calD)}$.  By definition of $\phi(\calD)$, $R^{\phi(\calD)}(\xbar,z)$ if and only if $f^{\calD}(\xbar)<^{\calD}z$.  Since $g$ is an isomorphism, this holds if and only if $f^{\calA_f}(g(\xbar))<^{\calA_f}g(z)$.  By definition of $\calA_f$, this holds if and only if  $R^{\calA}(g(\xbar),g(z))$, i.e., if and only if $R^{\calA}(g'(\xbar),g'(z))$.  Thus $R^{\phi(\calD)}(\xbar,z)$ if and only if $R^{\calA}(g'(\xbar),g'(z))$.  This finishes our verification that $g'$ is an isomorphism, and thus finishes the proof of part $(b)$. 
\end{proof}

We can now obtain a corollary which will allow us to transfer the Ramsey property from $\bS_k$ to $\bH_k^{\textnormal{pre}}$. First, both of these classes consist of linearly ordered structures, and thus embeddings can be conflated with isomorphic copies. In other words, given two structures $\cA$ and $\cB$ from one of these classes, we identify ${\cA\choose \cB}$ with the set of substructures of $\cA$ isomorphic to $\cB$. \emph{This identification will be used without further mention in the next two results.} Note also that given $\calC\in \bS_k$, we can think of $\phi_{\calC}$ as a map from the set of substructures of $\calC$ into the set of substructures of $\calC_R$, and $\psi_C$ as a map from the set of substructures of $\calC_R$ into the set of substructures of $\calC$.  

\begin{corollary}\label{cor:phipsi}
Suppose $\calC\in \bS_k$ and $\calA\in \bH_k^{\textnormal{pre}}$.   For any $\calD\leq \calC$, the restriction of $\phi_{\calC}$ to ${\calD\choose \calA_f}$ is a bijection from ${\calD\choose \calA_f}$ to ${\phi_{\calC}(\calD)\choose \calA}$, whose inverse is the restriction of $\phi_{\calC}$ to ${\phi_{\calC}(\calD)\choose \calA}$. 
\end{corollary}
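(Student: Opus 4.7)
The plan is to interpret ``the restriction of $\phi_{\calC}$'' as the map on embeddings induced by $\phi_{\calC}$ acting on the images of embeddings, and similarly for $\psi_{\calC}$ (the statement's final ``$\phi_{\calC}$'' is read as $\psi_{\calC}$). Concretely, I will first build $\Phi\colon\binom{\calD}{\calA_f}\to\binom{\phi_{\calC}(\calD)}{\calA}$ as follows: for $\sigma\in\binom{\calD}{\calA_f}$, let $\calD'=\sigma(\calA_f)\leq \calD$, so $\sigma\colon\calA_f\to \calD'$ is an isomorphism. Lemma~\ref{lem:corr}(b) then produces an isomorphism $\phi_{\calC}(\calD')\to\calA$, and inverting gives an isomorphism $\calA\to\phi_{\calC}(\calD')$. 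Lemma~\ref{lem:phipsi}(b) gives $\phi_{\calC}(\calD')\leq\phi_{\calC}(\calD)$, so composing with the inclusion yields $\Phi(\sigma)\in\binom{\phi_{\calC}(\calD)}{\calA}$. Symmetrically, I will define $\Psi\colon\binom{\phi_{\calC}(\calD)}{\calA}\to\binom{\calD}{\calA_f}$ from $\tau\in\binom{\phi_{\calC}(\calD)}{\calA}$: let $\calW=\tau(\calA)\leq\phi_{\calC}(\calD)$, apply Lemma~\ref{lem:corr}(a) to get an isomorphism $\calA_f\to\psi_{\calC}(\calW)$, and use Lemma~\ref{lem:phipsi}(b) together with $\psi_{\calC}(\phi_{\calC}(\calD))=\calD$ from Lemma~\ref{lem:phipsi}(a) to land in $\calD$.

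Next I will verify $\Psi\circ\Phi=\mathrm{id}$. Starting with $\sigma\in\binom{\calD}{\calA_f}$ and setting $\calD'=\sigma(\calA_f)$, the constructions in the proofs of Lemma~\ref{lem:corr}(a),(b) give very explicit underlying functions. Tracing through the proof of Lemma~\ref{lem:corr}(b), the isomorphism $\phi_{\calC}(\calD')\to\calA$ coming from $\sigma^{-1}$ is literally the restriction of $\sigma^{-1}$ to the subset $\phi_{\calC}(\calD')\subseteq \calD'$ (since $g'(x)=g(x)$ there in the notation of that proof). Hence $\Phi(\sigma)$ agrees with $\sigma$ on $A\subseteq A_f$. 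Now passing through $\Psi$, the proof of Lemma~\ref{lem:corr}(a) says that the induced isomorphism $\calA_f\to\psi_{\calC}(\Phi(\sigma)(\calA))$ satisfies $h'(x)=h(x)=\sigma(x)$ for $x\in A$ and $h'(d_{[\xbar]_{\calA}})=f^{\calC}(h(\xbar))$ for $\xbar\in Q^{\calA}$. Since $\sigma$ is an $\vec{\calL}_{f,k}$-embedding, $f^{\calC}(\sigma(\xbar))=\sigma(f^{\calA_f}(\xbar))=\sigma(d_{[\xbar]_{\calA}})$. Therefore $\Psi(\Phi(\sigma))=\sigma$ as a function on $A_f$.

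The verification $\Phi\circ\Psi=\mathrm{id}$ is completely symmetric: starting with $\tau\in\binom{\phi_{\calC}(\calD)}{\calA}$ and $\calW=\tau(\calA)$, the proof of Lemma~\ref{lem:corr}(a) says the induced isomorphism $\calA_f\to\psi_{\calC}(\calW)$ restricts to $\tau$ on $A$, so after applying $\Phi$ (which restricts back to the $A$-part by the computation above), we recover $\tau$. The image bookkeeping uses the identities $\phi_{\calC}(\psi_{\calC}(\calW))=\calW$ (Lemma~\ref{lem:phipsi}(a)) and $\psi_{\calC}(\phi_{\calC}(\calD'))=\calD'$, which guarantee that the substructures match.

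The only real subtlety—which I expect to be the main bookkeeping obstacle—is matching the underlying set-theoretic functions (not just the substructure images) on the nose. This is where one must invoke the explicit formulas from the proofs of Lemma~\ref{lem:corr}(a),(b), namely that in (b) the induced map is a literal restriction, and in (a) the induced map extends by sending $d_{[\xbar]_{\calA}}$ to $f^{\calC}(h(\xbar))$. The homomorphism property of the embedding with respect to $f$ ensures that these two definitions are mutually inverse, closing the argument.
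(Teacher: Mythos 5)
Your proposal is correct and takes essentially the same route as the paper: well-definedness in both directions plus mutual inverses, all via Lemmas \ref{lem:phipsi} and \ref{lem:corr} (and you rightly read the statement's final $\phi_{\calC}$ as $\psi_{\calC}$). The only difference is that the paper conflates embeddings with their images (legitimate since all structures involved are finite and linearly ordered by $<$, hence rigid, a conflation announced in the proof of Proposition \ref{prop:sramsey}), so it gets the inverse property directly from $\psi_{\calC}\circ\phi_{\calC}=\mathrm{id}$ and $\phi_{\calC}\circ\psi_{\calC}=\mathrm{id}$ on substructures and never needs your function-level chase through the explicit maps $g'$ and $h'$ inside the proofs of Lemma \ref{lem:corr}; that chase is sound but optional.
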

\begin{proof}

Fix $\calC\in \bS_k$ and $\calA\in \bH_k^{\textnormal{pre}}$.  For the rest of the proof, we will write $\phi$ and $\psi$ to mean $\phi_{\calC}$ and $\psi_{\calC}$.  Suppose $\calD\leq \calC$.  Fix $\calW\in {\calD\choose \calA_f}$.  Then $\phi(\calW)\leq \phi(\calD)$ by Lemma \ref{lem:phipsi}, and by Lemma \ref{lem:corr}, $\calW\cong \calA_f$ implies $\phi(\calW)\cong \calA$.  Thus $\phi(\calW)\in {\phi(\calD)\choose \calA}$ so the restriction of $\phi$ to ${\calD\choose \calA_f}$ maps into ${\phi(\calD)\choose \calA}$.  Suppose now $\calZ\in {\phi(\calD)\choose \calA}$.  Lemmas \ref{lem:phipsi} and \ref{lem:corr}, along with $\calZ\leq \phi(\calD)$, imply $\psi(\calZ)\leq \psi(\phi(\calD))=\calD$.  By Lemma \ref{lem:corr}, $\calZ\cong \calA$ implies $\psi(\calZ)\cong \calA_f$.  Combining we have that $\psi(\calZ)\in {\calD\choose \calA_f}$.  Thus the restriction of $\psi$ to ${\phi(\calD)\choose \calA}$ maps into ${\calD\choose \calA_f}$.  By Lemma \ref{lem:corr}, $\phi{\upharpoonright}{\calD\choose \calA_f}$ and $\psi{\upharpoonright}{\phi(\calD)\choose \calA}$ are then inverses of each other.  Hence, both maps are bijections. 
\end{proof}

\begin{proposition}\label{prop:sramsey}
$\bH_k^{\textnormal{pre}}$ has the Ramsey property.
\end{proposition}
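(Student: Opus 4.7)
The plan is to transfer the Ramsey property from $\bS_k$ (established in Corollary \ref{cor:ramseysk}) to $\bH_k^{\textnormal{pre}}$ using the correspondence $\calB \mapsto \calB_f$, $\calC \mapsto \calC_R$ together with the bijection between ${\calD \choose \calA_f}$ and ${\phi_{\calC}(\calD) \choose \calA}$ provided by Corollary \ref{cor:phipsi} (which applies to $\calA\in\bH_k^{\textnormal{pre}}$, since its proof only invokes Lemma \ref{lem:corr}).

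Fix $\calA,\calB\in\bH_k^{\textnormal{pre}}$ and $n\in\mathbb{N}$. First I would form the $\bS_k$-structures $\calA_f$ and $\calB_f$, and use the Ramsey property of $\bS_k$ to obtain some $\calC^*\in\bS_k$ with $\calC^*\rightarrow(\calB_f)_n^{\calA_f}$. Then I would set $\calC\coloneqq\calC^*_R\in\bH_k^{\textnormal{pre}}$ as the candidate witness, and claim that $\calC\rightarrow(\calB)_n^{\calA}$.

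The verification proceeds as follows. Given a coloring $\chi\colon{\calC\choose\calA}\rightarrow[n]$, apply Corollary \ref{cor:phipsi} with $\calD=\calC^*$: since $\phi_{\calC^*}(\calC^*)=\calC^*_R=\calC$, this yields a bijection $\phi_{\calC^*}\colon {\calC^*\choose\calA_f}\rightarrow{\calC\choose\calA}$. Pull back $\chi$ to obtain a coloring $\chi'\coloneqq\chi\circ\phi_{\calC^*}\colon{\calC^*\choose\calA_f}\rightarrow[n]$. By choice of $\calC^*$, there is some $\calB^*\in{\calC^*\choose\calB_f}$ such that ${\calB^*\choose\calA_f}$ is $\chi'$-monochromatic. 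Let $\calB'\coloneqq\phi_{\calC^*}(\calB^*)$; by Lemma \ref{lem:corr}(b) we have $\calB'\cong\calB$, so $\calB'\in{\calC\choose\calB}$. A second application of Corollary \ref{cor:phipsi}, this time with $\calD=\calB^*$, shows that $\phi_{\calC^*}$ restricts to a bijection from ${\calB^*\choose\calA_f}$ onto ${\calB'\choose\calA}$; combined with $\chi'$-monochromaticity of ${\calB^*\choose\calA_f}$, this gives that ${\calB'\choose\calA}$ is $\chi$-monochromatic, as required.

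There is no significant obstacle at this stage: all the work has been front-loaded into verifying Ramsey for $\bS_k$ and establishing the correspondence lemmas. Once those are in place, the proof of Proposition \ref{prop:sramsey} is a purely formal bookkeeping argument that amounts to transporting a coloring and a monochromatic subset across the bijections of Corollary \ref{cor:phipsi}.
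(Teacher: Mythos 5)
Your proposal is correct and follows essentially the same route as the paper: apply the Ramsey property of $\bS_k$ to $\calA_f,\calB_f$, take $\calC^*_R$ as the witness, and transport colorings and monochromatic copies across the bijections of Corollary \ref{cor:phipsi} (the paper likewise conflates embeddings with isomorphic copies since everything is linearly ordered, and likewise applies Corollary \ref{cor:phipsi} to $\calA\in\bH_k^{\textnormal{pre}}$, as you correctly note is justified by its proof). The only cosmetic difference is that you pull the coloring back along $\phi_{\calC^*}$ while the paper defines the pulled-back coloring via $\tau(\calX)=\rho(\phi_{\calC}(\calX))$ and returns through $\psi_{\calC}$; these are the same argument.
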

\begin{proof}
Fix $\calA,\calB\in \bH_k^{\textnormal{pre}}$ and a positive integer $\ell$.  We show that there exists a structure $\calE\in \bH_k^{\textnormal{pre}}$ such that for all $\rho\colon {\calE\choose \calA}\rightarrow [\ell]$ there is a $\calX\in {\calE\choose \calB}$ so that $\rho$ is constant on ${\calX\choose \calA}$.  Since $\calA_f,\calB_f\in \bS_k$, Corollary \ref{cor:ramseysk} implies that there exists $\calC\in \bS_k$ so that for every  $\tau\colon{\calC\choose \calA_f}\rightarrow [\ell]$ there is a $\calX\in {\calC\choose \calB_f}$ so that $\tau$ is constant on ${\calX\choose \calA_f}$. In the rest of the proof, we write $\phi$ and $\psi$ for $\phi_{\calC}$ and $\psi_{\calC}$.

Recall $\calC_R\in \bH_k^{\textnormal{pre}}$, since $\calC\in \bS_k$.  This $\calC_R$ will be our desired $\calE$.  Fix any $\rho\colon{\calC_R\choose \calA}\rightarrow [\ell]$.   For each $\calX\in {\calC\choose \calA_f}$, define $\tau(\calX)\coloneqq\rho(\phi_{\calC}(\calX))$. Since $\calC_R=\phi_{\calC}(\calC)$ by definition,  Corollary \ref{cor:phipsi} (applied with $\calD=\calC$) implies that $\tau$ is a well defined coloring $\tau:{\calC\choose \calA_f}\rightarrow [\ell]$. 

By our choice of $\calC$,  there is some $\calD\in {\calC\choose \calB_f}$ so that $\tau$ is constant on ${\calD\choose \calA_f}$, say $\tau(\calX)=i$ for all $\calX\in {\calD\choose \calA_f}$. By Lemma \ref{lem:corr}, $\calD\cong \calB_f$ implies $\phi_{\calC}(\calD)\cong \calB$.  Thus, $\phi_{\calC}(\calD)\in {\calC_R\choose \calB}$.  By Corollary \ref{cor:phipsi} and our definition of $\tau$, for every $\calY\in {\phi_{\calC}(\calD)\choose \calA}$, $\psi_{\calC}(\calY)\in {\calD\choose \calA_f}$, so $\rho(\calY)=\tau(\psi_{\calC}(\calY))=i$.  Therefore, $\rho$ is constant on ${\phi_{\calC}(\calD)\choose \calA}$. 
\end{proof}

We now finish by showing that $\bH_k$ is an irreducible locally finite subclass of $\bH_k^{\textnormal{pre}}$.  With Proposition \ref{prop:sramsey} and Theorem \ref{thm:hn}, this will show $\bH_k$ is  a Ramsey class.

\begin{lemma}\label{lem:ramseylem}
$\bH_k$ is an irreducible locally finite subclass of $\bH_k^{\textnormal{pre}}$.
\end{lemma}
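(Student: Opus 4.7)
The lemma has two parts: every structure in $\bH_k^{\textnormal{pre}}$ is irreducible, and $\bH_k$ is locally finite as a subclass of $\bH_k^{\textnormal{pre}}$. Irreducibility is immediate: each $\calC \in \bH_k^{\textnormal{pre}}$ carries a linear order $<$ on its full domain, so any decomposition $\calC = \calC_1 \cup \calC_2$ into proper substructures with common core $\calC_0$ admits $a \in C_1 \setminus C_0$ and $b \in C_2 \setminus C_0$ related by $<$, precluding a free amalgamation.

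For local finiteness I would take $n \equiv 0$, which renders condition $(iii)$ vacuous, and argue that conditions $(i)$ and $(ii)$ alone suffice to produce an $\bH_k$-completion of $\calA$. Given the homomorphism-embedding $h\colon \calA \to \calC$ from $(ii)$, I would construct $\calC^*$ by keeping $P_1, \ldots, P_{k+1}$, $<$, and $R$ as in $\calC$ while refining the pre-order $<_k^{\calC}$ on $Q(\calC)$ to a linear order via an arbitrary linear order on each $\sim^{\calC}_k$-equivalence class. A short check then shows $\calC^* \models T_k$; the only delicate axiom is $(5)$, which passes since $\leq_k^{\calC^*} \subseteq \leq_k^{\calC}$ while $R$ and $<$ are unchanged.

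The substantive work is checking that $h\colon \calA \to \calC^*$ remains a homomorphism-embedding. Since only $<_k$ has been altered, the only concern is how this relation is affected. My key observation is that whenever $\calA \models \xbar <_k \ybar$, the substructure $\calA[\xbar \cup \ybar]$ is irreducible: the tuple $\xbar\ybar$ lies in $<_k$ and its underlying set exhausts the domain, so any nontrivial partition of the domain is crossed by this tuple. By $(i)$ this substructure lies in $\bH_k$, where $<_k$ is a strict linear order; hence $h$ restricts to an embedding there, forcing $h(\xbar) <_k h(\ybar)$ in $\calC$ without the reverse inequality, so the strict ordering survives in any linear refinement. The embedding condition on arbitrary irreducible substructures of $\calA$ then follows: forward by the same observation, and backward from $<_k^{\calC^*} \subseteq <_k^{\calC}$.

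The main technical obstacle will be the bookkeeping for irreducibility of $\calA[\xbar \cup \ybar]$, whose witnessing tuple is $2k$-ary rather than binary. With the lemma in hand, Theorem \ref{thm:hkramsey} follows from Proposition \ref{prop:sramsey}, the hereditariness and strong amalgamation of $\bH_k$ (both available from Proposition \ref{prop:fraisseop}), and Theorem \ref{thm:hn}.
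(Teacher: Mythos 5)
Your argument is correct, but for the local-finiteness half it takes a genuinely different and leaner route than the paper's. The paper does not make condition $(iii)$ vacuous: it sets $n(\calC)=\max\{k|C|^k,2k\}$ (where $\calC$ is the given $\bH_k^{\textnormal{pre}}$-completion) and, when linearizing the pre-order $<_k^{\calC}$, it orders each $\sim_k$-class not arbitrarily but compatibly with the $h$-images of the $<_k$-related pairs of $\calA$: inside each class it collects those image pairs, takes the transitive closure, and uses condition $(iii)$ (via the bound $k|C|^k$) to exclude cycles so that the closure extends to a linear order on the class. Your key observation --- if $\calA\models \xbar<_k\ybar$ then $\calA[\xbar\cup\ybar]$ is irreducible, hence in $\bH_k$ by $(i)$, hence $\neg(\ybar<_k\xbar)$ holds and the restriction of $h$ to this substructure embeds into $\calC$, giving $h(\xbar)<_k^{\calC}h(\ybar)$, $\neg(h(\ybar)<_k^{\calC}h(\xbar))$, and (by injectivity) $h(\xbar)\neq h(\ybar)$, i.e.\ $h(\xbar)\nsim_k h(\ybar)$ --- shows that those within-class pair sets are empty, so an arbitrary linearization of each class works and $(iii)$ is never invoked. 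In fact the paper proves essentially this same inequivalence statement later, when verifying that $h$ restricted to each irreducible substructure embeds into the modified structure; your proposal simply promotes it to the definition stage, which buys a shorter proof and a formally stronger conclusion (conditions $(i)$ and $(ii)$ alone already produce an $\bH_k$-completion, so $n$ may be taken constant), while the paper's construction is more conservative at the price of the transitive-closure/acyclicity argument. The remaining ingredients of your outline coincide with the paper's: irreducibility of the members of $\bH_k^{\textnormal{pre}}$ (hence of $\bH_k$) from the linear order $<$, the check that the refinement $\calC^*$ satisfies axiom $(5)$ of $T_k$ because $\leq_k^{\calC^*}\subseteq\leq_k^{\calC}$ while $R$ and $<$ are untouched, and the reduction of the homomorphism requirement to embeddings on irreducible substructures via Fact \ref{fact:irred}, which applies since $\cL_k$ is relational.
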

\begin{proof}
It is clear that $\bH_k$ is irreducible, because its elements are linearly ordered by $<$.  We now check local finiteness.  Fix $\calC_0\in \bH_k^{\textnormal{pre}}$, and let $n=\max\{k|C_0|^k,2k\}$.  Suppose $\calC$ is a finite $\calL_k$-structure such that the following hold:
\begin{enumerate}
\item $\calC_0$ is an $\bH_k^{\textnormal{pre}}$-completion of $\calC$.
\item every irreducible substructure of $\calC$ is in $\bH_k$, and
\item every substructure of $\calC$ with at most $n$ vertices has an $\bH_k$-completion.
\end{enumerate}
We show $\calC$ has an $\bH_k$-completion.  For every $a\in C$, $\calC[\{a\}]$ is an irreducible substructure of $\calC$, so by (2),  there is a unique $i\in [k+1]$ for which $a\in P_i^{\calC}$.  Therefore, $C=P_1^{\calC}\cup\ldots\cup P_{k+1}^{\calC}$ is a partition $\calC$.   Similarly, (2) implies that ${<_k^{\calC}}\subseteq Q^{\calC}\times Q^{\cC}$ and $R^{\calC}\subseteq Q^{\calC}\times P_{k+1}^{\calC}$.

We define an $\calL_k$-structure $\calD$ with domain $D=C_0$ as follows.  First, define $\calD$ to agree with $\calC_0$ on all symbols in $\calL_k\backslash \{<_k\}$, i.e. $\calD|_{\calL_k\backslash \{<_k\}}=(\calC_0)|_{\calL_k\backslash \{<_k\}}$.  We now define $<_k^{\calD}$. For all $\xbar,\ybar\in D^k$, if $\xbar\notin Q^{\calD}$ or $\ybar\notin Q^{\calD}$, we let $\neg (\xbar<_k^{\calD}\ybar)$ hold. For all $\xbar,\ybar\in Q^{\calD}$ satisfying $\xbar\nsim_k\ybar$ in $\calC_0$, define $\xbar<_k^{\calD}\ybar$ if and only if $\xbar<_k^{\calC_0}\ybar$.  We now only have left to define $<^{\calD}_k$ on pairs of elements from $Q^{\calD}$ which come from the same $\sim_k$-class in $\calC_0$.  

Let $E_1,\ldots, E_m$ be the equivalence classes in $Q^{\calC_0}$ under the strict pre-order $<_k^{\calC_0}$.  Given $1\leq i\leq m$, let 
$$
W_i=\{(\xbar,\ybar)\in E_i\times E_i: \text{there are 
 $\abar,\bbar\in Q^{\calC}$ with $f(\abar)=\xbar$, $f(\bbar)=\ybar$,  and $\abar<_k^{\calC}\bbar$}\},
$$
and let $W_i'$ be the transitive closure of $W_i$ in $E_i$.  We claim $W_i'$ is a partial order on $E_i$.  Suppose towards a contradiction this is not the case. Then there exists some $\ell\leq |E_i|$ and $\xbar_1,\ldots, \xbar_{\ell}\in E_i$ so that for each $1\leq i\leq \ell-1$, $(\xbar_i,\xbar_{i+1})\in W_i$ and $(\xbar_{\ell},\xbar_1)\in W_i$.   Thus, there are $\abar_1,\ldots, \abar_{\ell}\in  Q^{\calC}$ such that for each $1\leq i\leq \ell-1$, $\abar_i<^{\calC}_k\abar_{i+1}$ and $\abar_{\ell}<_k^{\calC}\abar_1$.   But now $\calC[\abar_1,\ldots, \abar_{\ell}]$ is a substructure of $\calC$ which has no $\calH_k$-completion.  Note that $\ell\leq |Q^{\calC_0}|\leq |C_0|^k$, so $\calC[\abar_1,\ldots, \abar_{\ell}]$ has size at most $k\ell\leq k|C_0|^k\leq n$, a contradiction. It follows that $W_i'$ is a partial order on $E_i$.  Let $W_i''$ be any linear order extending $W_i'$ on $E_i$.  Finally, for each $\xbar,\ybar\in E_i$, define $\xbar<_k^{\calD}\ybar$ if and only if $(\xbar,\ybar)\in W_i''$. 

This finishes our definition of $\calD$.  It is straightforward to verify that by construction, $\calD\in \bH_k$.  By (1), there is a map $h\colon C\rightarrow C_0$ which is a homomorphism-embedding from $\calC$ to $\calC_0$.  By construction, $D=C_0$, so we may consider $h$ as a map from $C$ to $D$.  It now suffices to check that $h$ is a homomorphism-embedding as a map from $\calC$ to $\calD$.  Since $\calL_k$ is relational, Fact \ref{fact:irred} tells us that it suffices to show that for every irreducible substructure $\calA$ of $\calC$, $h|_{\calA}$ is an embedding into $\calD$.

Let $\calA$ be an irreducible substructure of $\calC$.  Since $h$ is a homomorphism-embedding from $\calC$ to $\calC_0$, $h|_{\calA}$ is an embedding from $\calA$ to $\calC_0$.  This implies  $h|_{\calA}$ is an injection and, since $\calD|_{\calL_k\setminus\{<_k\}}=\calC|_{\calL_k\setminus\{<_k\}}$, $h$ respects the predicates $P_1,\ldots, P_{k+1}$ and the relations $<,R$ in the desired way, when considered as a map to $\calD$. 

Fix $\xbar,\ybar\in A^k$.  We want to show $\xbar<_k^{\calA}\ybar$ if and only if $h(\xbar)<^{\calD}h(\ybar)$. Suppose first  $\xbar=\ybar$, $\xbar\notin Q^{\calA}$, or $\ybar\notin Q^{\calA}$ hold. Then (2) implies $\neg(\xbar<_k^{\calA}\ybar)$.  By construction, in any of these cases we also have $\neg (h(\xbar)<_k^{\calD}h(\ybar))$, so we are done. 

Now assume $\xbar\neq \ybar$, and $\xbar,\ybar\in Q^{\calA}$.  We claim $h(\xbar)\nsim_kh(\ybar)$ in $\calC_0$.  Suppose toward a contradiction $h(\xbar)\sim_kh(\ybar)$ in $\calC_0$. Then $(h(\xbar)<_k^{\calC_0}h(\ybar))\wedge (h(\ybar)<_k^{\calC_0}h(\xbar))$ holds.  Since $h|_{\calA}$ is an embedding, this implies $(\xbar<_k^{\calA}\ybar)\wedge (\ybar<_k^{\calA}\xbar)$.  But $\calA[\xbar,\ybar]$ is an irreducible substructure of $\calC$ of size at most $2k$ which is not in $\bS_k$, a contradiction.  Thus, $h(\xbar)\nsim_kh(\ybar)$ in $\calC_0$.  By construction, $h(\xbar)<_k^{\calC_0}h(\ybar)$ if and only if $h(\xbar)<_k^{\calD}h(\ybar)$, and since $h|_{\calA}$ is an embedding to $\calC_0$, $\xbar<^{\calA}_k\ybar$ holds if and only if  $h(\xbar)<_k^{\calC_0}h(\ybar)$.  Thus $\xbar<^{\calA}_k\ybar$ holds if and only if  $h(\xbar)<_k^{\calD}h(\ybar)$, as desired.
\end{proof}

It is an interesting question whether the fact that $\bH_k$ is a Ramsey class can be proved via more direct methods.  For instance, the Ramsey property for $\bH_1$ follows from the Ne\v{s}et\v{r}il-R\"{o}dl Theorem mentioned above, and the classes $\bH_k$ for $k>1$ are all built from $\bH_1$ via the construction described in Definition \ref{def:fraisseop}.  This raises the question of whether, in the notation of Definition \ref{def:fraisseop}, one can always transfer the Ramsey property from a class $\bK$ to $\bK_k(U)$.  We make this problem precise below.  

\begin{problem}\label{prob:genram}
Suppose $\calL$ is a finite relational language containing a binary relation $<$ and a unary relation $U$.  Suppose $\bK$ is a Fra\"{i}ss\'{e} class of finite $\calL$-structures with disjoint amalgamation, and whose elements are linearly ordered by $<$, and where, given any $\calA\in \bK$ and $n\in \mathbb{N}$, there is $\calB\in \bK$ with $\calA\leq \calB$ and $|U(\calB)|\geq n$.
Show that if $\bK$ has the Ramsey property, so does $\bK_k(U)$ for any $k\geq 1$.
\end{problem}

While the proof we gave above of the Ramsey property for $\bH_k$ is tailored to this particular class, we believe it may still shed light on a strategy for Problem \ref{prob:genram}.

\end{document}